\newtheorem{theorem}{Theorem}[section]
\newtheorem*{theorem*}{Theorem}
\newtheorem*{proposition*}{Proposition}
\newtheorem{lemma}[theorem]{Lemma}
\newtheorem*{lemma*}{Lemma}
\newtheorem*{conjecture*}{Conjecture}
\newtheorem{fact}[theorem]{Fact}
\newtheorem*{fact*}{Fact}
\newtheorem*{hypothesis*}{Hypothesis}
\newtheorem{conjecture}[theorem]{Conjecture}
\theoremstyle{definition}
\newtheorem{definition}[theorem]{Definition}
\newtheorem*{definition*}{Definition}
\newtheorem{problem}[theorem]{Problem}
\newtheorem{condition}[theorem]{Condition}
\newtheorem*{condition*}{Condition}
\theoremstyle{remark}
\newtheorem{claim}[theorem]{Claim}
\newtheorem*{claim*}{Claim}
\newtheorem{remark}[theorem]{Remark}
\newtheorem*{remark*}{Remark}
\newtheorem*{observation*}{Observation}
\crefname{lemma}{Lemma}{Lemmas}
\crefname{fact}{Fact}{Facts}
\crefname{theorem}{Theorem}{Theorems}
\crefname{corollary}{Corollary}{Corollaries}
\crefname{claim}{Claim}{Claims}
\crefname{example}{Example}{Examples}
\crefname{problem}{Problem}{Problems}
\crefname{definition}{Definition}{Definitions}
\crefname{conjecture}{Conjecture}{Conjectures}
\newcommand{\proves}{\vdash}
\newcommand{\Authornote}[2]{{\sffamily\small\color{red}{[#1: #2]}}}
\newcommand{\Authorfnote}[2]{\footnote{\color{red}{#1: #2}}}
\newcommand{\Ynote}{\Authornote{Y}}
\newcommand{\PRfnote}{\Authorfnote{PR}}
\newcommand{\Paren}[1]{\left(#1\right)}
\newcommand{\brac}[1]{[#1]}
\newcommand{\Brac}[1]{\left[#1\right]}
\newcommand{\abs}[1]{\lvert#1\rvert}
\newcommand{\norm}[1]{\lVert#1\rVert}
\newcommand{\Norm}[1]{\left\lVert#1\right\rVert}
\newcommand{\normt}[1]{\norm{#1}_2}
\newcommand{\Normt}[1]{\Norm{#1}_2}
\newcommand{\snormt}[1]{\norm{#1}^2_2}
\newcommand{\frob}[1]{\norm{#1}_F}
\newcommand{\Frob}[1]{\Norm{#1}_F}
\newcommand{\normo}[1]{\norm{#1}_1}
\newcommand{\Normo}[1]{\Norm{#1}_1}
\newcommand{\iprod}[1]{\langle#1\rangle}
\newcommand{\Iprod}[1]{\left\langle#1\right\rangle}
\newcommand{\Esymb}{\mathbb{E}}
\newcommand{\Psymb}{\mathbb{P}}
\DeclareMathOperator*{\E}{\Esymb}
\DeclareMathOperator*{\ProbOp}{\Psymb}
\renewcommand{\Pr}{\ProbOp}
\newcommand{\tensor}{\otimes}
\newcommand{\mper}{\,.}
\newcommand{\mcom}{\,,}
\newcommand\bdot\bullet
\DeclareMathOperator{\Ind}{\mathbf 1}
\DeclareMathOperator{\Tr}{Tr}
\DeclareMathOperator{\poly}{poly}
\newcommand{\Hoelder}{H\"{o}lder\xspace}
\newcommand{\Holder}{\Hoelder}
\newcommand{\N}{\mathbb N}
\newcommand{\R}{\mathbb R}
\newcommand{\bY}{\mathbf Y}
\newcommand{\cA}{\mathcal A}
\newcommand{\cB}{\mathcal B}
\newcommand{\cD}{\mathcal D}
\newcommand{\cG}{\mathcal G}
\newcommand{\cN}{\mathcal N}
\newcommand{\cO}{\mathcal O}
\newcommand{\bZ}{\mathbf Z}
\newcommand{\bb}{\mathbf b}
\let\epsilon=\varepsilon
\numberwithin{equation}{section}
\newcommand\MYcurrentlabel{xxx}
\newcommand{\MYstore}[2]{%
  \global\expandafter \def \csname MYMEMORY #1 \endcsname{#2}%
}
\newcommand{\MYload}[1]{%
  \csname MYMEMORY #1 \endcsname%
}
\newcommand{\MYnewlabel}[1]{%
  \renewcommand\MYcurrentlabel{#1}%
  \MYoldlabel{#1}%
}
\newcommand{\MYdummylabel}[1]{}
\newcommand{\torestate}[1]{%
  \let\MYoldlabel\label%
  \let\label\MYnewlabel%
  #1%
  \MYstore{\MYcurrentlabel}{#1}%
  \let\label\MYoldlabel%
}
\newcommand{\restatetheorem}[1]{%
  \let\MYoldlabel\label
  \let\label\MYdummylabel
  \begin{theorem*}[Restatement of \cref{#1}]
    \MYload{#1}
  \end{theorem*}
  \let\label\MYoldlabel
}
\newcommand{\restatelemma}[1]{%
  \let\MYoldlabel\label
  \let\label\MYdummylabel
  \begin{lemma*}[Restatement of \cref{#1}]
    \MYload{#1}
  \end{lemma*}
  \let\label\MYoldlabel
}
\newcommand{\restateprop}[1]{%
  \let\MYoldlabel\label
  \let\label\MYdummylabel
  \begin{proposition*}[Restatement of \cref{#1}]
    \MYload{#1}
  \end{proposition*}
  \let\label\MYoldlabel
}
\newcommand{\restatefact}[1]{%
  \let\MYoldlabel\label
  \let\label\MYdummylabel
  \begin{fact*}[Restatement of \prettyref{#1}]
    \MYload{#1}
  \end{fact*}
  \let\label\MYoldlabel
}
\newcommand{\restate}[1]{%
  \let\MYoldlabel\label
  \let\label\MYdummylabel
  \MYload{#1}
  \let\label\MYoldlabel
}
\newcommand{\e}{\epsilon}
\newcommand*{\Id}{\mathrm{Id}}
\newcommand*{\loweredwidetildehelper}[2]{\hbox{\csname dimen@\endcsname\accentfontxheight#1%
  \accentfontxheight#11.25\csname dimen@\endcsname
  $\csname m@th\endcsname#1\widetilde{#2}$%
  \accentfontxheight#1\csname dimen@\endcsname
  }%
}
\newcommand*{\accentfontxheight}[1]{\fontdimen5\ifx#1\displaystyle \textfont \else\ifx#1\textstyle \textfont \else\ifx#1\scriptstyle \scriptfont \else \scriptscriptfont \fi\fi\fi3
}
\DeclareMathOperator{\pEE}{\tilde{\mathbb{E}}}
\newcommand{\pE}{\pEE\nolimits}
\newcommand{\tv}{\tilde{v}}
\newcommand{\tw}{\tilde{w}}
\newcommand{\tSigma}{\tilde{\Sigma}}
\newcommand{\tZ}{\tilde{Z}}
\newcommand{\tE}{\tilde{\E}}
\newcommand{\tO}{\tilde{O}}
\newcommand{\sr}{\text{r}(\Sigma)}
\newcommand{\covrate}{r}
\newcommand{\cdest}{20}
\newcommand{\cgd}{100}
\newcommand{\cest}{200}
\newcommand{\sigalg}{\Sigma^*}
\newcommand{\dalg}{d^*}
\newcommand{\distest}{\text{Distance Estimation}}
\newcommand{\gest}{\text{Gradient Estimation}}
\newcommand{\nit}{1000 d\log \frac{d}{\epsilon}}
\newcommand{\nbuck}{10^6 \log \frac{1}{\delta}}
\title{Algorithms for Heavy-Tailed Statistics: Regression, Covariance Estimation, and Beyond}
\author{%
Yeshwanth Cherapanamjeri\thanks{U.C. Berkeley, \href{mailto:yeshwanth@berkeley.edu}{yeshwanth@berkeley.edu}, supported by the Berkeley Artificial Intelligence Research Lab.}
\and
Samuel B. Hopkins\thanks{U.C. Berkeley, \href{mailto:hopkins@berkeley.edu}{hopkins@berkeley.edu}, supported by a Miller Postdoctoral Fellowship.}
\and
Tarun Kathuria\thanks{U.C. Berkeley,  \href{mailto:tarunkathuria@berkeley.edu}{tarunkathuria@berkeley.edu}, supported by NSF Grant CCF 1718695.}
\and
Prasad Raghavendra\thanks{U.C. Berkeley,
\href{mailto:raghavendra@berkeley.edu}{raghavendra@berkeley.edu}, supported by NSF Grant CCF 1718695.}
\and
Nilesh Tripuraneni\thanks{U.C. Berkeley, \href{mailto:nilesh\_tripuraneni@berkeley.edu}{nilesh\_tripuraneni@berkeley.edu}, supported by the RISELab.}
}
\begin{document}

\pagestyle{empty}


\maketitle
\thispagestyle{empty} 


\begin{abstract}

    We study polynomial-time algorithms for linear regression and covariance estimation in the absence of strong (Gaussian) assumptions on the underlying distributions of samples, making assumptions instead about only finitely-many moments.
We focus on how many samples are required to perform estimation and regression with high accuracy and exponentially-good success probability in the face of heavy-tailed data.

For covariance estimation, linear regression, and several other problems in high-dimensional statistics, estimators have recently been constructed whose sample complexities and rates of statistical error match what is possible when the underlying distribution is Gaussian, but known algorithms for these estimators require exponential time \cite{mendelson2018robust,lugosi2016risk}.
We narrow the gap between the Gaussian and heavy-tailed settings for polynomial-time estimators with:
\begin{itemize}
    \item a polynomial-time estimator which takes $n$ samples from a $d$-dimensional random vector $X$ with covariance $\Sigma$ and produces $\hat{\Sigma}$ such that in spectral norm $\|\hat{\Sigma} - \Sigma \|_2 \leq \tilde{O}(d^{3/4}/\sqrt{n})$ w.p. $1-2^{-d}$.
    Here the information-theoretically optimal error bound is $\tilde{O}(\sqrt{d/n})$, while previous approaches to polynomial-time algorithms were stuck at $\tilde{O}(d/\sqrt{n})$.
    
    \item a polynomial-time algorithm which takes $n$ samples $(X_i,Y_i)$ where $Y_i = \iprod{u,X_i} + \e_i$ where both $X$ and $\e$ have a constant number of bounded moments and produces $\hat{u}$ such that the loss $\|u - \hat{u}\|^2 \leq O(d/n)$ w.p. $1-2^{-d}$ for any $n \geq d^{3/2} \poly \log(d)$.
    This (information-theoretically optimal) error is achieved by inefficient algorithms for any $n \gg d$, while previous approaches to polynomial-time algorithms suffer loss $\Omega(d^2/n)$ and require $n \gg d^2$. 
\end{itemize}

Our algorithms make crucial use of degree-$8$ sum-of-squares semidefinite programs.
Both apply to any $X$ which has constantly-many \emph{certifiably hypercontractive moments}.
We offer preliminary evidence that improving on these rates of error in polynomial time is not possible in the \emph{median of means} framework our algorithms employ.
Our work introduces new techniques to high-probability estimation, and suggests numerous new algorithmic questions in the following vein: \emph{when is it computationally feasible to do statistics in high dimensions with Gaussian-style errors when data is far from Gaussian}?

\end{abstract}

\clearpage


\setcounter{tocdepth}{1}

  \microtypesetup{protrusion=false}
  \tableofcontents{}
  \microtypesetup{protrusion=true}

\clearpage

\pagestyle{plain}
\setcounter{page}{1}


\section{Introduction}\label[section]{introduction}




Much work in theoretical computer science on algorithms for high-dimensional learning and statistics focuses on the dependence of rates of error (in estimation, regression, PAC learning, etc.) on the number of samples $n$ given to a learning/regression/estimation algorithm and the dimension/number of features $d$ of those samples.
\emph{In statistics it is also of fundamental importance to understand the dependence on the \textbf{level of confidence} $1-\delta$ -- predictions and estimates made from samples are most useful if they come with small confidence intervals.}
Classical estimators for elementary estimation and regression problems often have error rates $r(n,d,\delta)$ with far-from-optimal dependence on $\delta$ unless strong assumptions are made on the underlying distribution of samples.
In this work, we study algorithms for high-dimensional statistics without strong (sub-Gaussian) assumptions, focusing on achieving \emph{small errors  with high probability in polynomial time}.

Consider a prototypical estimation problem: the goal is to take independent samples $X_1,\ldots,X_n \sim \sim p_\theta$, where $p_\theta$ is a member of a family of $d$-dimensional probability distributions indexed by parameters $\theta$ and find $\hat{\theta}$ such that $\|\theta - \hat{\theta}\| \leq r$ with probability $1-\delta$ for some norm $\|\cdot \|$ and some \emph{rate} $r(n,d,\delta)$.
If we make only a weak assumption on $p_\theta$ -- e.g. that it has a small number of finite moments -- then the rates $r(n,d,\delta)$ achieved by classical approaches are typically exponentially-far from optimal with respect to $\delta$ (i.e. $r(n,d,\delta)$ scales like $1/\poly(\delta)$ rather than $\log(1/\delta)$).

Since at least the 1980s it has been known that in low-dimensional settings (e.g. $d=1$) there are estimators for basic problems like estimating the mean which achieve rates $r(n,\delta)$ whose dependence on $\delta$ under such weak assumptions is comparable to that of classical estimators (the empirical mean) under (sub)-Gaussian assumptions (up to constants).
For instance, the \emph{median of means} estimator of the mean achieves the same $r(n,\delta)$ as the empirical mean does in the Gaussian setting but assuming only that $p_\theta$ has finite variance \cite{alon1999space,jerrum1986random,nemirovsky1983problem}.
This immediately proved useful in streaming algorithms \cite{alon1999space}.

Achieving similar guarantees for large dimensions $d$ is much more challenging, even without asking for computationally-efficient algorithms.
A series of exciting developments in the last decade in statistics, however, constructs estimators with $r(n,d,\delta)$ matching the rates achievable in the Gaussian case by classical approaches but with much weaker assumptions.
Such estimators are now known for high dimensional mean estimation, covariance estimation, (sparse) linear regression, and more \cite{lugosi2019mean}.
Unlike their one-dimensional counterparts and classical approaches, however, \emph{naive algorithms to compute this new generation of optimal estimators take time exponential in $n,d$, or both}.
This suggests a key question applying to a wide range of estimation, regression, and learning problems:
\begin{quote}
\emph{Are there efficiently computable estimators achieving optimal $r(n,d,\delta)$ under weak assumptions (like finitely-many bounded moments) on underlying data?}
\end{quote}
Recent work in algorithms shows that such optimal and computationally efficient estimators do exist for the problem of estimating the mean of a random vector $X$ under only the assumption that $X$ has finite covariance \cite{hopkins2018sub,cherapanamjeri2019fast}.
The resulting algorithms, however, are heavily tailored to estimating the mean in $\ell_2$; although they introduce useful techniques, it is unclear whether they suggest any broader answers to the above.

In this work we tackle covariance estimation and linear regression with these goals in mind.
We contribute new algorithms for both problems whose error rates $r(n,d,\delta)$ improve by $\poly(d,\log(1/\delta))$ factors on the previous best polynomial-time algorithms when the underlying data is drawn from a distribution with only finitely-many bounded moments.
Unlike the situation in mean estimation, however, our estimators do not achieve information-theoretically optimal error rates.
We offer evidence (by constructing certain moment-matching distributions) that no efficient algorithm using the median-of-means approach we use here can significantly improve on rates achieved by our algorithms.
This suggests the possibility that the computational landscape for covariance estimation and regression is more complicated than for mean estimation: in particular, it could be that these problems suffer from a novel kind of tradeoff between computational efficiency and error rate \emph{in the small $\delta$ regime}.
(By contrast in the regime $\delta = \Omega(1)$ classical estimators typically have $r(n,d,\delta)$ which is information-theoretically optimal with respect to $n,d$ and are also efficiently computable.)
Whether there is indeed such a tradeoff is a fascinating open question.
%
\paragraph{Why Weak Assumptions?}
We study polynomial-time algorithms for high-dimensional statistics under \emph{weak assumptions} on underlying data.
Both linear regression and covariance estimation boast well-studied and computationally-efficient algorithms which achieve statistically optimal rates $r(n,d,\delta)$ with respect to both $n$ and $\delta$ under (sub)-Gaussian assumptions on $X$ (and $\e$): ordinary least squares regression and the empirical covariance, respectively.
These estimators are among the oldest in statistics: Gauss and Legendre both studied the least-squares estimator for linear regression around 1800 \cite{wiki:least_squares} and study of the empirical covariance dates at least to Pearson's invention of principal component analysis \cite{pearson1901liii}.

However, data cannot assumed to be Gaussian in every situation.
In this paper we only assume boundedness conditions on a small number of moments of a random vector $X$ (generally $8$th moments).
Under such assumptions, the error rates of the empirical covariance and ordinary least squares grow polynomially in $1/\delta$, while optimal error rates are logarithmic in $1/\delta$.
Beyond allowing us to address basic questions about which error rates are achievable in polynomial time, working under weak assumptions makes our algorithms potentially useful in a variety of settings where classical estimators break down.

First, our algorithms are useful in statistical settings involving heavy-tailed data -- data drawn from distributions with only a finite number of bounded moments.
Large networks, for instance, are well known to generate heavy-tailed data, often following a power law distribution.
Other common heavy-tailed distributions in statistics include the \emph{Student's $t$} distribution, and the Log-Normal distribution -- the  latter describes a number of real-world phenomena, such as the distribution of English sentence lengths, the distribution of elements in the Earth's crust, the distribution of species' abundances, and more \cite{wiki:lognormal}.
Even when data are not known to follow a particular heavy-tailed distribution, the conservative statistician may wish to avoid a Gaussian assumption if also lacking good reason to believe that the underlying population is Gaussian-distributed.

Second, it is often convenient to use algorithmic primitives for basic tasks like covariance estimation and regression as parts of more sophisticated algorithms.
Algorithms for the complicated high-dimensional statistics problems often studied in theoretical machine learning can have many moving parts.
In such situations, the samples $X_1,\ldots,X_n$ may themselves be the output of a complex random process or another ``upstream'' algorithm.
This can make it it difficult or impossible to guarantee that $X_1,\ldots,X_n$ satisfy sub-Gaussian concentration properties, but it can be much easier to establish that the outputs of such upstream algorithms satisfy the kind of weak finite-moment bounds required by our algorithms.
Indeed, one of the first uses of the \emph{median of means} technique we employ here (for estimation of frequency moments in a streaming setting) was for exactly this purpose \cite{DBLP:journals/jcss/AlonMS99}.

\subsection{Results}
\label[section]{sec:results}

\paragraph{``Nice'' distributions}
Since the main goal of our work is to achieve Gaussian-style error bounds while avoiding Gaussian assumptions in high-dimensional parameter estimation, before we lay out our results, we must describe the class of distributions to which they apply.
Obtaining Gaussian-style error rates does require some assumptions on the underlying random variables, for information-theoretic reasons -- typically the existence of $2$nd moments is a minimal requirement \cite{catoni2012challenging}.
(For covariance estimation this becomes $4$th moments of a random vector $X$, which are the $2$nd moments of the random matrix $XX^\top$.)

In this paper we make an assumption called \emph{certifiable hypercontractivity:}
we assume that \emph{as a polynomial in variables $u = (u_1,\ldots,u_d)$},
\[
O(1) \cdot (\E \iprod{X,u}^2)^4 - \E \iprod{X,u}^8\mper
\]
is a sum of squares.\footnote{Our algorithms also work if instead the inequality $\E \iprod{X,u}^8 \leq (\E \iprod{X,u}^2)^4$ has an SoS proof of higher degree, at a commensurate cost in running time to allow for higher-degree SoS relaxations.}
This in particular implies the more standard $8$th moment bound $\E \iprod{X,u}^2 \leq O(\E \iprod{X,u}^8)^{1/4}$.
We often call $(2,8)$ certifiably-hypercontractive distributions \emph{nice}.
We emphasize that niceness is an ``infinite-sample'' assumption: it concerns population moments $\E X^{\tensor 8}$.

Certifiable hypercontractivity holds for numerous interesting heavy-tailed distributions for which previous polynomial-time algorithms could not have achieved Gaussian-style error guarantees.
For instance, any product of univariate distributions with bounded $8$-th moments, and any linear transformation thereof (in particular for multivariate $t$-distributions) is certifiably hypercontractive.
In fact, the certifiable hypercontractivity assumption has been shown to hold for any distribution whose $8$-th moments match those of some strongly log-concave distribution \cite{kothari2018robust} (even if, say, $9$th moments do not exist).
The certifiable hypercontractivity assumption also underlies recent results in on polynomial-time high-dimensional clustering of mixture models and several robust parameter estimation problems \cite{kothari2018robust,hopkins2018mixture,klivans2018efficient}.


\subsubsection{Covariance Estimation}
\label[section]{sec:results-cov}

Covariance estimation is the following simple problem.
Given samples $X_1,\ldots,X_n$ from a $d$-dimensional random vector with covariance $\Sigma$, find $\hat{\Sigma}$ with the smallest possible spectral norm error $\|\hat{\Sigma} - \Sigma\|_2$.
For simplicity, let us focus for now on the setting that $\Tr \Sigma \leq O(d)$ and $\|\Sigma\|_2 \leq O(1)$ and $\delta = 2^{-d}$.
(Our main theorem for covariance estimation handles the case of general $\Sigma$ and $\delta$.)
We also assume throughout that $\E X = 0$; otherwise one may replace $X$ with $(X - X')/\sqrt{2}$ for pairs of independent samples $X,X'$ without affecting the covariance and losing only a factor of $2$ in the sample complexity.

Consider the Gaussian setting $X \sim \cN(0,\Sigma)$.
In this case, classical results offer the following type of concentration bound for the empirical covariance $\overline{\Sigma} = \tfrac 1  n \sum_{i \leq n} X_i X_i^\top$ of $n$ independent samples: for a universal constant $C$,
\begin{align}
  \Pr \Paren{ \Norm{\overline{\Sigma} - \Sigma}_2 \geq C\Paren{\sqrt{\frac dn } + t}} \leq \exp(-t^2 n)\mper \label{eq:gauss-cov}
\end{align}
(This bound becomes meaningful only when $n \geq d$.)
Note that by \cref{eq:gauss-cov}, $\Normt{ \overline{\Sigma} - \Sigma} \leq O(\sqrt{d/n})$ with probability $1-2^{-d}$.

Recent work by Mendelson and Zhivotovskiy \cite{mendelson2018robust}, building on earlier works by Lugosi and Mendelson \cite{lugosi2018near} shows that there is an estimator $\hat{\Sigma}$ for the covariance $\Sigma$ which matches this error guarantee under only the assumption that $X$ has hypercontractive $4$-th moments.
(In all the following informal theorem statements we assume $\Tr \Sigma \leq O(d),\|\Sigma\|_2 \leq O(1)$.)

\begin{theorem}[\cite{mendelson2018robust}]\label{thm:mendelson-intro}
  There is an estimator $\hat{\Sigma} = \hat{\Sigma}(X_1,\ldots,X_n)$ which given $n$ independent samples from a random variable $X$ with covariance $\Sigma$ and which is $(2,4)$-hypercontractive has the guarantee
  \[
  \Norm{\hat{\Sigma} - \Sigma}_2 \leq O\Paren{\sqrt{\frac {d \log d}{n}}} \text{ with probability at least } 1 - 2^{-d}\mper
  \]
\end{theorem}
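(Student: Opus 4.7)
The plan is to deduce the result from the Lugosi--Mendelson median-of-means tournament applied to the random matrix $Y = XX^\top$, whose mean is $\Sigma$ and whose ``norm'' of interest is the spectral norm. The key observation is that, for symmetric matrices, $\Norm{M}_2 = \sup_{u \in S^{d-1}} \abs{u^\top M u}$, so estimating $\Sigma$ in spectral norm reduces to estimating the scalar $u^\top \Sigma u = \E \iprod{X,u}^2$ uniformly over $u \in S^{d-1}$. Throughout I assume $\E X = 0$ as in the theorem statement.

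First I would construct the estimator. Split the $n$ samples into $k \asymp d$ disjoint buckets of equal size, and on each bucket $B_i$ compute the empirical covariance $\overline{\Sigma}_i = \tfrac{k}{n}\sum_{j \in B_i} X_j X_j^\top$. The estimator $\hat{\Sigma}$ is the output of a Lugosi--Mendelson tournament in spectral norm: declare a candidate $A \in \R^{d\times d}$ a \emph{winner} if, for every other candidate $B$, more than $k/2$ of the buckets satisfy $\Norm{\overline{\Sigma}_i - A}_2 \le \Norm{\overline{\Sigma}_i - B}_2$, and output any winner. Equivalently, one can work with the $u$-indexed tournament: set $\hat{\Sigma}$ so that $u^\top \hat{\Sigma} u$ equals the Lugosi--Mendelson median-of-means estimate of $\E \iprod{X,u}^2$ for every $u$, and show such a matrix exists via a fixed-point/tournament argument.

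Second, I would control the per-direction fluctuations. Fix $u \in S^{d-1}$. The $(2,4)$-hypercontractivity assumption gives
\begin{equation*}
\Var\bigparen{\iprod{X,u}^2} \le \E \iprod{X,u}^4 \le C \bigparen{\E \iprod{X,u}^2}^2 \le C \Norm{\Sigma}_2^2 = O(1).
\end{equation*}
Chebyshev's inequality on a bucket of size $n/k$ therefore shows that with probability $\ge 0.9$ one has $\abs{u^\top(\overline{\Sigma}_i - \Sigma)u} \le O(\sqrt{k/n})$; by a Chernoff argument at least (say) $0.8k$ buckets enjoy this deviation, except with probability $\exp(-\Omega(k)) = \exp(-\Omega(d))$.

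Third, I would promote this pointwise statement to a uniform one over the sphere. Because $\Sigma$ and each $\overline{\Sigma}_i$ are symmetric $d\times d$ matrices, a standard $1/2$-net argument on $S^{d-1}$ has size $\exp(O(d))$. Union bounding the ``good direction'' event above over the net and paying a $\sqrt{\log \abs{\mathrm{net}}} = O(\sqrt{d})$ factor in the radius (this is where the extra $\sqrt{\log d}$ ultimately appears, once one handles the supremum more carefully via chaining or by iterating the net), I conclude that with probability $1 - 2^{-d}$, simultaneously for every $u$ in the net at least $0.8k$ buckets satisfy $\abs{u^\top(\overline{\Sigma}_i - \Sigma)u} \le O(\sqrt{d\log d/n})$. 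A standard tournament-correctness argument then shows every winner $\hat{\Sigma}$ satisfies $\Normt{\hat{\Sigma} - \Sigma} \le O(\sqrt{d\log d/n})$: if some direction $u$ gave $\abs{u^\top(\hat{\Sigma}-\Sigma)u}$ much larger than this, then $\Sigma$ itself would beat $\hat{\Sigma}$ on a majority of buckets in that direction, contradicting $\hat{\Sigma}$ being a winner.

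The main obstacle, and the reason the result is nontrivial, is the uniform control over the sphere: a direct union bound over an $\epsilon$-net is lossy because per-direction deviations couple across directions. In the cleanest versions of this proof one replaces the naive net with a chaining or ``small-ball / Mendelson complexity'' computation that leverages the $(2,4)$-hypercontractivity of the matrix-valued process $u \mapsto u^\top XX^\top u$ to bound $\E \sup_u \abs{\iprod{X,u}^2 - \E\iprod{X,u}^2}$ at the correct scale. Matching the Gaussian rate up to the $\sqrt{\log d}$ factor requires doing this bound carefully and feeding it into a vector-valued median-of-means tournament analysis; this is the technical heart of \cite{mendelson2018robust}.
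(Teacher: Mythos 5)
This statement is not proved in the paper at all: it is imported as background from \cite{mendelson2018robust}, so there is no in-paper argument to compare against, and your sketch has to stand or fall on its own (or on the cited work, to which your last paragraph ultimately defers). The parts you spell out are fine as far as they go: bucketing into $k\asymp d$ groups, the per-direction Chebyshev bound using $\Var\bigl(\iprod{X,u}^2\bigr)\le C\Norm{\Sigma}_2^2=O(1)$ from $(2,4)$-hypercontractivity, and the binomial tail giving failure probability $e^{-\Omega(k)}$ at radius $O(\sqrt{k/n})$ for each fixed $u$ are all standard and correct.

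The gap is in the passage to uniformity over the sphere, which is exactly where the content of the theorem lives, and the one place where you commit to a quantitative mechanism is wrong. Paying ``a $\sqrt{\log\abs{\mathrm{net}}}=O(\sqrt d)$ factor in the radius'' would give error $\sqrt d\cdot\sqrt{d/n}=d/\sqrt n$, i.e.\ the suboptimal rate of \cref{thm:minsker-intro}, not $\sqrt{d\log d/n}$; the whole point of median-of-means is that the union bound over the $e^{O(d)}$ net points is absorbed into the failure exponent (take $k=Cd$ buckets with $C$ large, keeping the radius at $O(\sqrt{k/n})$), not into the radius, and the residual $\sqrt{\log d}$ in the target rate does not arise as $\sqrt{\log(\text{net size})}$. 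Moreover, the net-to-sphere step is not ``standard'' here: the set of good buckets depends on $u$, and moving from a net point $u_0$ to a nearby $u$ changes $u^\top(\overline{\Sigma}_i-\Sigma)u$ by up to $O\bigl(\epsilon\,\Normt{\overline{\Sigma}_i-\Sigma}\bigr)$, where each bucket contains only $n/k\ll d$ heavy-tailed samples, so $\Normt{\overline{\Sigma}_i-\Sigma}$ is typically of order $\Tr\Sigma\approx d$ --- vastly larger than the target radius. A $1/2$-net therefore gives nothing, and making this step work requires either a much finer net with a per-bucket spectral-norm control, or the truncation-plus-chaining/small-ball analysis that Mendelson and Zhivotovskiy actually carry out (truncation of the samples, which you omit, is also needed when only four moments exist). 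Since you explicitly defer that step to \cite{mendelson2018robust}, what you have is an outline of the estimator and of the easy half of the analysis, not a proof, and the sketchʼs explicit accounting for the hard half would, if taken literally, only recover the weaker $d/\sqrt n$ rate.
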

\noindent

Up to logarithmic factors, this rate of error is information-theoretically optimal, but no algorithm is known which achieves this guarantee in polynomial time.
Prior to this work, the strongest result known for polynomial-time algorithms was weaker by a $\poly(d)$ factor:
\begin{theorem}[\cite{minsker2018robust}]\label[theorem]{thm:minsker-intro}
  Under the same hypotheses as \cref{thm:mendelson-intro} there is a polynomial-time algorithm which finds $\hat{\Sigma}$ such that $\|\hat{\Sigma} - \Sigma\|_2 \leq O(d/\sqrt{n})$ with probability at least $1-2^{-d}$.
\end{theorem}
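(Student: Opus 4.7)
The plan is to establish \cref{thm:minsker-intro} via a matrix median-of-means construction, following Minsker's approach. Partition the $n$ samples into $k = \Theta(d)$ disjoint buckets $B_1,\ldots,B_k$ of size $m = n/k = \Theta(n/d)$, and compute a per-bucket estimator $\hat{\Sigma}_j$ for each bucket. The final estimator $\hat{\Sigma}$ is the matrix geometric median of the $\hat{\Sigma}_j$ in spectral norm, namely $\hat{\Sigma} = \argmin_M \sum_{j=1}^k \Normt{M - \hat{\Sigma}_j}$, which is the minimizer of a convex function on the space of symmetric matrices and thus computable in polynomial time (e.g., via standard convex optimization over an SDP representation of spectral norm).

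For the per-bucket estimator I would use a truncation-based covariance estimator: roughly, $\hat{\Sigma}_j = \tfrac1m \sum_{i \in B_j} \psi(X_i X_i^\top)$, where $\psi$ caps the spectral norm of $X_i X_i^\top$ at a threshold $\tau \asymp \sqrt{d m}$ via a smooth soft-truncation of eigenvalues. The key per-bucket bound, which would use only the $(2,4)$-hypercontractivity of $X$, is that $\Normt{\hat{\Sigma}_j - \Sigma} \leq O(\sqrt{d/m})$ with probability at least $7/8$. The proof has two pieces: (i) the truncation bias $\Normt{\mathbb{E}\,\psi(XX^\top) - \Sigma}$ is $O(\sqrt{d/m})$, since hypercontractivity gives $\mathbb{E} \|X\|^4 \leq O((\Tr\Sigma)^2) = O(d^2)$ and hence Markov controls the contribution of the tail $\{\|X\|^2 > \tau\}$; and (ii) the variance of the truncated sum, bounded via matrix Bernstein applied to the truncated (hence bounded) matrices, yields concentration around its mean at scale $\sqrt{V/m}$ with matrix variance parameter $V = O(\|\Sigma\|_2 \cdot \Tr\Sigma) = O(d)$, from which Chebyshev/Bernstein gives the desired $7/8$-probability spectral-norm bound.

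With the per-bucket bound in hand, the aggregation step is a standard Chernoff argument: the number of ``good'' buckets (those achieving $\Normt{\hat{\Sigma}_j - \Sigma} \leq r$ with $r = O(\sqrt{d/m})$) is at least $3k/4$ except with probability $2^{-\Omega(k)} = 2^{-\Omega(d)}$. One then invokes the matrix median lemma: if at least $3k/4$ of the $\hat{\Sigma}_j$ lie within spectral-norm distance $r$ of $\Sigma$, the geometric median $\hat{\Sigma}$ lies within distance $O(r)$ of $\Sigma$. This is an instance of a general fact about geometric medians in any norm — if $M^*$ is the geometric median and $\Sigma$ were the true target, then any point within $Cr$ would need to be ``pulled towards'' a majority of good points, forcing $\Normt{\hat{\Sigma} - \Sigma} \leq O(r)$. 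Combining gives $\Normt{\hat{\Sigma} - \Sigma} \leq O(\sqrt{d/m}) = O(\sqrt{d \cdot d/n}) = O(d/\sqrt{n})$ with probability $\geq 1 - 2^{-d}$.

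The main obstacle, and the technically most delicate step, is the per-bucket spectral-norm concentration at rate $\sqrt{d/m}$ from only a $(2,4)$-hypercontractivity assumption: a direct Frobenius-norm argument only gives $O(d/\sqrt{m})$, which is off by $\sqrt{d}$ and would ultimately yield $d^{3/2}/\sqrt{n}$ instead of $d/\sqrt{n}$. The gain comes from working in spectral norm directly via matrix Bernstein on the truncated summands, and choosing the truncation threshold $\tau$ to balance bias against the matrix variance parameter; calibrating this tradeoff so that both contributions come out as $O(\sqrt{d/m})$ is the heart of the argument. Everything else — the geometric-median aggregation, the Chernoff boost, and the polynomial-time SDP computation of the matrix median — is standard once the per-bucket guarantee is in place.
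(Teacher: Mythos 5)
First, note that the paper does not prove this statement at all: \cref{thm:minsker-intro} is imported directly from \cite{minsker2018robust} (the paper points to Corollary 4.1 there), so there is no in-paper argument to compare against. Minsker's own route is not a bucketed median-of-means scheme but a single, direct truncation estimator (a Catoni/\,$\psi$-type spectral truncation applied to the whole sum), analyzed with an exponential matrix-concentration bound of the form $\|\hat\Sigma-\Sigma\|_2 \lesssim \sigma\sqrt{(t+\log d)/n}$ with probability $1-e^{-t}$, where $\sigma^2$ is the matrix variance $\|\E(XX^\top)^2\|_2 = O(d)$ under $(2,4)$-hypercontractivity; setting $t=d$ absorbs the $\log d$ and yields $O(d/\sqrt n)$ exactly. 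Your proposal instead follows the geometric-median-of-means template (closer to Minsker's earlier Banach-space geometric median work): the aggregation step you invoke is a genuine, standard fact (the geometric median in any norm is $O(r)$-close to any point that is $r$-close to a $3/4$ majority), the Chernoff boost is fine, and minimizing a sum of spectral norms is indeed a polynomial-time SDP. So the skeleton is a legitimately different and workable route.

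The gap is in the per-bucket claim. You assert $\Normt{\hat\Sigma_j-\Sigma}\le O(\sqrt{d/m})$ with probability $7/8$ from $(2,4)$-hypercontractivity via matrix Bernstein, but matrix Bernstein at constant confidence cannot shed its dimensional factor: with variance parameter $O(d/m)$ and truncation level $\tau$, the deviation is of order $\sqrt{d\log d/m} + \tau\log d/m$, and the bias is of order $d/\tau$; no choice of $\tau$ (your $\tau\asymp\sqrt{dm}$ included, which makes the $R$-term $\sqrt{d/m}\,\log d$) does better than $O(\sqrt{d\log d/m})$ per bucket. Hence your scheme yields $O(d\sqrt{\log d}/\sqrt n)$ rather than the stated $O(d/\sqrt n)$ -- the $\log d$ cannot be absorbed precisely because in the bucketed approach the high confidence comes from the Chernoff step over buckets, not from the matrix concentration itself, whereas in Minsker's direct estimator the $\log d$ is dominated by the deviation parameter $t=d$. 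Since the surrounding theorem statements are informal and the paper's own results carry $\tilde O(\cdot)$ factors, this is a quantitatively small slippage, but as written the claimed log-free per-bucket bound does not follow from the tools you cite, and it is the one step you yourself identify as the heart of the argument.
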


Our main result for covariance estimation in the setting $\Tr \Sigma \approx d, \|\Sigma \| \approx 1$ is the following.
(For other parameter regimes see \cref{thm:covest}.)

\begin{theorem} [Main theorem on covariance estimation, informal -- see \cref{thm:covest}]
\label[theorem]{thm:covest-inform}
    There is an algorithm with running time $\poly(n,d)$ which when given $n$ i.i.d. samples $X_1,\ldots,X_n$ from a nice random vector $X$ in $d$ dimensions returns an estimate $\hat{\Sigma}$ of the covariance $\Sigma$ of $X$ such that
    \[
      \Norm{\hat{\Sigma} - \Sigma}_2 \leq \tO \left( \frac {d^{3/4}} {\sqrt n} \right) \text{ with probability at least } 1 - 2^{-d}\mper
    \]
    Here $\tO(\cdot)$ hides logarithmic factors in the dimension $d$.
\end{theorem}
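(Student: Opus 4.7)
The plan is to combine a median-of-means framework with a degree-$8$ sum-of-squares semidefinite program. First, partition the $n$ samples into $k \approx d$ disjoint buckets $B_1,\ldots,B_k$ of size $m = n/k$, and for each bucket form the bucket covariance $\hat{\Sigma}_i = \tfrac{1}{m}\sum_{j \in B_i} X_j X_j^\top$. Since bounding $\|\hat{\Sigma} - \Sigma\|_2$ amounts to controlling $u^\top(\hat{\Sigma} - \Sigma)u$ uniformly over unit $u$, the SDP will search simultaneously over a candidate matrix $\hat{\Sigma}$ and an adversarial direction $u$ in a way compatible with the SoS proof system.

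Set up a degree-$8$ SoS program whose variables include the entries of $\hat{\Sigma}$, a pseudo-distribution over a unit vector $u$, and bucket indicators $z_1,\ldots,z_k$ satisfying $z_i^2 = z_i$ and $\sum_i z_i \ge k/2$, together with the constraints $z_i \cdot \bigl(u^\top(\hat{\Sigma} - \hat{\Sigma}_i)u\bigr)^2 \le T^2$ for a target error $T = \tilde O(d^{3/4}/\sqrt n)$. For \emph{completeness}, show that $\hat{\Sigma} = \Sigma$ together with an appropriate choice of $z_i$'s is SoS-feasible: certifiable $(2,8)$-hypercontractivity gives, as an SoS inequality in $u$, that $u^\top \hat{\Sigma}_i u$ has hypercontractive fourth moment around $u^\top \Sigma u$; averaging over $m$ samples in a bucket yields an SoS variance bound of order $1/m$, and an SoS Markov inequality then lets us set a majority of the $z_i$'s to $1$ with SoS witnesses in $u$ supplied by the hypercontractivity hypothesis.

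For \emph{soundness}, extract $\hat{\Sigma}^{\star} = \pE[\hat{\Sigma}]$ from any feasible pseudo-distribution and bound $\|\hat{\Sigma}^\star - \Sigma\|_2 = \sup_u u^\top(\hat{\Sigma}^\star - \Sigma)u$ by a Lugosi--Mendelson majority-overlap argument: for any fixed unit $u$, at least $k/2$ buckets satisfy $|u^\top(\hat{\Sigma}^\star - \hat{\Sigma}_i)u| \le T$ from the SDP constraints, and by the completeness concentration at least $k/2$ satisfy $|u^\top(\hat{\Sigma}_i - \Sigma)u| \le T$. Some bucket lies in both sets, so the triangle inequality yields $|u^\top(\hat{\Sigma}^\star - \Sigma)u| \le 2T$, uniformly in $u$.

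The hard part will be executing completeness so that concentration of $\hat{\Sigma}_i - \Sigma$ holds \emph{uniformly in $u$} inside the SoS proof system, without resorting to a non-constructive $\epsilon$-net over the unit sphere. Degree-$8$ SoS certifies only fourth-moment information about $(u^\top X)^2 - u^\top \Sigma u$ via the hypercontractivity hypothesis, which yields matrix concentration of essentially Frobenius strength on the bucket deviations $\hat{\Sigma}_i - \Sigma$; combining this with the overlap argument inside SoS costs an extra factor of roughly $d^{1/4}$ relative to the information-theoretic optimum. This loss is precisely why the final rate comes out to $\tilde O(d^{3/4}/\sqrt n)$ rather than $\tilde O(\sqrt{d/n})$, consistent with the SoS lower-bound evidence alluded to in the introduction.
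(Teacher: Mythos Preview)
Your high-level plan (median-of-means buckets plus a degree-$8$ SoS relaxation) matches the paper, but the SDP you set up and the soundness extraction both have a genuine gap. You make $u$ a single pseudo-variable and impose $z_i\cdot(u^\top(\hat\Sigma-\hat\Sigma_i)u)^2\le T^2$; this constraint only binds for that one pseudo-direction, not for every unit $u$. Consequently, after extracting $\hat\Sigma^\star=\pE[\hat\Sigma]$, the claim ``for any fixed unit $u$, at least $k/2$ buckets satisfy $|u^\top(\hat\Sigma^\star-\hat\Sigma_i)u|\le T$'' does not follow: the SoS constraint is a statement about $\pE$ applied to a polynomial in the pseudo-variables $(\hat\Sigma,u,z)$, and there is no mechanism to specialize it to an arbitrary real direction $u_0$. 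This is exactly why the spectral-norm case is harder than $\ell_2$ mean estimation (where $\|\hat\mu-\mu\|^2$ is a single polynomial and the Hopkins-style ``solve for the estimate inside SoS'' works). Here the spectral norm hides a universal quantifier over $u$, and that quantifier cannot be absorbed into one pseudo-variable.

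The paper sidesteps this by \emph{not} putting $\hat\Sigma$ in the program at all. Instead it runs an outer gradient descent: at each iterate $\Sigma_t$ (a fixed matrix), it solves the degree-$8$ SoS relaxation of $\max\sum_i b_i$ subject to $b_i^2=b_i$, $\|u\|^2=1$, and $b_i\,u^\top(Z_i-\Sigma_t)u\ge b_i r$, where $Z_i$ are the (truncated) bucket covariances. If the optimum is small, $\Sigma_t$ is certified to be an $r$-median and hence $O(r)$-close to $\Sigma$; if large, the rounding $G=\pE[uu^\top]$ is a trace-one PSD matrix with $\langle G,\Sigma-\Sigma_t\rangle\ge\tfrac12\|\Sigma-\Sigma_t\|_2$, giving a descent step. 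The certification bound itself (this is where $d^{3/4}$ arises) is proved by (i) a bounded-differences argument to reduce to bounding the expected SoS value, then (ii) SoS Cauchy--Schwarz to reduce to $\max_{\pE}\pE\sum_i\langle u,Z_i u\rangle^2$, and (iii) a new \emph{SoS matrix Bernstein inequality} (\cref{thm:sosmatbern}) applied to the preconditioned random matrices $Z_i\otimes Z_i$, together with certifiable hypercontractivity to control the SoS variance term. Your sketch's ``SoS Markov inequality'' and ``Frobenius-strength concentration'' do not capture this; the SoS Bernstein step and the explicit truncation of samples (needed to get almost-sure operator-norm bounds) are the technical heart, and the iterative certify-or-descend loop is what replaces the infeasible one-shot extraction you propose.
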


The general statement of our main theorem (\cref{thm:covest}) obtains an error rate which avoids explicit dependence on the ambient dimension $d$ (except for logarithmic factors); instead, it depends only on the "effective" rank $\sr = \frac{\Tr(\Sigma)}{\normt{\Sigma}} \leq d$ and the operator norm $\normt{\Sigma}$.
Thus if $X$ lies in or near a low-dimensional subspace, our algorithm exploits this additional structure to estimate $\Sigma$ with fewer samples.

Finally, we note that our algorithm assumes access to a small number of additional parameters: bounds on $\Tr \Sigma, \Normt{\Sigma}$, and (as with all the algorithms described in this paper beyond empirical averages) in the case of general confidence levels $1-\delta$ it depends on the value of $\delta$.
The latter dependence is intrinsic: it is not information-theoretically possible to obtain Gaussian-style error rates in the heavy-tailed setting with estimators which do not depend on $\delta$ \cite{catoni2012challenging}.
We expect that techniques similar to those of \cite{mendelson2018robust} can avoid the dependence on $\Tr \Sigma, \Normt{\Sigma}$ by estimating them from samples.

The improvement from $d/\sqrt{n}$ to $d^{3/4}/\sqrt{n}$ moves the algorithmic state of the art for covariance estimation closer to information-theoretic optimality.
Of course the possibility of an information-theoretically optimal covariance estimation algorithm is tantalizing, but just as interesting from a complexity viewpoint is the possibility that $d^{3/4}/\sqrt{n}$ cannot be improved upon in polynomial time.
In \cref{sec:results-roadblock} we discuss evidence in this direction.

\subsubsection{Linear Regression}
\label[section]{sec:results-reg}


We study the following classical linear regression problem.
Let $f^* \, : \, \R^d \rightarrow \R$ be a linear function -- that is $f^*(x) = \iprod{f^*,x}$ for some vector $f^* \in \R^d$.
Let $X$ be a $d$-dimensional mean-zero random vector, and let $\e$ be an $\R$-valued random variable with $\E \e = 0$.
To avoid a preponderance of parameters, in this paper we focus on the case that $\E XX^\top = \Id$ and $\E \e^2 = 1$.\footnote{It is trivial to show that our results also work if $\E \e^2 = \sigma^2$, with appropriate dependence of the error rates on $\sigma$. We also believe that our techniques will be useful in designing algorithms which achieve small error $\E (\hat{f}(X) - f^*(X))^2$ when $\E XX^\top = \Sigma$ for general $\Sigma$, but we defer this challenge to future work. If $X$ is not mean zero then it can be replaced with $X - X'$ for pairs of samples $X,X'$, so this assumption is without loss of generality.}

The goal is to take $n$ independent samples of the form $(X_i,Y_i)$, where $Y_i = f^*(X_i) + \e_i$, and find a linear function $\hat{f}$ such that $\|f^* - \hat{f}\|$ is as small as possible.
Here the norm $\|f^* - \hat{f}\|$ is the $2$-norm induced by $X$; that is, $( \E (f^*(X) - \hat{f}(X))^2)^{1/2}$.
However, since we assume $\E XX^\top = \Id$, this is identical to the Euclidean norm of $f^* - \hat{f}$ considered as a vector of coefficients.

In most respects the situation for linear regression is similar to that for covariance estimation.
The classical algorithm is empirical risk minimization, also known in this setting as ordinary least squares regression (OLS).
The algorithm is simple: given $(X_1,Y_1),\ldots,(X_n,Y_n)$, output $\hat{f}$ which minimizes the empirical loss $\E_{i \sim [n]} (f(X_i) - Y_i)^2$.
This minimization problem is convex, so $\hat{f}$ can be obtained in polynomial time; it also admits a closed-form linear-algebraic solution.

Analogously to the empirical covariance in the previous section, when $X$ and $\e$ are Gaussian, OLS achieves small error with high probability.
Concretely, one has the following:\footnote{It is traditional here to state bounds on $\|\hat{f} - f\|^2$ rather than $\|\hat{f} - f\|$; note that the bound $O(d/n)$ represents the so-called \emph{fast rate} for regression -- in this paper we are exclusively concerned with fast rates, rather than the \emph{slow rate} $O(\sqrt{d/n})$.)}
\[
  \|\hat{f}_{\text{OLS}}  - f^*\|^2 \leq O \Paren{\frac d n } \text{ with probability } 1 - 2^{-d} \text{ so long as } n \gg d \mper
\]
We focus for now on the setting of regression with confidence $1-2^{-d}$: this regime provides a useful litmus test because it is the highest probability for which the $O(d/n)$ guarantee holds for OLS.
When $X$ or $\e$ has only a finite number of bounded moments, the error bound on $\|\hat{f}_{\text{OLS}} - f^*\|$ degrades badly, becoming exponential in $d$ for confidence $1-2^{-d}$.

Recent work by Lugosi and Mendelson \cite{lugosi2016risk} shows that a guarantee matching that of OLS in the Gaussian setting is possible without Gaussian assumptions.
Concretely we have the following:
\begin{theorem}[\cite{lugosi2016risk}, informal]
  \label[theorem]{thm:lm-reg-intro}
  There exists an (exponential-time) estimator $\hat{f}$ which given $n$ independent samples $(X_1,Y_1),\ldots,(X_n,Y_n)$ where $Y = f^*(X) + \e$, $\E XX^\top = \Id$, $X$ is $(2,4)$-hypercontractive, and $\E \e^2 = 1$, has\footnote{The results of \cite{lugosi2016risk} apply to a wide variety of convex function classes rather than just linear regression; we state here the special case for linear regression.}
  \[
    \Norm{\hat{f} - f}^2 \leq O \Paren{\frac d n} \text{ with probability } 1 - 2^{-d} \text{ so long as } n \gg d\mper
  \]
\end{theorem}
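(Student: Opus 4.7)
The plan is to prove \cref{thm:lm-reg-intro} via a median-of-means tournament in the style of Lugosi and Mendelson. Write $h = f - f^*$ and note that for any candidate $f$, the excess block-average loss on a block $B_j$ of size $m$ decomposes as
\[
L_j(f) \;=\; \frac{1}{m}\sum_{i \in B_j} \bigl[(Y_i - f(X_i))^2 - (Y_i - f^*(X_i))^2\bigr] \;=\; \underbrace{\frac{1}{m}\sum_{i \in B_j} \iprod{h,X_i}^2}_{\text{quadratic process } Q_j(h)} \;-\; \underbrace{\frac{2}{m}\sum_{i \in B_j} \e_i \iprod{h,X_i}}_{\text{multiplier process } M_j(h)}\mper
\]
We partition the $n$ samples into $k = \Theta(d)$ blocks of size $m = n/k = \Theta(n/d)$ and declare that $f$ \emph{beats} $g$ if $L_j(f) < L_j(g)$ on strictly more than half of the blocks. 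We output any tournament winner $\hat f$ (i.e., a function not beaten by any other candidate). Such a winner will exist because, as we will show, $f^*$ is never beaten.

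The heart of the proof is a single uniform lemma: with probability at least $1 - 2^{-d}$, simultaneously for every linear $f$ with $\Norm{h}^2 = \Norm{f-f^*}^2 \geq Cd/n$, at least $2/3$ of the blocks satisfy $Q_j(h) > |M_j(h)|$, which in particular forces $L_j(f) > 0$ and hence $f^*$ beats $f$. Given this lemma, any tournament winner $\hat f$ must satisfy $\Norm{\hat f - f^*}^2 \leq O(d/n)$, because otherwise $f^*$ would beat it. To establish the lemma, we analyze each block separately. For $Q_j$ we use a Paley--Zygmund / small-ball argument powered by $(2,4)$-hypercontractivity: $\E Q_j(h) = \Norm{h}^2$ and $\E Q_j(h)^2 \leq O(\Norm{h}^4)$, so $Q_j(h) \geq c\Norm{h}^2$ holds with constant probability independent of $m$. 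For $M_j$ we use Chebyshev with $\E (\e \iprod{h,X})^2 = \Norm{h}^2$ to get $|M_j(h)| \leq C\Norm{h}/\sqrt{m}$ with constant probability. Combining, each block is ``good'' for the pair $(f^*,f)$ with probability at least $3/4$ whenever $\Norm{h}^2 \geq C/m = \Theta(d/n)$, so a Chernoff bound gives that at least $2/3$ of the $k = \Theta(d)$ blocks are good, except with probability $\exp(-\Omega(d))$.

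The main obstacle is promoting this pointwise bound to a uniform statement over the infinite class of linear functions. By scale invariance of the relevant inequality we can restrict to $h$ on the Euclidean sphere $S^{d-1}$ (since both $Q_j$ and $M_j$ are homogeneous of degree $2$ and $1$ respectively, and the critical scale is pinned to $1/\sqrt{m}$). We then take a $\gamma$-net $\mathcal N$ of $S^{d-1}$ of cardinality $e^{O(d)}$ for a suitable $\gamma = \Theta(1)$, union-bound the Chernoff estimate across $\mathcal N$ against $2^{-d}$ with a large enough constant in $k = \Theta(d)$, and discharge the passage from $\mathcal N$ to all of $S^{d-1}$ using continuity of $h \mapsto Q_j(h), M_j(h)$ together with an additional moment inequality on $\sup_{\Norm{h}=1} Q_j(h)$ and $\sup_{\Norm{h}=1} M_j(h)$ (of Mendelson small-ball / multiplier flavor, controlled by the $(2,4)$-hypercontractivity of $X$ and the second-moment bound on $\e$). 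The analogous argument for all $\Norm{h} \geq \sqrt{Cd/n}$ at arbitrary scale follows since the test $Q_j(h) > |M_j(h)|$ only becomes easier as $\Norm{h}$ grows. Putting these pieces together yields the desired estimator with the stated rate and failure probability $2^{-d}$.
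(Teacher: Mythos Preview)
This theorem is cited from prior work (Lugosi--Mendelson) and the paper does not supply its own proof, so there is no ``paper's proof'' to compare against. Your overall architecture---median-of-means tournament, decomposition into quadratic and multiplier processes, per-block Paley--Zygmund and Chebyshev---is indeed the Lugosi--Mendelson strategy. However, the step where you pass from a $\gamma$-net to all of $S^{d-1}$ has a genuine gap in the regime $d \ll n \ll d^2$, which is exactly the regime the theorem is about.

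The problem is the interpolation. To transfer ``block $j$ is good for $h_0$'' to ``block $j$ is good for $h$'' with $\|h-h_0\|\leq\gamma$, you need to control the per-block oscillations $|Q_j(h)-Q_j(h_0)|$ and $|M_j(h)-M_j(h_0)|$. These are governed by $\sup_{\|h\|=1}Q_j(h)=\|\hat\Sigma_j\|_2$ and $\sup_{\|h\|=1}|M_j(h)|=\tfrac{2}{m}\|\sum_{i\in B_j}\e_iX_i\|$. With $m=n/k=\Theta(n/d)$ samples per block, the empirical covariance $\hat\Sigma_j$ has rank at most $m$ but trace $\approx d$, so deterministically $\|\hat\Sigma_j\|_2\geq d/m=\Theta(d^2/n)$. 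When $n\ll d^2$ this blows up, forcing $\gamma\lesssim n/d^2$ and hence a net of size $\exp(\Omega(d\log(d^2/n)))$, which the union bound with $k=\Theta(d)$ cannot absorb. The ``Mendelson small-ball / multiplier flavor'' inequalities you invoke do not rescue this: they control global Rademacher averages, not per-block suprema.

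The correct route (and the one Lugosi--Mendelson take) bypasses nets entirely. One applies the bounded-differences inequality to the block-indexed functional $\sup_{h}\tfrac{1}{k}\sum_j\mathbf{1}[\text{block }j\text{ bad for }h]$, reducing to an in-expectation bound on the supremum. For the multiplier, symmetrization plus Ledoux--Talagrand contraction collapses the block structure and yields $\E\sup_{\|h\|=1}\tfrac{1}{k}\sum_j|M_j(h)|\lesssim\sqrt{d/n}$ directly. For the quadratic, one replaces the hard indicator by a bounded Lipschitz surrogate $\phi$, uses contraction to bound $\E\sup_h\tfrac{1}{n}\sum_i\sigma_i\phi(\iprod{h,X_i})\lesssim\sqrt{d/n}$ over all $n$ samples, and then observes that a uniform sample-level bound on a $[0,1]$-valued average automatically forces most block-level averages to be good (via a Markov-type argument). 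Both pieces use the full-sample Rademacher complexity $\sqrt{d/n}$ rather than any per-block quantity, and this is what makes $n\gg d$ (rather than $n\gg d^2$) sufficient.
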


Once again, the state of the art for polynomial-time algorithms is somewhat worse (though still far better than OLS).
Until this paper, the polynomial-time algorithm with smallest error guarantees in the $1-2^{-d}$ probability regime were achieved by an algorithm of \cite{hsu2016loss}.

\begin{theorem}[\cite{hsu2016loss}, informal]
  \label[theorem]{thm:hs-intro}
  There is a polynomial-time algorithm which computes an estimator $\hat{f}$ which given $n$ i.i.d. samples $(X_i,Y_i)$ where $X$ is $(2,4+\delta)$-hypercontractive for some $\delta > 0$ and $Y = f^*(X) + \e$ for some linear function $f^*$ for a random variable $\e$ with $\E \e = 0$ and $\E \e^2 = 1$ achieves
  \[
    \Norm{\hat{f} - f}^2 \leq O\Paren{ \frac{d^2}{n} } \text{ with probability } 1 - 2^{-d} \text{ so long as } n \gg d^2\mper
  \]
\end{theorem}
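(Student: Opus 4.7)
The plan is to adapt a median-of-means style approach applied to block-wise ordinary least squares (OLS) estimators. First, I would partition the $n$ samples into $k = \Theta(d)$ disjoint blocks, each of size $m = n/k = \Theta(n/d)$. On each block $S_j$, compute the OLS estimate $\hat{f}_j$; since $n \gg d^2$ we have $m \gg d$, so each block has enough samples for the empirical Gram matrix $\hat{\Sigma}_j = \tfrac{1}{m} \sum_{i \in S_j} X_i X_i^\top$ to be well-conditioned with high (constant) probability. The final output $\hat{f}$ will be a geometric median of $\{\hat{f}_1, \ldots, \hat{f}_k\}$ (or any comparable constant-fraction robust aggregator), following the heavy-tails recipe of Minsker.

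Next, I would analyze a single block. Writing $\hat{f}_j - f^* = \hat{\Sigma}_j^{-1} \cdot \tfrac{1}{m} \sum_{i \in S_j} X_i \e_i$, I would use the $(2, 4+\delta)$-hypercontractivity of $X$ plus the second-moment bound on $\e$ to control $\E \Snorm{\tfrac{1}{m} \sum_{i} X_i \e_i} = \tfrac{1}{m} \E \Snorm{X\e} \leq O(d/m)$. The extra $\delta$ moments give matrix concentration adequate to show $\Normt{\hat{\Sigma}_j^{-1}} \le 2$ with constant probability once $m \gtrsim d$. Combining these via Markov's inequality, I would conclude $\Snorm{\hat{f}_j - f^*} \le C d / m = C d^2 / n$ with probability at least, say, $3/4$ over that block's randomness.

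Then I would aggregate. Call block $j$ \emph{good} if $\Snorm{\hat{f}_j - f^*} \le C d^2 / n$. By the single-block analysis, each block is good independently with probability at least $3/4$, so a Chernoff bound gives at least $2k/3$ good blocks with probability $1 - \exp(-\Omega(k)) = 1 - 2^{-\Omega(d)}$. The standard deterministic property of the geometric median --- namely, if a strict majority of the input points lie in a ball of radius $r$ around some target, then the geometric median lies in a ball of radius $O(r)$ around that target --- yields $\Snorm{\hat{f} - f^*} \le O(d^2/n)$ on the intersection of these events.

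The main obstacle is the single-block analysis: with only $(2,4)$-hypercontractivity, controlling $\Normt{\hat{\Sigma}_j^{-1}}$ on a block of size $m \approx n/d$ is delicate because scalar Bernstein-type concentration for $\hat{\Sigma}_j$ fails under pure fourth moments, which is exactly why the hypothesis requires $(2,4+\delta)$-hypercontractivity --- the extra $\delta$ moments enable a matrix concentration bound strong enough to certify $\hat{\Sigma}_j \succeq \tfrac{1}{2} \Id$ with constant probability. This also explains the sample complexity $n \gg d^2$: one factor of $d$ is paid so that each block has $m \gtrsim d$ samples (enough to make per-block OLS meaningful), and a second factor of $d$ is paid in the number of blocks to boost from constant confidence to $1 - 2^{-d}$.
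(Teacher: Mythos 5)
This statement is quoted from \cite{hsu2016loss} and is not proved in the paper itself, so the only meaningful comparison is with that cited work: your sketch (block-wise OLS, a weak-moment lower bound on each block's empirical Gram matrix, Markov plus Chernoff over $\Theta(d)$ independent blocks, and aggregation via the geometric median's majority-in-a-ball stability property) is essentially the Hsu--Sabato strategy, with Minsker's geometric median substituted for their median-of-distances selection rule. The outline is correct as stated (under the assumptions implicit in the informal statement, namely $\E XX^\top = \Id$ and noise independent of $X$ so that $\E \|X\e\|^2 = d$), and it yields exactly the claimed $O(d^2/n)$ error with confidence $1-2^{-d}$ once $n \gg d^2$.
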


Note that the error guarantees of \cref{thm:hs-intro} are weaker than what is information-theoretically possible (\cref{thm:lm-reg-intro}) in two key ways: first of all, the error scales with $d^2$ rather than with $d$, and second, the error rate does not kick in until $n \gg d^2$.
Our main theorem on regression completely fixes the first problem and partially fixes on the second (but does not reach information-theoretic optimality), for nice $X$.

\begin{theorem}[Main theorem on linear regression, informal -- see \cref{thm:regression-main}]
\label[theorem]{thm:regression-intro}
  There is an algorithm with running time $\poly(n,d)$ with the following guarantees.
  Suppose $X$ is nice, $\e$ is a univariate random variable with $\E \e^2 = 1$ and $\E \e = 0$, and $f^*$ is a linear function.
  Given $n$ i.i.d. samples $(X_i,Y_i)$ of the form $Y_i = f^*(X_i) + \e_i$, the algorithm finds a linear function $\hat{f}$ such that
  \[
    \Norm{\hat{f} - f^*}^2 \leq O\Paren{\frac d n} \text{ with probability } 1 - 2^{-d} \text{ so long as } n \gg d^{3/2} \cdot (\log d)^{O(1)}\mper
  \]
\end{theorem}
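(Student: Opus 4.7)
The plan is to implement a degree-$8$ sum-of-squares relaxation of the Lugosi--Mendelson median-of-means tournament procedure for regression, using $(2,8)$-certifiable hypercontractivity of $X$ to convert per-bucket concentration into an SoS-certified uniqueness statement on the learned parameter.

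\textbf{Step 1 (bucket decomposition).} Partition the $n$ samples into $k \asymp d$ buckets of size $m = n/k \gg d^{1/2}\polylog(d)$. Writing $\hat{L}_{B_j}(f)$ for the empirical squared loss on $B_j$ and $v := f^* - f$, the identity
\[
\hat{L}_{B_j}(f) \;=\; \frac{1}{m} \sum_{i \in B_j} \epsilon_i^2 \;+\; Q_j(v) \;+\; 2L_j(v),
\]
holds, where $Q_j(v) = \tfrac{1}{m}\sum_{i \in B_j} \langle X_i, v\rangle^2$ is the bucket quadratic process and $L_j(v) = \tfrac{1}{m}\sum_{i \in B_j} \epsilon_i \langle X_i, v\rangle$ is the bucket multiplier process.

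\textbf{Step 2 (SoS program).} Set up a degree-$8$ SoS system over SoS variables $f \in \R^d$ and indicators $b_1, \ldots, b_k \in \{0,1\}$: impose $b_j^2 = b_j$ and $\sum_j b_j \geq (1-\eta)k$ for a small constant $\eta$, plus a per-bucket constraint forcing $b_j = 1$ to imply that $Q_j(v) + 2L_j(v) \leq T^2$ for a threshold $T^2 = O(d/n)$. Since the identity above writes this quantity as $\hat{L}_{B_j}(f)$ (observable) minus $\tfrac{1}{m}\sum \epsilon_i^2$ (involving the unknown $\epsilon_i$), I introduce an auxiliary SoS variable $\sigma_j^2$ meant to track $\tfrac{1}{m}\sum_{i \in B_j} \epsilon_i^2$ and tie it via further SoS constraints to its concentration value on good buckets.

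\textbf{Step 3 (feasibility).} With probability $1 - 2^{-d}$ over the samples, the true parameter $f^*$ together with indicators $b_j^*$ marking buckets on which the empirical moments (of $\epsilon^2$, $XX^\top$, and the multiplier process) concentrate, satisfies the entire SoS system. This follows from certifiable hypercontractivity of $X$ (which gives bucket-level concentration of $Q_j$ as a polynomial in $v$), independence of $X_i$ and $\epsilon_i$, and a standard union-Chernoff argument across buckets.

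\textbf{Step 4 (identifiability).} The main technical lemma is an SoS proof that any pseudo-expectation $\pE$ satisfying the constraints has $\pE[\|v\|^2] \leq O(d/n)$. It has two pieces: (i) a quadratic lower bound certifying $\tfrac{1}{k}\sum_j b_j b_j^*\, Q_j(v) \geq c\|v\|^2 - O(d/n)$ as an SoS inequality in $v$, derived from certifiable hypercontractivity of $X$ applied at the empirical level; and (ii) a multiplier-process upper bound certifying $|\tfrac{1}{k}\sum_j b_j b_j^* L_j(v)| \leq O(\|v\|/\sqrt{n} + d/n)$, derived via the SoS realization of the fact that aggregating $L_j$ over $k$ good buckets reduces per-bucket standard deviation by a factor $\sqrt{k}$. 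Combining (i), (ii), the per-bucket upper bound, and $\sum_j b_j b_j^* \geq (1-2\eta)k$ yields $\pE[\|v\|^2] \leq O(T^2) = O(d/n)$.

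\textbf{Step 5 (rounding).} Output $\hat{f} := \pE[f]$. By convexity, $\|\hat{f} - f^*\|^2 \leq \pE[\|v\|^2] \leq O(d/n)$.

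\textbf{Main obstacle.} The central difficulty is Step 4 (ii): moving from the per-bucket multiplier-process standard deviation $\|v\|/\sqrt{m}$ down to the aggregate $\|v\|/\sqrt{n}$ must be carried out uniformly in the SoS pseudo-variable $v$, so that no pseudo-distribution can evade the bound by concentrating on adversarial directions. This is the regression analogue of the directional median-of-means concentration used for mean estimation in \cite{hopkins2018sub, cherapanamjeri2019fast}; the interaction between the random noise $\epsilon_i$ and the unknown direction $v$ is what makes a degree-$8$ SoS relaxation (rather than a lower-degree one) essential for the argument. The sample-complexity threshold $n \gg d^{3/2}\polylog(d)$ arises from balancing $k \asymp d$ buckets against the per-bucket requirement $m \gg d^{1/2}\polylog(d)$ needed for the SoS-certifiable concentration statements to hold with the desired $1 - 2^{-d}$ probability.
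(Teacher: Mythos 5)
Your Step 2 contains the critical gap. The per-bucket constraint you want to impose, $Q_j(v)+2L_j(v)\le T^2$ with $T^2=O(d/n)$, is not expressible from observables: it equals $\hat L_{B_j}(f)-\tfrac1m\sum_{i\in B_j}\e_i^2$, and the realized per-bucket noise energy $\tfrac1m\sum_{i\in B_j}\e_i^2$ is unknown and cannot be pinned down by any ``concentration'' constraint to accuracy better than its own fluctuation scale $\approx 1/\sqrt m=\sqrt{k/n}\approx\sqrt{d/n}$ (and with only $\E\e^2=1$ assumed, even that is optimistic). So the auxiliary variables $\sigma_j^2$ leave a slack of order $\sqrt{d/n}\gg d/n$ in every per-bucket constraint, and your Step 4 identifiability argument then only yields $\pE\|v\|^2\lesssim\sqrt{d/n}$ --- the slow rate, not the claimed $O(d/n)$. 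This is exactly why the paper never works with absolute per-bucket losses: its descent step (\cref{lem:regression-pdist-feasible,lem:regression-pdist-prog}) compares $L_i(f)$ and $L_i(g)$ for two candidate functions \emph{on the same buckets}, so the noise energy cancels identically, and the stopping test (\cref{lem:regression-certify-done}) uses only the correlation $\iprod{Y-g,f}_i$; these are combined in an iterative certify-or-descend scheme initialized at OLS, rather than a single feasibility program plus one rounding. The role of $n\gg d^{3/2}$ in the paper is also specific: it is what makes the norm-comparison conditions \cref{eq:norm-upper,eq:norm-lower} (per-bucket empirical norms track $\|f\|^2$ for most buckets, certified via \cref{lem:p-main} with truncated samples) hold with probability $1-2^{-d}$, which is the ingredient your Step 4(i) gestures at but does not construct.

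Two secondary points. In Step 4(ii) the aggregate multiplier bound $O(\|v\|/\sqrt n)$ is not achievable uniformly over pseudodistributions on the selector variables $b_j$: an adversarial choice of a constant fraction of buckets biases the average by order $\|v\|\sqrt{d/n}$, which is the per-bucket median-of-means bound the paper actually uses (its noise SDP threshold is $r\approx\sqrt{d/n+k/n}$); fortunately the weaker bound still balances against the quadratic term to give $\|v\|^2\lesssim d/n$, so this is repairable, unlike the Step 2 issue. Finally, the degree bookkeeping is off: in the paper the noise--direction interaction is handled by an essentially degree-2/4 argument, while certifiable $(2,8)$ hypercontractivity enters through the quadratic-process/norm-comparison analysis, and after the aggressive truncation $\|X_i\|\le O(\sqrt d)$ the regression SDPs are run at degree 4, not 8.
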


Our main result (and all the prior work) gracefully tolerates confidence levels other than $1-2^{-d}$; see \cref{thm:regression-main} for details.


\subsubsection{Faster algorithms for mean estimation in general norms}
\label[section]{sec:results-mean}

Our final algorithmic result concerns the problem of estimating the mean of a random vector $X$ on $\R^d$ with respect to an arbitrary norm $\| \cdot \|$.
Our starting point is the following theorem of Lugosi and Mendelson which constructs an estimator of the mean with respect to any norm $\| \cdot \|$ on $\R^d$.
In such a general setting the question of information-theoretic optimality is somewhat murky.
Nonetheless, for many natural norms ($\ell_2$ and spectral norm, for instance) one may see that the guarantees of their estimator match those of the empirical mean in the Gaussian setting.
We refer the reader to \cite{lugosi2018near} for further interpretation of the guarantees of the following theorem.

\begin{theorem}[\cite{lugosi2018near}, informal, $\Id$-covariance case]
  \label[theorem]{thm:lm-general-intro}
  For every $n,d \in \N$ and $\delta > 2^{-n}$ and norm $\| \cdot \|$ on $\R^d$ there is an estimator with the following guarantee.
  Given $n$ i.i.d. samples $X_1,\ldots,X_n$ of a random vector $X$ with mean $\mu$ and covariance $\Id$, it finds $\hat{\mu}$ such that
  \[
  \Norm{\mu - \hat{\mu}} \leq \frac 1 {\sqrt n} \cdot O\Paren{ \E \Norm{\sum_{i \leq n} \sigma_i X_i } + R \sqrt{\log(1/\delta)} }  \text{ with probability at least } 1 - \delta
  \]
  where $\sigma_1,\ldots,\sigma_n \sim \{ \pm 1\}$ are independent signs and $R = \sup_{\|x\|_* = 1} \|x\|_2$ is the norm-equivalence constant between the dual norm $\|\cdot\|_*$ and $\ell_2$.
  Note that the first term is essentially the \emph{expected} error achieved by the empirical mean for the norm $\| \cdot \|$, and in particular is independent of $\delta$, while the second term determines the decay of the bound as $\delta$ becomes small.\footnote{In \cite{lugosi2018near} this theorem is stated with an extra term in the error guarantee (which is typically dominated by the first term); we provide a simplified proof which also shows that the additional term is unnecessary.}
\end{theorem}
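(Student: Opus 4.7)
The plan is to follow the median-of-means tournament approach of Lugosi and Mendelson, specialized to a general norm via its dual formulation. Partition the $n$ samples into $k = \lceil C \log(1/\delta)\rceil$ equal-sized disjoint buckets of size $m = n/k$ and form the bucket means $Z_1,\ldots,Z_k$ with $Z_j = (1/m)\sum_{i \in B_j} X_i$. For every $v$ in the dual unit ball $B_* = \{v \in \R^d : \|v\|_* \leq 1\}$ of $\|\cdot\|$, form the one-dimensional median-of-means estimator $\psi(v) = \Median_{j \in [k]} \iprod{v, Z_j}$, and define $\hat\mu$ to be any vector satisfying $|\iprod{v, \hat\mu} - \psi(v)| \leq r/2$ for all $v \in B_*$, where
\[
r \;=\; \frac{C}{\sqrt{n}}\Paren{\E \Norm{\sum_{i \leq n}\sigma_i X_i} + R\sqrt{\log(1/\delta)}}\mper
\]
By duality $\|\hat\mu - \mu\| = \sup_{v \in B_*} \iprod{v, \hat\mu - \mu}$, so once we establish that $\sup_{v \in B_*}|\psi(v) - \iprod{v, \mu}| \leq r/2$ with probability $\geq 1-\delta$, both existence of a valid $\hat\mu$ (namely $\mu$ itself) and the stated error bound follow at once.

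The technical content is the uniform-in-$v$ bound. For a single fixed direction $v$, Chebyshev's inequality applied to the bucket statistic $\iprod{v, Z_j - \mu}$, which has variance $\|v\|_2^2/m \leq R^2/m$ in the $\Id$-covariance case, gives $\Pr[|\iprod{v, Z_j - \mu}| > 3\|v\|_2/\sqrt{m}] \leq 1/9$. A Chernoff bound over the $k$ independent buckets then yields $|\psi(v) - \iprod{v,\mu}| \leq 3\|v\|_2/\sqrt{m} \leq 3R\sqrt{\log(1/\delta)/n}$ except with probability $e^{-\Omega(k)} \leq \delta$. This handles a single (or finite) collection of directions and accounts for the additive $R\sqrt{\log(1/\delta)/n}$ piece of $r$.

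To upgrade to a uniform-in-$v$ statement over all of $B_*$ and to obtain the first summand in $r$, apply symmetrization plus Rademacher concentration to the process $v \mapsto \psi(v) - \iprod{v, \mu}$. The standard symmetrization inequality gives that the expected supremum is bounded by $O(1/\sqrt{n}) \cdot \E \Norm{\sum_{i \leq n} \sigma_i X_i}$, since by duality $\sup_{v \in B_*}\iprod{v, \sum_i \sigma_i X_i} = \|\sum_i \sigma_i X_i\|$. A bounded-differences concentration inequality applied to the resulting process — using that each bucket-mean coordinate $\iprod{v, Z_j}$ has $\ell_2$-Lipschitz constant controlled by $R/m$ since $\|v\|_2 \leq R$ for $v \in B_*$ — then upgrades this expectation bound to a high-probability bound, introducing only the already-allowed additive $R\sqrt{\log(1/\delta)/n}$ term.

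The main obstacle, and the point where the proof simplifies that of \cite{lugosi2018near}, is executing this last symmetrization-and-concentration step without losing an additional "variance" term. The original argument chains over an $\epsilon$-net in $B_*$ and then uses per-scale Chebyshev control, which leaves a residual additive term tied to the net geometry; the cleaner route is to apply bounded-differences concentration directly to the full median-of-means process as a function of the symmetrized samples, collapsing the chaining contribution into the $R\sqrt{\log(1/\delta)/n}$ summand and delivering exactly the two-term bound claimed.
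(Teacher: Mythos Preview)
Your proposal has a genuine gap at the symmetrization step. You write that ``the standard symmetrization inequality gives that the expected supremum [of $v\mapsto \psi(v)-\iprod{v,\mu}$] is bounded by $O(1/\sqrt n)\cdot \E\|\sum_i\sigma_i X_i\|$,'' but $\psi(v)$ is a \emph{median} of the bucket statistics, not a sum or average; the standard symmetrization lemma applies to empirical means, and there is no off-the-shelf symmetrization inequality for the median process. Likewise, the bounded-differences claim you invoke does not hold as stated: the samples $X_i$ are unbounded, so replacing one sample (or one bucket) can change $\iprod{v,Z_j}$ by an arbitrary amount, and hence can move $\sup_{v\in B_*}|\psi(v)-\iprod{v,\mu}|$ by an unbounded amount. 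Neither McDiarmid at the sample level nor at the bucket level gives you what you need for that functional.

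The paper's route around this is to apply bounded differences not to $\sup_v|\psi(v)-\iprod{v,\mu}|$ but to the integer-valued functional
\[
F(\bZ)\;=\;\max_{v\in\cB_*}\ \#\bigl\{\,j\in[k]:\ \iprod{v,Z_j-\mu}>r\,\bigr\}\mper
\]
Replacing one bucket $Z_j$ by an arbitrary $Z_j'$ changes $F$ by at most $1$ regardless of how far $Z_j'$ is from $Z_j$, so McDiarmid applies at the bucket level with $k=C\log(1/\delta)$ independent blocks. For the expectation, one uses $F(\bZ)\le \tfrac{1}{r}\max_{v\in\cB_*}\sum_{j\le k}|\iprod{v,Z_j-\mu}|$, and \emph{this} is a sum to which centering, symmetrization, and the Ledoux--Talagrand contraction principle (to strip the absolute values) legitimately apply, yielding the Rademacher term $\E\|\sum_i\sigma_i X_i\|$ plus the $R\sqrt{k}$ variance term. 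Choosing $r$ so that $\E F\le k/20$ and then concentrating gives that $\mu$ is $(r,1/10)$-central with probability $1-\delta$; any other $(r,1/10)$-central point is then within $2r$ of $\mu$. The key idea you are missing is to move the symmetrization from the median (where it fails) to the ``bad-bucket count'' (where bounded differences is automatic) and then to the $\ell_1$-type sum (where symmetrization is valid).
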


The naive algorithm to compute the estimator $\hat{\mu}$ from \cref{thm:lm-general-intro} requires brute-force search for a point in a non-convex set in $d$ dimensions, taking $\exp(\Omega(d))$ time.
We slightly modify the estimator from \cref{thm:lm-general-intro} and show that subject to a mild computational assumption on the norm $\| \cdot \|$ it can be computed by an algorithm whose running time is exponential only in $\log(1/\delta)$ rather than in $d$.

\begin{theorem}[Informal, $\Id$-covariance case, see \cref{thm:general-norms}]
  With the same setting and guarantees as \cref{thm:lm-general-intro}, under the additional assumption that there is a polynomial-time separation oracle for the dual ball of $\|\cdot \|$, there is an algorithm to compute $\hat{\mu}$ in time $\poly(n,d,1/\delta)$.
\end{theorem}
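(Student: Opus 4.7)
\emph{Proof proposal.} The plan is to replace the Lugosi--Mendelson tournament with a convex-feasibility reformulation amenable to the ellipsoid method. Partition the $n$ samples into $k = \Theta(\log(1/\delta))$ buckets, let $Z_1,\dots,Z_k$ be the bucket means, and for a radius $r$ matching the target rate of \cref{thm:lm-general-intro} define
\[
\hat{C} \defeq \bigcap_{v \in B^*} \Set{\mu \in \R^d \suchthat \Abs{\iprod{v,\mu} - \operatorname{med}_i \iprod{v,Z_i}} \leq r}\mcom
\]
where $B^*$ is the unit ball of the dual norm $\norm{\cdot}_*$. Each constraint is a slab in $\mu$, so $\hat{C}$ is convex. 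Whenever the true mean $\mu$ lies in $\hat{C}$, every $\hat{\mu} \in \hat{C}$ satisfies $\norm{\hat{\mu}-\mu} = \sup_{v \in B^*} \iprod{v, \hat{\mu}-\mu} \leq 2r$ by applying the slab at the dual-norm witness direction. Thus it suffices to (i) show $\mu \in \hat{C}$ with probability $1-\delta$ for the right $r$, and (ii) compute any point in $\hat{C}$ in time $\poly(n,d,1/\delta)$.

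For step (i), I would show that for each fixed $v \in B^*$ the bucketed median of $\iprod{v,X_i}$ concentrates around $\iprod{v,\mu}$ at the 1-D median-of-means rate $O(\sqrt{k \Var\iprod{v,X}/n})$ with probability $1-e^{-\Omega(k)}$, and then convert the pointwise bound into the uniform bound $r = \tfrac{1}{\sqrt n}\Paren{O\paren{\E\bignorm{\sum_i \sigma_i X_i}} + R\sqrt{\log(1/\delta)}}$ via Rademacher symmetrization and contraction. This is where the simplified analysis avoids the extra term in \cref{thm:lm-general-intro}: instead of running a tournament that incurs a union bound over ordered pairs of candidate means, one works directly with a single quantile-of-projection condition. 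For step (ii), I would run the ellipsoid method on $\R^d$ inside a ball of radius $\poly(n,d)$ around $Z_1$, targeting a point of an $r(1+1/\poly(d))$-inflation of $\hat{C}$ (to guarantee nonempty interior and convergence under weak separation). The crux is the separation oracle: given a candidate $\mu$, either certify $\mu \in \hat{C}$ or exhibit $v \in B^*$ with $\iprod{v,\mu} - \operatorname{med}_i \iprod{v, Z_i} > r$ (the opposite sign is symmetric). The key structural observation is that $v \mapsto \operatorname{med}_i \iprod{v, Z_i}$ is piecewise linear: on any region where the ordering of $\iprod{v,Z_1},\dots,\iprod{v,Z_k}$ is fixed the median coincides with $\iprod{v, Z_j}$ for a single bucket $j$. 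I would therefore enumerate the $k \cdot \binom{k-1}{\lfloor (k-1)/2\rfloor} \leq 2^{O(k)} = \poly(1/\delta)$ combinatorial types $(j, S^-, S^+)$ and for each solve
\[
\maximize \iprod{v, \mu - Z_j} \quad \subjectto \quad v \in B^*,\ \iprod{v, Z_i - Z_j} \leq 0 \text{ for } i \in S^-,\ \iprod{v, Z_j - Z_i} \leq 0 \text{ for } i \in S^+
\]
by the ellipsoid method, using the assumed separation oracle for $B^*$ together with the $O(k)$ added linear constraints. Each inner call costs $\poly(n,d,\log(1/\delta))$, so the overall oracle runs in $\poly(n,d,1/\delta)$ and the outer ellipsoid in $\poly(d)$ queries, producing the claimed running time.

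The main obstacle I expect is in step (i): showing that the single intersection of slabs genuinely captures the Lugosi--Mendelson rate requires converting a supremum of \emph{medians} (not means) over $v \in B^*$ into the Rademacher complexity of the class $\set{\iprod{v,\cdot} : v \in B^*}$. The standard route is a quantile-of-projections argument combined with the robustness of the median to a small fraction of outlier buckets in any fixed direction, with the extra observation that this robustness survives a chaining/union argument over directions provided one analyses a slightly relaxed quantile (e.g.\ the $0.49$-quantile) rather than the exact median used in the algorithmic definition. A secondary algorithmic subtlety is ensuring weak-separation suffices for ellipsoid convergence: inflating the slab radius by a factor $1+1/\poly(d)$ makes $\hat{C}$ contain a Euclidean ball of radius $1/\poly(n,d)$, which together with the $\poly(n,d)$ initial ball yields polynomially-bounded bit complexity throughout.
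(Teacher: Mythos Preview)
Your proposal is correct and follows essentially the same approach as the paper: define a convex set of ``central'' points with respect to the bucket means $Z_1,\ldots,Z_k$, build a separation oracle by enumerating $2^{O(k)} = \poly(1/\delta)$ combinatorial configurations (you enumerate median configurations $(j,S^-,S^+)$; the paper instead enumerates subsets $T \subseteq [k]$ of size $k/10+1$ witnessing a quantile violation), and solve an inner convex feasibility problem over $B^*$ for each configuration via the ellipsoid method. The paper's choice of a relaxed quantile---$(r,1/10)$-centrality in place of the exact median---makes both the separation oracle and your step~(i) cleaner (exactly the fix you flag in your ``main obstacle'' paragraph), but the core idea is identical.
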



\subsubsection{Roadblock to Improved Error Rates: Single-Spike Block Mixtures}
\label[section]{sec:results-roadblock}

Our main results on covariance estimation and linear regression (\cref{thm:covest-inform,thm:regression-intro}) push the state of the art in terms of error rates achievable for heavy-tailed statistics in polynomial time, but they do not achieve information-theoretic optimality.
Our covariance estimation algorithm in the setting of $\Tr \Sigma \leq O(d), \Normt{\Sigma} \leq O(1)$ achieves error $\|\hat{\Sigma} - \Sigma\|_2 \leq \tO(d^{3/4}/\sqrt{n})$, while in exponential time it is possible to achieve $\tO(\sqrt{d/n})$.
(Similarly, our linear regression algorithm requires $n \gg d^{3/2}$ rather than $n \gg d$.)

It is a fascinating open problem to understand whether these gaps can be closed.
We offer here some evidence that this is unlikely to be possible with techniques in the present paper.
We focus on covariance estimation -- the relation to linear regression is more subtle (see \cref{sec:roadblock}).
The key subroutine in our covariance estimation algorithm is an algorithm for the following problem:

\begin{problem}[Find high-variance direction]
\label[problem]{prob:high-var}
Given $\Sigma_1,\ldots,\Sigma_d \in \R^{d \times d}$, with $\Sigma_i \succeq 0$, find a unit vector $x \in \R^d$ such that $\iprod{x,\Sigma_i x} \geq r$ for at least $d/4$ matrices $\Sigma_i$, or certify that none exists.
\end{problem}

In fact, \cref{prob:high-var} must be solved when $\Sigma_1,\ldots,\Sigma_d$ are empirical covariance matrices by any algorithm performing covariance estimation using the \emph{median-of-means} framework, which is the dominant approach in constructing high-dimensional estimators with optimal $r(n,d,\delta)$ (even ignoring running time considerations).
It will have to wait until \cref{sec:techniques} to see in more detail why an algorithm solving \cref{prob:high-var} is useful for covariance estimation.
For now, let us note that our subroutine solves \cref{prob:high-var} when $\Sigma_i$ is the empirical covariance of $n/d$ samples from the heavy-tailed distribution whose covariance we are estimating, and the $\Sigma_i$'s are all independent.
\cref{prob:high-var} gets easier as $r$ gets larger, but it turns out that the value of $r$ for which we can solve it translates directly to the error rate of our covariance estimation algorithm.
\emph{Summarizing: in the case of estimating the covariance $\Sigma$ of a random variable $X$ with $\Tr \Sigma \approx d, \|\Sigma\|_2 \approx 1$, our key subroutine solves \cref{prob:high-var} with $\Sigma_i$ being the empirical covariance of $n/d$ of the samples $X_1,\ldots,X_n$ and $r \leq \tO(d^{3/4}/\sqrt{n}$).}

Improving the error rates of our algorithm (or any other median-of-means-based algorithm) would thus seem to require solving \cref{prob:high-var} with smaller $r$.
To investigate whether this may be possible in polynomial time, we consider an easier variant, which we call the \emph{single-spike block mixtures} problem.
It is easier in two respects: it is a decision problem rather than a search problem, and the underlying random variable $X$ is distributed in a known, Gaussian fashion.
(Note that it appears no longer relevant that we were initially interested in heavy-tailed random vectors -- we believe computational hardness for \cref{prob:high-var} appears even when $\Sigma_i$'s are empirical covariances formed from Gaussian samples.)

\begin{definition}[Single-Spike Block Mixtures]
  Let $d,m \in \N$ and $1 > \lambda > 0$.
  In the \emph{single-spike block mixtures testing problem} the goal is to distinguish, given vectors $y_1,\ldots,y_{md} \in \R^d$, between the following two cases:
  \begin{itemize}
  \item[\textsc{null}:] $y_1,\ldots,y_{md} \sim \cN(0,\Id)$ i.i.d.
  \item[\textsc{planted}:]
  First $x \sim \{ \pm 1/\sqrt{d} \}^d$ and $s_1,\ldots,s_d \sim \{\pm 1\}$.
  Then, $y_1,\ldots,y_m \sim \cN(0,\Id + s_1 \lambda xx^\top)$ and $y_{m+1},\ldots,y_{2m} \sim \cN(0, + s_2 \lambda xx^\top)$, and so forth.
  That is, each \emph{block} of vectors $y_{im},\ldots,y_{(i+1)m - 1}$ has either slightly larger variance in the $x$ direction (if $s_i = 1$) or slightly lesser variance (if $s_i = -1$) than they would in the null case.
  \end{itemize}
\end{definition}

\noindent It turns out that so long as $\lambda \gg 1/\sqrt{m} = \sqrt{d/n}$ (where $n = md$) it is possible to distinguish \textsc{null} from \textsc{planted} in exponential time.
(This is closely related to the fact that heavy-tailed mean estimation can be solved with error rate $\tO(\sqrt{d/n})$.)
But what about polynomial time?
A consequence of our main subroutine is the following theorem:

\begin{theorem}[Informal]
  If $\lambda \geq (d^{3/4}/\sqrt{n}) \poly \log(d,m)$ then there is a polynomial-time algorithm which distinguishes \textsc{null} from \textsc{planted} with high probability.
\end{theorem}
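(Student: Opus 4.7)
The plan is to reduce \textsc{single-spike block mixtures} testing to \cref{prob:high-var} and invoke the subroutine underlying \cref{thm:covest-inform}, which (per the summary preceding the theorem) solves \cref{prob:high-var} at the critical threshold $\tO(d^{3/4}/\sqrt{n})$ when the input matrices $\Sigma_i$ are independent empirical covariances of $m = n/d$ samples from a nice distribution---in particular for i.i.d.\ Gaussians. The algorithm is: partition the $n = md$ input vectors into $d$ contiguous blocks of size $m$, form the empirical block covariances $\widehat{\Sigma}_j := \frac{1}{m}\sum_{i \in \text{block } j} y_i y_i^\top$ for $j=1,\ldots,d$, feed these PSD matrices to the subroutine with threshold $r := 1 + \lambda/2$, and output \textsc{planted} iff it returns a witness unit vector.

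In the planted case, conditionally on $(x, s_1, \ldots, s_d)$ the vectors in block $j$ are i.i.d.\ samples from $\cN(0, \Id + s_j \lambda xx^\top)$, so $\iprod{x, \widehat{\Sigma}_j x}$ is an average of $m$ i.i.d.\ scaled $\chi^2_1$'s with mean $1 + s_j \lambda$. Standard $\chi^2$ concentration plus a union bound over blocks gives $|\iprod{x, \widehat{\Sigma}_j x} - (1 + s_j \lambda)| \leq \tO(\sqrt{d/n})$ for every $j$ with high probability. Since $\lambda \gg (d^{3/4}/\sqrt{n})\polylog(d) \gg \sqrt{d/n}$, this noise is $o(\lambda)$, and a Chernoff bound on $s_1, \ldots, s_d$ supplies at least $d/3 \geq d/4$ blocks with $s_j = +1$, each of which satisfies $\iprod{x, \widehat{\Sigma}_j x} \geq 1 + \lambda/2 = r$. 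So $x$ is a valid witness and the subroutine must return one. In the null case, each $\widehat{\Sigma}_j$ is an independent empirical covariance of $m$ standard Gaussians---exactly the setting in which the subroutine's soundness guarantee produces an SoS-dual certificate that no unit $x$ achieves $\iprod{x, \widehat{\Sigma}_j x} \geq 1 + \tO(d^{3/4}/\sqrt{n})$ in as many as $d/4$ blocks. Since $\lambda/2$ exceeds this threshold by hypothesis, the subroutine returns no witness and the algorithm correctly declares \textsc{null}.

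The main obstacle is less the reduction itself---which is a routine concentration exercise---than the extraction from \cref{thm:covest-inform} of a clean black-box interface of the shape above. In its native form, the subroutine is packaged as a median-of-means covariance estimator whose analysis produces SoS-certified bounds on discrepancies $\iprod{x, (\widehat{\Sigma} - \widehat{\Sigma}_j) x}$ relative to a candidate estimate $\widehat{\Sigma}$, so one may instead need to run the reduction on the centered squared matrices $(\widehat{\Sigma}_j - \Id)^2 \succeq 0$ with threshold $r = \lambda^2/4$. A technical point in that variant is that the full spectral deviation $\|\widehat{\Sigma}_j - \Id\|_2 = O(d/\sqrt{n})$ exceeds $\lambda$, so the reduction works only because in the planted direction the deviation is the much smaller $\tO(\sqrt{d/n})$, yielding $\|(\widehat{\Sigma}_j - \Id)x\|_2^2 = \lambda^2(1 \pm o(1))$; verifying that this per-direction control is what the SoS certificate actually leverages (rather than some weaker spectrally-averaged statement) is the main wrinkle.
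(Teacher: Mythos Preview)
Your proposal is essentially correct and takes the same approach as the paper. Both rest on the SoS certificate from \cref{lem:p-main}/\cref{lem:cert-main}: in the \textsc{planted} case the true spike $x$ together with the blocks where $s_j=+1$ furnishes a feasible point of large value, and in the \textsc{null} case the SoS bound on $\sum_i \iprod{\Sigma_i-\Id,uu^\top}^2$ certifies smallness.

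The only difference is packaging. The paper does not route through the search version of \cref{prob:high-var} at all; it uses the SoS relaxation value directly as a test statistic. Concretely, it computes the degree-$8$ SoS value of $\max_{b,u}\sum_{i\le d} b_i\iprod{\Sigma_i-\Id,uu^\top}$ subject to $b_i^2=b_i,\ \|u\|^2=1$, and outputs \textsc{planted} iff this value is at least $\lambda d/4$. In the planted case the actual point $(x,\{b_i=\Ind[s_i=+1]\})$ pushes the value above $\lambda d/4$; in the null case the Cauchy--Schwarz-then-\cref{lem:p-main} argument bounds it by $\tO(d^{7/4}/\sqrt{n})<\lambda d/4$. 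This sidesteps entirely your worry about ``extracting a clean black-box interface'' from the estimator.

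Relatedly, your second-paragraph concerns are unnecessary. You do not need to square the centered matrices to make them PSD: your first formulation already feeds the PSD matrices $\widehat\Sigma_j$ themselves with threshold $1+\lambda/2$, which is exactly $\iprod{\widehat\Sigma_j-\Id,uu^\top}\ge\lambda/2$ after centering, and this is precisely what the Test-Cov SoS program handles. The SoS certificate is inherently a per-direction statement (that is what the $b_i$'s encode), so the ``per-direction versus spectrally-averaged'' wrinkle you flag does not arise.
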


\noindent We make the following conjecture regarding optimality of this algorithm.

\begin{conjecture}
\label[conjecture]{conj:opt}
If $\lambda \leq d^{3/4 - \Omega(1)}/\sqrt{n}$ then no polynomial time algorithm solves the single-spike block mixture problem.
\end{conjecture}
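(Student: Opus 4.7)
Since the statement is a conjecture, the natural goal is to provide rigorous evidence rather than a proof; the standard such evidence for polynomial-time hardness of planted hypothesis testing is a \emph{low-degree polynomial lower bound}. I would aim to show that the projection $L^{\leq D}$ of the likelihood ratio $L = d\Pr_{\textsc{planted}}/d\Pr_{\textsc{null}}$ onto polynomials in the data of degree at most $D = \polylog(n)$ has squared $L^2(\Pr_{\textsc{null}})$-norm $\|L^{\leq D}\|^2 = 1 + o(1)$ whenever $\lambda \leq d^{3/4-\Omega(1)}/\sqrt{n}$. Via the heuristic connection between low-degree norms and sum-of-squares, this would rule out any hypothesis test obtainable from thresholding a polylog-degree polynomial of the data (and, conjecturally, any polylog-degree SoS relaxation), which is the form of hardness envisioned by \cref{conj:opt}.

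The first step is to put $L$ into a tractable form. Conditioning on the spike $x$ and on the block-signs $s_1,\ldots,s_d$, and using independence both across and within blocks, $L$ factorizes into per-block terms, each of which is an equal-weight mixture over $s_i \in \{\pm 1\}$ of the product over $j$ of Gaussian likelihood ratios between $\cN(0, \Id \pm \lambda xx^\top)$ and $\cN(0,\Id)$. The key structural observation is that averaging over $s_i$ annihilates every odd power of $\lambda$ in the Hermite expansion, so the lowest-order signal carried by any one block is $\Theta(\lambda^2)$ and lives on degree-4 polynomials of the form $(\iprod{x,y_a}^2 - 1)(\iprod{x,y_b}^2 - 1)$ for two distinct samples $a,b$ in the same block. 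In particular, no degree-2 polynomial of the data correlates with $L$.

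I would then compute $\|L^{\leq D}\|^2 = \E_{x,x'}\E_{\textsc{null}} \Brac{L_x^{\leq D} L_{x'}^{\leq D}}$ for independent copies $x,x'$ drawn from the hypercube prior. The block factorization together with the sign symmetry forces the inner expectation to be a product over blocks of a power series in $\iprod{x,x'}^2$ (rather than $\iprod{x,x'}$), and expanding yields a sum indexed by ``block diagrams'' of total degree $\leq D$. A diagram with $k$ within-block sample pairings contributes on the order of $\lambda^{2k}\iprod{x,x'}^{2k}$ times a combinatorial factor counting the pairings. Averaging $x, x'$ over the prior, the hypercube moment bound $\E\iprod{x,x'}^{2k} \leq (O(k)/d)^k$ controls each term, and the full sum should close into a bound of the type $\|L^{\leq D}\|^2 - 1 = O\Paren{(m^2\lambda^4/d) \cdot \polylog(n)}$, which is $o(1)$ in precisely the regime $\lambda \leq d^{3/4-\Omega(1)}/\sqrt{n}$.

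The hard part will be the combinatorial bookkeeping in the third step: one must (i) use the sign symmetry to discard every diagram with an odd number of samples in any single block; (ii) prove that cross-block sample pairings --- which a priori could contribute at lower orders in $\lambda$ --- vanish in expectation thanks to the independence of the $s_i$'s and Hermite orthogonality; and (iii) control the truncation error, since the hypercube moment bound $\E\iprod{x,x'}^{2k} \leq (O(k)/d)^k$ becomes loose for $k$ comparable to $d$ and therefore requires the choice of $D$ to be calibrated against the tail of the series. A successful execution should either confirm \cref{conj:opt} in the low-degree model or, by exposing an overlooked diagram, suggest a genuinely new polynomial-time algorithm around the $d^{3/4}/\sqrt{n}$ scale.
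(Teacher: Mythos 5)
Your plan is essentially the paper's own route: since \cref{conj:opt} is only a conjecture, the paper supports it exactly by a low-degree likelihood-ratio lower bound (\cref{thm:roadblock-main}), proved through the Hermite expansion in \cref{lem:hermite-single-vec,lem:hermite-any,lem:lb-main}, where averaging over the block signs $s_i$ forces each block's Hermite degree to be divisible by $4$ (killing all odd-in-$\lambda$ terms) and the hypercube prior forces even row-parity, giving a degree-$t$ contribution of order $(C\lambda t/d)^t d^{3t/4} m^{t/2}$ whose $t=4$ term matches your claimed $O(m^2\lambda^4/d)$ leading order. The only differences are presentational: you compute $\|L^{\le D}\|^2$ in the overlap/second-moment form rather than by directly counting the admissible (``super even'') Hermite coefficients, and the paper rules out tests of degree $(md)^{o(1)}$ rather than only $\mathrm{polylog}(n)$.
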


In support of \cref{conj:opt}, we prove a lower bound against a certain class of restricted algorithms, called \emph{low degree tests}.
A degree-$D$ test is a function $f \, : \, \R^{d \times md} \rightarrow \R$ such that as a polynomial $\deg f \leq D$ and $\E_{Y = y_1,\ldots,y_{md} \sim \textsc{null}} f(Y) = 0$.
We say the test is \emph{successful} if $\E_{Y \sim \textsc{planted}} f(Y) / (\E_{Y \sim \textsc{null}} f(Y)^2)^{1/2} \rightarrow \infty$ as $d,m \rightarrow \infty$.

While such low degree tests (for $D$ relatively small -- say at most $(md)^{o(1)}$) would seem to be a quite restrictive model compared to the class of all polynomial time algorithms, it turns out that the existence of a successful low degree test solving a hypothesis testing problem is a remarkably accurate predictor for the existence of any polynomial time algorithm.
For instance, successful low degree tests (of logarithmic degree) appear exactly at the predicted \emph{computational thresholds} for the planted clique problem (clique size $\Omega(\sqrt{n})$), the random $3$-SAT problem ($(\text{number of variables})^{3/2})$ clauses), the $k$-community stochastic block model (the \emph{Kesten-Stigum threshold}), the sparse PCA problem (the $k^2$ sample threshold) and beyond.
Lower bounds on low degree tests are technically distinct from but conceptually similar to statistical query lower bounds.
They are also closely related to the \emph{pseudocalibration} technique for proving lower bounds against SoS algorithms.
For further discussion, see \cite{hopkins2018statistical,kunisky2019notes}.

We rule out the existence of successful low degree tests for $D = (md)^{o(1)}$ when $\lambda \leq d^{3/4 - \Omega(1)}/\sqrt{n}$.
Obtaining an impossibility result for such large $D$ is relatively strong: in this low-degree test model the typical proxy for polynomial time is $D$ of degree logarithmic in the input size (in this case $md^2$).

\begin{theorem}[Informal, see \cref{thm:roadblock-main}]
  If $\lambda \leq d^{3/4 - \Omega(1)}/\sqrt{n}$ then there is no successful degree $(md)^{o(1)}$ test for the single-spike block mixtures problem.
\end{theorem}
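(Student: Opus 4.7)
The plan is to use the low-degree polynomial method. Write $P$ for the planted distribution and $Q$ for the null, let $L = dP/dQ$, and let $L^{\leq D}$ denote its orthogonal projection onto polynomials of degree $\leq D$ in $L^2(Q)$. By Cauchy--Schwarz duality,
\[
\sup\Bigl\{\E_P f \big/ (\E_Q f^2)^{1/2} \;:\; \deg f \leq D,\ \E_Q f = 0\Bigr\} = \|L^{\leq D} - 1\|_2,
\]
so it suffices to prove $\|L^{\leq D}\|_2^2 = O(1)$ throughout the advertised regime. Marginalizing the per-block signs first, $L(y) = \E_x \prod_{i=1}^d \ell_i(x, y^{(i)})$ where $\ell_i(x, y^{(i)}) = \frac12 \sum_{s = \pm 1}\prod_{j \in \text{block}(i)}\phi_s(\iprod{x, y_j})$ and $\phi_s$ is the one-sample density ratio of $\cN(0, \Id + s\lambda xx^\top)$ to $\cN(0, \Id)$, a function of $\iprod{x, y_j}$ alone.

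Next I compute the pairwise block inner product explicitly. A direct Gaussian integral shows $\E_{Q} \phi_s(\iprod{x, y}) \phi_{s'}(\iprod{x', y}) = (1 - ss'\lambda^2\iprod{x, x'}^2)^{-1/2}$; independence across the $m$ samples in a block and averaging over $s, s' \in \{\pm 1\}$ then give
\[
\langle \ell_i(x), \ell_i(x')\rangle_{L^2(Q)} = C(\iprod{x, x'}),\qquad C(z) := \tfrac12\bigl[(1 - \lambda^2 z^2)^{-m/2} + (1 + \lambda^2 z^2)^{-m/2}\bigr].
\]
Sign averaging kills odd powers of $\lambda^2 z^2$, so $C(z) = 1 + \beta z^4 + \beta_2 z^8 + \cdots$ with leading coefficient $\beta = \binom{m/2+1}{2}\lambda^4 = \Theta(m^2\lambda^4)$. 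Blocks are independent under $Q$, so the Hermite basis factorizes across blocks; by rotational symmetry of the multivariate Hermite inner product, the degree-$T$ component of $\langle \ell_i(x), \ell_i(x')\rangle$ is exactly $c(T)\iprod{x, x'}^T$ with $c(T) = [z^T] C(z)$. Hence
\[
\|L^{\leq D}\|_2^2 = \sum_{S \leq D} g(S)\cdot \E_{x, x'} \iprod{x, x'}^S, \qquad g(S) := [z^S]\,C(z)^d.
\]

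For the quantitative estimate, note that in the regime of interest $m^2\lambda^4 \ll d$, so $\log C(z) = \beta z^4 + O(\beta_2 z^8)$ with higher-order corrections negligible (since $d\beta_2 z^8 \ll (d\beta z^4)^2$), giving $C(z)^d \approx \exp(d\beta z^4)$ and $g(4K) \leq O(1)\cdot (d\beta)^K/K!$. Since $\iprod{x, x'} = \tfrac 1 d \sum_{k=1}^d r_k$ for i.i.d.\ Rademachers $r_k$, Khintchine's inequality yields $\E\iprod{x, x'}^{4K} \leq (CK/d)^{2K}$, and using $K^K/K! \leq e^K$ we obtain
\[
\|L^{\leq D}\|_2^2 \leq \sum_{K \leq D/4}\bigl(C'\,m^2\lambda^4 K / d\bigr)^K.
\]
The hypothesis $\lambda \leq d^{3/4 - \Omega(1)}/\sqrt n$ is equivalent to $m^2\lambda^4/d \leq d^{-\Omega(1)}$; combined with $D = (md)^{o(1)}$, each summand is at most $d^{-\Omega(K)}$, so the total is $O(1)$. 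The main technical obstacle will be rigorously controlling the non-leading contributions to $g(S)$: configurations in which a few blocks carry high Hermite degree instead of many blocks each at the minimum non-trivial degree $4$. Showing that such ``concentrated'' configurations are subdominant requires careful convolution estimates on $c(\cdot)$ via $\binom{m/2+2k-1}{2k} \leq (m/2 + 2k)^{2k}/(2k)!$, combined with the Rademacher moment bounds; to make the exponent $3/4$ sharp it may be necessary to replace the crude moment bound on $\iprod{x, x'}^S$ with a tail decomposition that separates the typical regime $|\iprod{x, x'}| = O(1/\sqrt d)$ from the rare large-overlap events.
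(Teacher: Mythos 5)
Your proposal is correct in substance and lands at the same threshold as the paper, but it organizes the computation differently. The paper works directly with Hermite coefficients of the planted distribution: it computes $\E_{\textsc{planted}} H_\alpha(y)$ exactly (Lemma~6.5 via Stein's lemma, then Lemma~6.6, which identifies the ``super even'' multi-indices $\alpha$ that survive the sign- and $x$-averaging) and then bounds $\sum_{|\alpha|\le t}(\E H_\alpha)^2$ by counting those multi-indices (Lemma~6.7). You instead never compute individual Hermite coefficients: you use the closed-form replica overlap $\E_Q \phi_s\phi_{s'}=(1-ss'\lambda^2\iprod{x,x'}^2)^{-1/2}$, package each block into the sign-averaged generating function $C(z)$, and read off the degree-$S$ contribution as $[z^S]C(z)^d\cdot\E_{x,x'}\iprod{x,x'}^S$, finishing with Rademacher moment bounds on the overlap. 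The two are the same underlying low-degree likelihood-ratio calculation (your $[z^S]C(z)^d$ is exactly the paper's count of super-even $\alpha$ weighted by the double-factorial coefficients), but your generating-function bookkeeping is arguably cleaner and makes the parity/divisibility-by-4 structure automatic, while the paper's direct route makes the per-coordinate evenness constraints explicit and is self-contained about the Hermite moment formula, which your ``rotational symmetry'' justification for the identity (degree-$T$ component $=c(T)\iprod{x,x'}^T$) glosses over; that identity does need the fact that the degree-$T$ Hermite coefficient tensor is proportional to $x^{\otimes T}$, which is precisely the content of the paper's Lemma~6.5.

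The ``main technical obstacle'' you flag is not actually an obstacle: the crude bounds you already name close it, and no tail decomposition of $\iprod{x,x'}$ is needed. Writing $c_{4j}=\binom{m/2+2j-1}{2j}\lambda^{4j}\le \frac{(m/2+2j)^{2j}}{(2j)!}\lambda^{4j}$ and using $(2j)!\ge 2^j j!$,
\begin{equation*}
g(4K)=\sum_{j_1+\cdots+j_d=K}\ \prod_{i=1}^d c_{4j_i}\ \le\ \lambda^{4K}\Paren{\tfrac m2+2K}^{2K}\sum_{\sum j_i=K}\prod_i\frac{1}{(2j_i)!}\ \le\ \lambda^{4K}\Paren{\tfrac m2+2K}^{2K}\cdot\frac{d^K}{2^K K!}\mcom
\end{equation*}
and combining with $\E\iprod{x,x'}^{4K}\le (4K/d)^{2K}$ and $1/K!\le(e/K)^K$ gives a summand of the form $\bigl(O(1)\,\lambda^4(m+K)^2K/d\bigr)^K$, so the ``concentrated'' configurations are absorbed into the same bound (the $(m+K)$ factor is harmless since $K\le D=(md)^{o(1)}$). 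Two small points to tidy up: subtract the $K=0$ term so that the conclusion reads $\|L^{\le D}\|_2^2\le 1+o(1)$, matching the $o(1)$ advantage claimed; and note that, exactly as in the paper's informal statement, the final step needs $D\cdot d^{-\Omega(1)}=o(1)$, i.e.\ the $(md)^{o(1)}$ degree budget is only meaningful when $m$ is at most polynomial in $d$ (or one restates the degree bound as $d^{o(1)}$).
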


\subsection{Techniques}
\label[section]{sec:techniques}

For purposes of this technical overview, we focus on covariance estimation.
Our algorithm for linear regression employs broadly similar ideas.

\paragraph{The median of means framework}


%

Let us first explain the basic median-of-means trick in one dimension.
Consider the problem of estimating the mean $\mu \in \R$ of a one-dimensional random variable $X$ from independent samples, and suppose $\E (X - \mu)^2 \leq 1$, but make no further assumptions on $X$.
In this setting, the empirical mean $\overline{\mu} = \sum_{i=1}^n X_i$ of $n$ independent samples has $\Pr(|\overline{\mu} - \mu| > t) \leq 1/t^2 n$ by Chebyshev's inequality, and no tighter bound is possible.
By contrast, if $X$ were Gaussian, we would have the exponentially-better bound $\Pr(|\overline{\mu} - \mu| > t)\leq \exp(-t^2 n)$.

The simplest median-of-means trick offers a family of estimators $\hat{\mu}_\delta$ for each $\delta \geq 2^{-0.01 n}$ such that $\Pr(|\hat{\mu}_\delta - \mu| > 100 \sqrt{\log(1/\delta) / n}) \leq \delta$.
First we place $X_1,\ldots,X_n$ into $\Theta(\log(1/\delta))$ equal-size buckets.
In each bucket $i \leq \Theta(\log(1/\delta))$ we let $Z_i$ be the average of the samples in bucket $i$.
Then we let $\overline{\mu}_\delta$ be the median of $Z_1,\ldots,Z_{\Theta(\log(1/\delta))}$.

The analysis is a straightforward use of Chebyshev's inequality to show that each $Z_i$ has $|Z_i - \mu| \leq O(\sqrt{\log(1/\delta)/n})$ with probability at least $0.9$, followed by a binomial tail bound ensuring that with probability at least $1-\delta$ at least a $0.7$ fraction of the $Z_i$'s satisfy this inequality.
Then the key step: if more than half of $Z_1,\ldots,Z_k$ have distance at most $r$ to $\mu$, then so does their median.

\paragraph{Medians in High Dimensions}
Extending this idea to high dimensional settings requires surmounting several hurdles.
The first one is to design an appropriate high-dimensional notion of median.
In the last few years, however, the techniques to do this have become relatively well understood in statistics \cite{lugosi2019mean}.
For example, the key notion in recent heavy-tailed estimators of the mean of a random vector in $d$ dimensions with respect to Euclidean distance is the following: for a set of points $Z_1,\ldots,Z_k \in \R^d$ and $r > 0$, $x \in \R^d$ is an $r$-median if for every unit direction $u$ we have $|\iprod{Z_i,u} - \iprod{x,u}| \leq r$ for at least a $0.51$-fraction of $Z_1,\ldots,Z_k$.
It turns out that using the median of means trick with this notion of median leads to an information-theoretically optimal estimator of the mean in $d$ dimensions assuming only that the underlying random vector has finite covariance.

For covariance estimation the appropriate notion of median was first defined in \cite{lugosi2018near} and fully analyzed in \cite{mendelson2018robust}.
We will call $M$ an $r$-median for matrices $Z_1,\ldots,Z_k$ if for all unit $x \in \R^d$ it holds that $|\iprod{Z_i,xx^\top} - \iprod{M,xx^\top}| \leq r$ for at least a $0.51$-fraction of $Z_1,\ldots,Z_k$.
Then (ignoring some technical details regarding truncation of large samples) one may design a nearly information-theoretically optimal covariance estimator for random vectors $X$ with bounded $4$th moments as follows.
Given samples $X_1,\ldots,X_n$, as before, place them in $\approx \log(1/\delta)$ buckets.
Let $\Sigma_i$ be the empirical covariance in bucket $i$, and output an $r$-median of $\Sigma_1,\ldots,\Sigma_{\Theta(\log(1/\delta))}$ for the least $r$ for which such an $r$-median exists.

\paragraph{How to Compute a Median in High Dimensions}
The next hurdle is computational: naive algorithms to compute the medians described above would seem to require exponential time in $n$ or $d$.
Hopkins \cite{hopkins2018sub} uses the sum of squares method to compute the relevant median for mean estimation in $\ell_2$.
Our main technical contribution for covariance estimation is an algoirthm to compute the relevant median \emph{for values of $r$ somewhat larger (hence making finding the median easier) than information-theoretically optimal (but exponential time) algorithms would do}.
We are able to analyze our algorithm only in the average-case setting that $Z_1,\ldots,Z_k$ whose median we wish to find are the empirical covariances of bucketed independent samples $X_1,\ldots,X_n$.

The key difficulty in computing a median is knowing when we have found one.
We first aim to solve a simpler \emph{certification} problem.
Suppose given $\Sigma_1,\ldots,\Sigma_k$ which are the empirical covariances of independent bucketed copies $X_1,\ldots,X_n$ of a random vector $X$ with covariance $\Sigma$, \emph{and suppose also given $\Sigma$}.
How can we \emph{certify}, for as small a value of $r$ as possible, that $\Sigma$ is an $r$-median of $\Sigma_1,\ldots,\Sigma_k$?
That is, we aim to find a certificate that for all unit directions $u$ we have $|\iprod{\Sigma_i,uu^\top} - \iprod{\Sigma,uu^\top}| \leq r$ for at least a $0.51$-fraction of $\Sigma_1,\ldots,\Sigma_k$.
To leverage the power of the median-of-means trick to obtain estimators whose error is small with high probability, we need to successfully find such a certificate with high probability, $1-2^{-k}$.
(This need for a high-probability guarantee will play the same role in the algorithmic and high-dimensional context as the simple binomial concentration bound does in the one-dimensional median-of-means estimator.)

To certify that $\Sigma$ is an $r$-median for $\Sigma_1,\ldots,\Sigma_k$ we start by setting up an optimization problem in variables $b_1,\ldots,b_k \in \{0,1\}^k$ and $u \in \R^d$ with $\|u\|^2 = 1$.
\begin{align}\label{eq:opt-tech}
\max \sum_{i \leq k} b_i \text{ s.t. } b_i \iprod{\Sigma_i - \Sigma, uu^\top} \geq b_i r, \|u\|^2 = 1, b_i^2 = b_i\mper
\end{align}
Notice that a feasible solution of value $0.52k$ to the above problem corresponds to a subset of $0.52k$ of $\Sigma_1,\ldots,\Sigma_k$ and a unit direction $u$ such that for all $\Sigma_i$ in the subset, $|\iprod{\Sigma_i, uu^\top} - \iprod{\Sigma, uu^\top}| \geq r$.
\emph{Ruling out} such solutions (i.e. placing an upper bound on the value of the optimization problem) would thus certify that $\Sigma$ is an $r$-median (ignoring some small technical issues about the sign of $\iprod{\Sigma_i - \Sigma, uu^\top}$).

We will pass to an efficiently-computable convex relaxation of the optimization problem above.
In particular, we use the degree-$8$ Sum of Squares (SoS) semidefinite programming relaxation of \cref{eq:opt-tech}.
Sum of Squares semidefinite programs are convex relaxations of polynomial optimization problems -- they have seen extensive recent use in algorithm design for high-dimensional statistics. (See e.g. \cite{raghavendra2018high,hopkins2018statistical}.)
Roughly speaking, to show that SoS SDPs can efficiently certify a bound on the optimum of the above optimization problem, we need to prove such an upper bound \emph{using only arguments involving low-degree polynomials in $u,b_i$}.
Now we sketch that proof, which is the technical heart of our algorithm for covariance estimation.

First, we show by applying a bounded-differences concentration inequality to the value of the SoS SDP that the optimum value of the relaxation of \cref{eq:opt-tech} concentrates around its expectation with high probability $(1-2^{-k})$.
(This bounded-differences step appears in the non-algorithmic context in \cite{LM18} and in the algorithmic context in \cite{hopkins2018sub}.)
Then we bound the expected value of the above problem via
\[
  \sum_{i\leq k} b_i \leq \frac 1 r \sum_{i \leq k} b_i \iprod{\Sigma_i - \Sigma, uu^\top} \leq \frac 1 r \cdot \sqrt{k} \cdot \Paren{\sum_{i \leq k} \iprod{\Sigma_i - \Sigma, uu^\top}^2}^{1/2}\mcom
\]
where we have used Cauchy-Schwarz.

The polynomial on the right-hand side is a degree-$4$ polynomial in $u$ with random coefficients; the goal is to upper bound its expected maximum on the unit sphere (via an argument which applies also to the SoS relaxation, which rules out standard approaches using $\e$-nets).
In fact, since we need the bound $0.51k$ on the $\sum_{i \leq k} b_i$, we will eventually take $r$ large enough to compensate for whatever is our bound on $\sum_{i \leq k} \iprod{\Sigma_i - \Sigma,uu^\top}^2$.
We want to keep $r$ small, so we want the tighest bound possible.

Note that $\sum_{i \leq k} \iprod{\Sigma_i - \Sigma,uu^\top}^2$ is a sum of i.i.d. random polynomials.
A standard approach to analyze the performance of SoS for such random polynomials is to first ``unfold'' the polynomial to a matrix (in this case $\sum_{i \leq k} (\Sigma_i - \Sigma)^{\tensor 2}$) and then use matrix concentration inequalities to analyze the maximum eigenvalue of this random matrix.
Such eigenvalue bounds will also apply to the SoS relaxation we work with in the end.

We use a similar approach, with a key technical twist: in previous applications of this idea, it was usually necessary to have an explicit expression for $\E M$, where $M$ is the random matrix analogous to $(\Sigma_i - \Sigma)^{\tensor 2}$, and typically also for its inverse, in order to correctly ``precondition'' the random matrix before analyzing its top eigenvalue.
Such an explicit representation would be easily accessible if the underlying data $X$ were Gaussian or had independent coordinates, for example, which was the case in previous applications of SoS to random degree-$4$ polynomials.
We do not have this luxury, since we only make the niceness assumption on the underlying random vector $X$.

Nonetheless, we are able to carry out the preconditioning strategy (which removes spurious large eigenvalues of $(\Sigma_i - \Sigma)^{\tensor 2}$) for any nice random variable $X$.
Along the way we prove a new (albeit simple) SoS Bernstein inequality which may be of independent use (and in particular allows for simplifed proofs of some previous applications of SoS to random degree-$4$ polynomials -- e.g. that of \cite{BarakBHKSZ12}).
See \cref{thm:sosmatbern} for the SoS Bernstein inequality and \cref{lem:p-main} for our application to the random polynomial $\sum_{i \leq k} \iprod{\Sigma_i - \Sigma, uu^\top}^2$.

\paragraph{Certification to Search} Using similar techniques as \cite{cherapanamjeri2019fast} developed for the case of $\ell_2$ mean estimation, we turn our certification into an algorithm to \emph{find} an $r$-median.
Suppose that instead of knowing the true covariance $\Sigma$ as above, in its place we have some guess $M \in \R^{d \times d}$.
If the certification algorithm certifies that $M$ is a median, then we can output $M$ as our estimator for $\Sigma$.
If not, we show that by rounding the above SoS relaxation we can instead update $M$ to make it closer to $\Sigma$ -- we can replace it with $M + \Delta$ such that $\|M+\Delta - \Sigma\| \ll \|M-\Sigma\|$.

\subsection{Related Work}

\paragraph{Robust Statistics}
The questions we address here are distinct from those addressed by a recent flurry of algorithmic work in \emph{robust} statistics \cite{DBLP:conf/focs/DiakonikolasKK016, DBLP:conf/focs/LaiRV16} (see also \cite{li2018principled, steinhardt2018robust} for further references).
In the latter setting, one studies statistics when the list of samples $X_1,\ldots,X_n$ contains a small constant fraction $\e$ of adversarially-chosen outliers, and the primary focus is on achieving statistical error nearly as small as would be achieved by the classical estimators when $\e = 0$.
By contrast, \emph{our goal is to beat the error rate of the classical estimators when Gaussianity is violated}.
One consequence is that we give estimators which come with small confidence intervals even for error probabilities as low as $2^{-d}$; this high-probability regime is not addressed by the adversarial corruptions model.\footnote{One recent work, \cite{lecue2019robust}, shows that while the adversarial robustness model and the ones we consider here are incomparable, under some circumstances the same algorithm can give information-theoretically optimal estimates in both models. This work, however, does not address covariance estimation or linear regression -- it is an interesting direction to understand to what extent algorithms for covariance estimation and linear regression can perform well across different models.}

\paragraph{Median of Means}
In heavy-tailed (constantly-many moments exist) settings, estimators based on empirical averages typically have poor statistical performance, because they are sensitive to large outliers.
Our work falls in a long line which develop the \emph{median of means} technique for high-probability estimators in the face of heavy tails.
The median of means framework was first developed to estimate univariate heavy-tailed random variables \cite{NemYud83, jerrum1986random, alon1999space}.
Recent extensions to the multivariate case typically have two flavors: they are polynomial-time computable (e.g. \cite{HsuSab16,lerasle2011robust,Min18}) but statistically suboptimal, or statistically optimal (\cite{lugosi2017sub, lugosi2018near, lugosi2016risk}) but apparently require exponential computation time.
The first major exceptions to this rule came in 2018, starting with a polynomial-time statistically-optimal algorithm for mean estimation in $\ell_2$ \cite{hopkins2018sub}.
Because of reliance on high-degree sum of squares semidefinite programs, this algorithm has an enormous polynomial running time.
The subsequent work \cite{cherapanamjeri2019fast} brought the running time much closer to practicality by replacing some of the sum of squares tools with a gradient-descent style algorithm.
(\cite{lei2019fast,lecue2019robust} brought the running times down even further.)
The present work builds substantially on ideas from both these papers.

\paragraph{Covariance Estimation}

There is a long and rich literature on the problem of covariance estimation (see \cite{fan2016overview} for an expository review). However, strong high-confidence guarantees for many such estimators rely on the assumption that the samples are drawn from a sub-Gaussian distribution.
The problem of robustly estimating covariance only assuming boundedness of low-order moments on the underlying distribution has also received attention; however many rigorous theoretical results in this vein are either asymptotic (i.e. concern only the $n\rightarrow \infty$ limit for fixed dimensions $d$) and/or often impose strong parametric assumptions on the underlying distribution (i.e. requiring elliptical symmetry). See \cite{tyler1987distribution, fan2016overview} for example, for a coverage of several such results.

The state-of-the-art results for the problem we consider here have been recently achieved in the works of \cite{minsker2018robust} and \cite{mendelson2018robust}. These results have come in two flavors, paralleling recent work in the problem of heavy-tailed mean estimation: \cite{minsker2018robust}[Corollary 4.1] provides computationally-efficient but information-theoretically suboptimal estimators while \cite{mendelson2018robust}[Theorem 1.9] provides statistically-optimal estimators that require exponential time (in $n, d$) to compute. 


\paragraph{Linear Regression}
Like covariance estimation, linear regression is an old and well-studied topic and a thorough survey is out of the scope of this paper.
Regression in the heavy-tailed and high-dimensional setting has been studied via the median-of-means framework in \cite{lugosi2016risk,hsu2016loss,lugosi2017regularization}.
There are also efficient outlier-robust algorithms for linear regression which use techniques besides median-of-means estimation -- for instance, the iterative methods of \cite{suggala2019adaptive} -- but none are yet known to achieve information-theoretically optimal error. 
In particular we are not aware of any which improve on the guarantees of \cite{hsu2016loss} in our setting, while our algorithms offer $\poly(d)$ improvements on the error rates of \cite{hsu2016loss}.


\paragraph{Sum of Squares Algorithms for High-Dimensional Statistics}
There has been a significant amount of recent work using the sum of squares (SoS) semidefinite programming hierarchy to design computationally efficient algorithms for unsupervised learning problems (see \cite{RSS18} for a survey).
By now, SoS algorithms are the only ones known which gives state-of-the-art statistical performance among polynomial-time algorithms for a wide range of problems: dictionary learning, tensor decomposition, high-dimensional clustering, robust parameter estimation and regression, and more \cite{DBLP:conf/stoc/BarakKS15,hopkins2018mixture,kothari2018robust,DBLP:conf/focs/MaSS16,klivans2018efficient,DBLP:conf/colt/BarakM16}.

We note that one of our techniques for exploiting $8$-th moments is inspired by a certain approach to using the Cauchy-Schwarz inequality in SoS proofs for bounding degree-$3$ random polynomials by degree-$4$ random polynomials.
This technique is in turn inspired by refutation algorithms for random constraint satisfaction problems \cite{DBLP:journals/toc/FeigeO07}, and has been used in the design of SoS algorithms for several learning problems \cite{ge2015decomposing,DBLP:conf/colt/HopkinsSS15,DBLP:conf/colt/BarakM16}.
We also note that the certify-or-gradient paradigm used by our algorithms, where gradients are furnished by solving SDPs, has previously appeared in robust and heavy-tailed mean estimation \cite{cheng2019high,cherapanamjeri2019fast}; these works do not combine this technique with SoS SDPs of degree greater than $2$.

Our algorithms using the SoS hierarchy run in polynomial time, but because of their reliance on solving large semindefinite programs they are impractical.
However, numerous slow-but-polynomial-time SoS algorithms for high-dimensional statistics have led to algorithms with practical nearly-linear running times \cite{schramm2017fast,dong2019quantum,DBLP:conf/stoc/HopkinsSSS16,lecue2019robust,hopkins2019robust,cherapanamjeri2019fast}.
We therefore hope that additional investigation can lead to SoS-inspired and practical algorithms with improved guarantees for heavy-tailed covariance estimation and regression.


\paragraph{Certifiable Hypercontractivity}
Our algorithms for covariance estimation and linear regression assume the underlying random vector $X$ is $(2,8)$ certifiably hypercontractive.
The certifiable hypercontractivity assumption was introduced in \cite{kothari2018robust,hopkins2018mixture} where it was used in designing algorithms for robust estimation and mixture model clustering.
It has been used in the context of regression by \cite{klivans2018efficient}.
Previous work using certifiable hypercontractivity assumptions (for example in clustering mixture models) typically assumed the presence of a $\poly(d)$-factor more samples than information-theoretically necessary in order to ensure the convergence of empirical moments to these population averages.
Since we are interested in fine-grained questions about the number of samples required to achieve certain rates of statistical error, a major portion of the technical work in our paper is to show that SoS algorithms can exploit structure in the population moments even with relatively few samples.
\cite{hopkins2019hard,DBLP:conf/stoc/BarakBHKSZ12} investigate computational hardness questions surrounding certifiable hypercontractivity.

\section{Preliminaries}

We write $\|M\|_2$ for the spectral norm of a matrix $M$, $\|M\|_1$ for its nuclear norm, and $\|M\|_F$ for its Frobenius norm.
The notation $\iprod{v,w}$ always indicates the Euclidean inner product of vectors, and for matrices $A,B$ we write $\iprod{A,B} = \Tr A B^\top$.

\subsection{SoS Basics}
We refer the reader to \cite{sos-notes-general} for most basic SoS definitions and discussion; we note here just a few pieces of notation.
If $p,q$ are polynomials, we write $p \preceq q$ to denote that $q - p$ is a sum of squares.
If $\cA$ is a set of polynomial inequalities, we write $\cA \proves_t p \geq 0$ to indicate that there is a degree-$t$ SoS proof that $p \geq 0$ using axioms $\cA$.

\subsection{Certifiably Hypercontractive Distributions}
Our algorithms for regression and covariance estimation will work for a class of distributions for which SoS can certify upper bounds on low-order moments.
In particular, we make the following definition:

\begin{definition}[Certifiable $(2,8)$ Hypercontractivity]
\label[definition]{def:cert-hyper}
  Let $X$ be a mean-zero random variable on $\R^d$.
  We say that $X$ is $L$-certifiably $(2,8)$-hypercontractive for some number $L > 0$ if
  \[
  \E \iprod{X,u}^8 \preceq L^2 \cdot (\E \iprod{X,u}^2)^4
  \]
  where left and right-hand sides are polynomials in $u$.
  We make the analogous definition for certifiable $(2,4)$-hypercontractivity.
\end{definition}

Our covariance estimation and regression algorithms will concern the following class of distributions.

\begin{definition}[Nice Distributions]
\label[definition]{def:nice-dist}
  We say a mean-zero random vector $X$ on $\R^d$ is $L$-nice if it has $\E X = 0$ and it is $L$-certifiably $(2,8)$-hypercontractive and $L$-certifiably $(2,4)$-hypercontractive.
\end{definition}
\subsection{Random Matrices}
We state the matrix Bernstein inequality which we repeatedly use throughout,

\begin{lemma}[Matrix Bernstein -- see \cite{DBLP:journals/focm/Tropp12}]
\label[lemma]{lem:matrix-bernstein}
  Let $S_1,\ldots,S_n$ be independent symmetric random $d \times d$ matrices.
  Suppose that each has $\normt{S_k - \E S_k} \leq R$ with probability $1$.
  Let $Z = \sum_{i=1}^k S_i$, and let
  \[
  \sigma^2 = \normt{\E Z^2 - (\E Z)^2 }\mper
  \]
  Then
  \[
  \E \normt{Z - \E Z} \leq \sqrt{2 \sigma^2 \log(2d)} + \frac 1 3 R \log(2d)\mper
  \]
\end{lemma}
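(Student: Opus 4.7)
The plan is to reduce to the standard matrix Laplace transform / trace exponential machinery and optimize a scalar parameter. First I would center the summands: by replacing $S_k$ with $S_k - \E S_k$, I can assume without loss of generality that $\E S_k = 0$, so $Z - \E Z = Z$ and the problem becomes bounding $\E \|Z\|$ with $\|S_k\| \le R$ almost surely and $\sigma^2 = \|\sum_k \E S_k^2\|$.

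The core inequality to invoke is the operator-valued moment generating function bound: for any $\theta > 0$,
\begin{equation*}
\E \tr \exp(\theta Z) \;\le\; \tr \exp\!\Big(\sum_k \log \E \exp(\theta S_k)\Big),
\end{equation*}
which follows from Lieb's concavity theorem (or equivalently the Ahlswede--Winter argument). Next, for each $k$, since $\|S_k\| \le R$ and $\E S_k = 0$, I would apply the scalar bound $e^x \le 1 + x + g(\theta) x^2$ valid on $[-\theta R, \theta R]$, with $g(\theta) = (e^{\theta R} - 1 - \theta R)/(\theta R)^2$, in its matrix form: this yields $\E \exp(\theta S_k) \preceq I + \theta^2 g(\theta) \E S_k^2$, and then via $\log(I + A) \preceq A$ for $A \succeq 0$, I get $\log \E \exp(\theta S_k) \preceq \theta^2 g(\theta) \E S_k^2$. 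Summing and taking spectral norm gives $\E \tr \exp(\theta Z) \le d \cdot \exp(\theta^2 g(\theta) \sigma^2)$.

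From here, I would convert trace-MGF control into control of $\E \|Z\|$ by observing that $\|Z\| = \max(\lambda_{\max}(Z), \lambda_{\max}(-Z))$ and using, for each of the $\pm$ signs, the inequality $\exp(\theta \E \lambda_{\max}(Z)) \le \E \exp(\theta \lambda_{\max}(Z)) \le \E \tr \exp(\theta Z)$ (Jensen plus nonnegativity of the spectrum of $\exp(\theta Z)$). Taking logarithms yields
\begin{equation*}
\E \|Z\| \;\le\; \frac{\log(2d)}{\theta} + \theta g(\theta) \sigma^2.
\end{equation*}
The final step is a scalar optimization over $\theta > 0$. Using the elementary estimate $g(\theta) \le 1/(2 - 2\theta R/3)$ for $\theta R < 3$ (equivalently $\theta^2 g(\theta) \le \theta^2/2 + \theta^3 R/6 + \cdots$), the right-hand side becomes $\log(2d)/\theta + \theta \sigma^2 / (2 - 2\theta R/3)$, and choosing $\theta = \sqrt{2\log(2d)/\sigma^2}$ in the variance-dominated regime, and $\theta \asymp 1/R$ in the range-dominated regime, separates cleanly into the two summands $\sqrt{2 \sigma^2 \log(2d)}$ and $\tfrac{1}{3} R \log(2d)$ stated in the lemma. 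The main obstacle is the Lieb-concavity step, which I would simply cite from Tropp's treatment; the remaining calculations are routine scalar analysis.
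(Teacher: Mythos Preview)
The paper does not prove this lemma at all: it is stated in the preliminaries and attributed directly to Tropp via the citation, with no argument given. So there is nothing to compare against on the paper's side.

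Your sketch is the standard Tropp proof and is essentially correct. One small point of exposition: the step where you pass from the trace--MGF bound to a bound on $\E\|Z\|$ is slightly glossed. Treating the two signs separately and then writing a single $\log(2d)$ is fine, but the clean justification is that $\theta\|Z\| \le \log\bigl(\tr e^{\theta Z} + \tr e^{-\theta Z}\bigr)$ pointwise, after which Jensen (concavity of $\log$) gives $\E\|Z\| \le \theta^{-1}\log\bigl(\E\tr e^{\theta Z} + \E\tr e^{-\theta Z}\bigr) \le \theta^{-1}\log(2d) + \theta g(\theta)\sigma^2$; this is where the factor of $2$ in $\log(2d)$ comes from. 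Your phrasing ``for each of the $\pm$ signs'' followed by a single Jensen application is a bit ambiguous about the order of operations, but the conclusion is right. The scalar optimization at the end is routine, as you say.
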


\section{Degree-$8$ SoS for Certifiable Distributions}\label{sec:random_poly}

In this section we state and prove a key bound on the expected maximum value of degree-$8$ SoS relaxations of certain random polynomial optimization problems on the unit sphere.
We will use this bound to control the expected error of both our covariance and linear regression estimators.

\paragraph{Setup and truncation}
Let $v$ be a $d$-dimensional $L$-nice mean-zero random vector with covariance $\Sigma$.
For $\alpha > 0$, we define $\tv = v \cdot \Ind(\|v\| \leq \alpha)$ as the $\alpha$-truncation of $v$.
Let $\tSigma$ be the second moment matrix of $\tv$, $\tSigma = \E[\tv \tv^\top]$.

Let $v_1,\ldots,v_n$ be i.i.d. copies of $v$.
Let $B_1,\ldots,B_k$ partition $[n]$ into $k$ equal-sized buckets, and for each $i \leq k$ let
\[
  Z_i = \frac{1}{m} \sum_{j \in B_i} \tv_j \tv_j^\top - \tSigma\mcom
\]
where $m=n/k$.
For $\tau \in \R$, let $\tZ_i = Z_i \cdot \Ind(\|Z_i\|_2 \leq \tau)$ be the $\tau$-truncation of $Z_i$.

\begin{definition}
We study the following polynomial $p_{v_1,\ldots,v_n}(u)$ in variables $u = (u_1,\ldots,u_d)$:
\begin{align}
  \label{eq:p-def}
  p_{v_1,\ldots,v_n}(u) = \sum_{i \leq k} \iprod{\tZ_i \tensor \tZ_i, uu^\top \tensor uu^\top} = \sum_{i \leq k} \iprod{u, \tZ_i u}^2\mper
\end{align}
Notice that $\iprod{u,\tZ_i u} = \frac{1}{m} \sum_{j \in B_i} \iprod{u,\tv_j}^2 - \E \iprod{u,\tv}^2$, so the polynomial $p$ measures the squared deviations of the quadratic forms of $\tZ_1,\ldots,\tZ_k$ in the direction $u$.
\end{definition}

\noindent Our main lemma gives an upper bound on $p$, and shows furthermore that this bound can be certified by a degree-$8$ SoS SDP.


\begin{lemma}
  \label[lemma]{lem:p-main}
  Let $v_1,\ldots,v_n \sim \cD$ be i.i.d and assume the distribution $\cD$ satisfies \cref{def:nice-dist}.
  Then for $p$ as in \eqref{eq:p-def},
  \begin{align*}
  & \E_{v_1,\ldots,v_n} \Brac{ \max_{\pE_u} \pE_u p(u) } \\
  & \leq O\Paren{\frac 1 n \cdot k^{3/2} \cdot L \cdot \Tr \Sigma \cdot \|\Sigma\|_2 \cdot \sqrt{\log d} } + O \Paren{ \frac{k^2}{n} \cdot L \cdot \|\Sigma\|_2^2} + O \Paren{\tau^2 \cdot \log d}\mper
  \end{align*}
  where the maximum is over all degree-$8$ pseudodistributions in variables $u = (u_1,\ldots,u_d)$ satisfying $\{\|u\|^2 = 1\}$.
\end{lemma}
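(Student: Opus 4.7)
My plan is to decompose $p(u) = \E p(u) + (p(u) - \E p(u))$ as a polynomial in $u$, and to bound each piece after taking pseudoexpectations and then the expectation over samples. Since $\E p$ is deterministic,
\begin{align*}
\E_{v_1,\dots,v_n}\Brac{\max_{\pE_u} \pE_u p(u)} \leq \max_{\pE_u} \pE_u \E p(u) + \E_{v_1,\dots,v_n}\Brac{\max_{\pE_u} \pE_u(p - \E p)}.
\end{align*}
The first term will be controlled using certifiable hypercontractivity and will produce the $O(k^2 L \|\Sigma\|_2^2/n)$ summand; the second, bounded via matrix Bernstein on a $d^2 \times d^2$ unfolding of $p$, will produce the $O(k^{3/2} L \Tr\Sigma \cdot \|\Sigma\|_2 \sqrt{\log d}/n) + O(\tau^2 \log d)$ summands.

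For the expectation piece, since $Z_1$ is an average of $m$ i.i.d.\ centered rank-one matrices, I would chain the SoS inequalities
\[
\E\iprod{u, \tZ_1 u}^2 \preceq \E\iprod{u, Z_1 u}^2 = \tfrac{1}{m}\Var(\iprod{u,\tv}^2) \preceq \tfrac{1}{m}\E\iprod{u,\tv}^4 \preceq \tfrac{1}{m}\E\iprod{u,v}^4 \preceq \tfrac{L (u^\top \Sigma u)^2}{m} \preceq \tfrac{L \|\Sigma\|_2^2}{m}\|u\|^4,
\]
where the steps in order are: the truncation $Z_i \mapsto \tZ_i$ is a $\{0,1\}$-scaling (SoS-certifiable); subtracting the SoS term $(\E\iprod{u,\tv}^2)^2$; the truncation $v \mapsto \tv$ contributes the PSD matrix $\E[v^{\otimes 2}(v^{\otimes 2})^\top \cdot \Ind(\|v\|>\alpha)]$ acting on $u^{\otimes 2}$; $L$-certifiable $(2,4)$-hypercontractivity of $v$; and factoring $\|\Sigma\|_2^2\|u\|^4 - (u^\top\Sigma u)^2 = u^\top(\|\Sigma\|_2 I - \Sigma)u \cdot u^\top(\|\Sigma\|_2 I + \Sigma)u$ as a product of two SoS quadratics. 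Summing over $k$ buckets and using $\pE\|u\|^4 = 1$ bounds this piece by $k^2 L\|\Sigma\|_2^2/n$.

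For the fluctuation piece, unfold $p(u) = \iprod{M, (u^{\otimes 2})(u^{\otimes 2})^\top}$ where $M = \sum_{i=1}^k \tZ_i \otimes \tZ_i$ is a symmetric $d^2 \times d^2$ random matrix. The pseudomoment matrix $\pE[(u^{\otimes 2})(u^{\otimes 2})^\top]$ is PSD (it equals $\pE[ww^\top]$ for $w = u^{\otimes 2}$) with trace $\pE\|u\|^4 = 1$, so
\[
\|M - \E M\|_2 \cdot \|u\|^4 - (p - \E p) = (u^{\otimes 2})^\top\bigparen{\|M - \E M\|_2 I - (M - \E M)}(u^{\otimes 2})
\]
is an SoS quadratic form in $u^{\otimes 2}$, giving $\pE(p - \E p) \leq \|M - \E M\|_2$. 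I would then apply \cref{lem:matrix-bernstein} to the independent summands $M_i = \tZ_i\otimes\tZ_i$, with a.s.\ bound $R \leq \tau^2$ (since $\|M_i\|_2 = \|\tZ_i\|_2^2 \leq \tau^2$) and variance proxy $\sigma^2 = k\|\E M_1^2\|_2$. The key scalar estimate $\|\E Z_1^2\|_2 \leq \tfrac{1}{m}\|\E[\|\tv\|^2 \tv \tv^\top]\|_2 \leq \tfrac{L\|\Sigma\|_2 \Tr\Sigma}{m}$ follows by bounding $\E[\iprod{u,v}^2\|v\|^2] = \sum_j \E[\iprod{u,v}^2\iprod{w_j,v}^2]$ over an orthonormal basis via Cauchy-Schwarz and $(2,4)$-hypercontractivity.

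The main obstacle is the variance-proxy bound $\|\E M_1^2\|_2 = \|\E[\tZ_1^2\otimes\tZ_1^2]\|_2 \lesssim \|\E\tZ_1^2\|_2^2 \leq (L\|\Sigma\|_2\Tr\Sigma/m)^2$: the naive bound $\tZ_1^2 \otimes \tZ_1^2 \preceq \tau^2(I \otimes \tZ_1^2)$ yields only $\tau^2\|\E\tZ_1^2\|_2$, which misses the target $k^{3/2}/n$ rate by a $\sqrt{k}$ factor and introduces spurious $\tau$-dependence in the first summand. Recovering the correct rate requires the covariance decomposition $\E[\tZ_1^2\otimes\tZ_1^2] = (\E\tZ_1^2)^{\otimes 2} + \E[(\tZ_1^2 - \E\tZ_1^2)^{\otimes 2}]$ and showing that the second (centered) term is of lower order using fourth-moment control on $\tZ_1$ — this is where the $(2,8)$-hypercontractivity built into the niceness assumption enters (controlling eighth powers of $\iprod{u,v}$).
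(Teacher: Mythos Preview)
Your decomposition into expectation plus fluctuation and your treatment of the expectation term are correct and match the paper's approach (the paper routes through $(2,8)$-hypercontractivity via pseudoexpectation Cauchy--Schwarz rather than $(2,4)$ directly, but both arrive at $O(k^2 L\|\Sigma\|_2^2/n)$).

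The fluctuation bound, however, has a genuine gap. You propose to bound $\pE(p - \E p) \leq \|M - \E M\|_2$ and then apply the standard matrix Bernstein inequality, which requires controlling the \emph{spectral norm} $\|\E[\tZ_1^2 \otimes \tZ_1^2]\|_2$. But this spectral norm is taken over all unit $w \in \R^{d^2}$, not just rank-one $w = u \otimes u$, and on non-rank-one directions it is genuinely larger than the target $(L\,\Tr\Sigma\,\|\Sigma\|_2/m)^2$. Concretely, expanding $Z_1 = \tfrac{1}{m}\sum_j W_j$ with $W_j = \tv_j\tv_j^\top - \tSigma$, the $1/m^3$ summand in $\E[Z_1^2\otimes Z_1^2]$ is $\tfrac{1}{m^3}\E[(W^2)^{\otimes 2}]$; testing it against $w = d^{-1/2}\mathrm{vec}(\Id)$ gives $\tfrac{1}{dm^3}\E\Tr(W^4) \gtrsim \tfrac{1}{dm^3}\E\|v\|^8 \approx \tfrac{L^2(\Tr\Sigma)^4}{dm^3}$. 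For $\Sigma = \Id$ this is $L^2 d^3/m^3$, which exceeds the target $L^2 d^2/m^2$ whenever $m < d$ --- precisely the regime of interest (e.g.\ $k \sim d$, $n \sim d^{3/2}$, so $m \sim \sqrt d$). Your proposed fix (the covariance decomposition with the centered term ``of lower order'') does not rescue this: the centered piece $\E[(\tZ_1^2 - \E\tZ_1^2)^{\otimes 2}]$ is \emph{not} lower order in spectral norm, and carries exactly this bad $1/m^3$ contribution.

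The paper's resolution is different in kind: rather than bounding $\|M - \E M\|_2$, it proves an \emph{SoS matrix Bernstein} inequality (\cref{thm:sosmatbern}). One preconditions by $A = \tfrac{1}{\sigma}\E[M_1^2] + \sigma I$, applies ordinary matrix Bernstein to $A^{-1/2}(\sum_i M_i)A^{-1/2}$, and then uses the PSD relation $\sum_i M_i \preceq \|A^{-1/2}(\sum_i M_i)A^{-1/2}\|_2 \cdot A$ together with $\pE\iprod{u^{\otimes 2}, A\,u^{\otimes 2}} \leq 2\sigma$. The crucial point is that only the \emph{SoS variance} bound $\E\iprod{u^{\otimes 2}, M_1^2\, u^{\otimes 2}} \preceq \sigma^2\|u\|^4$ is needed --- a bound on the quadratic form restricted to rank-one directions $u^{\otimes 2}$ --- and this is exactly what certifiable $(2,8)$-hypercontractivity delivers (\cref{lem:sosmvbound}) via $\pE\E[\|v\|^4\iprod{u,v}^4] \leq (\E\|v\|^8)^{1/2}(\pE\E\iprod{u,v}^8)^{1/2}$. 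This preconditioning step is the missing idea, and it is why degree-$8$ pseudodistributions (rather than degree-$4$) appear in the statement.
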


Our key technical tool to obtain a sharp bound on the random fluctuations of the polynomial $p$ in the proof of \cref{lem:p-main} is a sum-of-squares generalization of the matrix Bernstein inequality.
We think this inequality may be of independent interest.

\begin{theorem}[SoS Matrix Bernstein]
    \label{thm:sosmatbern}
  Let $M_i \in \R^{d^r \times d^r}$ be a sequence of i.i.d., mean-zero, random symmetric matrices satisfying the conditions,
  \begin{itemize}
      \item $\norm{M_i} \leq R$ almost surely,
      \item $\E[\langle u^{\otimes r}, M_i^2 u^{\otimes r} \rangle] \preceq \sigma^2 \norm{u}^{2r}$ where the left and right-hand sides are polynomials in $u$.
  \end{itemize}
  Then,
  \begin{align}
      \E \Brac{ \max_{\tE} \tE \Brac{ \sum_{i \leq k} \langle u^{\otimes r}, M_i u^{\otimes r} \rangle } } \leq \left( \frac{2 (\log(2) + r \log d)}{3} \cdot R + 2\sqrt{2k (\log(2) + r \log d)} \cdot \sigma \right)
  \end{align}
  where the maximum is taken over all degree-$2r$ pseudoexpectations satisfying the polynomial inequality $\{ \norm{u}^2=1 \}$.
\end{theorem}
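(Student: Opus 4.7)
The plan is to mimic the classical Lieb--Tropp proof of matrix Bernstein (see \cite{DBLP:journals/focm/Tropp12}) inside the pseudoexpectation framework. The first step is a duality: every degree-$2r$ pseudoexpectation $\tE$ satisfying $\|u\|^2 = 1$ determines a matrix $X_{\tE} := \tE[u^{\otimes r}(u^{\otimes r})^\top] \in \R^{d^r \times d^r}$ that is PSD with trace $\tE\|u\|^{2r} = 1$ and supported on the symmetric subspace $\mathrm{Sym}^r(\R^d) \subseteq (\R^d)^{\otimes r}$ of dimension at most $d^r$. Writing $S := \sum_{i \leq k} M_i$, we then have $\tE\langle u^{\otimes r}, S u^{\otimes r}\rangle = \langle S, X_{\tE}\rangle$, so the target quantity equals $\E \max_X \langle S, X\rangle$, where $X$ ranges over the convex set of such pseudomoment matrices.

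Next I would convert the polynomial SoS variance hypothesis into a scalar variance bound on each linear functional $\langle M_i, X\rangle$ via trace Cauchy--Schwarz: for any PSD $X$ with $\tr(X) = 1$,
\begin{equation*}
\E \langle M_i, X\rangle^2 \;=\; \E \tr(M_i X)^2 \;\leq\; \E \tr(X M_i^2) \;=\; \langle \E M_i^2, X\rangle \;\leq\; \sigma^2,
\end{equation*}
where the first inequality is Cauchy--Schwarz for the trace inner product (using $\tr(X) = 1$) and the last step applies the SoS hypothesis to the pseudoexpectation associated with $X$. Combined with $|\langle M_i, X\rangle| \leq \|M_i\| \cdot \tr(X) \leq R$, this supplies the standard scalar Bernstein ingredients (almost-sure bound $R$, variance $\sigma^2$) for the scalar sum $\langle S, X\rangle$ at each individual $X$.

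The main step is a trace-exponential/Lieb argument on the symmetric subspace. Using $\max_X \langle S, X\rangle \leq \lambda_{\max}(\Pisym S \Pisym)$, for any $\theta \in (0, 3/R)$ the matrix Chernoff bound gives $\Pr(\max_X \langle S, X\rangle > t) \leq e^{-\theta t}\, \E \tr_{\mathrm{sym}} \exp(\theta \sum_i \Pisym M_i \Pisym)$; Lieb's concavity theorem and the matrix Bernstein MGF bound $\log \E e^{\theta Y} \preceq g(\theta)\, \E Y^2$ with $g(\theta) = \theta^2/(2(1-R\theta/3))$ reduce this to controlling $\tr_{\mathrm{sym}} \exp\bigl(k g(\theta)\, \Pisym \E M_1^2 \Pisym\bigr)$. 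Bounding the trace using $\dim(\mathrm{Sym}^r(\R^d)) \leq d^r$ together with the SoS variance reduction of the previous paragraph, then optimizing $\theta$ over the two Bernstein regimes ($R\theta$ small vs.\ large) and integrating the resulting tail bound, recovers the claimed two-term bound $\tfrac{2(\log 2 + r\log d)}{3} R + 2\sqrt{2k(\log 2 + r\log d)}\,\sigma$.

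The main obstacle is this last step: the SoS hypothesis controls only polynomial evaluations $\langle u^{\otimes r}, \E M_i^2 u^{\otimes r}\rangle$, not the operator norm of $\Pisym \E M_i^2 \Pisym$ that ordinarily appears in the standard Lieb argument on $\mathrm{Sym}^r(\R^d)$. Reconciling the two requires threading the scalar Cauchy--Schwarz identity from the previous paragraph \emph{inside} the Laplace transform, so that the final log-MGF bound inherits the polynomial variance $\sigma^2$ (rather than the potentially much larger matrix operator variance) while still exploiting the dimension $d^r$ of the symmetric subspace to produce the $r \log d$ factor.
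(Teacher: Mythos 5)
There is a genuine gap at exactly the step your proposal flags as the ``main obstacle,'' and the obstacle is not a technicality that can be patched by bookkeeping: once you relax $\max_{\tE}\tE\iprod{u^{\otimes r}, S u^{\otimes r}}$ to $\lambda_{\max}(\Pisym S \Pisym)$ and run the Lieb/Tropp trace-exponential argument on the symmetric subspace, the log-MGF bound is necessarily governed by the \emph{operator norm} $\norm{\Pisym \E M^2 \Pisym}$ of the projected matrix variance, and the hypothesis of the theorem gives no control on this quantity. The SoS condition $\E\iprod{u^{\otimes r}, M^2 u^{\otimes r}} \preceq \sigma^2 \norm{u}^{2r}$ only bounds the quadratic form of $\E M^2$ on Veronese vectors $u^{\otimes r}$ (equivalently, its pairing with pseudomoment matrices); the top eigenvector of $\Pisym S \Pisym$ need not be of this form, and indeed $\norm{\Pisym \E M^2\Pisym}$ can be far larger than $\sigma^2$. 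Relatedly, your scalar variance computation $\E\iprod{M_i,X}^2 \le \iprod{\E M_i^2, X} \le \sigma^2$ is valid only when $X$ is a pseudomoment matrix, not for an arbitrary trace-one PSD $X$ on the symmetric subspace, so it cannot be re-imported after the relaxation; and a pointwise scalar Bernstein bound at each fixed $X$ does not bound the expected maximum over the data-dependent family of pseudomoment matrices without exactly the kind of uniformization the matrix-exponential machinery was supposed to provide. The closing sentence about ``threading the Cauchy--Schwarz identity inside the Laplace transform'' names the difficulty but is not an argument, and as stated the route only yields the cruder bound with $\sqrt{\norm{\Pisym\E M^2\Pisym}}$ in place of $\sigma$.

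The paper avoids this by a preconditioning trick rather than a symmetric-subspace Lieb argument: set $A = \tfrac{1}{\sigma}\E[M^2] + \sigma I$, apply the \emph{standard} matrix Bernstein inequality to the conjugated matrices $A^{-1/2} M_i A^{-1/2}$, which by construction satisfy $\norm{A^{-1/2}M_iA^{-1/2}} \le R/\sigma$ and have matrix variance of operator norm at most $1$, and then transfer back to the pseudoexpectation via the deterministic PSD inequality $\sum_i M_i \preceq \bignorm{A^{-1/2}\sum_i M_i A^{-1/2}} \cdot A$ together with the SoS-certified bound $\tE\iprod{u^{\otimes r}, A u^{\otimes r}} \le \sigma + \sigma = 2\sigma$, which is where the hypothesis on $\E M^2$ is actually used. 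This is the mechanism by which the operator-norm variance is replaced by the ``SoS norm'' $\sigma$; your proposal is missing some device of this kind, so the claimed two-term bound does not follow from the steps outlined.
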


Results in \cite{BarakBHKSZ12, hopkins2016fast} anticipate the result of \cref{thm:sosmatbern}. However, these results strongly exploit the Gaussianity of the underlying random matrices while \cref{thm:sosmatbern} applies to a broader class of  random matrices. The core idea in the proof of the SoS matrix Bernstein inequality is to apply the standard Matrix Bernstein inequality to an appropriately preconditioned version of the matrix $M$.
Crucially, this allows us to bound the fluctuations of the of the (random) polynomial in terms of the ``SoS norm'' of the matrix variance term $\sum M_i^2$ as opposed to the spectral norm of $\sum M_i^2$, which would yield a much cruder bound.

Our proof of \cref{lem:p-main} has two steps: an expectation step in which we will control $\pE \E_{v_1,\ldots,v_n} p(u)$, and a deviation step in which we control the remaining random fluctuations which critically uses \cref{thm:sosmatbern}. These are captured by the following two lemmas, the proofs of which we present below.

The first lemma exploits the rank-one structure of the tensor $uu^\top \tensor uu^\top$ inside $\pE \E_{v_1,\ldots,v_n} p(u)$:
\begin{lemma}[Expectation of $p$]
\label[lemma]{lem:p-expectation}
    Assume the distribution $\cD$ satisfies \cref{def:nice-dist} and that $v_i \sim \mathcal{D}$ i.i.d. Then for all degree-$8$ pseudoexpectations $\pE_u$ that satisfy the polynomial equation $\norm{u}^2=1$,
  \[
   \pE_u \E_{v} p(u) = \sum_{i=1}^{k} \pE_u \E_v \iprod{\tZ_i \tensor \tZ_i, uu^\top \tensor uu^\top} = \sum_{i=1}^k \pE_u \E_v \iprod{u, \tZ_i u}^2 \leq O \left(\frac{L k}{m} \right) \normt{\Sigma}^2 = O \Paren{\frac{k^2}{n} \cdot L \cdot \normt{\Sigma}^2}\mper
  \]
\end{lemma}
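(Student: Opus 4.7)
The plan is a routine SoS computation that exploits two features of $p$: the rank-one form of $uu^\top\tensor uu^\top$, and the fact that each $Z_i$ is a centered average of $m$ i.i.d.\ rank-one fluctuations $\tv_j\tv_j^\top-\tSigma$. The goal reduces to producing an SoS upper bound on $\E_v\iprod{u,\tZ_i u}^2$ of the form $O(\|\Sigma\|_2^2\|u\|^4/m)$ as a polynomial in $u$; summing in $i$ and applying the pseudoexpectation $\pE_u$ with $\{\|u\|^2=1\}$ (under which $\pE_u\|u\|^4=1$) then yields the claimed bound $O(Lk/m)\|\Sigma\|_2^2=O(Lk^2/n)\|\Sigma\|_2^2$.

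First I would discard the truncation: since $\iprod{u,\tZ_i u}^2 = \mathbf{1}(\|Z_i\|_2\leq\tau)\cdot\iprod{u,Z_i u}^2 \preceq \iprod{u,Z_i u}^2$ in $u$ (a nonnegative scalar times a square), and SoS inequalities survive under $\E_v$, I may work with $Z_i$. Expanding
\[
\iprod{u,Z_i u} \;=\; \tfrac{1}{m}\sum_{j\in B_i}\bigl(\iprod{u,\tv_j}^2-\iprod{u,\tSigma u}\bigr),
\]
the summands are i.i.d.\ and mean-zero under $\E_v$. Squaring and taking $\E_v$, the cross terms vanish deterministically, so
\[
\E_v\iprod{u,Z_i u}^2 \;=\; \tfrac{1}{m}\bigl(\E_v\iprod{u,\tv}^4-(\iprod{u,\tSigma u})^2\bigr) \;\preceq\; \tfrac{1}{m}\,\E_v\iprod{u,\tv}^4,
\]
using that $(\iprod{u,\tSigma u})^2$ is a square.

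I would then chain three SoS inequalities in $u$, all of degree $4$. (i) \emph{Truncation monotonicity}: $\iprod{u,\tv}^2 = \mathbf{1}(\|v\|\leq\alpha)\cdot\iprod{u,v}^2\preceq\iprod{u,v}^2$, whence $\iprod{u,\tv}^4\preceq\iprod{u,v}^4$ by multiplying the SoS inequalities $\iprod{u,v}^2\pm\iprod{u,\tv}^2\succeq 0$ (products of SoS are SoS). (ii) \emph{Certifiable $(2,4)$-hypercontractivity of $v$}: $\E_v\iprod{u,v}^4\preceq L^2(\iprod{u,\Sigma u})^2$, directly from the niceness hypothesis. (iii) \emph{Operator-norm bound}: the PSD quadratic forms $\|\Sigma\|_2\|u\|^2\pm\iprod{u,\Sigma u}$ are both SoS, so their product $\|\Sigma\|_2^2\|u\|^4-(\iprod{u,\Sigma u})^2$ is SoS. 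Chaining yields $\E_v\iprod{u,\tZ_i u}^2 \preceq (L^2/m)\|\Sigma\|_2^2\|u\|^4$; summing over the $k$ buckets and applying $\pE_u$ finishes the proof.

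The manipulations are elementary; the only thing to guard against is SoS validity at each step. The one mini-obstacle worth flagging is the decoupling move: I rely on it producing an \emph{equality} of polynomials in $u$ rather than a bound that holds only after $\pE_u$, since subsequent steps chain further SoS relations onto it. Fortunately, the cross-term cancellations happen deterministically under $\E_v$ for every fixed $u$ (independence and the mean-zero property hold at the vector level), so no pseudoexpectation-level reasoning is required there.
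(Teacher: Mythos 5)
Your proof is correct, but it takes a genuinely different route from the paper at the key step, so a comparison is worthwhile. The paper discards the two truncation indicators exactly as you do, but then it bounds the surviving fourth-moment term $\pE_u \E_v \iprod{v,u}^4$ by pseudoexpectation Cauchy--Schwarz against the eighth moment, $\pE_u \E_v\iprod{v,u}^4 \le (\pE_u \E_v\iprod{v,u}^8)^{1/2}$, then invokes certifiable $(2,8)$-hypercontractivity and the SoS spectral bound $\pE_u \iprod{u,\Sigma u}^4 \le \normt{\Sigma}^4$; this is precisely where degree-$8$ pseudoexpectations are used. Your argument never leaves degree $4$: the decoupling identity $\E_v\iprod{u,Z_i u}^2 = \tfrac1m\bigl(\E_v\iprod{u,\tv}^4 - \iprod{u,\tSigma u}^2\bigr)$ holds coefficient-wise in $u$ (the cross terms vanish by independence and $\E\,\tv\tv^\top=\tSigma$, so no pseudoexpectation reasoning is needed there, as you flag), and then truncation monotonicity, certifiable $(2,4)$-hypercontractivity, and the product of the PSD forms $\normt{\Sigma}\norm{u}^2 \pm \iprod{u,\Sigma u}$ give an SoS chain ending at $O(1)\cdot\tfrac{1}{m}\normt{\Sigma}^2\norm{u}^4$, with $\pE_u$ applied only at the end (and $\pE_u\norm{u}^4=1$ from the constraint). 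What your route buys: it needs only degree-$4$ pseudoexpectations and only the $(2,4)$ half of niceness, which would essentially remove the ``Degree 4 versus Degree 8'' caveat the paper later has to address in the regression section by a separate truncation-based argument. What the paper's route buys: going through the eighth moment yields the constant $L$ as stated, whereas your $(2,4)$ step yields $L^2$ under the paper's normalization of \cref{def:cert-hyper} (or $L$ if the ``analogous'' $(2,4)$ definition is normalized as $\E\iprod{v,u}^4 \preceq L\,(\E\iprod{v,u}^2)^2$); since every application assumes $O(1)$-niceness, this discrepancy is immaterial.
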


\noindent The deviations of $p$ are controlled by the following result.


\begin{lemma}[Deviations of $p$]
\label[lemma]{lem:p-deviation}
  Assume the distribution $\cD$ satisfies \cref{def:nice-dist} and that $v_1,\ldots,v_n \sim \mathcal{D}$ are i.i.d. Recalling that for $\tau > 0$ we defined $\tilde{Z_i} = Z_i \cdot \Ind[\|Z_i\|_2 \leq \tau]$,
  \begin{align*}
  \E_v \max_{\pE_u} \pE_{u} \left[ p(u) - \E_{v} p(u) \right] \leq O(\log(d)\tau^2) + O(\sqrt{k \log d}) \cdot \sqrt{ O \Paren{\frac{1}{m^2}} L^2 (\Tr \Sigma)^2 \normt{\Sigma}^2} \mcom
  \end{align*}
  where the maximum is taken over degree-8 pseudodistributions $\pE_u$ satisfying $\{\norm{u}^2=1\}$.
\end{lemma}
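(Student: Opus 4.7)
The strategy is to apply the SoS matrix Bernstein inequality (\cref{thm:sosmatbern}) at $r=2$ to the mean-zero i.i.d.\ symmetric $d^2 \times d^2$ random matrices
\[
M_i := \tZ_i^{\otimes 2} - \E \tZ_i^{\otimes 2},
\]
since by construction $p(u) - \E p(u) = \sum_{i \leq k} \iprod{u^{\otimes 2}, M_i u^{\otimes 2}}$, which is exactly the form of the polynomial bounded by \cref{thm:sosmatbern}. The SoS Bernstein statement is for degree-$2r = 4$ pseudoexpectations, while the lemma's maximum is over degree-$8$ pseudoexpectations; since any degree-$8$ pseudoexpectation restricts to a degree-$4$ one, any SoS upper bound at degree $4$ transfers. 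Two ingredients are needed: an almost-sure spectral bound $R \geq \|M_i\|_2$, and a polynomial upper bound $\iprod{u^{\otimes 2}, \E M_i^2 u^{\otimes 2}} \preceq \sigma^2 \|u\|^4$.

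The spectral bound is immediate: the truncation $\|\tZ_i\|_2 \leq \tau$ forces $\|M_i\|_2 \leq 2\tau^2$, giving $R = O(\tau^2)$ and contributing the term $O(\tau^2 \log d)$ of the target bound. For the variance proxy, the plan is to first drop the PSD contribution $(\E \tZ_i^{\otimes 2})^2$ from $\E M_i^2$ and then use that $\tZ_i^2 \preceq Z_i^2$ implies $\iprod{u, \tZ_i^2 u}^2 \preceq \iprod{u, Z_i^2 u}^2$ as SoS polynomials in $u$ (the difference factors as a product of two PSD quadratic forms in $u$, hence of two SoS polynomials), reducing matters to bounding $\E \iprod{u, Z_i^2 u}^2 = \E \|Z_i u\|^4$.

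Writing $Z_i = m^{-1} \sum_{j \in B_i} y_j$ with $y_j := \tv_j \tv_j^\top - \tSigma$ i.i.d.\ mean-zero, expanding the fourth moment over index pairings yields
\[
  \E \|Z_i u\|^4 = \tfrac{1}{m^3}\E \|yu\|^4 + \tfrac{m-1}{m^3}(\E \|yu\|^2)^2 + \tfrac{2(m-1)}{m^3}\|\E y u u^\top y\|_F^2,
\]
so the dominant $O(1/m^2)$ contributions come from the last two terms. For $(\E \|yu\|^2)^2 = \iprod{u, \E y^2 u}^2$, dropping the PSD term $-\tSigma^2$ inside $\E y^2$ leaves the matrix $N := \E[\|\tv\|^2 \tv \tv^\top]$. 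Writing $N = \sum_i \E[\tv_i^2 \tv \tv^\top]$, numerical Cauchy--Schwarz together with $(2,4)$-hypercontractivity gives $\E[\tv_i^2 \tv\tv^\top] \preceq L^2 \Sigma_{ii} \Sigma$, so $N \preceq L^2 (\Tr \Sigma) \Sigma$. This matrix bound translates into the SoS polynomial inequality $\iprod{u, N u} \preceq L^2 (\Tr \Sigma) \|\Sigma\|_2 \|u\|^2$, whose square is SoS via the factorization $C^2 \|u\|^4 - \iprod{u, N u}^2 = (C\|u\|^2 - \iprod{u, N u})(C\|u\|^2 + \iprod{u, N u})$ with $C = L^2 \Tr \Sigma \|\Sigma\|_2$, yielding $(\E \|yu\|^2)^2 \preceq L^4 (\Tr \Sigma)^2 \|\Sigma\|_2^2 \|u\|^4$. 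For the Frobenius-norm term, using $\E y u u^\top y = \E[\iprod{\tv, u}^2 \tv \tv^\top] - \tSigma u u^\top \tSigma$ and $\|A - B\|_F^2 \leq 2\|A\|_F^2 + 2\|B\|_F^2$, the easy piece is $\|\tSigma u u^\top \tSigma\|_F^2 = \iprod{u, \tSigma^2 u}^2 \preceq \|\Sigma\|_2^4 \|u\|^4$, while the main piece expands as $\E_{\tv, \tv'} \iprod{\tv, u}^2 \iprod{\tv', u}^2 \iprod{\tv, \tv'}^2$; the SoS Lagrange identity $\|\tv\|^2 \|\tv'\|^2 - \iprod{\tv, \tv'}^2 = \sum_{i<j}(\tv_i \tv'_j - \tv_j \tv'_i)^2$ decouples the expectation into $(\E \iprod{\tv, u}^2 \|\tv\|^2)^2 = \iprod{u, N u}^2$, already bounded above. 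The subleading $m^{-3} \E \|yu\|^4 = O(\tau^4 / m^3) \|u\|^4$ contribution is absorbed into the leading ones.

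Combining, $\sigma^2 = O(L^{O(1)} (\Tr \Sigma)^2 \|\Sigma\|_2^2 / m^2)$, and substituting $R$ and $\sigma^2$ into \cref{thm:sosmatbern} gives the claimed bound. The main obstacle is the Frobenius-norm term: it is delicate both to produce the right scale $(\Tr \Sigma)^2 \|\Sigma\|_2^2 / m^2$ (rather than something worse like $\tau^2 \cdot \Tr \Sigma \|\Sigma\|_2 / m$) and to do so as a genuine SoS certificate, which is why the Lagrange-identity SoS Cauchy--Schwarz step, rather than a numerical Cauchy--Schwarz, is essential. The rest reduces to routine matrix-norm bounds via hypercontractivity.
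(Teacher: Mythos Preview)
Your high-level plan coincides with the paper's: apply the SoS Bernstein inequality (\cref{thm:sosmatbern}) at $r=2$ to $M_i=\tZ_i^{\otimes 2}-\E\tZ_i^{\otimes 2}$, take $R=O(\tau^2)$ from $\|\tZ_i\|_2\le\tau$, and put all the work into the variance proxy. In the paper that variance computation is exactly \cref{lem:sosmvbound}. For the two $O(1/m^2)$ terms in your expansion of $\E\|Z_iu\|^4$, your route is a bit different from the paper's and is correct: the paper bounds each cross term by a common degree-$8$ pseudoexpectation Cauchy--Schwarz followed by $(2,8)$-certifiable hypercontractivity, whereas you obtain genuine degree-$4$ SoS certificates via the PSD bound $N=\E[\|\tv\|^2\tv\tv^\top]\preceq L(\Tr\Sigma)\Sigma$ (your exponent on $L$ is off by one, but this is cosmetic) and the Lagrange identity to decouple the Frobenius term. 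That is a legitimate, slightly more elementary treatment of the leading contributions.

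The gap is in your $O(1/m^3)$ term. The claim $\E\|yu\|^4=O(\tau^4)\|u\|^4$ conflates two different truncations: $\tau$ bounds $\|Z_i\|_2$, not the individual summands $y_j=\tv_j\tv_j^\top-\tSigma$; the only a\,priori bound on $y$ is $\|y\|_2\le\alpha^2+\|\tSigma\|_2=O(\alpha^2)$. The crude estimate $\E\|yu\|^4\preceq O(\alpha^8)\|u\|^4$ then gives a contribution $\alpha^8/m^3$, and for the paper's choice $\alpha^4=L\|\Sigma\|_2 n(\Tr\Sigma)/\sqrt{k}$ this is \emph{not} dominated by $L^2(\Tr\Sigma)^2\|\Sigma\|_2^2/m^2$ (it would force $n\le 1$). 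Even the sharper degree-$4$ bound $\E\|yu\|^4\preceq O(\alpha^4 L\|\Sigma\|_2^2)\|u\|^4$ via $\|\tv\|^4\le\alpha^4$ and $(2,4)$-hypercontractivity is absorbed only when $\sqrt{k}\le r(\Sigma)$, which is not assumed. The paper handles this term by a genuinely degree-$8$ step: for any degree-$8$ $\pE$ satisfying $\|u\|^2=1$,
\[
\pE\,\E\bigl[(u^\top y^2 u)^2\bigr]\;\le\;8\bigl(\E\|\tv\|^8\bigr)^{1/2}\bigl(\pE\,\E\iprod{\tv,u}^8\bigr)^{1/2}+8\|\tSigma\|_2^4\;\le\;O\bigl(L^2(\Tr\Sigma)^2\|\Sigma\|_2^2\bigr),
\]
using pseudoexpectation Cauchy--Schwarz and $(2,8)$-certifiable hypercontractivity. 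This is the one place where degree $8$ and the $(2,8)$-niceness assumption are actually needed, and it is why the lemma is stated over degree-$8$ pseudodistributions; your degree-$4$ reduction cannot cover it.
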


\noindent Now we are prepared to prove \cref{lem:p-main}.

\begin{proof}[Proof of \cref{lem:p-main}]
\noindent We can simply center the random polynomial and apply \cref{lem:p-expectation,lem:p-deviation} to the first and second terms:
\begin{align*}
    &  \E_{v} \Brac{ \max_{\pE_u} \pE_u  p(u) } \leq \E_v \max_{\pE_{u}} \pE_{u} \left[p(u) - \E_v p(u) \right] + \max_{\pE_u} \pE_u \E_v p(u) 
\end{align*}
to conclude \cref{lem:p-main}.
\end{proof}

\subsection{Proofs of \cref{lem:p-expectation,lem:p-deviation}}

We turn to the proofs of \cref{lem:p-expectation,lem:p-deviation}, starting with the former.

\begin{proof}[Proof of \cref{lem:p-expectation}]
  Recall that $\tZ = (\frac kn \sum_{i=1}^{n/k} \tv_i \tv_i^\top - \tilde{\Sigma}) \Ind(\normt{Z} \leq \tau)$ where $v_1,\ldots,v_{n/k}$ are i.i.d. samples $v_i \sim \cD$.
  For any $\tZ_i$ we have that,
  \[ \pE_u \E_v \iprod{u, \tZ_i u}^2 = \pE_u \E_v \iprod{u, Z_i u}^2 - \pE_u \E_v  [\Ind(\normt{Z_i} \geq \tau) \iprod{u, Z_i u}^2]   \leq \pE_u \E_v \iprod{u, Z_i u}^2 \]
  noting that $\pE_u \E_v [\Ind(\normt{Z_i} \geq \tau) \iprod{u, Z_i u}^2] \geq 0$ to discard the second term.
  Continuing by expanding $Z_i$, we have for any $\pE_u$,
  \begin{align*}
  \pE_u \E_v \iprod{u, Z_i u}^2 & \leq \frac k n \left( \pE_u \E[ \iprod{v_i,u}^4] - \E [\iprod{v_i, u}^4 \Ind[\normt{v} \geq \alpha]] - \pE_u \iprod{u, \tSigma u}^2 \right)\\
  & \leq \frac{k}{n} \sqrt{\pE_u \E[\langle v, u \rangle^8]} \\
  & \leq L \cdot \frac k n \cdot \sqrt{ \pE_u (\E [\iprod{v,u}^2])^4}\\
  & \leq L \cdot \frac{k}{n} \normt{\Sigma}^2
  \end{align*}
  noting in the first inequality $\pE_u[\iprod{v, u}^4 \Ind[\normt{v} \geq \alpha]] \geq 0$ and $\pE_u \iprod{\tSigma \tensor \tSigma, uu^{\top} \tensor uu^{\top}} = \pE_u \iprod{\tSigma, uu^\top}^2 \geq 0$ are squares so the intermediate terms can be discarded. The second inequality appeals to pseudoexpectation Cauchy-Schwarz, the third to certifiable L$8$-L$2$ hypercontractivity, and the final an SoS spectral bound. Assembling, we obtain
  \[
  \sum_{i=1}^{k} \pE_u \E_Z \iprod{\tZ_i \tensor \tZ_i, uu^{\top} \tensor uu^{\top}} \leq \frac{L k}{m} \normt{\Sigma}^2 \mper
  \]
  for any $\pE_u$.
\end{proof}

It remains to prove \cref{lem:p-deviation}.
To do so we need one further lemma, which we prove at the end of this section.
The approach to prove \cref{lem:p-deviation} is to use an application of the SoS-version of the matrix Bernstein inequality
The latter requires a careful computation of the SoS \emph{matrix variance}, which will leverage the certifiably, hypercontractive properties of the nice distribution $\cD$.
This is captured by the following lemma.

\begin{lemma}
    \label[lemma]{lem:sosmvbound}
    Consider the random matrix, $Z = \frac{1}{m} \sum_{i = 1}^m (\tv_i \tv_i^\top - \tilde{\Sigma})$ where $\tv_i = v_i \Ind[\norm{v_i} \leq \alpha]$ and $v_i$ are drawn i.i.d. from a distribution $\mathcal{D}$ satisfying \cref{def:nice-dist} with covariance matrix $\Sigma$. Let $\tZ = Z \Ind[\norm{Z} \leq \tau]$.  Then, we have that for any pseudoexpectation, $\pE$, satisfying $\norm{u}^2 = 1$:
    \begin{equation*}
        \pE \iprod{\E [\tZ^2 \tensor \tZ^2] - (\E [\tZ \tensor \tZ])^2, u^{\tensor 4}} \leq O \Paren{\frac{1}{m^2}} L^2 (\Tr \Sigma)^2 \normt{\Sigma}^2.
    \end{equation*}
\end{lemma}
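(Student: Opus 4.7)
The plan is to reduce the SoS matrix-variance expression to a fourth-moment bound that can be controlled term-by-term using certifiable hypercontractivity. First I would observe that $A := \E[\tZ \tensor \tZ]$ is a symmetric $d^2 \times d^2$ matrix (since $\tZ$ is symmetric), so $A^2 \succeq 0$ and $\iprod{A^2, u^{\tensor 4}} = \|A u^{\tensor 2}\|^2$ is manifestly a sum of squares in $u$. Hence $\pE \iprod{A^2, u^{\tensor 4}} \geq 0$, and dropping it only loosens the bound, so it suffices to control $\pE \iprod{\E[\tZ^2 \tensor \tZ^2], u^{\tensor 4}}$. Since $\tZ^2 \tensor \tZ^2 = (Z^2 \tensor Z^2) \cdot \Ind[\|Z\|_2 \leq \tau]$ and $Z^2 \tensor Z^2 \succeq 0$, monotonicity in the Loewner order gives $\E[\tZ^2 \tensor \tZ^2] \preceq \E[Z^2 \tensor Z^2]$, reducing the task to bounding $\pE_u \E_v \|Zu\|^4$.

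Next, I would write $Zu = \tfrac{1}{m}\sum_{i=1}^m W_i$ where $W_i := \iprod{\tv_i, u}\tv_i - \tSigma u$, noting that the $W_i$ are i.i.d.\ and mean-zero in $\tv_i$ and depend polynomially on $u$. A standard fourth-moment expansion of this i.i.d.\ sum, in which only ``paired'' index combinations survive by independence and mean-zero-ness, yields
\[
\E_v \|Zu\|^4 \;=\; \tfrac{1}{m^3}\E\|W_1\|^4 \;+\; \tfrac{m-1}{m^3}\bigl(\E\|W_1\|^2\bigr)^2 \;+\; \tfrac{2(m-1)}{m^3}\E\iprod{W_1, W_2}^2.
\]

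Each surviving term is then a polynomial in $u$ to be controlled by SoS manipulations. For the dominant middle term, use $\E\|W_1\|^2 \preceq \E\iprod{\tv, u}^2 \|\tv\|^2$ and SoS Cauchy--Schwarz (a polynomial identity) to obtain $\bigl(\E\iprod{\tv, u}^2 \|\tv\|^2\bigr)^2 \preceq (\E\iprod{\tv, u}^4)(\E\|\tv\|^4)$; certifiable $(2,4)$-hypercontractivity bounds the first factor by $L \iprod{u, \Sigma u}^2$, and the same principle applied coordinate-wise bounds $\E\|\tv\|^4 \leq L(\Tr\Sigma)^2$. Combined with the SoS inequality $\iprod{u, \Sigma u}^2 \preceq \|\Sigma\|_2^2$ modulo $\{\|u\|^2 = 1\}$, this contributes the target order $O(L^2(\Tr\Sigma)^2\|\Sigma\|_2^2/m^2)$. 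The cross-term equals $\Tr(\Sigma_W(u)^2)$ for $\Sigma_W(u) := \E[W_1 W_1^\top]$; using $\Tr(\Sigma_W^2) \leq \Tr(\Sigma_W)\|\Sigma_W\|_2$ together with $\Tr(\Sigma_W) \preceq L\,\Tr\Sigma\cdot\|\Sigma\|_2$ and $\|\Sigma_W\|_2 \preceq L\|\Sigma\|_2^2$ (both via the same hypercontractivity + SoS Cauchy--Schwarz), one recovers the same order after using $\|\Sigma\|_2 \leq \Tr\Sigma$.

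The main obstacle is the first term $\tfrac{1}{m^3}\E\|W_1\|^4$, which a priori might scale badly with the truncation parameter. The strategy here is to exploit the $\alpha$-truncation $\|\tv\|\leq\alpha$ to replace $\|\tv\|^4$ by $\alpha^2\|\tv\|^2$, thereby avoiding the need for moments beyond the eighth, and then apply certifiable $(2,8)$-hypercontractivity with SoS Cauchy--Schwarz to bound $\E\iprod{\tv, u}^4 \|\tv\|^2 \preceq L^2 \iprod{u,\Sigma u}^2 \cdot \Tr\Sigma$. The factor $1/m^3$ supplies an extra factor of $m$ relative to the other terms, which is the slack needed to absorb the $\alpha$-dependent contribution into the target $O(L^2(\Tr\Sigma)^2\|\Sigma\|_2^2/m^2)$ budget --- this is precisely where certifiable degree-$8$ hypercontractivity (and not merely $(2,4)$) becomes indispensable.
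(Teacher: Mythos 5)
Your reduction and decomposition coincide with the paper's: dropping $(\E[\tZ\tensor\tZ])^2$ by positivity, removing both truncations by Loewner monotonicity, and expanding the fourth moment of the i.i.d.\ sum (your $W_i=\iprod{\tv_i,u}\tv_i-\tSigma u$ is just $(\tv_i\tv_i^\top-\tSigma)u$, so your three surviving terms are exactly the paper's four tensor terms). Your middle term is handled correctly and to the right order (in fact slightly more economically than the paper, which bounds the $i=j\neq k=l$ term by the per-sample eighth-moment term via AM--GM). One presentational slip: your cross-term argument via $\Tr(\Sigma_W(u)^2)\leq\Tr(\Sigma_W(u))\,\normt{\Sigma_W(u)}$ is not an SoS argument as written, since $\normt{\Sigma_W(u)}$ is not a polynomial in $u$ and the bound must hold under an arbitrary degree-$8$ pseudoexpectation rather than pointwise; this is easily repaired, e.g.\ by the Lagrange-identity form of Cauchy--Schwarz $\iprod{W_1,W_2}^2\preceq\norm{W_1}^2\norm{W_2}^2$, which reduces the cross term to your middle term.

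The genuine gap is the diagonal term $\tfrac{1}{m^3}\E\norm{W_1}^4$. Replacing $\norm{\tv}^4$ by $\alpha^2\norm{\tv}^2$ produces a contribution of order $\tfrac{\alpha^2}{m^3}L^{3/2}\,\Tr\Sigma\,\normt{\Sigma}^2$, and your claim that the spare factor of $1/m$ ``absorbs'' the $\alpha$-dependence is unsupported: the lemma is stated for an \emph{arbitrary} truncation level $\alpha$ (its right-hand side contains no $\alpha$), and for the $\alpha$ the paper actually uses, $\alpha^2=(L\,\normt{\Sigma}\,\Tr\Sigma)^{1/2}(m\sqrt{k})^{1/2}$, absorption into the $O(L^2(\Tr\Sigma)^2\normt{\Sigma}^2/m^2)$ budget requires $\sqrt{k}\leq O\bigl(m\cdot\Tr\Sigma/\normt{\Sigma}\bigr)$, a condition neither assumed in the lemma nor implied by its hypotheses. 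Moreover the maneuver is unnecessary, and its motivation (``avoiding moments beyond the eighth'') is mistaken: the direct pseudoexpectation Cauchy--Schwarz bound $\pE\,\E[\norm{\tv}^4\iprod{\tv,u}^4]\leq(\E\norm{\tv}^8)^{1/2}(\pE\,\E\iprod{\tv,u}^8)^{1/2}$ uses exactly eighth moments, which are controlled without any truncation via $\E\norm{v}^8\leq L^2(\Tr\Sigma)^4$ (\cref{lem:l8l2vec}) and certifiable $(2,8)$-hypercontractivity, giving $O(L^2(\Tr\Sigma)^2\normt{\Sigma}^2/m^3)$ with no $\alpha$ at all. This is the paper's treatment of the diagonal term, and it is the step you should substitute.
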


\noindent Using \cref{lem:sosmvbound} we can complete the proof of \cref{lem:p-deviation}:

\begin{proof}[Proof of \cref{lem:p-deviation}]
Directly applying the SoS matrix Bernstein inequality
$\E_v \max_{\pE_u} \pE_{u} \left[ p(u) - \E_{v} p(u) \right]$, along with the fact that $\norm{\tZ} \leq \tau$ almost surely and the sharp computation of the SoS matrix variance term from \cref{lem:sosmvbound}, shows,
  \begin{align*}
  \E_v \max_{\pE_u} \pE_{u} \left[ p(u) - \E_{v} p(u) \right] \leq O(\log(d)\tau^2) + O(\sqrt{k \log d}) \cdot \sqrt{ O \Paren{\frac{1}{m^2}} L^2 (\Tr \Sigma)^2 \normt{\Sigma}^2} \mcom
  \end{align*}
\end{proof}

We conclude by presenting the proof of \cref{lem:sosmvbound}  which uses the niceness of $\cD$.

\begin{proof}[Proof of \cref{lem:sosmvbound}]
Firstly, we have:
\begin{equation*}
    \pE \iprod{(\E [\tZ \tensor \tZ])^2, u^{\tensor 4}} = \pE \iprod{(u^{\tensor 2})^\top(\E [\tZ \tensor \tZ])^2, u^{\tensor 2}} \geq 0.
\end{equation*}
Therefore, we can ignore the second term in the inner product. From this, we get from the fact that $Z^2 \tensor Z^2$ is positive semidefinite:
\begin{equation*}
    \pE \iprod{\E [\tZ^2 \tensor \tZ^2] - (\E [\tZ \tensor \tZ])^2, u^{\tensor 4}} \leq \pE \iprod{\E [\tZ^2 \tensor \tZ^2], u^{\tensor 4}} = \pE \iprod{\E [Z^2 \tensor Z^2 \Ind[\norm{Z} \leq \tau]], u^{\tensor 4}} \leq \pE \iprod{\E [Z^2 \tensor Z^2], u^{\tensor 4}}.
\end{equation*}
We now expand the right hand side as follows:
\begin{align*}
    \pE \iprod{\E [Z^2 \tensor Z^2], u^{\tensor 4}} &= \frac{1}{m^3} \pE \Iprod{\E [((\tv\tv^\top - \tSigma)^2)^{\tensor 2}], u^{\tensor 4}} + O \Paren{\frac{1}{m^2}} \pE \Iprod{\E [((\tv\tv^\top - \tSigma)^2) \tensor ((\tw\tw^\top - \tSigma)^2)], u^{\tensor 4}}\\
    &+ O \Paren{\frac{1}{m^2}} \pE \Iprod{\E [((\tv\tv^\top - \tSigma) (\tw\tw^\top - \tSigma)) \tensor ((\tw\tw^\top - \tSigma) (\tv\tv^\top - \tSigma)))], u^{\tensor 4}} \\
    &+ O \Paren{\frac{1}{m^2}} \pE \Iprod{\E [((\tv\tv^\top - \tSigma) (\tw\tw^\top - \tSigma))^{\tensor 2})], u^{\tensor 4}}.
\end{align*}
Note, that all other terms that arise vanish under expectations. For the first term, we have:
\begin{align*}
    \pE \Iprod{\E [((\tv\tv^\top - \tSigma)^2)^{\tensor 2}], u^{\tensor 4}} &= \pE \E [(u^\top (\tv\tv^\top - \tSigma)^2 u)^2] = \pE \E [(\norm{\tv}^2 \iprod{u, \tv}^2 -2 \iprod{\tv, u} \tv^\top \tSigma \tv + u^\top \tSigma^2 u)^2] \\
    &\leq \pE \E [(2\norm{\tv}^2 \iprod{u, \tv}^2 + 2 u^\top \tSigma^2 u)^2] \leq 4 \pE \E [2(\norm{\tv}^2 \iprod{u, \tv}^2)^2 + 2 (u^\top \tSigma^2 u)^2] \\
    &\leq 8 \pE \E [\norm{\tv}^4 \iprod{u, \tv}^4 + (u^\top \tSigma^2 u)^2] \leq 8 \pE \E [\norm{\tv}^4 \iprod{u, \tv}^4] + 8\normt{\tSigma}^4 \\
    &\leq 8 (\E [\norm{\tv}^8])^{1 / 2} (\pE \E [\iprod{u, \tv}^8])^{1 / 2} + 8\normt{\tSigma}^4.
\end{align*}
For the second term, we have:
\begin{align*}
    &\pE \Iprod{\E [((\tv\tv^\top - \tSigma)^2) \tensor ((\tw\tw^\top - \tSigma)^2)], u^{\tensor 4}} = \pE \E [(u^\top(\tv\tv^\top - \tSigma)^2u) (u^\top(\tw\tw^\top - \tSigma)^2 u)] \\
    &\qquad \leq \frac{1}{2} \cdot \pE \E [(u^\top(\tv\tv^\top - \tSigma)^2u)^2 + (u^\top(\tw\tw^\top - \tSigma)^2 u)^2] = \pE \E [(u^\top(\tv\tv^\top - \tSigma)^2u)^2] \\
    &\qquad \leq 8 (\E [\norm{\tv}^8])^{1 / 2} (\pE \E [\iprod{u, \tv}^8])^{1 / 2} + 8\normt{\tSigma}^4.
\end{align*}
Similarly, for the third term, we get:
\begin{align*}
    &\pE \Iprod{\E [((\tv\tv^\top - \tSigma) (\tw\tw^\top - \tSigma)) \tensor ((\tw\tw^\top - \tSigma) (\tv\tv^\top - \tSigma)))], u^{\tensor 4}} \\
    &\qquad = \pE \E [(u^\top (\tv\tv^\top - \tSigma) (\tw\tw^\top - \tSigma)u) (u^\top (\tw\tw^\top - \tSigma) (\tv\tv^\top - \tSigma)u))] \\
    &\qquad = \pE \E [(u^\top (\tv\tv^\top - \tSigma) (\tw\tw^\top - \tSigma)u)^2] \leq \pE \E [(u^\top (\tv\tv^\top - \tSigma)^2u) \cdot (u^\top (\tw\tw^\top - \tSigma)^2 u)] \\
    &\qquad \leq \frac{1}{2} \cdot \pE \E [(u^\top (\tv\tv^\top - \tSigma)^2u)^2 + (u^\top (\tw\tw^\top - \tSigma)^2u)^2] \leq 8 (\E [\norm{\tv}^8])^{1 / 2} (\pE \E [\iprod{u, \tv}^8])^{1 / 2} + 8\normt{\tSigma}^4.
\end{align*}
Finally, for the last term, we notice that:
\begin{equation*}
    \pE \Iprod{\E [((\tv\tv^\top - \tSigma) (\tw\tw^\top - \tSigma))^{\tensor 2})], u^{\tensor 4}} = \pE \E [(u^\top (\tv\tv^\top - \tSigma) (\tw\tw^\top - \tSigma)u)^2].
\end{equation*}
Finally, we have:
\begin{equation*}
    \E [\norm{\tv}^8] = \E [\norm{v}^8 \Ind[\norm{v} \leq \alpha]] \leq \E [\norm{v}^8] \text{ and } \normt{\tSigma} = \max_{\norm{u} = 1} \E [\iprod{u, v}^2 \Ind[\norm{v} \leq \alpha]] \leq \max_{\norm{u} = 1} \E [\iprod{u, v}^2] = \normt{\Sigma}.
\end{equation*}
Similarly, we also have from the fact that $\iprod{v, u}^8$ is a square polynomial:
\begin{equation*}
    \pE \E [\iprod{u, \tv}^8] = \pE \E [\iprod{u, v}^8 \Ind[\norm{v} \leq \alpha]] \leq \pE \E [\iprod{u, v}^8].
\end{equation*}
From \cref{lem:l8l2vec} and the certifiable hypercontractivity of $\mathcal{D}$, this concludes the proof of the lemma.
\end{proof}

\subsection{Proof of \cref{thm:sosmatbern}}
We now provide the proof of the SoS matrix Bernstein inequality.

\begin{proof}[Proof of \cref{thm:sosmatbern}]
We apply the standard matrix Bernstein inequality to the sum of random matrices $\sum_{i \leq k} M_i$ which has been preconditioned with the p.s.d. operator $A = \frac{1}{\sigma} \E[M^2] + \sigma \cdot I$, 
\begin{align*}
    \E[\normt{A^{-1/2} \sum_{i \leq k} M_i A^{-1/2}}] \leq \frac{1}{3} \log(2d^r) \max_i \normt{A^{-1/2} M_i A^{-1/2}} + \sqrt{2k \log(2d^r) \normt{\E[A^{-1/2} M A^{-1} M A^{-1/2}]}}.
\end{align*}
The fact that $A \succeq \sigma \cdot I \implies A^{-1} \preceq \frac{1}{\sigma} \cdot I \implies \norm{A^{-1/2}} \leq \frac{1}{\sqrt{\sigma}}$ and submultiplicativity of the operator norm show,
\begin{align*}
    \max_i \normt{A^{-1/2} M_i A^{-1/2}} \leq \max_i \normt{M_i} \normt{A^{-1/2}}^2 \leq  \frac{R}{\sigma}.
\end{align*}
We now bound the matrix variance term using the variational characterization of the operator norm and the Cauchy-Schwarz inequality,
\begin{align*}
    & \normt{\E[A^{-1/2} M A^{-1} M A^{-1/2}]} = \sup_{x : \norm{x}_2=1} \E[x^\top A^{-1/2} M A^{-1} M A^{-1/2} x] \leq \sup_{x : \norm{x}_2=1} \E[ \normt{x^\top A^{-1/2} M}   \normt{A^{-1}  M A^{-1/2}x}]  \\ & \leq \normt{A^{-1}} \sup_{x : \norm{x}_2=1} \E[ \normt{MA^{-1/2 }x}^2 ] \leq \normt{A^{-1}} \sup_{x : \norm{x}=1} x^\top A^{-1/2} \E[M^2] A^{-1/2} x \leq \frac{1}{\sigma} \normt{A^{-1/2} \E[M^2] A^{-1/2}}.
\end{align*}
However note by construction that $\E[M^2] \preceq \sigma A \implies A^{-1/2} \E[M^2] A^{-1/2} \preceq \sigma I$, so $\normt{A^{-1/2} \E[M^2] A^{-1/2}} \leq \sigma$. Thus $\normt{\E[A^{-1/2} M A^{-1} M A^{-1/2}]} \leq 1$. Assembling, we conclude,
\begin{align*}
    \E \left[\normt{A^{-1/2} \sum_{i \leq k} M_i A^{-1/2}} \right] \leq \frac{\log(2d^r)}{3} \frac{R}{\sigma} + \sqrt{2k \log(2d^r)}
\end{align*}

To conclude our final result, note that the following inequality holds deterministically in the p.s.d. order,
\begin{align*}
    A^{-1/2} \sum_{i \leq k} M_i A^{-1/2} \preceq \normt{A^{-1/2} \sum_{i \leq k} M_i A^{-1/2}} \cdot I \implies \sum_{i \leq k} M_i \preceq A \normt{A^{-1/2} \sum_{i \leq k} M_i A^{-1/2}} 
\end{align*}
Noting $A \preceq B \implies \langle A, u^{\otimes k} \rangle \preceq \langle B, u^{\otimes k} \rangle$ (where the first $\preceq$ is in the semi-definite ordering and the second $\preceq$ corresponds to an SoS proof) applying the pseudoexpectation and expectation operators gives,
\begin{align*}
    & \max_{\tE} \tE \langle u^{\otimes r}, \sum_{i \leq k} M_i u^{\otimes r} \rangle \leq \normt{A^{-1/2} \sum_{i \leq k} M_i A^{-1/2}} \cdot \max_{\tE} \tE \langle u^{\otimes r}, \sum_{i \leq k} A u^{\otimes r} \rangle \implies \\
    & \E[\max_{\tE} \tE \langle u^{\otimes r}, \sum_{i \leq k} M_i u^{\otimes r} \rangle] \leq \E[\normt{A^{-1/2} \sum_{i \leq k} M_i A^{-1/2}}] \cdot \max_{\tE} \tE \langle u^{\otimes r}, \sum_{i \leq k} A u^{\otimes r} \rangle \\
    & \leq \left(\frac{2 (\log(2) + r \log d))}{3} R + 2\sqrt{2k (\log(2) + r \log d)} \sigma \right) \max_{\tE} \tE[\normt{u}^{2k}]
\end{align*}
using the result of the previous matrix Bernstein bound and the SoS hypercontractivity assumption. Finally note $\max_{\tE} \tE[\normt{u}^{2k}]=1$ under the conditions of the theorem.
\end{proof}

\section{Covariance estimation}\label[section]{sec:covariance}


In this section, we will state and prove our result on estimating covariance matrices of $L$-nice distributions. Specifically, we will prove the following theorem:

\begin{theorem}
    \label{thm:covest}
    Given $\epsilon > 0$, $k>0$, the truncation parameter $\alpha = \left( \frac{L \cdot \normt{\Sigma}}{\Tr(\Sigma)} \cdot \frac{n}{\sqrt{k}} \right)^{1/4} \cdot \sqrt{\Tr(\Sigma)}$ and a sequence of $\{v_i\}_{i = 1}^n$ i.i.d. samples from a $O(1)$-nice distribution $\cD$ in $d$ dimensions (in the sense of \cref{def:nice-dist}), Algorithm~\ref{alg:covest} returns an estimate $\sigalg$ satisfying:
    \begin{gather*}
        \normt{\sigalg - \Sigma} \leq \cest \max(\covrate, \epsilon) \text{ where } \\
        r = O \left( \frac{(\log d)^{3/2} \normt{\Sigma}}{\sqrt{n}} \left( 
        k^{1/4} \sr^{1/2} + \sqrt{k} \right) \right)
    \end{gather*}
    with probability at least $1 - 2^{-\Omega(k)}$ where $\sr = \frac{\Tr(\Sigma)}{\normt{\Sigma}}$. Furthermore the run-time of Algorithm~\ref{alg:covest} is at most $\tO\Paren{\log(\frac{1}{\epsilon}) \cdot (d + k)^{17} n }$.
\end{theorem}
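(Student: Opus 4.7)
The plan is to realize the outline from Section~1.2. First I would set up the median-of-means framework: split the $n$ samples into $k$ buckets $B_1,\ldots,B_k$ of size $m = n/k$, truncate each sample vector at norm $\alpha$, form bucket covariances $\Sigma_i = \tfrac1m \sum_{j \in B_i} \tv_j\tv_j^\top$, and declare the estimator to be a matrix $M$ which is a ``$O(r)$-median'' in the sense that for every unit $x \in \R^d$, $|\iprod{\Sigma_i - M, xx^\top}| \leq O(r)$ for at least $0.51k$ indices $i$. Two matrices that are both $r$-medians must satisfy $\|M - M'\|_2 \leq O(r)$, so existence of such an $M$ together with an algorithm that either certifies it or makes progress toward it yields the claim. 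The algorithm itself is iterative: maintain a guess $M_t$ starting from, say, $0$; at each step solve the degree-$8$ SoS relaxation of
\begin{equation*}
\max \sum_i b_i \quad \text{subject to} \quad b_i^2 = b_i, \ \|u\|^2 = 1, \ b_i \cdot s_i \cdot \iprod{\Sigma_i - M_t, uu^\top} \geq b_i r
\end{equation*}
(for both choices of sign $s_i \in \{\pm 1\}$). If the value is below $0.49k$, output $M_t$; otherwise round the pseudoexpectation to get a nuclear-norm-$1$ matrix $U$ with $\iprod{U, M_t - \Sigma} \gtrsim r$, and update $M_{t+1} = M_t - \eta U$ for a suitable step size $\eta$. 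Standard gradient-descent analysis in the Frobenius geometry gives $O(\log(1/\epsilon))$ iterations to reach spectral-norm accuracy $\epsilon$.

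The technical heart is the certification step, which I would carry out via three ingredients. First, bound the expected SoS value by Cauchy--Schwarz inside the pseudoexpectation: $\sum_i b_i \preceq \tfrac1r \sum_i b_i \iprod{\Sigma_i - \Sigma, uu^\top}$, and then $\preceq \tfrac{\sqrt{k}}{r}\bigl(\sum_i \iprod{u, (\Sigma_i - \Sigma)u}^2\bigr)^{1/2}$; this converts the objective into the random polynomial $p(u)$ studied in Section~3. Second, invoke Lemma~\ref{lem:p-main} to bound $\E \max_{\pE}\pE p(u)$ by roughly $\tfrac{k^{3/2} \Tr(\Sigma)\|\Sigma\|_2}{n}\sqrt{\log d} + \tfrac{k^2 \|\Sigma\|_2^2}{n} + \tau^2 \log d$, where $\tau$ is the truncation level on the $Z_i$'s. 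Third, upgrade in-expectation to high-probability via a bounded-differences argument: modifying one bucket's samples changes the SoS value by at most $1$ (since $0 \le b_i \le 1$ is enforced in the relaxation), so McDiarmid gives deviation $O(\sqrt{k})$ with probability $1 - 2^{-\Omega(k)}$. Choosing $r$ to dominate the sum of these three contributions --- optimizing the truncation level $\alpha$ as in the theorem statement so that the $\tau^2 \log d$ term is of the same order --- gives exactly $r = O\bigl(\tfrac{(\log d)^{3/2}\|\Sigma\|_2}{\sqrt{n}}(k^{1/4}\sqrt{\sr} + \sqrt{k})\bigr)$.

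A few pieces require care. The objective is written with $b_i \iprod{\Sigma_i - \Sigma, uu^\top}$, which is not manifestly nonnegative, so I would split into two optimization problems, one for each sign, and bound both --- this is a standard SoS manipulation. The truncation of $\tv_j$ introduces a bias $\Sigma - \tSigma$, but by niceness (bounded $8$th moments) this bias is controlled in spectral norm as long as $\alpha$ is chosen as in the theorem; the median-of-means certificate is then stated against $\tSigma$ and the final triangle inequality absorbs the bias into $r$. Likewise the truncation of $Z_i$ at level $\tau$ discards a small-probability event which one shows contributes only to the expectation (handled already by Lemma~\ref{lem:p-main}) and not to the bounded-differences constant. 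Finally, for the gradient step I must show that when the SoS value exceeds $0.51k$, rounding actually produces a direction $U$ with $\iprod{U, M_t - \Sigma}$ bounded below by $r$ up to constants --- this follows from the certified lower bound and the fact that $\Sigma$ itself is a feasible median, so the difference must be witnessed in the direction returned by $U = \pE[uu^\top]$.

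The main obstacle is the certification bound, which is where the entire $d^{3/4}/\sqrt{n}$ rate is set: all three terms in Lemma~\ref{lem:p-main} must simultaneously be controlled, and this requires the SoS Bernstein inequality (Theorem~\ref{thm:sosmatbern}) precisely because the naive spectral bound on $\sum_i (\Sigma_i - \Sigma)^{\otimes 2}$ is too weak by a $\poly(d)$ factor --- preconditioning against the population fourth moment, which is accessible only via certifiable hypercontractivity (the niceness hypothesis), is essential. Everything else --- the bounded differences step, the outer gradient descent, the truncation bookkeeping --- is comparatively standard once the certification bound is in hand.
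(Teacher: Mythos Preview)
Your outline is essentially the paper's proof: the certification step via bounded differences plus SoS Cauchy--Schwarz reducing to Lemma~\ref{lem:p-main}, the pair of sign-variant SDPs, the truncation bias bookkeeping, and the outer descent with $U=\pE[uu^\top]$ all match Sections~\ref{ssec:certpopcov}--\ref{ssec:gdcov}.

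One point does not go through as written. Solving the SDP at the \emph{fixed} certification radius $r$ and extracting $U$ gives only $\iprod{U,\tSigma-M_t}\gtrsim r$, which yields additive Frobenius-squared progress $\Theta(r^2)$ per step, not a multiplicative contraction; the claim of $O(\log(1/\epsilon))$ iterations does not follow from this. The paper inserts a \emph{distance estimation} subroutine (Algorithm~\ref{alg:covdest}, Lemma~\ref{lem:dest}): binary-search on the radius to find the largest $d_t$ for which the SDP at $M_t$ still has value $\geq 0.999k$, and show $d_t$ approximates $\|M_t-\tSigma\|_2$ up to constant factors. Solving the gradient SDP at radius $d_t$ (not $r$) then yields, via Lemma~\ref{lem:covgdest}, the stronger correlation $\iprod{U,\tSigma-M_t}\geq 0.5\,\|M_t-\tSigma\|_2$; with step size $d_t/4$ this gives $\|\Sigma_{t+1}-\tSigma\|_F^2\leq(1-\tfrac{1}{16d})\|\Sigma_t-\tSigma\|_F^2$ and hence $O(d\log(1/\epsilon))$ iterations. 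Note the factor of $d$, coming from $\|A\|_2\geq d^{-1/2}\|A\|_F$, which your $O(\log(1/\epsilon))$ also omits even granting the adaptive step.
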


The proof of the theorem will proceed through two main steps:
\begin{enumerate}
    \item First, in Subsection~\ref{ssec:certpopcov} we show that there exists an efficient polynomial-time algorithm to certify if a candidate matrix is close to the true covariance (see \cref{lem:cert-main}). Our certification algorithm is based on a sum-of-squares relaxation of the polynomial optimization problem obtained via the analysis of the median-of-means tournament estimator for the same problem. While the polynomial optimization problem itself is intractable, crucially we show that a bounded degree sum-of-squares relaxation of the above problem suffices to perform such a test. 
    \item Subsequently in Subsection~\ref{ssec:gdcov}, we show that in the event that the above certification concludes that a candidate matrix is far from the true covariance matrix, such a certification algorithm also furnishes a suitable descent direction to improve our candidate estimate of the true covariance (see \cref{lem:covgdest}). We integrate this insight into a gradient-descent style algorithm for covariance matrix estimation (see \cref{alg:covest}).
\end{enumerate}

\subsection{Certifying the Population Covariance}
\label{ssec:certpopcov}
We begin by constructing a SDP that can efficiently test if a candidate matrix is close to the true covariance, using the samples $v_i$.

A key component of our high-probability certificate requires obtaining a sharp bound on the expected value of the SoS program. The expectation bound will rest heavily on our results from \cref{sec:random_poly} controlling the expected values of random degree-4 polynomials.
Concentration around the mean will follow from a bounded-differences inequality argument.

To bound the expected value of the SoS program, we let $\alpha > 0$ be a truncation level to be set in the sequel and
let $\tilde{v}_i = v_i \cdot \Ind[\|v_i\| \leq \alpha]$ be a truncated version of $v_i$.
Let $\tilde{\Sigma} = \E \tilde{v} \tilde{v}^\top$.
We will eventually choose $\alpha$ such that $\normt{\tilde{\Sigma} - \Sigma} \leq O(\covrate)$, so the bias from the truncation does not introduce too much error to our estimator. Fix $k \in \mathbb{N}$ (later we will take $k = \Theta(\log(1/\delta))$) and
let $B_1,\ldots,B_k \subseteq [n]$ partition $n$ evenly into $k$ buckets.
For $i \leq k$, let
\[
Z_i = \frac 1 m \sum_{j \in B_i} \tv_j\tv_j^\top - \tilde{\Sigma} \mper
\]
For convenience we will write $m = n/k$ (the number of samples in each bucket).


Having partitioned our data into $k$ buckets, solutions to the following polynomial optimization problem can be used to certify with high probability over the draw of the data points that $\tSigma$ is the covariance matrix of the distribution up to a radius of $r$ in spectral norm.
\begin{align*}
  & \max_{b,u} \frac 1k \sum_{i=1}^k b_i \text{ such that}\\
  & b_i^2 = b_i \\
  & \|u\|^2 = 1 \\
  & b_i^4 \norm{u}^2 \iprod{Z_i, uu^\top} \geq r b_i\mper \tag{\textbf{Test-Cov}} \label{eq:tstco}
\end{align*}
Note that in our gradient descent algorithm, we actually use two nearly identical testing programs \cref{eq:tstcov} and \cref{eq:tstcovn}, which differ by a sign in one of the constraints in \cref{eq:tstco}. We defer discussion of this technicality until later.

As the above optimization problem is non-convex and it is unclear whether it can be solved in polynomial time, we work with its SoS relaxation instead. In particular, we will use the degree-$8$ SoS relaxation of the above problem. The reason we use the last constraint instead of the conceptually simpler constraint, $b_i \iprod{Z_i, uu^\top} \geq r b_i$, is that this avoids enforcing additional polynomial constraints of the form $q(x,b) (b_i \iprod{Z_i, uu^\top} - r b_i) \geq 0$ which would complicate subsequent analysis. Informally, the smallest value $r$ for which the value of the SoS program is small will determine our error rate. Notationally, we will refer to the value of this program using $\ref{eq:tstco}(r, \bZ)$ where $\bZ = \{ Z_i \}_{i=1}^k$.
The following is the key lemma for certifying the population covariance.

\begin{lemma}\label[lemma]{lem:cert-main} Assume that the $v_i$ are $L$-nice,
fix $k \in \mathbb{N}$ and
let $B_1,\ldots,B_k \subseteq [n]$ partition $n$ evenly into $k$ buckets. 
For $i \leq k$, let $Z_i = \frac k n \sum_{j \in B_i} \tv_j\tv_j^\top - \tilde{\Sigma}$ where $\tv_i = v_i \Ind[\normt{v_i} \leq \alpha]$.
If the truncation level $\alpha = \left( \frac{L \normt{\Sigma} n}{\Tr(\Sigma) \sqrt{k}} \right)^{1/4} \cdot \sqrt{\Tr(\Sigma)}$ and
  if $r \geq C_3 (\log d)^{3/2}  \left( \frac{L \normt{\Sigma}}{\sqrt{n}} \left(k^{1/4} \sr^{1/2} + \sqrt{k} \right) \right)$ for large-enough constant $C_3$ then,
  \[
  \max_{\pE} \frac 1k \pE \sum_{i=1}^k b_i \text{ s.t. $\pE$ satisfies \ref{eq:tstco}$( r, \bZ)$ and degree $\pE = 8$}
  \]
  is at most $0.001$ with probability at least $1-2^{-\Omega(k)}$.
\end{lemma}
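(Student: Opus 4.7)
The plan is to reduce bounding the SoS value of \ref{eq:tstco} to the expectation bound on $p(u) = \sum_i \iprod{u, \tilde{Z}_i u}^2$ provided by \cref{lem:p-main}, and then to apply a McDiarmid-style concentration inequality. First, using the booleanity axiom $b_i^2 = b_i$ (so $b_i^4 = b_i$ in the quotient ring) together with $\|u\|^2 = 1$, the $i$-th constraint simplifies to $b_i \iprod{u, Z_i u} \geq r b_i$. Summing, squaring the non-negative inequality, applying the SoS Cauchy--Schwarz identity $(\sum_i b_i \iprod{u, Z_i u})^2 \leq (\sum_i b_i^2)(\sum_i \iprod{u, Z_i u}^2)$, and using $\sum_i b_i \leq k$ yield a degree-$8$ SoS derivation (from the axioms) of
\[
r^2 \Bigparen{\sum_i b_i}^2 \leq k \sum_i \iprod{u, Z_i u}^2.
\]
Taking pseudoexpectation and invoking pseudo-Cauchy--Schwarz $\pE[\sum_i b_i]^2 \leq \pE[(\sum_i b_i)^2]$ yields
\[
V(v)^2 \leq \frac{1}{kr^2} \max_{\pE} \pE\Bigbrac{\sum_i \iprod{u, Z_i u}^2},
\]
where $V(v) \defeq \max_{\pE} \tfrac{1}{k}\pE[\sum_i b_i]$ is the quantity we want to bound.

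Second, I would control the right-hand side in expectation. On the event $G = \{\normt{Z_i} \leq \tau \text{ for all } i\}$ the matrices $Z_i$ coincide with their truncations $\tilde{Z}_i$, so $\sum_i \iprod{u, Z_i u}^2 = p(u)$ pointwise, which is the object bounded by \cref{lem:p-main}. With the specified $\alpha$ and a choice $\tau = \tilde O(\alpha^2/m + \alpha\sqrt{\normt{\Sigma}/m})$, matrix Bernstein (\cref{lem:matrix-bernstein}) applied to each $Z_i$ followed by a union bound gives $\Pr(G^c) \leq 2^{-\Omega(k)}$. Jensen's inequality combined with \cref{lem:p-main} then gives
\[
\E\brac{V(v)\Ind_G} \leq \frac{1}{r\sqrt k}\sqrt{O\Paren{\frac{k^{3/2} L\,\Tr(\Sigma)\normt{\Sigma}\sqrt{\log d}}{n} + \frac{k^2 L\normt{\Sigma}^2}{n} + \tau^2\log d}},
\]
which simplifies (using $\Tr(\Sigma) = \sr\cdot\normt{\Sigma}$ and plugging in the chosen $\alpha, \tau$) to $O\bigparen{(\log d)^{3/2} L\normt{\Sigma}(k^{1/4}\sqrt{\sr} + \sqrt k)/(r\sqrt n)}$. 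The hypothesis on $r$ with $C_3$ large enough makes this at most $0.0004$, and since $V \leq 1$ always, $\E V(v) \leq 0.0004 + 2^{-\Omega(k)}$.

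Finally, I would upgrade this expectation bound to a high-probability bound via McDiarmid's inequality. The key stability claim is that replacing all samples in a single bucket $B_j$ changes $V(v)$ by at most $2/k$: given a feasible degree-$8$ pseudoexpectation $\pE$ for the original instance, the linear functional $\pE'$ obtained by sending every monomial containing $b_j$ to zero is a valid degree-$8$ pseudoexpectation for the perturbed instance, since the booleanity axiom for $b_j$ and the modified inequality $b_j^4\|u\|^2\iprod{Z_j', uu^\top}\geq r b_j$ both become $0\geq 0$ while all other axioms are untouched; the objective drops by at most $\tfrac{1}{k}\pE[b_j]\leq 1/k$. By symmetry the bounded-differences constant per bucket is $2/k$, so McDiarmid yields $V(v)\leq \E V(v) + 0.0006$ with probability $1-2^{-\Omega(k)}$, which combined with the expectation bound gives $V(v) \leq 0.001$ as required. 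The most delicate step I expect is verifying that the ``zero out $b_j$'' construction really defines a valid pseudoexpectation modulo the booleanity ideal; this reduces to checking $\pE'[q^2]\geq 0$ using the decomposition $q \equiv q_0(b_{-j}, u) + b_j q_1(b_{-j}, u) \pmod{b_j^2 - b_j}$, which gives $q^2 \equiv q_0^2 + b_j(2q_0 q_1 + q_1^2)$ and hence $\pE'[q^2] = \pE[q_0^2]\geq 0$.
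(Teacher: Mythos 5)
Your overall architecture (reduce to the random degree-$4$ polynomial controlled by \cref{lem:p-main} via the constraints and Cauchy--Schwarz, then a bounded-differences/McDiarmid step over the $k$ independent buckets, with the zero-out-$b_j$ pseudoexpectation argument) is the same as the paper's. But there is a genuine gap in how you pass from the matrices $Z_i$ to their $\tau$-truncations $\tZ_i$: you condition on the global event $G=\{\normt{Z_i}\le\tau\ \text{for all } i\}$ and assert $\Pr(G^c)\le 2^{-\Omega(k)}$ by matrix Bernstein plus a union bound. This is false at the truncation level required. After the $\alpha$-truncation the summands of $Z_i$ still have operator norm up to $R\approx\alpha^2/m$, and matrix Bernstein gives a per-bucket tail of roughly $d\exp(-c\,\tau/R)$; with $\tau=\tilde O\bigparen{\alpha^2/m+\sqrt{L\Tr\Sigma\normt{\Sigma}/m}}$ this is only inverse-polynomial in $d$, with no dependence on $k$, so it cannot be $2^{-\Omega(k)}$ once $k\gg\log d$ (the typical regime, e.g.\ $k=d$). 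Worse, the union bound multiplies this by $k$, and the lemma must hold for arbitrary $k$ (possibly much larger than any fixed power of $d$), where $\Pr(G^c)$ need not be small at all; forcing it small would require inflating $\tau$ by $\log k$ factors, which feeds into the $\tau^2\log d$ term of \cref{lem:p-main} and degrades the admissible $r$. Conceptually, demanding that \emph{every} bucket be well-behaved is exactly what median-of-means is designed to avoid: with heavy-tailed data, individual buckets are bad with non-negligible probability, and all exponential-in-$k$ concentration must come from the bounded-differences step, not from per-bucket tails.

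The paper's proof never requires all buckets to be good: it splits the indices per realization into $\cG=\{i:\normt{Z_i}\le\tau\}$ and $\cB$, bounds the bad contribution by $\E\max_{\pE}\frac1k\pE\sum_{i\in\cB}b_i\le\frac1k\sum_i\Pr[\normt{Z_i}\ge\tau]\le\E\normt{Z}/\tau\le 0.00025$ using only $\pE b_i\le1$ and Markov's inequality (with $\tau$ a large constant times the bound on $\E\normt{Z_i}$ from \cref{lem:EZub}), and applies the constraint, Cauchy--Schwarz, and \cref{lem:p-main} only on $\cG$. This fractional-bad-bucket accounting works for every $k$, and is the repair you need: replace the event $G$ by the per-index indicator $\Ind[\normt{Z_i}\le\tau]$ inside the sum. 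A smaller wobble: your claim of a degree-$8$ SoS derivation ``from the axioms'' of $r^2(\sum_i b_i)^2\le k\sum_i\iprod{u,Z_iu}^2$ implicitly multiplies the degree-$8$ constraints by non-constant multipliers, which a degree-$8$ pseudoexpectation need not respect (this is precisely why the paper uses the $b_i^4\norm{u}^2$ form of the constraint). The fix is to stay at the pseudoexpectation level, $\pE\sum_i b_i\le\frac1r\pE\sum_i b_i\iprod{u,Z_iu}\le\frac{\sqrt k}{r}\bigparen{\pE\sum_i\iprod{u,Z_iu}^2}^{1/2}$, which yields the same displayed inequality you use, so this part is reparable; the treatment of truncation is the substantive gap.
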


To prove \cref{lem:cert-main}, we note, firstly, that the random variable we wish to bound satisfies a bounded differences inequality with respect to the random matrices, $Z_i$ (\cref{lem:sos-conc}). Therefore, it suffices to bound its expected value. That is, it suffices to control the following quantity:
\[
\E_v \max_{\pE} \frac 1k \pE \sum_{i=1}^k b_i \text{ s.t. $\pE$ satisfies above equations}
\]

Subsequently, the bulk of the proof of \cref{lem:cert-main} rests on assembling some supporting results to control the expected value of the aforementioned SoS program in \cref{eq:tstco}. We, first, establish that this program obeys a bounded-differences condition:

\begin{lemma} \label[lemma]{lem:sos-conc}
  Let $Y = (Y_1, \hdots, Y_k)$ by any set of $k$, $d$-dimensional p.s.d. matrices and let $Y' = (Y_1, \hdots, Y_i', \hdots, Y_k)$ be an identical set except with the $i$th matrix replaced with $Y_i'$. Now let $m$ denote the optimal value of \ref{eq:tstco}$(r, \bY)$ and $m'$ the optimal value of \ref{eq:tstco}$(r, \bY')$. Then $\abs{m-m'} \leq 1/k$.
\end{lemma}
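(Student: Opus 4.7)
The plan is to exhibit a natural transformation from pseudo-expectations feasible for \ref{eq:tstco}$(r,\bY')$ to ones feasible for \ref{eq:tstco}$(r,\bY)$, which loses at most $1/k$ in the objective. Let $\pE$ be any degree-$8$ pseudo-expectation satisfying the constraints of \ref{eq:tstco}$(r,\bY')$. Define a new functional $\pE'$ on polynomials in $(b_1,\dots,b_k,u)$ by
\[
\pE'[p(b_1,\dots,b_k,u)] \;=\; \pE\bigl[p(b_1,\dots,b_{i-1},0,b_{i+1},\dots,b_k,u)\bigr].
\]
In words, $\pE'$ is obtained from $\pE$ by substituting $b_i \mapsto 0$ inside the polynomial before applying $\pE$. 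Linearity, normalization ($\pE'[1]=\pE[1]=1$), and positivity on squares (the substitution sends squares to squares) are immediate, so $\pE'$ is a valid degree-$8$ pseudo-expectation.

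Next, I would check that $\pE'$ satisfies the constraints of \ref{eq:tstco}$(r,\bY)$. The constraints $b_j^2=b_j$ for $j\neq i$ and $\|u\|^2=1$ survive the substitution unchanged, so they remain satisfied because they were satisfied by $\pE$ for $\bY'$. The constraint $b_i^2=b_i$ becomes $0=0$ after the substitution, hence holds trivially. For $j\neq i$ the inequality $b_j^4\|u\|^2\iprod{Y_j,uu^\top}\ge rb_j$ coincides with the corresponding inequality for $\bY'$ (since $Y_j=Y_j'$), and is preserved by the substitution. Finally, the only constraint that actually depended on replacing $Y_i'$ by $Y_i$ is $b_i^4\|u\|^2\iprod{Y_i,uu^\top}\ge rb_i$, and after substituting $b_i\mapsto 0$ this becomes $0\ge 0$, again trivially true. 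Thus $\pE'$ is feasible for \ref{eq:tstco}$(r,\bY)$.

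It remains to compare the objective values. By construction,
\[
\tfrac{1}{k}\pE'\sum_{j=1}^k b_j \;=\; \tfrac{1}{k}\pE\sum_{j\neq i} b_j \;=\; \tfrac{1}{k}\pE\sum_{j=1}^k b_j \;-\; \tfrac{1}{k}\pE[b_i].
\]
I then need the standard fact that $0\le \pE[b_i]\le 1$ for any pseudo-expectation satisfying $b_i^2=b_i$: the lower bound follows from $\pE[b_i]=\pE[b_i^2]\ge 0$, and the upper bound from $\pE[1-b_i]=\pE[(1-b_i)^2]\ge 0$ since the axiom $b_i^2=b_i$ yields $(1-b_i)^2=1-b_i$. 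Hence the objective value of $\pE'$ for $\bY$ is at least the objective value of $\pE$ for $\bY'$ minus $1/k$. Taking $\pE$ to be (near-)optimal gives $m\ge m'-1/k$, and swapping the roles of $\bY$ and $\bY'$ yields $m'\ge m-1/k$, completing the proof. No real obstacle is expected; the only subtle point is the formal verification that the substitution $b_i\mapsto 0$ preserves the pseudo-expectation axioms, which is the routine observation above.
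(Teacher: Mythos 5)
Your proposal is correct and is essentially the paper's own argument: both proofs map a feasible degree-$8$ pseudoexpectation for one program to one for the other by substituting $b_i \mapsto 0$, observe feasibility is preserved (the $i$th constraint becomes trivial and the others are unchanged), and use $0 \le \pE[b_i] \le 1$ (from $b_i^2 = b_i$) plus symmetry to conclude $\abs{m - m'} \le 1/k$. The only cosmetic difference is the direction of the transformation (you go from $\bY'$ to $\bY$, the paper from $\bY$ to $\bY'$), which is immaterial by the symmetric step.
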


\begin{proof}
    The argument proceeds similarly to an analogous argument in \cite{hopkins2018sub} and \cite{cherapanamjeri2019fast}. Let $\pE$ denote any degree-8, feasible pseudoexpectation for the \ref{eq:tstco} evaluated on $Y$. Then, define a new pseudoexpectation functional $\pE'$ which satisfies $\pE'[p(u,b_1,\ldots,b_{i-1},b_i,b_{i+1},\ldots,b_k)] = \pE[p(u,b_1,\ldots,b_{i-1},0,b_{i+1},\ldots,b_k)]$ for any degree-8 polynomial in $u, b_1, \hdots, b_k$. Such a $\pE'$ is a feasible pseudoexpectation for the latter program \ref{eq:tstco} evaluated on $\bm{Y}'$. Finally, $\frac{1}{k} \pE'[\sum_{j=1}^{k} b_j] = \frac{1}{k} \pE[\sum_{j=1}^{k} b_j] - \frac{1}{k}\pE[b_i] \geq \frac{1}{k} \pE[\sum_{j=1}^{k} b_j]-1/k$ since $0 \leq \pE[b_j] \leq 1, 0 \leq \pE'[b_j] \leq 1$ for all $j \in [k]$. Since the aforementioned argument holds for all feasible $\pE$ it follows that $m' \geq m-\frac{1}{k}$. A symmetric argument shows that $m \geq m'-1/k$.
\end{proof}

With this concentration result in hand, we now turn to controlling the expected value and begin by establishing a bound on the bias introduced by the $\alpha$-level truncation of the vectors $v_i$. Indeed, our truncation level $\alpha$ will have to be chosen sufficiently large so that this bias does not dominate our desired estimation accuracy. 
\begin{lemma}\label[lemma]{lem:bias} 
Assume the distribution $\cD$ satisfies \cref{def:nice-dist}, $v \sim \cD$ and $\tv = v \Ind[\normt{v} \leq \alpha]$. Then
 \begin{align*}
& \normt{\tilde{\Sigma}-\Sigma} \leq \frac{ L \normt{\Sigma} \Tr \Sigma}{\alpha^2}
\end{align*}
where $\tSigma = \E[\tv \tv^\top]$.
\end{lemma}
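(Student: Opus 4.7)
The plan is to bound $\|\tilde{\Sigma}-\Sigma\|_2$ by writing the difference as a PSD matrix, reducing to a scalar quadratic form, and then using Markov's inequality on the truncation indicator together with the $(2,4)$-hypercontractivity assumption contained in niceness.

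First, I would observe that since $\tilde{v} = v \cdot \Ind[\|v\|\leq\alpha]$, we have
\[
\Sigma - \tilde{\Sigma} \;=\; \E[vv^\top] - \E[\tilde v\tilde v^\top] \;=\; \E\!\left[vv^\top \Ind[\|v\|>\alpha]\right],
\]
which is a PSD matrix. Hence
\[
\Normt{\Sigma-\tilde{\Sigma}} \;=\; \sup_{\|u\|=1}\; \E\!\left[\iprod{u,v}^2 \Ind[\|v\|>\alpha]\right].
\]
Next I would apply the trivial bound $\Ind[\|v\|>\alpha] \leq \|v\|^2/\alpha^2$ to get, for any unit $u$,
\[
\E\!\left[\iprod{u,v}^2 \Ind[\|v\|>\alpha]\right] \;\leq\; \frac{1}{\alpha^2}\, \E\!\left[\iprod{u,v}^2 \|v\|^2\right].
\]

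The remaining step is to control $\E[\iprod{u,v}^2 \|v\|^2]$. I would expand $\|v\|^2 = \sum_{i\leq d} \iprod{e_i,v}^2$ in the standard basis, apply Cauchy--Schwarz termwise, and then use the $(2,4)$-hypercontractivity from the niceness of $\cD$:
\[
\E\!\left[\iprod{u,v}^2 \iprod{e_i,v}^2\right] \;\leq\; \sqrt{\E \iprod{u,v}^4}\cdot \sqrt{\E \iprod{e_i,v}^4} \;\leq\; L \cdot \iprod{u,\Sigma u}\cdot \Sigma_{ii}.
\]
Summing over $i$ and using $\iprod{u,\Sigma u}\leq \Normt{\Sigma}$ yields $\E[\iprod{u,v}^2 \|v\|^2] \leq L \Normt{\Sigma}\Tr\Sigma$, and dividing by $\alpha^2$ completes the proof.

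The argument has no real obstacle; the main thing to keep track of is the constant $L$ arising from the $(2,4)$-hypercontractivity (absorbed into the statement of the lemma). Note that the SoS form of hypercontractivity is not needed here, only the scalar inequality for the fixed vectors $u$ and $e_i$, which is an immediate consequence of $L$-niceness.
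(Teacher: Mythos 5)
Your proof is correct, and it takes a somewhat different route from the paper's. The paper bounds $\sup_{x}\E\bigl[\Ind[\normt{v}>\alpha]\,\iprod{v,x}^2\bigr]$ by Cauchy--Schwarz against the tail event, i.e. by $(\E\iprod{v,x}^4)^{1/2}\Pr[\normt{v}>\alpha]^{1/2}$, then applies Markov's inequality to $\E\normt{v}^4$ and controls everything through the eighth-moment assumption ($\E\normt{v}^8\le L^2(\Tr\Sigma)^4$, \cref{lem:l8l2vec}, plus L8--L2 hypercontractivity in the direction $x$). You instead use the pointwise bound $\Ind[\normt{v}>\alpha]\le\normt{v}^2/\alpha^2$ and reduce to showing $\E[\iprod{u,v}^2\normt{v}^2]\le L\normt{\Sigma}\Tr\Sigma$ --- which is exactly the paper's \cref{lem:l4l2norm}, proved there from eighth moments and used for \cref{lem:EZub} rather than here --- except you establish it by coordinatewise Cauchy--Schwarz using only fourth-moment hypercontractivity. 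Your version is more modular and shows the bias bound needs only $(2,4)$-type moment control, a mild generalization of the paper's argument. The one detail to watch is the power of $L$: with the paper's parametrization in \cref{def:cert-hyper} (where $L^2$ sits on the right-hand side), your termwise step $\sqrt{\E\iprod{u,v}^4}\cdot\sqrt{\E\iprod{e_i,v}^4}$ yields $L^2\iprod{u,\Sigma u}\Sigma_{ii}$ and hence the bound $L^2\normt{\Sigma}\Tr\Sigma/\alpha^2$; to recover the stated constant $L$ one can instead note $\E\iprod{w,v}^4\le(\E\iprod{w,v}^8)^{1/2}\le L(\E\iprod{w,v}^2)^2$ from the $(2,8)$ part of niceness. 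Since the lemma is only invoked with $L=O(1)$, this discrepancy is immaterial.
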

\begin{proof}
  For unit-norm $x$ we have that, 
\begin{align*}
      & \|\tilde{\Sigma}-\Sigma\| = \sup_{x} | \E \iprod{v, x}^2 -\E \iprod{\tv, x}^2 | = \sup_x | \E[(1-\Ind[\|v\| \leq \alpha]) \cdot \iprod{v, x}^2] | \\
      & \leq \sup_x (\E \iprod{v,x}^8)^{1/4} \cdot \sqrt{\Pr[\normt{v} > \alpha]} \leq \sqrt{L} \sup_x (\E \iprod{v, x}^2) \cdot \sqrt{ \frac{\sqrt{\E[\normt{v}^8]}}{\alpha^4}} \leq \frac{L \normt{\Sigma } \Tr(\Sigma)}{\alpha^2}
\end{align*}
where we have used the Cauchy-Schwarz/Jensen inequalities, Markov's inequality (in the form $\Pr[\normt{v}^4 \geq \alpha^4] \leq \frac{\E[\normt{v}^4]}{\alpha^4}$) and appealed to L8-L2 hypercontractivity. The result $\E[\normt{v}^8] \leq L^2 (\Tr \Sigma)^4$ follows from \cref{lem:l8l2vec}.
\end{proof}
Moreover we will also use another of level of truncation of the $Z_i's$ at level $\tau$ (appearing simply in the analysis of \cref{lem:cert-main} not in our algorithm). Appropriately choosing $\tau$ requires a bound on $\E[\normt{Z}]$,
\begin{lemma}\label[lemma]{lem:EZub}
Assume the distribution $\cD$ satisfies \cref{def:nice-dist} and that $v_i \sim \mathcal{D}$ i.i.d. Then, letting $Z = \frac{1}{m} \sum_{i \leq m} \tv_i \tv_i^\top - \tSigma$,
\begin{align*}
    \E[\normt{Z}] \leq O(\log d) \frac{\alpha^2}{m} + O \left(\sqrt{\log (d)} \sqrt{\frac{L \Tr \Sigma \normt{\Sigma}}{m}} \right)
\end{align*}
where $\tv = v \Ind[\normt{v} \leq \alpha]$ and $\tSigma = \E[\tv \tv^\top]$.
\end{lemma}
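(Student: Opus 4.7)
The plan is to apply the matrix Bernstein inequality (\cref{lem:matrix-bernstein}) to the sum $Z = \sum_{i \leq m} S_i$ where $S_i = \frac{1}{m}(\tv_i \tv_i^\top - \tSigma)$. These are i.i.d., mean-zero, symmetric $d \times d$ random matrices, so we only need to control (i) a uniform spectral-norm bound $R$ on each $S_i$ and (ii) the matrix variance $\sigma^2 = \|\E Z^2\|_2$. The bound in the statement will then fall out directly from \cref{lem:matrix-bernstein}: the first term comes from $\sqrt{2\sigma^2 \log(2d)}$ and the second from $\tfrac13 R \log(2d)$.

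For the almost-sure bound, the truncation gives $\|\tv_i\|_2 \leq \alpha$, so $\|\tv_i \tv_i^\top\|_2 \leq \alpha^2$, and clearly $\|\tSigma\|_2 \leq \|\Sigma\|_2 \leq \alpha^2$ for the choice of $\alpha$ used in \cref{thm:covest}. Hence $\|S_i\|_2 \leq \frac{2\alpha^2}{m}$, so we may take $R = O(\alpha^2/m)$.

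For the variance, i.i.d.\ and zero-mean of the $S_i$ give $\E Z^2 = \sum_{i \leq m} \E S_i^2 = \frac{1}{m}\bigl(\E[\|\tv\|_2^2 \, \tv \tv^\top] - \tSigma^2\bigr) \preceq \frac{1}{m}\E[\|\tv\|_2^2 \, \tv \tv^\top]$. For any unit vector $u$, Cauchy–Schwarz yields
\[
u^\top \E[\|\tv\|_2^2\,\tv\tv^\top]\,u \;=\; \E[\|\tv\|_2^2 \iprod{\tv,u}^2] \;\leq\; \bigl(\E \|\tv\|_2^4\bigr)^{1/2}\bigl(\E \iprod{\tv,u}^4\bigr)^{1/2}.
\]
Since $\tv = v \cdot \Ind[\|v\|\leq \alpha]$, both $\E \|\tv\|_2^4 \leq \E \|v\|_2^4$ and $\E \iprod{\tv,u}^4 \leq \E \iprod{v,u}^4$. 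Niceness of $\cD$ (and the vector-$L^4$–$L^2$ bound of \cref{lem:l8l2vec}) gives $\E \iprod{v,u}^4 \leq L\,\|\Sigma\|_2^2$ and $\E \|v\|_2^4 \leq L\,(\Tr\Sigma)^2$. Therefore $u^\top \E Z^2 u \leq \frac{L\,\Tr\Sigma\,\|\Sigma\|_2}{m}$, so $\sigma^2 \leq \frac{L\,\Tr\Sigma\,\|\Sigma\|_2}{m}$.

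Plugging these into \cref{lem:matrix-bernstein} yields
\[
\E \|Z\|_2 \;\leq\; \sqrt{2\sigma^2 \log(2d)} + \tfrac{1}{3}R\log(2d) \;\leq\; O\!\Paren{\sqrt{\tfrac{L\,\Tr\Sigma\,\|\Sigma\|_2 \log d}{m}}} + O\!\Paren{\tfrac{\alpha^2 \log d}{m}},
\]
which is exactly the claimed bound. The only mild subtlety is ensuring that centering by $\tSigma^2$ (rather than $\Sigma^2$) can be dropped in the variance step and that the truncation $\Ind[\|v\|\leq \alpha]$ does not inflate the moment bounds; both are immediate because truncation only shrinks nonnegative-integrand expectations and because niceness is already phrased for the untruncated $v$.
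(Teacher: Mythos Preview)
Your proof is correct and follows essentially the same route as the paper: apply matrix Bernstein (\cref{lem:matrix-bernstein}) with $R=O(\alpha^2/m)$ from the truncation, and bound the matrix variance $\|\E Z^2\|_2$ by controlling $\|\E[\|\tv\|^2\tv\tv^\top]\|_2$ via Cauchy--Schwarz and hypercontractivity. The only cosmetic difference is that the paper packages the variance bound into \cref{lem:l4l2norm} (which goes through eighth moments and Jensen), whereas you use fourth moments directly; both reach the same $\sigma^2 \leq L\,\Tr\Sigma\,\|\Sigma\|_2/m$.
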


\begin{proof}
  This follows from an application of the matrix Bernstein inequality:
  \begin{align*}
      \E[\normt{Z}] \leq O(\log d) \cdot \frac{\alpha^2}{m} + O(\sqrt{\log d}) \normt{ \E Z^2 }^{1/2}.
  \end{align*}
  We can use \cref{lem:linalg1} along with  \cref{lem:l4l2norm} to bound the matrix variance as
  \begin{align*}
    & \normt{ \E Z^2 } \leq O \left(\Normt{ \frac{1}{m^2} \sum_{i \leq m} \E[\normt{\tv_i}^2 \tv_i \tv_i^\top] } \right) = \frac{1}{m} O(\normt{\E[\snormt{\tv} \tv \tv^\top}) \leq O(\frac{1}{m} L \Tr \Sigma \normt{\Sigma})
  \end{align*}
  Combining terms yields the result. Recall $\normt{\tSigma} \leq \normt{\Sigma}$ and $\Tr(\tSigma) \leq \Tr(\Sigma)$.
\end{proof}

With these supplementary lemmas we are finally prepared to embark on the proof of \cref{lem:cert-main}.

\begin{proof}[Proof of \cref{lem:cert-main}]

The primary technical challenge is bounding the expected value of the (random) SoS program in \ref{eq:tstco} since concentration around this expected value follows by a bounded differences argument. Accordingly we begin by bounding the expected value $\E_v \max_{\pE} \frac 1k \pE \sum_{i=1}^k b_i$.

To begin, simply for the purposes of our analysis, we partition the set of $k$ indices corresponding to buckets $B_1, \hdots, B_k$, into two sets, $\cG = \{i : \normt{Z_i} \leq \tau\}$ and $\cB = \{i : \normt{Z_i} > \tau\}$. Then, for any degree-8 $\pE$ satisfying the constraints in \ref{eq:tstco}, we have that
\begin{align*}
\E_v \max_{\pE} \frac 1k \pE \sum_{i=1}^k b_i \leq \E_v \max_{\pE} \frac 1k \pE \sum_{i \in \cG} b_i + \E_v \max_{\pE} \frac 1k \pE \sum_{i \in \cB} b_i.
\end{align*}
Now, since $\E \normt{Z} \leq  O(\log d) \frac{\alpha^2}{m} + O(\sqrt{\log d/m}) \sqrt{L \Tr \Sigma \normt{\Sigma}}$ (which follows from \cref{lem:EZub}), taking $\tau = C_1 \left( \log(d) \frac{\alpha^2}{m} + \sqrt{\log (d)} \sqrt{L \Tr \Sigma \normt{\Sigma}/m} \right)$ for some large-enough constant $C_1$, we can ensure that
\begin{align*}
    \E_v \max_{\pE} \frac 1k \pE \sum_{i \in \cB} b_i \leq \E_v \frac{1}{k} \sum_{i \leq k} \Ind[\normt{Z_i} \geq \tau] \leq \frac{1}{k} \frac{\sum_{i \leq k} \E \normt{Z_i}}{\tau} \leq .00025
\end{align*}
since $\pE[b_i] \leq 1$ and by appealing to Markov's inequality. For the remainder of the argument we fix this truncation level $\tau$. Now to control $\E_v \max_{\pE} \frac 1k \pE \sum_{i \in \cG} b_i$ we have that, 
\begin{align*}
    \E_v \max_{\pE} \frac 1k \pE \sum_{i \in \cG} b_i \leq \frac{1}{r} \E_v \max_{\pE} \frac 1k \pE \sum_{i \in \cG} b_i^4 \iprod{Z_i, uu^\top} = \frac{1}{r} \E_v \max_{\pE} \frac 1k \pE \sum_{i \in \cG} b_i \iprod{\tZ_i, uu^\top}
\end{align*}
using the constraints in \ref{eq:tstco} and defining $\tZ_i = Z_i \Ind[\normt{Z_i} \leq \tau]$. Now by an application of the SoS Cauchy-Schwarz inequality, we have
  \[
  \frac 1 k \E_v \max_{\pE} \pE \sum_{i \in \cG} b_i \iprod{\tZ_i, uu^\top} \leq \frac 1 {\sqrt k} \Paren{\E_v \max_{\pE} \pE \sum_{i=1}^k \iprod{\tZ_i \tensor \tZ_i, uu^\top \tensor uu^\top}}^{1/2}\mper
  \]
  Now using sub-additivity of the $\sqrt{\cdot}$,
  \begin{align*}
  & \frac 1 {\sqrt k}  \Paren{\E_v \max_{\pE} \pE \sum_{i=1}^k \iprod{\tZ_i \tensor \tZ_i, uu^\top \tensor uu^\top}}^{1/2} \leq \\
  & \frac{1}{\sqrt{k}}  \Paren{ O(\log(d)\tau^2) + O(\sqrt{k \log d}) \cdot \sqrt{O \left( \frac{1}{m^2} L^2 \normt{\Sigma}^2 \Tr(\Sigma)^2 \right)} }^{1/2} + \\
  & \frac{1}{\sqrt{k}} \left(O(\frac{L k}{m}) \cdot \normt{\Sigma}^2 \right)^{1/2}.
  \end{align*}
  for any degree-8 $\pE$ satisfying the conditions of the theorem by \cref{lem:p-main}.
  Note that $\tau$ is a function of the truncation level $\alpha$ on the vectors $v_i$. We now choose $\alpha$ appropriate to balance the bias from truncation and the previous upper bound. For convenience we reparametrize as $\alpha = \beta \sqrt{\Tr \Sigma}$. Recall by \cref{lem:bias} we have that our bias is $\normt{\tSigma-\Sigma} \leq \frac{L \normt{\Sigma } \Tr(\Sigma)}{\alpha^2} = \frac{L \normt{\Sigma}}{\beta^2} \equiv B(\beta)$. Similarly, after algebraic simplifications we obtain that aforementioned upper bound is at most,
  \begin{align*}
      & (\log d)^{3/2} \cdot O \Big( \sqrt{k} \frac{\Tr(\Sigma) \beta^2}{n} + \frac{1}{\sqrt{n}} \sqrt{L \Tr(\Sigma) \normt{\Sigma}} + k^{1/4} \frac{\sqrt{L \normt{\Sigma} \Tr(\Sigma)}}{\sqrt{n}} + \sqrt{\frac{kL}{n}} \normt{\Sigma} \Big).
  \end{align*}
  where we have used that $m=n/k$ and define $$R(\beta) = \sqrt{k} \frac{\Tr(\Sigma) \beta^2}{n} + \frac{1}{\sqrt{n}} \sqrt{L \Tr(\Sigma) \normt{\Sigma}} 
  +
  k^{1/4} \frac{\sqrt{L \normt{\Sigma} \Tr(\Sigma)}}{\sqrt{n}} + \sqrt{\frac{kL}{n}} \normt{\Sigma}$$ for convenience.
  Since our final estimation error for $\Sigma$ will be upper bounded by $O((\log d)^{3/2} R(\beta)+B(\beta)) \leq C_2 (\log d)^{3/2} (R(\beta)+B(\beta))$ for some universal constant $C_2$, we choose $\beta$ to minimize $R(\beta)+B(\beta)$. A short computation choosing $\beta$ to balance the first term in $R(\beta)$ and $B(\beta)$ shows the optimal $\beta^* = \left (\frac{\normt{\Sigma} \cdot L }{\Tr(\Sigma)} \cdot \frac{n}{\sqrt{k}} \right)^{1/4}$. Hence we have the final value can be upper bounded, 
  \begin{align*}
    O \left( (\log d)^{3/2} \cdot \frac{\sqrt{L} \normt{\Sigma}}{\sqrt{n}} \left(
    k^{1/4} \sr^{1/2} + \sqrt{k} \right) \right).
  \end{align*}
 By assembling this bound with our previous results and choosing $r \geq C_3 (\log d)^{3/2}  \left( \frac{\sqrt{L} \normt{\Sigma}}{\sqrt{n}} \left(k^{1/4} \sr^{1/2} + \sqrt{k} \right) \right)$, for large-enough constant $C_3$, we can guarantee that 
  \begin{align*}
  \E_v \max_{\pE} \frac 1k \pE \sum_{i=1}^k b_i \leq .0005
  \end{align*}
  where $\max_{\pE}$ is taken over any degree-8 pseudoexpectation $\pE$ satisfying the constraints in \ref{eq:tstco}. 
  
  Finally, since by \cref{lem:sos-conc}, \ref{eq:tstco} obeys a bounded-difference condition, simply applying the bounded differences inequality shows that under the previous conditions,
  \begin{align*}
      \max_{\pE} \frac 1k \pE \sum_{i=1}^k b_i \leq .001
  \end{align*}
  with probability at least $1-2^{-\Omega(k)}$.
\end{proof}

\subsection{Gradient Descent for Covariance Estimation}
\label{ssec:gdcov}

We now show how the certification algorithm described in the previous subsection can be leveraged to obtain an efficient algorithm for estimating covariance matrices. We start by proving a lemma which will help us relate a solution to the testing semidefinite program centered at an \textit{arbitrary} matrix, $x$, to a solution at the mean. To start with, we formally describe  the certification SDPs below:

\begin{gather*}
    \max_{\pE} \pE \left[\sum_{i = 1}^k b_i\right] \\
    \text{ such that $\pE$ satisfies: } b_i^2 = b_i \\
    \norm{u}^2 = 1 \\
    b_i^4 \norm{u}^2 \iprod{uu^\top, Z_i - x} \geq b_i r \tag{\textbf{Test-Cov-Pos}} \label{eq:tstcov}
\end{gather*}

In addition, to account for cases where the largest eigenvalue (in magnitude) of $\tSigma - x$ is negative, we will use the following semidefinite program:
\begin{gather*}
    \max_{\pE} \pE \left[\sum_{i = 1}^k b_i\right] \\
    \text{ such that $\pE$ satisfies: } b_i^2 = b_i \\
    \norm{u}^2 = 1 \\
    -b_i^4 \norm{u}^2 \iprod{uu^\top, Z_i - x} \geq b_i r \tag{\textbf{Test-Cov-Neg}} \label{eq:tstcovn}
\end{gather*}

We will use $(\pE, v) = \ref{eq:tstcov}(\bm{Z}, X, r)$ to denote the optimal solution, value pair of the semidefinite program \ref{eq:tstcov} instantiated with $\bm{Z}, X$ and $r$ and analogous notation for \ref{eq:tstcovn}. Through the rest of this subsection, we will assume the following deterministic condition on the random matrices, $Z_i$.

\begin{condition}
    \label{cnd:covconc}
    Given $\bm{Z} = \{Z_i\}_{i = 1}^k$, for:
    \begin{equation*}
        \covrate = C_3 (\log d)^{3/2} \left( \frac{L \normt{\Sigma}}{\sqrt{n}} \left( 
        k^{1/4} \sr^{1/2} + \sqrt{k} \right) \right)
    \end{equation*}
    for large-enough $C_3$,
    the solutions $(\pE_p, v_p) = \ref{eq:tstcov} (\bm{Z}, \tSigma, \covrate)$ and $(\pE_n, v_n) = \ref{eq:tstcovn} (\bm{Z}, \tSigma, \covrate)$ satisfy, $v_p, v_n \leq 0.001k$.
\end{condition}

We will also make use of the fact that due to the pseudoexpectations satisfying $b_i^2 = b_i$ and $\norm{u}^2 = 1$, the last constraints in \ref{eq:tstcov} and \ref{eq:tstcovn} are equivalent to:

\begin{equation*}
    \pE[b_i \iprod{uu^\top, Z_i - x}] \geq \pE [b_i r] \text{ and } \pE[-b_i \iprod{uu^\top, Z_i - x}] \geq \pE[b_i r] \text{ respectively.}
\end{equation*}

\begin{lemma}
  \label{lem:sdpxtm}
  Assume Condition~\ref{cnd:covconc}. Let $\pE$ be a pseudo-distribution over variables $b_i$ and $v_j$ satisfying $\norm{v}^2 = 1$ and $b_i^2 = b_i$. Suppose further that $\pE$ satisfies $\pE[\sum_{i = 1}^k b_i] \geq 0.999k$. Then, there are sets of $0.998k$ indices $\mathcal{S}_p$ and $\mathcal{S}_n$ such that for all $l_p \in \mathcal{S}_p$ and $l_n \in \mathcal{S}_n$, we have:
  \begin{equation*}
      \pE[b_{l_p}\iprod{uu^\top, Z_{l_p} - \tSigma}] \leq \pE[b_{l_p}] \covrate \text{ and } \pE[-b_{l_n}\iprod{uu^\top, Z_{l_n} - \tSigma}] \leq \pE[b_{l_n}] \covrate
  \end{equation*}
  Furthermore, there is a subset $\mathcal{R} \subseteq [k]$ such that $\abs{\mathcal{R}} \geq 0.98k$ for any $i \in \mathcal{R}$, we have $\pE[b_i] \geq 0.95$.
\end{lemma}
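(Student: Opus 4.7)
The plan is to treat the two claims separately: the $\mathcal{R}$ bound is a short averaging argument on the one-dimensional numbers $\pE[b_i]$, while the $\mathcal{S}_p,\mathcal{S}_n$ bounds are obtained by contradicting Condition~\ref{cnd:covconc} via a substitution that zeroes out the ``good'' buckets of a hypothetical witness pseudoexpectation.

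For $\mathcal{R}$: from $b_i^2 = b_i$ I get $\pE[(1-b_i)^2] = 1 - \pE[b_i] \geq 0$, so $0 \leq \pE[b_i] \leq 1$ for every $i$. If strictly more than $0.02k$ indices had $\pE[b_i] < 0.95$, then $\pE[\sum_i b_i] < 0.02k \cdot 0.95 + 0.98k \cdot 1 = 0.999k$, contradicting the hypothesis. So $|\mathcal{R}| \geq 0.98k$.

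For $\mathcal{S}_p$ (the case of $\mathcal{S}_n$ is symmetric) I would argue by contradiction. Let $T_p := \{\, l \in [k] : \pE[b_l \iprod{uu^\top, Z_l - \tSigma}] > \pE[b_l]\covrate \,\}$, and assume toward contradiction that $|T_p| > 0.002k$. Define a new degree-$8$ functional $\pE'$ by the polynomial substitution $b_j \mapsto \alpha_j b_j$ with $\alpha_j := \mathbf{1}[j \in T_p]$, i.e.\ $\pE'[f(u, b_1, \ldots, b_k)] := \pE[f(u, \alpha_1 b_1, \ldots, \alpha_k b_k)]$. Since polynomial substitution sends squares to squares and preserves the constant $1$, $\pE'$ is a valid degree-$8$ pseudoexpectation. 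The axioms $b_i^2 = b_i$ and $\norm{u}^2 = 1$ transfer to $\pE'$: for $i \in T_p$ they are inherited from $\pE$, and for $i \notin T_p$ both sides of $b_i^2 - b_i$ become $0$ under the substitution. The preamble has already reduced the remaining SDP constraint to the linear inequality $\pE'[b_i \iprod{uu^\top, Z_i - \tSigma}] \geq \pE'[b_i]\covrate$: for $i \in T_p$ this is immediate from the definition of $T_p$ (the substitution leaves $b_i$ and $u$ untouched), and for $i \notin T_p$ both sides vanish. Hence $\pE'$ is feasible for \ref{eq:tstcov} with $x = \tSigma$.

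The endgame is arithmetic. Condition~\ref{cnd:covconc} gives $\sum_{i \in T_p} \pE[b_i] = \pE'[\sum_i b_i] \leq v_p \leq 0.001k$, and combining with $\pE[\sum_i b_i] \geq 0.999k$ yields $\sum_{i \notin T_p} \pE[b_i] \geq 0.998k$; using $\pE[b_i] \leq 1$ forces $|[k] \setminus T_p| \geq 0.998k$, contradicting $|T_p| > 0.002k$. The $\mathcal{S}_n$ case is identical with \ref{eq:tstcovn} and $v_n$ in place of \ref{eq:tstcov} and $v_p$. The only step requiring any subtlety is verifying the feasibility of $\pE'$, which relies crucially on the preamble's reduction of the polynomial constraint $b_i^4 \norm{u}^2 \iprod{uu^\top, Z_i - x} \geq b_i\covrate$ to a linear-in-$\pE$ inequality; the raw polynomial form would demand higher-degree SoS certificates that the substituted $\pE'$ has no reason to supply.
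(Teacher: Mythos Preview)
Your proof is correct and follows essentially the same approach as the paper. Both arguments use the averaging bound on $\pE[b_i]$ for $\mathcal{R}$, and for $\mathcal{S}_p,\mathcal{S}_n$ both construct a feasible pseudoexpectation for \ref{eq:tstcov} (resp.\ \ref{eq:tstcovn}) at $\tSigma$ by zeroing out a subset of the $b_i$'s, then invoke Condition~\ref{cnd:covconc} to bound the objective and hence the size of the relevant index set; the paper zeros out the ``violating'' indices while you zero out the complement of $T_p$, which is the same argument with complementary bookkeeping. One cosmetic point: your contradiction hypothesis $|T_p|>0.002k$ is never actually used, since the feasibility of $\pE'$ and the ensuing arithmetic already give $|[k]\setminus T_p|\geq 0.998k$ directly.
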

\begin{proof}
  Under Condition~\ref{cnd:covconc},  the optimal value of \ref{eq:tstcov} at the true mean $\tSigma$ and the radius set to $\covrate$ is at most $\frac{k}{1000}$. Furthermore, the only constraints of \ref{eq:tstcov}$(\bm{Z}, \tSigma, \covrate)$ that are violated by $\pE$ are constraints which involve the polynomial equation $b_i^4 \normt{u}^2 \iprod{uu^\top, Z_i - \tSigma} \geq b_i \covrate$. However, note that since the polynomial constraint is of degree $8$ and we are optimizing over degree-$8$ pseudoexpectations, the only constraints enforced by this inequality are constraints of the form:
  \begin{equation*}
      \pE [b_l^4 \normt{u}^2 \iprod{uu^\top, Z_l - \tSigma}] \geq \pE[b_l] \covrate
  \end{equation*}
  Let the set of indices which violate the above inequality be denoted by $\mathcal{S}$. By setting to $0$ the $b_l$ corresponding to all the indices in $\mathcal{S}$, we obtain a feasible solution for \ref{eq:tstcov} $(\tZ, \tSigma, \covrate)$. However, note that setting a particular $b_l$ to $0$, only decreases the value of $\pE[\sum_{i = 1}^k b_i]$ by at most $1$. Since, we reduce the value of this quantity by at least $0.998k$, we conclude that the size of $\mathcal{S}$ is at least $0.998k$. Analogous results for \ref{eq:tstcovn} prove the existence of $\mathcal{S}_n$.
  
  For the second claim of the lemma, let $\mathcal{R} = \{i: \pE[b_i] \geq 0.95\}$. We have:
  \begin{equation*}
    0.999k \leq \pE[\sum_{i = 1}^k b_i] = \pE[\sum_{i \in \mathcal{R}} b_i] + \pE[\sum_{i \notin \mathcal{R}} b_i] \leq \abs{\mathcal{R}} + 0.95 (k - \abs{\mathcal{R}}) \implies \abs{\mathcal{R}} \geq 0.98k
  \end{equation*}
\end{proof}

In the next lemma, we show that for any point, $x$, we will able to accurately estimate the distance from $x$ to the mean.

\begin{lemma}
  \label{lem:dest}
  Assume Condition~\ref{cnd:covconc}. Let $x \in \mathbb{R}^{d \times d}$ be a symmetric matrix. Then, we have the following guarantee for the distance estimation step run on $x$ where $d_x = \distest(Z, x)$:
  
  \begin{equation*}
      \abs{d_x - \normt{x - \tSigma}} \leq \max(\cdest \covrate, 1 / 4 \normt{x - \tSigma})
  \end{equation*}
\end{lemma}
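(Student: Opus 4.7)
The natural candidate for the Distance Estimation subroutine is a search procedure (e.g.\ binary search on a geometrically-spaced grid of radii from some minimum $r_0 \asymp \covrate$ up to an a~priori upper bound on $\Normt{x-\tSigma}$): for each candidate $r$, solve both $\ref{eq:tstcov}(\bZ,x,r)$ and $\ref{eq:tstcovn}(\bZ,x,r)$ and report the smallest $r$ in the grid for which both SDPs have value at most (say) $0.01 k$. I will show that any such $r$ satisfies $\Normt{x-\tSigma}-O(\covrate) \leq r \leq \Normt{x-\tSigma}+O(\covrate)$; because the grid is geometric (ratio $\leq 5/4$), the reported $d_x$ then satisfies the multiplicative part of the claimed bound whenever $\Normt{x-\tSigma}$ dominates $\covrate$, and the additive part otherwise.

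The first half is the \emph{upper bound} on $d_x$: I show that if $r \geq \Normt{x-\tSigma} + \covrate$, then both test SDPs have value $\leq 0.001 k$. Fix any degree-$8$ pseudoexpectation $\pE$ feasible for $\ref{eq:tstcov}(\bZ,x,r)$; I claim it is also feasible for $\ref{eq:tstcov}(\bZ,\tSigma,\covrate)$. All constraints other than the last carry over trivially, so it suffices to verify $b_i^4 \|u\|^2 \iprod{uu^\top,Z_i-\tSigma} \geq b_i\covrate$. Writing $Z_i-\tSigma = (Z_i - x) + (x-\tSigma)$, the hypothesis gives the first term, and for the second term I will use the standard SoS spectral identity: for any symmetric $A$,
\[
  \|A\|_2\,\|u\|^2 - \iprod{uu^\top,A} \;=\; \sum_j (\|A\|_2-\lambda_j)\,\iprod{u,v_j}^2
\]
is a sum of squares via the eigendecomposition $A=\sum_j \lambda_j v_j v_j^\top$. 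Multiplying by $b_i^4 \succeq 0$ (which is an SoS consequence of $b_i^2=b_i$) yields $b_i^4\|u\|^2\iprod{uu^\top,x-\tSigma} \succeq -b_i^4\Normt{x-\tSigma}$, and combining with the original constraint gives the required inequality with slack $r-\Normt{x-\tSigma}\geq\covrate$. The Condition~\ref{cnd:covconc} bound then caps the value of $\pE$, and the identical argument handles \ref{eq:tstcovn}.

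The second half is the \emph{lower bound} on $d_x$: I show that if $r \leq \Normt{x-\tSigma}-\covrate$, at least one of the two SDPs has an \emph{integral} feasible solution with value $\geq 0.999k$, which bounds the SoS value from below. Let $u_\star$ be a unit top eigenvector of $\tSigma - x$ and let $\lambda = \iprod{u_\star u_\star^\top,\tSigma-x}$, so $|\lambda|=\Normt{x-\tSigma}$; WLOG $\lambda>0$ (otherwise use \ref{eq:tstcovn} symmetrically). Set $u=u_\star$ and $b_i = \mathbf{1}\{\iprod{u_\star u_\star^\top, Z_i - x}\geq r\}$; since $\iprod{u_\star u_\star^\top,Z_i - x}=\iprod{u_\star u_\star^\top,Z_i-\tSigma} + \lambda$, the condition $b_i=1$ is implied by $\iprod{u_\star u_\star^\top,Z_i-\tSigma}\geq -\covrate$. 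Applying Condition~\ref{cnd:covconc} to \ref{eq:tstcovn}$(\bZ,\tSigma,\covrate)$ against this specific integral assignment of $u_\star$ (setting $b_i=1$ iff $\iprod{u_\star u_\star^\top,Z_i-\tSigma}\leq -\covrate$) shows that at most $0.001k$ indices violate this inequality, so $\sum_i b_i \geq 0.999k$, proving the lower bound.

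Combining the two halves shows that every $r$ passing the test at the threshold lies in $[\Normt{x-\tSigma}-\covrate,\,\Normt{x-\tSigma}+\covrate]$; adjusting the threshold and rescaling $\covrate$ by the absolute constant $\cdest$ absorbs the grid discretization. The main obstacle I anticipate is ensuring the SoS spectral bound on $\iprod{uu^\top,x-\tSigma}$ is invoked at the correct degree so that multiplication by $b_i^4$ keeps the overall proof within degree~$8$, and bookkeeping the two sign cases so that one of \ref{eq:tstcov} or \ref{eq:tstcovn} is triggered depending on whether the extreme eigenvalue of $\tSigma-x$ is positive or negative; both are routine once the SoS identity above is in hand.
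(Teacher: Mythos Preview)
Your argument is correct and in fact yields the sharper bound $|d_x - \Normt{x-\tSigma}| \leq \covrate$ (no case split, no factor $\cdest$ or $1/4$ needed); the grid discretization you introduce is unnecessary once you observe that the paper's $\distest$ takes the exact supremum $d^\ast = \sup\{r>0 : \text{one of the two SDPs has value} \geq 0.999k\}$, to which your two implications apply directly.

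The lower-bound half of your proof coincides with the paper's: both plug in the top eigenvector of $\tSigma-x$ and use Condition~\ref{cnd:covconc} on the sign-flipped program to show at most $0.001k$ buckets can deviate in the wrong direction. The upper-bound half, however, is genuinely different. The paper does \emph{not} show that feasibility transfers from the $x$-centered program to the $\tSigma$-centered one. Instead, it bounds $\pE[b_i\iprod{\tSigma-x,uu^\top}]$ via SoS Cauchy--Schwarz, obtaining only $\sqrt{\pE[b_i]}\cdot\Normt{x-\tSigma}\leq\Normt{x-\tSigma}$ rather than your $\pE[b_i]\cdot\Normt{x-\tSigma}$; to compensate for this slack it invokes Lemma~\ref{lem:sdpxtm} to extract a large set $\mathcal R$ of indices with $\pE[b_i]\geq 0.95$, zeros out the remaining $b_i$'s, and only then obtains a pseudoexpectation feasible at $\tSigma$. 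This is why the paper needs the two-case analysis ($\Normt{x-\tSigma}\gtrless\cdest\covrate$) and ends up with the multiplicative $1/4$ term. Your direct use of the degree-$2$ spectral SoS identity $\Normt{A}\,\|u\|^2 + \iprod{uu^\top,A} = \sum_j(\Normt{A}+\lambda_j)\iprod{u,v_j}^2$, multiplied by the square $b_i^4\|u\|^2$, stays within degree~$8$ and gives the cleaner one-sided bound $\pE[b_i\iprod{uu^\top,x-\tSigma}]\geq -\pE[b_i]\Normt{x-\tSigma}$ after reducing $b_i^4\|u\|^4$ to $b_i$ via the axioms. This is both simpler and quantitatively stronger than the paper's route.
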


\begin{proof}
  We first consider the case where $\normt{\tSigma - x} \geq \cdest \covrate$. For the lower bound of the distance estimation step, consider the singular vector $u$ corresponding to the largest singular value of $\tSigma - x$. We will assume without loss of generality that the eigenvalue corresponding to this eigenvector is positive. Under Condition~\ref{cnd:covconc}, we know that for at least $0.999k$ of the $Z_i$, we have $\iprod{Z_i - \tSigma, -uu^\top} \leq \covrate$ as otherwise, one can construct a feasible solution for \ref{eq:tstcov}$(\bm{Z}, \tSigma, \covrate)$ with value at least $0.001k$. For any $i$ satisfying the previous property:
  
  \begin{equation*}
      \iprod{Z_i - x, uu^\top} = \iprod{Z_i - \tSigma, uu^\top} + \iprod{\tSigma - x, uu^\top} \implies \iprod{Z_i - x, uu^\top} - \normt{\tSigma - x} \geq -\covrate
  \end{equation*}
  
  This proves the lower bound in the lemma. For the upper bound, suppose that $\pE$ is a solution which attains the optimal value of \ref{eq:tstcov}$(\bm{Z}, x, 1.25\norm{\tSigma - x})$ greater than $0.999k$. Note that for any $Z_i$, we have the following:
  \begin{align*}
      \pE [b_i\iprod{Z_i - x, uu^\top}] &= \pE [b_i\iprod{Z_i - \tSigma, uu^\top}] + \pE [b_i\iprod{\tSigma - x, uu^\top}] \\
      &\leq \pE [b_i\iprod{Z_i - \tSigma, uu^\top}] + \Paren{\pE \Brac{b_i^2}}^{1 / 2} \Paren{\pE \Brac{\iprod{uu^\top, \tSigma - x}^2}}^{1 / 2} \\
      &\leq \pE [b_i\iprod{Z_i - \tSigma, uu^\top}] + \normt{\tSigma - x}
  \end{align*}
  
  where the first inequality follows from SoS Cauchy-Schwarz. We have from the second claim of Lemma~\ref{lem:sdpxtm} that there is a subset of at least $0.98k$ elements, $\mathcal{R}$, such that for all $i \in \mathcal{R}$:
  \begin{equation*}
      \pE \Brac{b_i \iprod{Z_i - x, uu^\top}} \geq \pE[b_i] (1.25\normt{\tSigma - x}) \geq 1.0625 \normt{\tSigma - x}
  \end{equation*}
  Therefore, we have from the previous two equations that for the $i \in \mathcal{R}$:
  \begin{equation*}
      \pE [b_i\iprod{Z_i - \tSigma, uu^\top}] \geq 0.0625 \normt{\tSigma - x} > \covrate
  \end{equation*}
  By constructing a pseudo-expectation, $\pE'$ such that $\pE'$ is identical to $\pE$ for monomials not involving the $b_i$ for $i \notin \mathcal{R}$ and $0$ otherwise, we obtain a feasible solution for \ref{eq:tstcov}$(\bm{Z}, \tSigma, \covrate)$ with objective value at least $0.95\abs{\mathcal{R}} \geq 0.5k$ as for each $i \in \mathcal{R}$, we have $\pE[b_i] \geq 0.95$. This is a contradiction to Condition~\ref{cnd:covconc}. A similar proof for \ref{eq:tstcovn} proves the upper bound in the case where $\normt{\tSigma - x} \geq \cdest \covrate$.
  
  Now, consider the alternate case where $\normt{\tSigma - x} \leq \cdest \covrate$. In this case, the lower bound is trivially true. For the upper bound, we proceed similarly to the upper bound for the previous case. Let $\pE$ be a solution to \ref{eq:tstcov}$(\tZ, x, \normt{\tSigma - x} + \cdest\covrate)$ which obtains objective value at least $0.999k$. We define the set, $\mathcal{R}$ similar to the previous case and we have for all $i \in \mathcal{R}$:
  \begin{equation*}
      \pE \Brac{b_i \iprod{Z_i - x, uu^\top}} \geq 0.95 (\normt{\tSigma - x} + \cdest\covrate) \implies \pE \Brac{b_i \iprod{Z_i - \tSigma, uu^\top}} \geq 19\covrate - 0.05\normt{\tSigma - x} \geq 18\covrate
  \end{equation*}
  As before, we may construct as before a new pseudo-expectation, $\pE'$, which is identical to $\pE$ on polynomials not involving $b_i$ for $i \notin \mathcal{R}$ and $0$ otherwise. $\pE'$ is a feasible solution for \ref{eq:tstcov}$(\bm{Z}, \tSigma, 18\covrate)$ with optimal value at least $0.5k$ which is a contradiction to Condition~\ref{cnd:covconc}. A similar proof for \ref{eq:tstcovn} proves the upper bound in the case where $\normt{\tSigma - x} \leq \cdest \covrate$. This concludes the proof of the lemma.
\end{proof}

In the next lemma, we show that we may accurately estimate a gradient from the solution of the testing problem. In the following, we use $\normo{X}$ to denote the trace norm of the matrix $X$ which the sum of the singular values of $X$.

\begin{lemma}
  \label[lemma]{lem:covgdest}
  Assume Condition~\ref{cnd:covconc}.
  Let $x \in \mathbb{R}^{d\times d}$ be a symmetric matrix satisfying $\normt{\tSigma - x} \geq \cgd \covrate$. The matrix $G = \gest(\tZ, x)$ satisfies:
  \begin{equation*}
      \normo{G} = 1, \qquad \qquad \iprod{G, \tSigma - x} \geq 0.5 \normt{\tSigma - x}.
  \end{equation*}
\end{lemma}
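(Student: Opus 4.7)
The plan is to show that when $\|\tilde{\Sigma} - x\|_2 \geq \cgd \covrate$, one of \ref{eq:tstcov} or \ref{eq:tstcovn} centered at $x$ admits a feasible degree-$8$ pseudoexpectation with value $\geq 0.999k$ at a radius comparable to $\|\tilde{\Sigma} - x\|_2$, and that $G$ can be read off from any such pseudoexpectation. WLOG I handle the case where the top eigenvalue of $\tilde{\Sigma} - x$ is positive (the negative case is symmetric: work with \ref{eq:tstcovn} and negate the extracted matrix, which preserves the nuclear norm).

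Let $d = \|\tilde{\Sigma} - x\|_2$ and let $u^\star$ be its top unit eigenvector, so $\langle u^\star (u^\star)^\top, \tilde{\Sigma} - x\rangle = d$. First, by \cref{lem:dest} applied with $d \geq \cgd \covrate \geq \cdest\covrate$, I conclude $3d/4 \leq d_x \leq 5d/4$. I then choose a radius like $r = (3/4) d_x$, so that $r \in [9d/16,\, 15d/16]$, and show $\ref{eq:tstcov}(\bZ, x, r)$ has value at least $0.999k$. Indeed, since the SDP relaxes the integer program and \cref{cnd:covconc} caps the centered SDP at $0.001k$ (for both sign variants), the test vector $u^\star$ can have $\langle u^\star(u^\star)^\top, Z_i - \tilde{\Sigma}\rangle < -\covrate$ for at most $0.001k$ indices $i$. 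For all other $i$,
\[
\langle u^\star(u^\star)^\top, Z_i - x\rangle \;=\; \langle u^\star(u^\star)^\top, Z_i - \tilde{\Sigma}\rangle + d \;\geq\; d - \covrate \;\geq\; r,
\]
so setting $b_i = 1$ on these and $b_i = 0$ on the rest is a feasible integer solution with value $\geq 0.999k$.

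Next I extract $G$. Let $\pE$ be a degree-$8$ pseudoexpectation feasible for $\ref{eq:tstcov}(\bZ, x, r)$ with $\pE[\sum_i b_i] \geq 0.999k$. \cref{lem:sdpxtm} applied to $\pE$ yields subsets $\mathcal{R}, \mathcal{S}_p \subseteq [k]$ with $|\mathcal{R}| \geq 0.98k$ and $|\mathcal{S}_p| \geq 0.998k$, where $\pE[b_i] \geq 0.95$ for $i \in \mathcal{R}$ and $\pE[b_i \langle uu^\top, Z_i - \tilde{\Sigma}\rangle] \leq \pE[b_i]\,\covrate$ for $i \in \mathcal{S}_p$. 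Pick any $i^\star \in \mathcal{R} \cap \mathcal{S}_p$ (nonempty since $0.98k + 0.998k > k$) and set $G = \pE[b_{i^\star}\, uu^\top] / \pE[b_{i^\star}]$.

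It remains to verify the two conclusions. Because $b_{i^\star}^2 = b_{i^\star}$ is an axiom, for every vector $w$ the polynomial $b_{i^\star}(u^\top w)^2 = (b_{i^\star}\, u^\top w)^2$ is a square, so $\pE[b_{i^\star}\, uu^\top]$ is PSD; hence $\|G\|_1 = \Tr G = \pE[b_{i^\star}\|u\|^2]/\pE[b_{i^\star}] = 1$ by the axiom $\|u\|^2 = 1$. For the inner product, I split
\[
\langle G, \tilde{\Sigma} - x\rangle \;=\; \frac{\pE[b_{i^\star} \langle uu^\top, Z_{i^\star} - x\rangle] \;-\; \pE[b_{i^\star} \langle uu^\top, Z_{i^\star} - \tilde{\Sigma}\rangle]}{\pE[b_{i^\star}]} \;\geq\; r - \covrate \;\geq\; \tfrac{9d}{16} - \tfrac{d}{100} \;\geq\; 0.5\,\|\tilde{\Sigma} - x\|_2,
\]
where the first term in the numerator uses the SDP constraint $\pE[b_{i^\star}\langle uu^\top, Z_{i^\star} - x\rangle] \geq r\,\pE[b_{i^\star}]$ and the second uses $i^\star \in \mathcal{S}_p$. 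The main subtlety is the shift from the centered bound at $\tilde{\Sigma}$ to a directional lower bound at $x$; this is exactly what \cref{lem:sdpxtm} was designed to mediate, and once it is in hand the remainder of the argument is routine algebra combined with the sandwich on $d_x$ from \cref{lem:dest}.
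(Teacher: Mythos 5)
Your feasibility step (top eigenvector of $\tSigma - x$ plus Condition~\ref{cnd:covconc} to exhibit an integer solution of value $\geq 0.999k$) and your invocation of Lemma~\ref{lem:sdpxtm} mirror the paper, and the computation you carry out for the matrix $\pE[b_{i^\star}uu^\top]/\pE[b_{i^\star}]$ is internally sound (the PSD/trace-one argument and the bound $r-\covrate\ge 0.5\normt{\tSigma-x}$ both check out). The genuine gap is that this is not the matrix the lemma is about: $G=\gest(\bm{Z},x)$ is defined by Algorithm~\ref{alg:covgest} to be $\pE_p[uu^\top]$ (or $-\pE_n[uu^\top]$), where $\pE_p$ optimizes \ref{eq:tstcov}$(\bm{Z},x,d_x)$ at the estimated radius $d_x=\distest(\bm{Z},x)$ -- not a single-bucket reweighting $\pE[b_{i^\star}uu^\top]/\pE[b_{i^\star}]$ extracted at your deflated radius $r=\tfrac34 d_x$. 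The SDP constraints and the sets from Lemma~\ref{lem:sdpxtm} only control quantities of the form $\pE[b_i\iprod{uu^\top,\cdot}]$; passing from these to the unweighted $\pE[uu^\top]$ is precisely the step you skip, and it is the heart of the paper's proof, which sums the constraints over $\mathcal{T}=\mathcal{R}\cap\mathcal{S}$, shows the deficit matrix $\pE[(k-\sum_{i\in\mathcal{T}}b_i)uu^\top]$ is PSD with trace at most $0.1k$, and pays for it via matrix-H\"older with a $0.1\normt{\tSigma-x}$ loss. (A smaller mismatch: your WLOG is keyed to the sign of the top eigenvalue of $\tSigma-x$, whereas the algorithm's branch is determined by which of the two SDPs attains value $\ge 0.999k$ at radius $d_x$; the paper's proof works with whichever branch is returned.)

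Moreover, your parameter choice makes the obvious repair fail numerically. Writing $\pE[uu^\top]=\pE[b_{i^\star}uu^\top]+\pE[(1-b_{i^\star})uu^\top]$ and using that the second matrix is PSD with trace $1-\pE[b_{i^\star}]\le 0.05$, your radius gives only $\iprod{\pE[uu^\top],\tSigma-x}\ \ge\ 0.95\bigl(\tfrac{9}{16}-\tfrac1{100}\bigr)\normt{\tSigma-x}-0.05\normt{\tSigma-x}\approx 0.47\,\normt{\tSigma-x}$, short of the claimed $0.5$. If you instead work, as the algorithm does, at radius $d_x\ge 0.75\normt{\tSigma-x}$ (Lemma~\ref{lem:dest}), the same one-bucket decomposition yields roughly $0.95\cdot 0.74-0.05\approx 0.65$, which would prove the lemma and is arguably a leaner route than the paper's sum over $\mathcal{T}$. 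As written, however, your argument establishes the conclusion for a different estimator than $\gest$, so it neither proves the stated lemma nor plugs into the descent analysis of Theorem~\ref{thm:covest}, which uses $\frob{G_t}\le\normo{G_t}=1$ and the correlation bound for the algorithm's actual iterates $G_t=\gest(\bm{Z},\Sigma_t)$.
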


\begin{proof}
  Let $d_x = \distest(\tZ, x)$ and that $\pE$ is the solution to \ref{eq:tstcov}$(\tZ, x, d_x)$ or \ref{eq:tstcovn}$(\tZ, x, d_x)$ satisfying $\pE [\sum b_i] \geq 0.999k$. Without loss of generality assume that $\pE$ is the solution to \ref{eq:tstcov} as the proof for the alternate case is similar. In this case, note that $G = \pE [uu^\top]$. Furthermore, from Lemma~\ref{lem:sdpxtm}, we have that there exists a set $\mathcal{S}$ of size at least $0.998k$ indices such that for all $i \in \mathcal{S}$, we have:
  \begin{equation*}
      \pE \Brac{b_i \iprod{uu^\top, \tZ_i - \tSigma}} \leq \pE [b_i] \covrate
  \end{equation*}
  In addition, there exists a set $\mathcal{R}$ of size at least $0.98k$ such that for all $i \in \mathcal{R}$, we have:
  \begin{equation*}
      \pE \Brac{b_i \iprod{uu^\top, \tZ_i - x}} \geq \pE [b_i] d_x \geq 0.95 d_x \geq 0.95 \cdot 0.75 \normt{\tSigma - x} \geq 0.7125 \normt{\tSigma - x}
  \end{equation*}
  Now, consider the set $\mathcal{T} = \mathcal{R} \cap \mathcal{S}$. Note that the size of $\mathcal{T}$ is at least $0.975k$. We now get the following inequality:
  \begin{align*}
      0.695k \norm{\tSigma - x} &\leq 0.7125 \abs{\mathcal{T}} \norm{\tSigma - x} \leq \pE \Brac{\sum_{i \in \mathcal{T}} b_i \iprod{uu^\top, \tZ_i - x}} \\ 
      &= \pE \Brac{\sum_{i \in \mathcal{T}} b_i \iprod{\tZ_i - \tSigma, uu^\top}} + \pE \Brac{\sum_{i \in \mathcal{T}} b_i \iprod{\tSigma - x, uu^\top}} \\
      &\leq \sum_{i \in \mathcal{T}} \pE[b_i]\covrate + k\iprod{\tSigma - x, \pE [uu^\top]} -\Iprod{\tSigma - x, \pE \Brac{\Paren{k - \sum_{i \in \mathcal{T}} b_i} uu^\top}} \\
      &\leq k\covrate + k\iprod{\tSigma - x, \pE uu^\top} + \normt{\tSigma - x}\Normo{\pE \Brac{\Paren{k - \sum_{i \in \mathcal{T}} b_i} uu^\top}}
  \end{align*}
  where the last inequality follows by an application of the matrix-\Holder inequality. Note that the matrix $\pE \Brac{(k - \sum_{i \in \mathcal{T}}) uu^\top}$ is positive semidefinite as for all all $v\in \mathbb{R}^{d}$, we have:
  \begin{equation*}
      v^\top \pE \Brac{\Paren{k - \sum_{i \in \mathcal{T}}} uu^\top} v = \pE \Brac{\Paren{k - \sum_{i \in \mathcal{T}} b_i} \iprod{u, v}^2} \geq 0
  \end{equation*}
  where the last inequality follows because $\pE$ satisfies the polynomial inequality $b_i \leq 1$ and therefore the inequality $(k - \sum_{i \in \mathcal{T}} b_i) \geq 0$. We bound the second term in the above inequality as follows:
  
  \begin{equation*}
      \Tr \pE \Brac{\Paren{k - \sum_{i \in \mathcal{T}} b_i} uu^\top} = \pE \Brac{\Paren{k - \sum_{i \in \mathcal{T}} b_i} \Tr uu^\top} = \pE \Brac{k - \sum_{i \in \mathcal{T}} b_i} \leq k - \abs{\mathcal{T}} \cdot 0.95 \leq 0.1k
  \end{equation*}
  By substituting the above bound, we get:
  \begin{equation*}
      0.695k \normt{\tSigma - x} \leq kr + k \iprod{\tSigma - x, \pE uu^\top} + 0.1k \normt{\tSigma - x}
  \end{equation*}
  
  Noting that $\norm{\tSigma - x} \geq \cgd \covrate$, we get:
  \begin{equation*}
      \iprod{\tSigma - x, \pE uu^\top} \geq 0.5 \normt{\tSigma - x}
  \end{equation*}
  Finally, note that $\pE[uu^\top]$ is also a psd matrix and therefore, we have that:
  \begin{equation*}
      \normo{\pE [uu^\top]} = \Tr \pE [uu^\top] = 1
  \end{equation*}
\end{proof}

We now conclude that our gradient descent algorithm returns a good solution.

\begin{proof}[Proof of Theorem~\ref{thm:covest}]
  Let $\Sigma_{t}$ be the sequence of iterates obtained in the algorithm. We define the set, $\mathcal{G} = \{X \in \mathbb{R}^{d\times d}: \normt{X - \tSigma} \leq \cgd \covrate\}$. We will prove the theorem under two cases:
  \paragraph{Case 1:} One of the iterates belongs to the set, $\mathcal{G}$. Suppose that $\Sigma_t$ be an iterate in $\mathcal{G}$. Therefore, we have from Lemma~\ref{lem:dest} applied to the iterate, $\Sigma_t$:
  \begin{equation*}
    \dalg \leq d_t \leq 120 \covrate
  \end{equation*}
  Finally, if $\sigalg$ already belongs to the set $\mathcal{G}$, we are already done. Otherwise, via an application of Lemma~\ref{lem:dest} to $\sigalg$, we get:
  \begin{equation*}
      \normt{\sigalg - \tSigma} \leq \frac{1}{0.75} \dalg \leq 160 \covrate
  \end{equation*}
  This proves the lemma in this case.
  \paragraph{Case 2:} In the alternate case where none of the iterates belong $\mathcal{G}$, we have via the following inequality and Lemmas~\ref{lem:covgdest} and \ref{lem:dest}:
  \begin{align*}
      \frob{\Sigma_{t + 1} - \tSigma}^2 &= \Frob{\Sigma_{t} - \frac{d_t}{4} G_t - \tSigma}^2 = \frob{\Sigma_{t} - \tSigma}^2 - 2 \frac{d_t}{4} \Iprod{\Sigma_t - \tSigma, G_t} + \frac{d_t^2}{16} \frob{G_t}^2 \\
      &\leq \frob{\Sigma_{t} - \tSigma}^2 - \frac{d_t}{4} \normt{\Sigma_t - \tSigma} + \frac{d_t^2}{16} \leq \frob{\Sigma_{t} - \tSigma}^2 - \frac{3}{16} \normt{\Sigma_t - \tSigma}^2 + \frac{25}{256} \normt{\Sigma_t - \tSigma}^2 \\
      &\leq \frob{\Sigma_{t} - \tSigma}^2 - \frac{1}{16} \normt{\Sigma_t - \tSigma}^2 \leq \frob{\Sigma_{t} - \tSigma}^2 - \frac{1}{16d} \frob{\Sigma_t - \tSigma}^2 \\
      &= \Paren{1 - \frac{1}{16d}} \frob{\Sigma_t - \tSigma}^2 \leq e^{-\frac{T}{16d}}\frob{\Sigma_0 - \tSigma}^2
  \end{align*}
  where we have used the fact that $\frob{G_t} \leq \normo{G_t} = 1$ and the fact that $\normt{X} \geq \frac{1}{\sqrt{d}} \frob{X}$. The accuracy of our algorithm follows from recursively applying the above inequality. 
  
  By applying Lemma~\ref{lem:cert-main} to the the semidefinite optimization problems, \ref{eq:tstcov} and $\ref{eq:tstcovn}$, we see that Condition~\ref{cnd:covconc} holds with probability at least $1 - 2^{-\Omega(k)}$. Note that technically the output of \cref{alg:covest}, $\Sigma^*$, is an estimate of the "truncated" second-moment matrix $\tSigma$. However as noted in the proof of \cref{lem:cert-main}, the truncation parameter $\alpha$ is chosen to precisely balance the bias term (due to truncation) with the estimation error, so the overall error $\normt{\Sigma^*-\Sigma} \leq \normt{\Sigma^*-\tSigma} + \normt{\tSigma-\Sigma}$ achieves the stated convergence rate.
  
  Our algorithm consists of $O(d\log 1 / \epsilon)$ iterations where in each iteration, we solve an semidefinite optimization problem with $O((d + k)^8)$ variables. Assuming standard runtimes of the Ellipsoid algorithm, the total runtime of our algorithm can be upper bounded by $O((d + k)^{17}\log 1 / \epsilon)$. This concludes the proof of the theorem.
  
\end{proof}
\begin{algorithm}[H]
\caption{Distance Estimation}
\label{alg:covdest}
\begin{algorithmic}[1]
\State \textbf{Input: } Set of sample covariance matrices, $\bm{Z} = \{Z_i\}_{i = 1}^k$, Current estimate $X$
\State $d^* \gets \sup \{r > 0: \ref{eq:tstcov} (\bm{Z}, X, r) \geq 0.999k \text{ or } \ref{eq:tstcovn} (\bm{Z}, X, r) \geq 0.999k\} $
\State \textbf{Return: } $d^*$
\end{algorithmic}
\end{algorithm}

\begin{algorithm}[H]
\caption{Gradient Estimation}
\label{alg:covgest}
\begin{algorithmic}[1]
\State \textbf{Input: } Set of sample covariance matrices, $\bm{Z} = \{Z_i\}_{i = 1}^k$, Current estimate $X$
\State $d^* \gets \distest (\bm{Z}, X)$
\State $\pE_p \gets \text{\ref{eq:tstcov}} (\bm{Z}, X, d^*),\ v_p \gets \pE_p[\sum_{i = 1}^k b_i]$
\State $\pE_n \gets \text{\ref{eq:tstcovn}} (\bm{Z}, X, d^*),\ v_n \gets \pE_n[\sum_{i = 1}^k b_i]$
\If {$v_p \geq 0.999k$}
\State $G \gets \pE_p[uu^\top]$
\Else 
\State $G \gets -\pE_n[uu^\top]$
\EndIf
\State \textbf{Return: } $G$
\end{algorithmic}
\end{algorithm}
\begin{algorithm}[H]
\caption{Estimate Covariance}
\label{alg:covest}
\begin{algorithmic}[1]
\State \textbf{Input: } Set of sample points, $\{v_i\}_{i = 1}^n$, Error Tolerance $\epsilon$, Success Probability $\delta$
\State $nit \gets \nit$
\State $k \gets \nbuck$
\State $\tv_i \gets v_i \bm{1} \{\norm{v_i} \leq \alpha\}$
\State Split data into $k$ buckets, $\mathcal{B}_j = \left\{\tv_{\frac{(j-1)n}{k} + 1}, \dots, \tv_{\frac{jn}{k}}\right\}$ for $j = 1, \cdots k$
\State $Z_j \gets \frac{k}{n} \sum_{\tv \in \mathcal{B}_j} \tv\tv^\top$
\State $\bm{Z} = \{Z_1, \dots, Z_k\}$
\State $\Sigma_0 \gets 0, \Sigma^* \gets 0, d^* \gets \infty$
\For {$t = 0:nit$}
    \State $d_{t} \gets \distest (\bm{Z}, \Sigma_t)$
    \If {$d_t \leq d^*$}
        \State $d^* \gets d_t,\ \Sigma^* \gets \Sigma_t$
    \EndIf
    \State $G_t \gets \gest (\bm{Z}, \Sigma_t)$
    \State $\Sigma_{t + 1} \gets \Sigma_t - \frac{d_t}{4} G_t$
\EndFor
\State \textbf{Return: }$\Sigma^*$
\end{algorithmic}
\end{algorithm}

\section{Regression}

In this section we prove the following main theorem.

\begin{theorem}\label[theorem]{thm:regression-main}
  There is a polynomial-time algorithm and a universal constant $C > 0$ with the following guarantees.
  For any $d$-dimensional $O(1)$-nice random variable $X$ with $\E X = 0$ and $\E XX^\top = \Id$ and any $\R$-valued random variable $\e$ with $\E \e = 0$ and $\E \e^2 = 1$ and any linear function $f^* \, : \, \R^d \rightarrow \R$, given $n$ i.i.d. samples $(X_1,f^*(X_1) + \e_1),\ldots,(X_n,f^*(X_n) + \e_n)$ the algorithm produces $\hat{f}$ such that with probability at least $1-\delta$ it holds that 
  \[
  \E_X (f(X) - f^*(X))^2 \leq C \cdot \Paren{ \frac d n + \frac {\log(1/\delta)}{n} }
  \]
  so long as $n \geq \max(d (\log(1/\delta))^{1/2} \cdot (\log d)^{C}, C \log(1/\delta))$.
\end{theorem}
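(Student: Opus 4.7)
The plan is to emulate the certify-or-descend framework of \cref{sec:covariance}. I would split the $n$ samples into $k = \Theta(\log(1/\delta))$ buckets $B_1,\ldots,B_k$ of size $m = n/k$, and for a candidate linear function $f$ and unit vector $u \in \R^d$ define the per-bucket statistics $g_i(f,u) = \tfrac{1}{m}\sum_{j \in B_i}(Y_j - f(X_j))\iprod{X_j,u}$. Writing $Y_j = f^*(X_j) + \e_j$ and using $\E XX^\top = \Id$, each $g_i(f^*, u)$ has mean $0$ and variance $O(1/m)$, so by Chebyshev plus the scalar median-of-means argument, for every fixed $u$ at least $0.6k$ of the buckets will satisfy $|g_i(f^*, u)| \leq r$ with probability $1 - e^{-\Omega(k)}$, for $r = C_0(\sqrt{d/n}+\sqrt{\log(1/\delta)/n})$. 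I would call $f$ \emph{good} if this property holds \emph{uniformly} over every unit $u$; the goal is to produce such an $\hat f$ in polynomial time and to argue $\|\hat f - f^*\|^2 = O(r^2)$.

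\textbf{Certification via degree-$8$ SoS.} To certify that a candidate $\hat f$ is good I would set up the polynomial program
\begin{align*}
\max_{b,u}\ \tsum_i b_i \quad \text{s.t.}\quad b_i^2 = b_i,\ \|u\|^2 = 1,\ b_i^4 \|u\|^2 \iprod{g_i(\hat f), u} \geq b_i r,
\end{align*}
together with its sign-flipped mirror, and pass to the degree-$8$ SoS relaxation. A bounded-differences inequality on the SoS value (exactly as in \cref{lem:sos-conc}) reduces the high-probability claim to bounding its expectation; then SoS Cauchy--Schwarz gives $(\pE \sum b_i)^2 \leq k \cdot \pE \sum_i \iprod{g_i(f^*), u}^2$. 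Expanding $g_i(f^*) = -\tfrac{1}{m}\sum_{j\in B_i} \e_j X_j$, the right-hand side is a random degree-$2$ polynomial in $u$ whose expected SoS value splits into an expectation term, bounded using $\E\e^2 XX^\top = \Id$ and certifiable hypercontractivity of $X$ (yielding the $d/n$ contribution), and a deviation term, bounded by the SoS matrix Bernstein inequality \cref{thm:sosmatbern} (yielding the $\log(1/\delta)/n$ contribution), in complete analogy with \cref{lem:p-main}.

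\textbf{Bucket-norm lemma, gradient descent, and main obstacle.} Certifying "good" is not enough on its own: to translate it into a bound on $\|\hat f - f^*\|^2$ I would additionally certify a \emph{bucket-norm} statement, namely that for every linear $f$, at least $0.99k$ buckets satisfy $\E_{j\in B_i} f(X_j)^2 \in [0.99, 1.01]\cdot\E f(X)^2$. When $m < d$ the individual empirical covariances are rank-deficient, so bucket-by-bucket certification fails; instead I would again set up a polynomial program jointly selecting a bad bucket set and a witnessing direction $u$, and bound the degree-$8$ SoS value by bounded-differences plus \cref{lem:p-main} applied to $\sum_i(\tfrac{1}{m}\sum_{j\in B_i}\iprod{X_j, u}^2 - 1)^2$. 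This is where the hypothesis $n \gtrsim d \cdot (\log(1/\delta))^{1/2} (\log d)^{C}$ arises. Given both certificates, the algorithm itself runs the gradient-descent-on-SDP scheme from \cref{alg:covest}: at iterate $f_t$, if the testing SDP value is small then $\|f_t - f^*\|^2 = O(r^2)$ and we output $f_t$; otherwise I would extract a descent direction from the SDP pseudoexpectation in direct analogy with \cref{lem:covgdest}, yielding a constant-factor decrease in $\|f_t - f^*\|^2$ every $O(d)$ steps. The main obstacle will be the bucket-norm lemma: producing a degree-$8$ SoS certificate that $\sum_i(\tfrac{1}{m}\sum_{j\in B_i}\iprod{X_j,u}^2 - 1)^2$ is uniformly small on the sphere with the correct dependence on $n,d,k$. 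This is where certifiable $(2,8)$-hypercontractivity is essential (to control the $4$-th moment of the degree-$2$ polynomial $\iprod{X,u}^2$ via SoS Cauchy--Schwarz), and is the origin of the same $d^{3/2}$-type barrier encountered in the covariance estimation analysis.
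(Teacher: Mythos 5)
Your proposal follows essentially the same route as the paper's proof: median-of-means buckets, a noise-correlation certificate together with bucket-norm upper/lower certificates established via a bounded-differences argument plus \cref{lem:p-main} and \cref{thm:sosmatbern}, and an SDP-driven certify-or-descend loop, with the requirement $n \geq d\,(\log(1/\delta))^{1/2}(\log d)^{C}$ arising exactly where you locate it. The only differences are minor: the paper's descent step finds a pseudodistribution over candidate regressors with improved per-bucket losses and outputs $\pE f$ (\cref{lem:regression-pdist-feasible,lem:regression-pdist-prog}) rather than the covariance-style gradient extraction, it certifies the noise condition by invoking the mean-estimation lemma of \cite{hopkins2018sub} with only degree-$4$ pseudoexpectations (note $\e$ has just two moments, so the analogy with \cref{lem:p-main} needs truncation rather than hypercontractivity there), and it initializes the iteration with ordinary least squares so that the number of descent steps is polynomial, a small point your sketch omits.
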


We will prove \cref{thm:regression-main} from main lemmas in the next subsection, but before we do so we need to set up some notation.

\paragraph{Notation}
Suppose that $S \subseteq \R^d$ is a finite set of vectors.
They induce an inner product on functions $f \, : \, \R^d \rightarrow \R$ by $\iprod{f,g}_S = \E_{x \sim S} f(x) g(x)$.
We also write $\|f\|_S = \iprod{f,f}_S^{1/2}$.

Similarly, if $(X,Y)$ is a random variable which is clear from context and $f,g$ are functions of $X,Y$ then we let $\iprod{f,g} = \E_{X,Y} f(X,Y)g(X,Y)$ and similarly for $\|f\|$.
If $u \in \R^d$ we denote by $\|u\|$ its Euclidean norm; since in this section the random variable $X$ will always have $\E X = 0$ and $\E XX^\top = \Id$ this is the same as $\|f\|$ for the linear function $f(X,Y) = \iprod{u,X}$.

We work with the square-loss function.
If $X$ is a random vector which is clear from context, and $Y$ is an $\R$-valued random variable, then $L(f) = \E_X (f(X) - Y)^2 = \|f(X) - Y\|^2$.
If $S = \{(X_1,Y_1),\ldots,(X_m,Y_m)\}$, we denote the empirical loss on $S$ by $L_S(f) = \E_{(X,Y) \sim S} (f(X) - Y)^2 = \|f(X) - Y\|_S^2$.

We often have a set $\{(X_1,Y_1),\ldots,(X_n,Y_n)\}$ which we split into $k$ buckets $B_1,\ldots,B_k$ of equal size.
In this case, we shorten the notation $\iprod{f,g}_{B_i}$ to $\iprod{f,g}_i$ and similarly for $\|f\|_i$ and $L_i(f)$.

\subsection{Proof of \cref{thm:regression-main}}

\newcommand{\cAloss}{\cA_{\text{bucket-loss}}}
\newcommand{\cAnormupper}{\cA_{\text{norm-upper}}}
\newcommand{\cAnormlower}{\cA_{\text{norm-lower}}}
\newcommand{\cAnoise}{\cA_{\text{noise}}}

To set up for the proof of \cref{thm:regression-main}, we need to describe the key certifiability properties that our main algorithm exploits.
Throughout, let $X_1,\ldots,X_n \in \R^d$ and $Y_1,\ldots,Y_n \in \R$.
In the background of all the definitions which follow there is a fixed partition $B_1,\ldots,B_k$ of $[n]$ into $k$ equal parts.
The polynomial systems defining the SoS SDPs are all in variables $f = (f_1,\ldots,f_d)$ which represents a linear function on $\R^d$ by its coefficients and $b_1,\ldots,b_k$ which we think of as $0/1$-indicators corresponding to the buckets $B_1,\ldots,B_k$.

\begin{definition}[Noise correlation SDP]
    For a number $r > 0$ and a linear function $g$, let $\cAnoise$ be the polynomial system
    \begin{align*}
        b_i \iprod{Y - g, f}_i & \geq r b_i \text{ for } i \in [k]\\
        \|f\|^2 & = 1 \\
        b_i^2 & = b_i \text{ for } i \in [k]\mper
    \end{align*}
    A feasible solution $(b,f)$ is a linear function $f$ together with the indicator of a subset of the buckets on which its empirical correlation with $Y - g$ is at least $r$.
    We define the noise correlation SDP (parameterized by $g,r$) as
    \[
    \max \pE \sum_{i \leq k} b_i \text{ such that } \deg \pE = 4, \pE \text{ satisfies } \cAnoise
    \]
\end{definition}

We will also employ the following semidefinite programs, which certify bounds on the deviations between per-bucket norms and Euclidean norms.

\begin{definition}[Norm upper bound SDP]
  For $C > 0$ we define the following SDP over degree-$4$ pseudodistributions in variables $b_1,\ldots,b_k,f_1,\ldots,f_d$.
  \begin{align*}
  & \max \pE \sum_{i \leq k} b_i \|f\|^2 \text{ such that }\\
  & \pE \|f\|^4 \leq 1 \\
  & \pE \text{ satisfies } b_i^2 = b_i \\
  & \pE b_i \|f\|_i^2  \geq C \pE b_i \|f\|^2\\
  \end{align*} 
\end{definition}

\begin{definition}[Norm lower bound SDP]
  For $c > 0$ we define the following SDP over degree-$4$ pseudodistributions in variables $b_1,\ldots,b_k,f_1,\ldots,f_d$.
  \begin{align*}
  & \max \pE \sum_{i \leq k} b_i \|f\|^2 \text{ such that }\\
  & \pE \|f\|^4 \leq 1 \\
  & \pE \text{ satisfies } b_i^2 = b_i \\
  & \pE b_i \|f\|_i^2  \leq c \pE b_i \|f\|^2\\
  \end{align*} 
\end{definition}

Our main algorithm will succeed under the following deterministic condition.

\newcommand{\ttX}{\tilde{X}}
\newcommand{\tY}{\tilde{Y}}

\begin{definition}[Regression Deterministic Conditions]\label[definition]{def:reg-det}
Let $X,f^*,\e$ be as in \cref{thm:regression-main} and let $Y_i = f^*(X_i) + \e_i$. 
Let $(\ttX_i,\tY_i) = (X_i,Y_i) \cdot \Ind(\|X\| \leq \alpha)$, where $\alpha = C_0 \sqrt{d}$ for a large-enough constant $C_0$.
Let $k = \Theta(\log(1/\delta))$ and let $B_1,\ldots,B_k$ be a fixed partition of $[n]$ into $k$ buckets as usual.
For some $r^2 = O(d/n + \log(1/\delta)/n)$, our deterministic conditions are
\begin{align*}
  & \text{noise correlation SDP}\leq 0.001k \tag{\text{noise}} \label{eq:noise}\\
  & \text{norm upper bound SDP} \leq 0.001k \tag{\text{norm-upper}} \label{eq:norm-upper}\\
  & \text{norm lower bound SDP} \leq 0.001k \tag{\text{norm-lower}} \label{eq:norm-lower}\mper
\end{align*}
where the SDPs are instantiated with $\tilde{X_i},\tilde{Y_i}, r,f^*$ and $C = 1.01, c = 0.99$ and truncated samples $(\ttX_i,\tY_i)$.
\end{definition}

We will prove \cref{thm:regression-main} from two main lemmas.
The first says that the deterministic conditions above hold with high probability.

\begin{lemma}\label[lemma]{lem:regression-deterministic}
  Let $X,f^*,\e,\delta$ be as in \cref{thm:regression-main}.
  For $k = C \log(1/\delta)$ for a large-enough constant $C$, let $B_1,\ldots,B_k$ partition $[n]$ into equal-size parts.
  Let $f^*$ be a linear function, and let $r^2 = C' (d/n + \log(1/\delta)/n)$ for some universal $C' > 0$.
  Let $X_1,\ldots,X_n,\e_1,\ldots,\e_n$ be i.i.d. copies of $X,\e$ respectively and let $Y_i = f^*(X_i) + \e_i$.
  Suppose $n \geq  \log(1/\delta)^{1/2} d (\log d)^{C''}$ and $\delta \geq 2^{-n/C''}$, for a large-enough constant $C''$.
  Then \cref{eq:noise,eq:norm-upper,eq:norm-lower} all hold with probability at least $1-\delta$.
\end{lemma}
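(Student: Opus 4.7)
The plan is to handle each of the three SDPs uniformly using the two-step strategy pioneered in the covariance estimation argument of \cref{sec:covariance}: first establish bounded-differences concentration of the SDP value around its expectation, then bound the expectation by applying SoS Cauchy--Schwarz to reduce to the expected maximum of a random degree-$\leq 4$ polynomial, and control the latter via the SoS matrix Bernstein inequality (\cref{thm:sosmatbern}) in the style of \cref{lem:p-main}. Throughout we work with the $\alpha$-truncated samples $(\ttX_i, \tY_i)$ and absorb the truncation bias into the error budget via $(2,4)$-hypercontractivity, as in \cref{lem:bias}. For the concentration step, note that in each of the three SDPs the optimum value changes by at most $1$ when one bucket $B_i$ is modified arbitrarily: one can always zero out the corresponding $b_i$ in the relaxation (using the SoS-derivable inequality $0 \le b_i \le 1$ from $b_i^2 = b_i$). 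McDiarmid therefore gives concentration within $0.0005k$ with probability $1 - 2^{-\Omega(k)} \ge 1-\delta$ provided $k = C\log(1/\delta)$ with $C$ large. It suffices to bound each expectation by $0.0005k$.

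For the noise correlation SDP, the constraint $b_i \iprod{\tY - f^*, f}_i \geq r b_i$ and SoS Cauchy--Schwarz (combined with $b_i^2=b_i$) give $(\pE \sum b_i)^2 \leq (k/r^2) \pE \sum \iprod{\tilde\e,f}_i^2$, where $\tilde\e_j = \tY_j - f^*(\ttX_j)$. The quadratic form $\sum_i \iprod{\tilde\e,f}_i^2 = f^\top M f$ has $M = \sum_i v_i v_i^\top$ with $v_i = m^{-1}\sum_{j\in B_i} \tilde\e_j \ttX_j$. Since $\E\e = 0$ and $\E\e^2 = 1$ and the $\e_j$ are independent of each other and of $X_j$, one gets $\E M = (k/m)\Id \cdot (1+o(1))$ contributing $k^2/n$ to the spectral norm; the deviation is controlled by ordinary matrix Bernstein, which also certifies the corresponding SoS upper bound since $\pE f^\top M f \le \|M\|_2 \cdot \pE\|f\|^2$. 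Combined with $r^2 \ge C'(d/n+\log(1/\delta)/n)$ and $k = \Theta(\log(1/\delta))$, this yields $\pE \sum b_i \leq 0.0005k$ for $C'$ large.

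For the norm upper and lower bound SDPs, the constraints rearrange to $\pE \sum b_i \|f\|^2 \leq O(1) \cdot \pE \sum b_i g_i$ where $g_i = f^\top(\tSigma_i - \Id)f$ and $\tSigma_i$ is the per-bucket empirical covariance of the truncated samples. SoS Cauchy--Schwarz gives $(\pE \sum b_i g_i)^2 \leq k \pE \sum g_i^2$, and the polynomial $\sum_i g_i^2$ is precisely of the form handled by \cref{lem:p-main} (with buckets of size $m = n/k$ and the random variable $X$ in place of the vector $v$ there). The certifiable $(2,4)$-hypercontractivity of $X$ controls the expectation $\E g_i^2 \preceq (L/m)\|f\|^4$ as an SoS polynomial in $f$, contributing $\lesssim k L/m$; the deviation is controlled by \cref{thm:sosmatbern} applied to the matrices $(\tSigma_i-\Id)^{\otimes 2}$, after the analogue of \cref{lem:sosmvbound} bounds the pseudo-expectation of the matrix variance using niceness. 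A choice of bucket-level truncation $\tau \asymp \sqrt{Ld/m}\cdot \sqrt{\log d}$ (chosen as in the covariance proof so that $\E\|\tSigma_i-\Id\| \le \tau$, invoking \cref{lem:EZub}) then makes the deviation contribution $\lesssim (\log d)^{O(1)} \sqrt{kL d/m^2}$ under $\pE\|f\|^4 \le 1$.

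The main obstacle, and what forces the sample threshold $n \gtrsim d\sqrt{\log(1/\delta)}\cdot (\log d)^{O(1)}$, is balancing these two contributions in the norm SDPs: the expectation piece contributes $k\cdot \sqrt{kL/m}$ to $\pE \sum b_i \|f\|^2$ and requires merely $m \gtrsim L$, but the matrix-Bernstein deviation piece contributes $(\log d)^{O(1)} k\sqrt{kLd/m^2}$, which is $o(k)$ exactly when $m^2 \gtrsim kd(\log d)^{O(1)}$, i.e. $n \gtrsim d\sqrt{k}(\log d)^{O(1)} \asymp d\sqrt{\log(1/\delta)}(\log d)^{O(1)}$. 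Carrying out the SoS matrix-variance computation for $(\tSigma_i-\Id)^{\otimes 2}$ in the absence of any distributional form beyond niceness (as in \cref{lem:sosmvbound}, but with the roles of truncation and bucket size adapted to the regression setting) is the main technical burden; once this is in place, assembling the three pieces and taking $C$ large in the definition of $k$ finishes the proof.
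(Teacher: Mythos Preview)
Your plan is correct and is essentially the paper's own argument: bounded-differences concentration reduces to expectations; for the norm SDPs you rearrange the constraint to $\pE\sum_i b_i\|f\|^2 \le O(1)\cdot \pE\sum_i b_i(\|f\|_i^2-\|f\|^2)$, apply SoS Cauchy--Schwarz, and are left with the random degree-$4$ polynomial $\sum_i(\|f\|_i^2-\|f\|^2)^2$ bounded via \cref{lem:p-main} (the paper packages this as \cref{fact:reg-det-variance}, together with a good/bad split $G=\{i:\|Z_i\|\le c\,\E\|Z_i\|\}$ that is exactly your $\tau$-truncation); the noise SDP is the argument from \cite{hopkins2018sub}.

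Two small points are worth flagging. First, your threshold arithmetic slips: $m^2 \geq kd$ gives $n=km \geq k^{3/2}\sqrt d$, not $d\sqrt k$. The correct computation is that the deviation bound from \cref{lem:p-deviation} (with $\Sigma=\Id$, $\Tr\Sigma=d$) is $O(\sqrt{k\log d}\cdot d/m)+O(\tau^2\log d)$, and forcing this to be at most $\e k$ gives $m \geq (d/\sqrt k)\cdot(\log d)^{O(1)}$, hence $n \geq d\sqrt k\cdot(\log d)^{O(1)}$. Second, the norm SDPs live only at degree $4$ in $(b,f)$, while \cref{lem:p-main} as stated uses degree-$8$ pseudodistributions; the paper addresses this explicitly (the ``Degree 4 versus Degree 8'' remark after \cref{fact:reg-det-variance}), observing that the aggressive truncation $\alpha=O(\sqrt d)$ lets one replace the $(2,8)$-hypercontractivity step inside \cref{lem:sosmvbound} by the crude bound $\|\tilde X\|^4\le O(d^2)$, so degree $4$ suffices. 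You should make this point as well.
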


The second lemma says that when the deterministic conditions are satisfied it is possible to estimate $f^*$ in polynomial time.

\begin{lemma}\label[lemma]{lem:regression-descent-main}
  There is a polynomial-time algorithm with the following guarantees.
  Given $(X_1,Y_1),\ldots,(X_n,Y_n)$ and a partition $B_1,\ldots,B_k$ of $[n]$ into $k$ buckets and $r$ for which \cref{eq:noise,eq:norm-upper,eq:norm-lower} all hold for some linear function $f^*$, and given some linear function $f_0$ such that $\|f^* - f_0\| \leq \exp(\poly(n,d)) \cdot r$, the algorithm outputs $\hat{f}$ such that $\|\hat{f} - f^*\| \leq O(r)$.
\end{lemma}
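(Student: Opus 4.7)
The plan is to mirror the certify-or-descend framework of \cref{sec:covariance}, specialized to regression. Starting from $f_0$, maintain iterates $g_t$, and at each step use the noise correlation SDP instantiated at $g_t$ both to decide whether $g_t$ is already within $O(r)$ of $f^*$ and, failing that, to extract a gradient direction $\Delta_t$ from the optimal pseudoexpectation. The output is the iterate of smallest estimated distance, paralleling \cref{alg:covest}.

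The key structural lemma is a two-sided comparison between the geometric distance and the SDP-derived distance $d(g) := \sup\{\rho : \text{noise correlation SDP at }g\text{ with threshold }\rho\text{ has value}\geq 0.99k\}$. For the lower bound $d(g) \geq \Omega(\|f^*-g\|)$ when $\|f^*-g\| \gg r$: set $f = (f^*-g)/\|f^*-g\|$ and $b_i=1$ on the $\geq 0.997k$ buckets where both $\|f^*-g\|_i^2 \geq 0.99\|f^*-g\|^2$ (by \cref{eq:norm-lower} applied to the fixed unit direction $f$) and $|\iprod{\e, f}_i| \leq r$ (by \cref{eq:noise} applied to $\pm f$); on these buckets $\iprod{Y-g, f}_i \geq 0.99\|f^*-g\|-r$, giving a feasible witness at threshold $\Omega(\|f^*-g\|)$. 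For the upper bound $d(g) \leq O(\|f^*-g\|) + O(r)$: summing the axiom $b_i\iprod{Y-g,f}_i\geq \rho b_i$ over $i$ decomposes into $\pE[\sum b_i\iprod{f^*-g,f}_i]$ and $\pE[\sum b_i\iprod{\e,f}_i]$; bound the first via SoS Cauchy-Schwarz $b_i\iprod{f^*-g, f}_i \leq \tfrac12 b_i(\|f^*-g\|_i^2 + \|f\|_i^2)$ together with the SoS dual certificate for \cref{eq:norm-upper}, and bound the second using the SoS dual certificate for \cref{eq:noise}.

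Given this comparison, the descent is standard. When $d(g_t) \geq Cr$, let $\pE_t$ witness the SDP value at threshold $d_t := d(g_t)$ and set $\Delta_t := \pE_t[f]$; then $\|\Delta_t\| \leq 1$ by SoS Cauchy-Schwarz applied to $\pE_t[\|f\|^2] = 1$, and the axiom summation combined with the noise bound gives $\iprod{\Delta_t, f^*-g_t} \geq \Omega(d_t) \geq \Omega(\|f^*-g_t\|)$. The update $g_{t+1} = g_t + \eta_t \Delta_t$ with $\eta_t = \Theta(d_t)$ yields the recursion $\|g_{t+1}-f^*\|^2 \leq (1-c)\|g_t-f^*\|^2$ for some absolute $c > 0$, so reaching error $O(r)$ requires only $\poly(n,d)$ iterations since $\|f_0-f^*\| \leq \exp(\poly(n,d)) r$. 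The main obstacle will be the gradient-extraction step: translating the pseudoexpectation-level bound $\pE_t[\sum b_i\iprod{f^*-g_t, f}_i] \gtrsim d_t k$ into the deterministic inequality $\iprod{\Delta_t, f^*-g_t} \gtrsim d_t$. Unlike in \cref{lem:covgdest}, where $u$ couples to the target through a single degree-$2$ bilinear form $\iprod{uu^\top, \Sigma-x}$, here the per-bucket inner products $\iprod{f^*-g_t, f}_i$ involve the bucket covariance $\E_{B_i}\ttX\ttX^\top$ and must be averaged down to the single population inner product $\iprod{f^*-g_t, \Delta_t}$ at the pseudoexpectation level; this will require invoking the SoS dual certificates for \cref{eq:norm-upper} and \cref{eq:noise} applied to the fixed direction $(f^*-g_t)/\|f^*-g_t\|$ rather than merely their corollaries for individual linear functions.
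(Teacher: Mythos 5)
Your overall certify-or-descend shell is fine, but the core of your plan --- using the noise-correlation SDP at $g_t$ as both distance estimate and gradient source, in direct analogy with \cref{lem:covgdest} --- runs into a concrete obstruction that you flag but do not resolve, and your proposed remedy does not suffice. The problematic object is the bilinear term $\iprod{Y-g,f}_i - \iprod{\e,f}_i = \iprod{f^*-g,f}_i = (f^*-g)^\top \hat\Sigma_i f$, where $\hat\Sigma_i$ is the empirical second-moment matrix of bucket $B_i$. In the regime of \cref{thm:regression-main} each bucket has only $n/k \approx \sqrt d \cdot \mathrm{polylog}(d)$ samples, so $\|\hat\Sigma_i - \Id\|_2$ is polynomially large (of order $\sqrt{d k/n}$); consequently, controlling the quadratic form of $\hat\Sigma_i$ in the fixed direction $(f^*-g)/\|f^*-g\|$ (which is all that \cref{eq:norm-upper,eq:norm-lower} give you when ``applied to the fixed direction'') says nothing about the coupling of that direction to the pseudodistribution variable $f$ through $\hat\Sigma_i - \Id$. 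Your stated bound $b_i\iprod{f^*-g,f}_i \leq \tfrac12 b_i(\|f^*-g\|_i^2 + \|f\|_i^2)$ then requires an \emph{upper} bound on $\pE\sum_i b_i\|f\|_i^2$ and $\pE\sum_i b_i\|f^*-g\|_i^2$ over \emph{all} selected buckets, including the exceptional ones; but \cref{eq:norm-upper} only bounds the population-norm mass $\pE\sum b_i\|f\|^2$ of pseudodistributions whose empirical norms exceed their population norms --- it never upper-bounds empirical norms on the exceptional buckets, which can individually be $\mathrm{poly}(d)$ large. The same wall appears in both halves of your two-sided comparison for $d(g)$ and in the gradient-correlation inequality $\iprod{\pE f, f^*-g}\gtrsim d_t$ (polarizing $\iprod{f^*-g,f}_i$ into $\|f^*-g\pm f\|_i^2$ does not escape it, since the plus-sign empirical norms still need upper bounds on bad buckets).

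The paper's proof is organized precisely to avoid this term. Instead of a gradient step, \cref{lem:regression-descent-progress} finds (by binary search, \cref{lem:regression-pdist-feasible}) a pseudodistribution over $(f,b)$ with $\|f-g\|^2=s^2$ whose bucketed \emph{losses} beat those of $g$, i.e. $\pE b_i L_i(f) \le \pE b_i(L_i(g)-0.97s^2)$. Expanding both losses around $f^*$ makes the $\|f^*-y\|_i^2$ terms cancel, so the only empirical quantities that ever appear are $\|f-f^*\|_i^2$ (which needs only a \emph{lower} bound, supplied on good buckets by \cref{eq:norm-lower}), $\|g-f^*\|_i^2$ (upper bound on good buckets from \cref{eq:norm-upper}), and noise cross-terms $\iprod{g-f,f^*-y}_i$ (\cref{eq:noise}); bad buckets contribute only population-norm mass, bounded by restricting and rescaling the pseudodistribution to contradict the SDP value bounds. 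Progress then follows from $\pE\|f-f^*\|^2\le 0.999\|g-f^*\|^2$ (\cref{lem:regression-pdist-prog}) plus Cauchy--Schwarz, with the next iterate simply $\pE f$, and termination is checked by the residual-correlation test of \cref{lem:regression-certify-done} (which is only needed at the single scale $\Theta(r)$, not as a quantitative distance estimate at scale $\|f^*-g\|\gg r$). So your termination test is essentially the paper's, but the descent mechanism you propose would need a genuinely new argument to control the bucketwise bilinear coupling, and as sketched it does not go through.
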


\begin{proof}[Proof of \cref{thm:regression-main}]
  The theorem follows almost immediately from \cref{lem:regression-descent-main} and \cref{lem:regression-deterministic}.
  The only missing ingredient is the initialization $f_0$ for the algorithm of \cref{lem:regression-descent-main}.
  This may be obtained by running the classical ordinary least squares algorithm: even with probability $\delta = 2^{-n}$ it offers an estimator with $\|\hat{f} - f^*\| \leq \exp(\poly(n,d))$.
\end{proof}

\subsection{Gradient Descent for Linear Regression -- Proof of \cref{lem:regression-descent-main} }
\label[section]{sec:regression-descent}

In this section we describe and analyze our main gradient descent method for linear regression, proving \cref{lem:regression-descent-main}.
The algorithm will produce a series of iterates $f_0 = g_0,g_1,\ldots,g_T = \hat{f}$.
The key step is a subroutine to make progress: that is, if $\|f^* - g_t\| \gg r$, we need to produce $g_{t+1}$ such that $\|f^* - g_{t+1} \| \leq 0.99 \|f^* - g_t\|$.

\begin{lemma}\label[lemma]{lem:regression-descent-progress}
  There is a polynomial-time algorithm and a constant $C > 0$ with the following guarantees.
  Given $(X_1,Y_1),\ldots,(X_n,Y_n)$ for which \cref{eq:noise,eq:norm-upper,eq:norm-lower} all hold (for some $r > 0$ and a linear function $f^*$) and a linear function $g$ such that $\|g - f^*\| \geq Cr$, the algorithm produces $g'$ such that
  \[
  \|g' - f^*\| \leq 0.999 \|g - f^*\|\mper
  \]
  Additionally, if $\|g - f^*\| \leq Cr$, the algorithm outputs \textsc{certify}.
\end{lemma}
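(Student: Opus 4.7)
The plan is to follow the certify-or-gradient-descent paradigm of Section~\ref{sec:covariance}, instantiated with the noise correlation SDP. The algorithm proceeds in two stages. First, a \emph{distance-estimation} stage binary-searches for $R^* := \sup\{R > 0 : \text{the noise correlation SDP at } g \text{ with radius } R \text{ has value at least } 0.5k\}$. Second, if $R^* \leq C' r$ for a suitably large absolute constant $C'$, the algorithm outputs \textsc{certify}; otherwise it solves the noise SDP at radius $R^*/2$, recovers an optimal degree-$4$ pseudoexpectation $\pE$, extracts the candidate direction $\hat{f} := \pE[\sum_i b_i f]$, and returns $g' := g + \eta \hat{f}$ for a step size $\eta$ chosen proportional to $R^*/\|\hat{f}\|^2$.

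The analysis reduces to three technical claims. \emph{Claim A (distance upper bound):} whenever the noise SDP at $g$ with radius $R$ has value at least $0.5k$, we have $R \leq O(r + \|g - f^*\|)$. Its proof is an SoS derivation that starts from the feasibility relation $\pE[b_i \iprod{Y - g, f}_i] \geq R\pE[b_i]$ and splits $Y - g = \e + h$ with $h := f^* - g$; the noise contribution $\pE[\sum_i b_i \iprod{\e, f}_i]$ is bounded by $O(rk)$ using a thresholding argument together with \eqref{eq:noise} (and the sample-truncation level $\alpha$ from \cref{def:reg-det} to control the large-deviation tail), while the signal contribution $\pE[\sum_i b_i \iprod{h, f}_i]$ is bounded by $O(\|h\|) \cdot \pE[\sum_i b_i] \leq O(\|h\| k)$ via SoS Cauchy--Schwarz inside each bucket combined with \eqref{eq:norm-upper} applied to both $f$ and $h$. \emph{Claim B (distance lower bound):} when $\|h\| \geq 100r$, the noise SDP at $g$ with radius $0.98\|h\|$ has value at least $0.99k$; to see this, take the (actual) expectation supported on $f = h/\|h\|$ and $b_i = 1$ for all $i$, and combine \eqref{eq:norm-lower}, which guarantees $\iprod{h, h/\|h\|}_i = \|h\|_i^2/\|h\| \geq 0.99\|h\|$ on at least $0.999k$ buckets, with \eqref{eq:noise} applied to $\pm h/\|h\|$, which forces $|\iprod{\e, h/\|h\|}_i| \leq r$ on at least $0.998k$ buckets. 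Together, Claims~A and~B yield $R^* = \Theta(\|h\|)$ whenever $\|h\| \geq 100r$ and $R^* \leq O(r)$ whenever $\|h\| \leq O(r)$, which justifies the certification threshold.

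\emph{Claim C (gradient correlation):} any pseudoexpectation $\pE$ feasible for the noise SDP at $g$ with radius $R = \Theta(\|h\|) \gg r$ and value at least $0.5k$ yields $\iprod{\hat{f}, h} \geq \Omega(\|h\|) \cdot \|\hat{f}\|$, where the inner product and norm are the population $L^2$ ones. The lower bound on $\pE[\sum_i b_i \iprod{h, f}_i]$ comes from the SDP constraint minus the noise bound of Claim~A, giving $\pE[\sum_i b_i \iprod{h, f}_i] \geq 0.5Rk - O(rk) = \Omega(\|h\| \cdot k)$; this empirical quantity is then transferred to the population inner product via the identity $\iprod{\hat{f}, h} = \pE[\sum_i b_i \iprod{f, h}]$ (by linearity of $\pE$ and $\iprod{\cdot, h}$), using SoS polarization of \eqref{eq:norm-upper} and \eqref{eq:norm-lower} applied to $f$, $h$, and $f \pm h$ to replace each empirical $\iprod{f, h}_i$ by $\iprod{f, h}$ up to a $(1 \pm 0.01)$ multiplicative error on a $(1 - o(1))$ fraction of buckets. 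The norm $\|\hat f\|$ is controlled by a short degree-$4$ SoS computation using $\pE[\|f\|^4] \leq 1$ and $b_i^2 = b_i$. With Claim~C, the standard gradient-descent identity $\|g' - f^*\|^2 = \|g - f^*\|^2 - 2\eta\iprod{\hat{f}, h} + \eta^2\|\hat{f}\|^2$ gives $\|g' - f^*\| \leq 0.999\|g - f^*\|$ for the chosen $\eta$, exactly as in the covariance-estimation analysis.

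The main obstacle is Claim~C: normalizing $\hat{f}$ correctly and transferring per-bucket empirical inner products to the population inner product inside the SoS framework require carefully combining \eqref{eq:noise}, \eqref{eq:norm-upper}, and \eqref{eq:norm-lower} via polarization, degree-$4$ SoS Cauchy--Schwarz, and control of the normalizing denominator $\pE[\sum_i b_i]$. A secondary obstacle is the noise bound used in Claim~A: because \eqref{eq:noise} only directly controls pseudoexpectations that satisfy the constraint $b_i \iprod{\e, f}_i \geq r b_i$ for every $i$, converting it into the uniform bound $\pE[\sum_i b_i \iprod{\e, f}_i] \leq O(rk)$ requires a truncation-and-thresholding argument that uses the sample cutoff $\alpha$ to bound each $|\iprod{\e, f}_i|$ a priori. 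Since the noise SDP uses only degree-$4$ pseudoexpectations, we cannot freely multiply constraints and must keep the SoS proofs low-degree, which mirrors the delicate low-degree bookkeeping in the covariance-estimation argument of Section~\ref{sec:covariance}.
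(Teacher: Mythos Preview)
Your certification/distance-estimation step via the noise correlation SDP essentially matches the paper (Lemma~\ref{lem:regression-certify-done}).  The progress step, however, is genuinely different from the paper's, and your version has a gap in Claim~C.

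\textbf{How the paper proceeds.}  The paper does \emph{not} extract a gradient from the noise SDP.  Instead it introduces a separate loss-based SDP (Lemma~\ref{lem:regression-pdist-feasible}): it searches over pseudodistributions on pairs $(f,b)$, where $f$ is a candidate \emph{function} (not a unit direction) constrained by $\|f-g\|^2=s^2$, such that $\pE b_i L_i(f)\le \pE b_i(L_i(g)-0.97s^2)$ on $0.998k$ buckets.  Then (Lemma~\ref{lem:regression-pdist-prog}) it expands $L_i(f)-L_i(g)=\|f-f^*\|_i^2-\|g-f^*\|_i^2+2\iprod{g-f,f^*-y}_i$ and shows $\pE\|f-f^*\|^2\le 0.999\|g-f^*\|^2$, outputting $g'=\pE f$ and using Jensen.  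Crucially, after this expansion every term the analysis must compare between bucket and population is a \emph{squared norm} $\|\cdot\|_i^2$ versus $\|\cdot\|^2$ (the noise term $\iprod{g-f,f^*-y}_i$ is handled directly by \eqref{eq:noise}); this is exactly what \eqref{eq:norm-upper}, \eqref{eq:norm-lower} provide.  No empirical-to-population transfer of a bilinear cross term $\iprod{f,h}_i$ is ever needed.

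\textbf{The gap in Claim~C.}  Your step $g'=g+\eta\hat f$ needs $\iprod{\hat f,h}\ge \Omega(\|h\|)\|\hat f\|$ in the \emph{population} inner product, but the SDP constraint only gives the \emph{empirical} quantity $\pE[\sum_i b_i\iprod{h,f}_i]$.  The polarization you propose reduces the transfer to comparing $\pE b_i\|f\pm h\|_i^2$ with $\pE b_i\|f\pm h\|^2$, but the norm SDPs bound $\sum_{i\in B}\pE b_i\|g\|^2$ (the mass \emph{weighted by} $\|g\|^2$), not $\sum_{i\in B}\pE b_i$.  For $g=f-h$ there is no SoS lower bound on $\|f-h\|^2$ under $\|f\|^2=1$ when $\|h\|$ is of order one (indeed $f=h/\|h\|$ makes it $(1-\|h\|)^2$), so you cannot convert the weighted bound into a bound on bad-bucket mass.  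A second issue: even for the fixed vector $h$, on the $\le 0.001k$ buckets not controlled by \eqref{eq:norm-upper} the value $\|h\|_i^2$ is completely unbounded, and those buckets can carry nonzero $\pE b_i$ in your pseudoexpectation, so their contribution to $\sum_i\pE[b_i(\|h\|_i^2-\|h\|^2)]$ is not controlled.  Finally, the phrasing ``up to $(1\pm 0.01)$ multiplicative error'' cannot be right even informally, since $\iprod{f,h}$ can vanish while $\iprod{f,h}_i$ does not.  The paper sidesteps all of this by never needing to transfer a cross inner product.
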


\cref{lem:regression-descent-main} follows immediately from \cref{lem:regression-descent-progress}: given $f_0$, the algorithm will iterate the procedure from \cref{lem:regression-descent-progress} until it finds $g$ for which it outputs \textsc{certify}.
Then this $g$ is output as $\hat{f}$.
Since $\|g' - f\| \leq 0.999 \|g - f^*\|$ at each iteration, by our assumptions on $f_0$ only $\poly(n,d)$ iterations are required, so the overall algorithm runs in polynomial time.
We focus now on proving \cref{lem:regression-descent-progress}.

\subsubsection{Main Lemmas and Proof of \cref{lem:regression-descent-progress}}
In this subsection we accumulate the main lemmas needed to prove \cref{lem:regression-descent-progress} and prove the latter.
The first lemma states that if $\|g - f^*\| \gg r$ then it is possible to find a pseudodistribution on functions $f$ such that the loss of $f$ is noticeably less than the loss of $g$ on most buckets.

\begin{lemma}\label[lemma]{lem:regression-pdist-feasible}
    Suppose given $(X_1,Y_1),\ldots,(X_n,Y_n)$ and $r > 0$ for which \cref{eq:noise,eq:norm-upper,eq:norm-lower} all hold for some linear function $f^*$, and also given a linear function $g$ such that $\|g - f^*\| >Cr$ for a large-enough constant $C$.
    Then it is possible to find in polynomial time a degree-$4$ pseudodistribution on variables $f_1,\ldots,f_d$ (representing a linear function $f$) and $b_1,\ldots,b_k$ which has the following properties, for some number $s \geq 0.99 \|g - f^*\|$.
    \begin{align*}
        & \pE \text{ satisfies } b_i^2 = b_i \\
        & \pE \text{ satisfies } \sum_{i \leq k} b_i = 0.998k\\
        & \pE \text{ satisfies } \|f - g\|^2 = s^2 \\
        & \pE b_i L_i(f) \leq \pE b_i(L_i(g) - 0.97s^2) \\
    \end{align*}
\end{lemma}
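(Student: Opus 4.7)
The idea is to exhibit a concrete pseudodistribution witnessing the stated properties with $s = \|g - f^*\|$, then extract it algorithmically via SDP. Write $h := g - f^*$ and $\epsilon_j := Y_j - f^*(X_j)$.

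First I will identify a large set of \emph{good} buckets by testing each of the three deterministic conditions with the specific deterministic direction $f_\star := h/\|h\|$. Evaluation at $f_\star$ (a degree-$0$ pseudoexpectation) has $\pE \|f\|^4 = 1$, and combining it with the $\{0,1\}$-valued $b_i := \Ind(\|h\|_i^2 > 1.01 \|h\|^2)$ gives a feasible solution to the \eqref{eq:norm-upper} SDP whose value is exactly $|\{i : \|h\|_i^2 > 1.01\|h\|^2\}|$; thus this quantity is at most $0.001k$. An analogous argument with \eqref{eq:norm-lower} bounds $|\{i : \|h\|_i^2 < 0.99\|h\|^2\}|$ by $0.001k$. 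For \eqref{eq:noise}, I use that it is instantiated with $g_{\mathrm{SDP}} = f^*$ (per \cref{def:reg-det}), so $Y - g_{\mathrm{SDP}} = \epsilon$; applying the SDP with $f = \pm f_\star$ and $b_i = \Ind(\pm \iprod{\epsilon, f_\star}_i > r)$ bounds $|\{i : |\iprod{\epsilon, f_\star}_i| > r\}|$ by $0.002k$. A union bound then yields a set $\mathrm{Good} \subseteq [k]$ of size at least $(1 - 0.004)k$ on which all of $\|h\|_i^2 \in [0.99, 1.01]\|h\|^2$ and $|\iprod{h, \epsilon}_i| \leq r\|h\|$ hold.

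Next I define $\pE$ to be the point-mass distribution on $(f, b)$ with $f = f^*$ and $b \in \{0,1\}^k$ the indicator of a $0.998k$-subset of $\mathrm{Good}$ (this requires tightening the $0.001k$ constants in \cref{def:reg-det} slightly, say to $0.0005k$, which costs only universal constants in \cref{lem:regression-deterministic}). With $s := \|h\|$, the constraints $b_i^2 = b_i$, $\sum b_i = 0.998k$, and the polynomial identity $\|f - g\|^2 = s^2$ are immediate since $f$ is deterministic. For the final condition, on each good bucket $i$ I expand
\[
L_i(g) - L_i(f^*) = \|h\|_i^2 - 2 \iprod{h, \epsilon}_i \;\geq\; 0.99 \|h\|^2 - 2 r \|h\|.
\]
Under the hypothesis $\|h\| \geq Cr$ with $C$ sufficiently large (say $C \geq 200$), the second term is at most $0.01\|h\|^2$, so $L_i(g) - L_i(f^*) \geq 0.98 s^2 \geq 0.97 s^2$; multiplying by $b_i$ and taking $\pE$ yields property 4.

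For the polynomial-time extraction, I set up the feasibility SDP over degree-$4$ pseudoexpectations defined by the four properties in the lemma, treating $s$ as a parameter. Since $s$ is not known a priori, I binary search over $s$ in a polynomially bounded interval (using, e.g., $\|g\| + \|f_0\|$ as an a priori upper bound on $\|g - f^*\|$); feasibility at $s = \|g - f^*\|$ is guaranteed by the point-mass solution above, so $O(\log)$ SDP solves yield some $s \geq 0.99 \|g - f^*\|$. The main technical obstacle is bookkeeping of constants: the three SDP-value bounds of order $0.001k$ must collectively fit within the $0.002k$ slack allowed by the $\sum b_i = 0.998k$ requirement, so the constants in the deterministic conditions (and hence in the concentration arguments of \cref{lem:regression-deterministic}) must be tight enough to accommodate the union bound over the four failure modes.
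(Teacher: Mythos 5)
Your feasibility argument is essentially the paper's: exhibit the point solution $f = f^*$ with $b$ supported on buckets that are good for the fixed direction $(g-f^*)/\|g-f^*\|$, using the deterministic SDP bounds applied to that single direction, and expand $L_i(g)-L_i(f^*) = \|g-f^*\|_i^2 + 2\iprod{g-f^*,f^*-Y}_i$. Two remarks on that part. First, your union bound over four failure modes only yields $0.996k$ good buckets under the stated $0.001k$ constants; but you do not actually need \eqref{eq:norm-upper} or the second side of the noise bound for this lemma, and dropping them gives $0.998k$ exactly, so no retuning of \cref{def:reg-det} (and hence of \cref{lem:regression-deterministic}) is required. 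Your proposed patch would also work, but it is unnecessary.

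The genuine gap is in the extraction step. You establish feasibility of the SDP only at the single exact value $s = \|g-f^*\|$, which the algorithm does not know and which a discrete binary search will essentially never query; the constraint $\|f-g\|^2 = s^2$ is an exact equality, so feasibility at one $s$ does not transfer to nearby grid values without an argument. The paper supplies exactly this: it shows the program remains feasible for every $s$ in an interval around $\|g-f^*\|$ (roughly $[0.99\|g-f^*\|,1.01\|g-f^*\|]$) by interpolating $f = \e g + (1-\e)f^*$ with $\e = \e(s)$ chosen so that $\|f-g\|^2 = s^2$, and it is this interval feasibility that makes a grid/binary search succeed. Moreover, your claim that ``$O(\log)$ SDP solves yield some $s \geq 0.99\|g-f^*\|$'' does not follow as stated: values of $s$ much smaller than $\|g-f^*\|$ are also feasible (take $f$ a small step from $g$ toward $f^*$), so an unstructured search that returns an arbitrary feasible $s$ may return one far below the required threshold. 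One needs to scan a multiplicative grid from an upper bound downward (with ratio below $1.01/0.99$) and return the first feasible $s$, which combined with the interval-feasibility claim guarantees $s \geq 0.99\|g-f^*\|$. Finally, a small slip: $\|g\| + \|f_0\|$ is not an a priori bound on $\|g-f^*\|$; you should instead use $\|g - f_0\| + \|f_0 - f^*\| \leq \|g-f_0\| + \exp(\poly(n,d))\cdot r$ from the hypothesis of \cref{lem:regression-descent-main}, which still keeps the number of search steps polynomial.
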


The second lemma shows that the pseudodistribution found in \cref{lem:regression-descent-progress} makes progress towards $f^*$.

\begin{lemma}\label[lemma]{lem:regression-pdist-prog}
  Let $(X_1,Y_1),\ldots,(X_n,Y_n)$ satisfy the same conditions as in \cref{lem:regression-pdist-feasible} with respect to some linear function $f^*$.
  Suppose $g$ is another linear function with $\|g - f^*\| > Cr$ for a large-enough constant $C$, and that $\pE$ is a pseudodistribution of degree-$4$ on variables $f,b$ with the properties from the conclusion of \cref{lem:regression-pdist-feasible}.
  Then
  \[
  \pE \|f - f^*\|^2 \leq 0.999 \|g - f^*\|^2\mper
  \]
\end{lemma}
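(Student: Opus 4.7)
The starting point is to expand the hypothesis using the noise decomposition
\[
L_i(h) = \|h - f^*\|_i^2 - 2\iprod{h - f^*, \e}_i + \|\e\|_i^2,
\]
which holds for any linear function $h$ (where $\e = Y - f^*(X)$). Applying this to both $h = f$ and $h = g$, summing the hypothesis $\pE b_i L_i(f) \leq \pE b_i(L_i(g) - 0.97 s^2)$ over $i \in [k]$, and using that $\pE$ satisfies $\sum_i b_i = 0.998 k$ as an identity, the $\|\e\|_i^2$ terms cancel and I obtain
\begin{equation*}
  \sum_i \pE b_i \|f - f^*\|_i^2 \leq \sum_i \pE b_i \|g - f^*\|_i^2 + 2\sum_i \pE b_i \iprod{f - g, \e}_i - 0.97 \cdot 0.998 k s^2. \quad (\star)
\end{equation*}
From here the strategy is to translate each of the three empirical quantities in $(\star)$ into its population analogue via one of the deterministic conditions \cref{eq:noise}, \cref{eq:norm-upper}, \cref{eq:norm-lower}, and then combine with $s \geq 0.99\|g - f^*\|$ and $\|g - f^*\| > Cr$ to conclude.

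\paragraph{The three translations.} For the fixed-function term $\sum_i \pE b_i \|g - f^*\|_i^2$, I would apply \cref{eq:norm-upper} and \cref{eq:norm-lower} to point pseudodistributions concentrated at the normalized function $(g - f^*)/\|g - f^*\|$: for any subset $S$ of buckets on which $\|g - f^*\|_i^2 > 1.01 \|g - f^*\|^2$, the point pseudodistribution with $b_i = \mathbf{1}[i \in S]$ is feasible with objective $|S|$, forcing $|S| \leq 0.001k$; symmetrically for the lower bound. Combined with standard concentration controlling $\frac{1}{k}\sum_i \|g - f^*\|_i^2$, this yields $\sum_i \pE b_i \|g - f^*\|_i^2 \leq (1.01 + o(1)) \cdot 0.998 k \|g - f^*\|^2$. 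For the pseudodistribution term $\sum_i \pE b_i \|f - f^*\|_i^2$, I would apply \cref{eq:norm-lower} directly to $\pE$ under the shift $h = f - f^*$: by scale-invariance any feasible pseudodistribution obeys $\pE' \sum_i b_i \|h\|^2 \leq 0.001 k \cdot (\pE' \|h\|^4)^{1/2}$, and zeroing out the $b_i$ in $\pE$ for which the lower bound is violated produces a feasible pseudodistribution, yielding $\sum_i \pE b_i \|f - f^*\|_i^2 \geq 0.99 \cdot 0.998 k \pE \|f - f^*\|^2 - 0.001 k (\pE \|f - f^*\|^4)^{1/2}$. For the noise correlation, the key normalization trick is that since $\pE$ satisfies $\|f - g\|^2 = s^2$ as an identity, the rescaled variable $\phi = (f - g)/s$ satisfies $\|\phi\|^2 = 1$ as an identity, so $\pE$ induces a feasible pseudodistribution for \cref{eq:noise} (recall the noise SDP is instantiated with $g \leftarrow f^*$ so $Y - f^* = \e$); an analogous zeroing-out argument then gives $\sum_i \pE b_i \iprod{\e, \phi}_i \leq 1.001 \cdot 0.998 k \cdot r$, and rescaling yields $\sum_i \pE b_i \iprod{f - g, \e}_i \leq O(rsk)$.

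\paragraph{Combining and the main obstacle.} Substituting the three translations into $(\star)$ and bounding $(\pE \|f - f^*\|^4)^{1/2} \leq O(s^2 + \|g - f^*\|^2)$ via pseudo-triangle inequality $\|f - f^*\|^2 \preceq 2\|f - g\|^2 + 2\|g - f^*\|^2$ together with $\pE \|f - g\|^2 = s^2$, I arrive at
\begin{equation*}
  0.99 \cdot 0.998 k \pE \|f - f^*\|^2 \leq 1.01 \cdot 0.998 k \|g - f^*\|^2 - 0.97 \cdot 0.998 k s^2 + O\Paren{rsk + k (s^2 + \|g - f^*\|^2)/C}.
\end{equation*}
Since $s \geq 0.99 \|g - f^*\|$ gives $s^2 \geq 0.98 \|g - f^*\|^2$, and $r \leq \|g - f^*\|/C$ makes the cross-term $rs$ of order $\|g - f^*\|^2/C$, the right-hand side is at most $(1.01 - 0.97 \cdot 0.98 + O(1/C)) \cdot 0.998 k \|g - f^*\|^2 < 0.07 \cdot 0.998 k \|g - f^*\|^2$ for $C$ large enough, which divided by $0.99$ is comfortably below $0.999 \|g - f^*\|^2$. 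The main obstacle is the bad-bucket bookkeeping: the norm and noise SDPs only bound the \emph{number} of bad buckets (at most $0.001k$ each), not the magnitude of the per-bucket deviations, so the technical heart is the scale-invariant argument that converts count bounds into fourth-moment bounds of the form $0.001 k (\pE \|h\|^4)^{1/2}$. A secondary subtlety is that the conclusion requires $s = \Theta(\|g - f^*\|)$ rather than merely $s \geq 0.99 \|g - f^*\|$—the construction in \cref{lem:regression-pdist-feasible} needs to furnish a matching upper bound on $s$, or equivalently, control $\pE \|f - g\|^4$, which should follow from how $\pE$ is built from the testing SDP.
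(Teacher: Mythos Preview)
Your plan has the right ingredients—expanding the loss, invoking the three SDP conditions, and bounding $(\pE\|f-f^*\|^4)^{1/2}$ via the pseudo-triangle inequality—but there is a structural gap in how you organize the argument.

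By summing the per-bucket hypothesis over \emph{all} $i$ to obtain $(\star)$, you commit to upper-bounding the aggregates $\sum_i \pE b_i \|g-f^*\|_i^2$ and $\sum_i \pE b_i \iprod{f-g,\e}_i$. The SDP conditions, however, only control the \emph{count} of bad buckets (via statements of the form $\pE\sum_{i\in B}b_i\|h\|^2 \le 0.001k(\pE\|h\|^4)^{1/2}$), not the \emph{magnitude} of $\|g-f^*\|_i^2$ or $\iprod{\phi,\e}_i$ on those bad buckets. Your lower bound on the LHS works because bad-bucket terms there are nonnegative and can simply be dropped; on the RHS the bad-bucket terms can be large and positive, and nothing in the hypotheses bounds them. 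Your appeal to ``standard concentration controlling $\tfrac1k\sum_i\|g-f^*\|_i^2$'' is not among the lemma's deterministic conditions, and since $g$ is a data-dependent iterate, making it rigorous would require adding a new condition (e.g.\ $\|\hat\Sigma-\Id\|_2$ small). The same issue bites the noise aggregate: the zeroing-out argument gives $\pE\sum_{i\in B''}b_i$ small, not $\sum_{i\in B''}\pE b_i\iprod{\phi,\e}_i$ small.

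The paper's fix is a reorganization that avoids ever summing the hypothesis over bad buckets. It writes
\[
0.998k\,\pE\|f-f^*\|^2 \;=\; \sum_{i\in G}\pE b_i\|f-f^*\|^2 \;+\; \sum_{i\notin G}\pE b_i\|f-f^*\|^2,
\]
applies the per-bucket inequality \emph{only on the good set $G$} (where, by definition, all three empirical-to-population translations hold bucketwise), and bounds the residual $\sum_{i\notin G}\pE b_i\|f-f^*\|^2$ directly by pseudo-Cauchy--Schwarz against $(\pE\|f-f^*\|^4)^{1/2}$, using exactly the bad-bucket count bounds you derived. This way the uncontrolled quantities $\|g-f^*\|_i^2$ and $\iprod{f-g,\e}_i$ on bad buckets never enter the estimate.

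Finally, your secondary worry about needing $s=\Theta(\|g-f^*\|)$ is unfounded: in the correct accounting the leading $-0.97s^2$ term dominates the $+O(s^2)$ error coming from $(\pE\|f-f^*\|^4)^{1/2}\le O(s^2)$, so only the lower bound $s\ge 0.99\|g-f^*\|$ is used and larger $s$ only helps.
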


Our last lemma shows that to detect whether a linear function $g$ has $\|g - f^*\| \leq O(r)$ it is enough to check for the existence of a certain SoS proof.

\begin{lemma}
  \label[lemma]{lem:regression-certify-done}
  For some $(X_1,Y_1),\ldots,(X_n,Y_n)$ and $r > 0$ and linear function $f^*$ on $\R^d$, suppose that \cref{eq:noise,eq:norm-upper,eq:norm-lower} all hold.
  Let $g$ be another linear function.
  Then if
  \begin{align*}
  & \max \pE \sum_{i \leq k} b_i \text{ such that } \\
  & \pE \text{ satisfies } b_i^2 = b_i \\
  & \pE \text{ satisfies } b_i \iprod{Y-g,f}_i \geq b_i \cdot C \cdot r \\
  & \pE \text{ satisfies } \|f\|^2 = 1\\
  & \deg \pE = 4
  \end{align*}
  is less than $0.1k$ then $\|f^* - g\| \leq 2Cr$, while if $\|f^*-g\| \leq Cr/2$ then the quantity above is at most $0.1k$, for any large-enough $C$.
\end{lemma}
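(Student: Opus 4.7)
The plan is to prove both implications by contradiction, leveraging the three deterministic conditions as black-box bounds. For the forward direction, suppose $\|f^* - g\| > 2Cr$ and set $h = (f^* - g)/\|f^* - g\|$, a classical unit-norm linear function. I will exhibit a classical feasible solution to the SDP with value at least $0.997k$ by choosing $f = h$ and letting $b_i$ be the indicator of ``good'' buckets. A bucket $i$ is good if both $\|h\|_i^2 \geq 0.99$ and $|\iprod{\e, h}_i| \leq r$. By plugging the classical delta distribution on $f = h$ together with the bad-bucket indicator into the norm-lower SDP (whose value is $\leq 0.001k$ by \cref{eq:norm-lower}), at most $0.001k$ buckets violate $\|h\|_i^2 \geq 0.99$. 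Similarly, applying the noise SDP instantiated at $g = f^*$ (so that $\iprod{Y - f^*, h}_i = \iprod{\e, h}_i$) with $f = \pm h$ bounds the number of buckets violating $|\iprod{\e, h}_i| \leq r$ by at most $2 \cdot 0.001k$. On the remaining $\geq 0.997k$ good buckets, $\iprod{Y - g, h}_i = \|f^* - g\| \|h\|_i^2 + \iprod{\e, h}_i \geq 0.99 \cdot 2Cr - r > Cr$ for $C$ sufficiently large, giving a feasible solution of SDP value $\geq 0.997k > 0.1k$, contradicting the hypothesis.

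The reverse direction is more delicate and requires pseudo-level reasoning. Let $s = \|f^* - g\| \leq Cr/2$, and assume for contradiction that $\pE$ is a degree-$4$ pseudodistribution feasible for the SDP with $\pE \sum_i b_i > 0.1k$. Expand $\iprod{Y - g, f}_i = \iprod{\e, f}_i + \iprod{f^* - g, f}_i$; summing the SDP constraint yields
\[
Cr \cdot \pE \sum_i b_i \leq \pE \sum_i b_i \iprod{\e, f}_i + \pE \sum_i b_i \iprod{f^* - g, f}_i.
\]
For the second term, apply the SoS identity $2 \iprod{h, f}_i \leq t^2 \|h\|_i^2 + \|f\|_i^2 / t^2$ (obtained by expanding $\|th - f/t\|_i^2 \geq 0$) with $h = f^* - g$ and $t^2 = 1/s$, yielding
\[
\pE \sum_i b_i \iprod{f^* - g, f}_i \leq \frac{1}{2s} \sum_i (\pE b_i) \|f^* - g\|_i^2 + \frac{s}{2} \pE \sum_i b_i \|f\|_i^2.
\]
I bound the first summand using \cref{eq:norm-upper} applied to the classical delta on $h/s$: at most $0.001k$ buckets have $\|f^* - g\|_i^2 > 1.01 s^2$, while on the few bad buckets the truncation $\|X_j\| \leq \alpha = O(\sqrt{d})$ gives the worst-case bound $\|f^* - g\|_i^2 \leq O(d) \cdot s^2$. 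I bound the second summand via \cref{eq:norm-upper} applied at the pseudo-level, yielding $\pE \sum_i b_i \|f\|_i^2 \leq 1.01 \pE \sum_i b_i$ up to lower-order terms. Combining gives $\pE \sum_i b_i \iprod{f^* - g, f}_i \leq O(s) \cdot \pE \sum_i b_i + o(Cr \cdot \pE \sum_i b_i)$, which rearranges to $\pE \sum_i b_i \iprod{\e, f}_i \geq (Cr - O(s)) \pE \sum_i b_i \geq 0.7 Cr \cdot \pE \sum_i b_i$. This pseudodistribution therefore essentially witnesses a feasible solution to the noise SDP with value exceeding $0.001k$, contradicting \cref{eq:noise}.

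The main obstacle is the pseudo-level application of \cref{eq:norm-upper} to bound $\pE \sum_i b_i \|f\|_i^2$: the SDP value bound in \cref{eq:norm-upper} concerns a particular optimization problem, not a universal inequality usable inside an arbitrary $\pE$. I plan to handle this via SoS duality (Positivstellensatz): the value bound $\leq 0.001k$ yields an explicit SoS certificate which can be combined with our feasible $\pE$ to derive the required pseudo-level inequality. A similar duality step converts the derived lower bound $\pE \sum_i b_i \iprod{\e, f}_i \geq 0.7 Cr \cdot \pE \sum_i b_i$ into a concrete violation of the noise SDP value bound. A secondary issue is controlling the bad-bucket contribution of order $O(dks)$; this is absorbed using the parameter scaling $r^2 = \Theta((d+k)/n)$ and the sample-size hypothesis $n \gg d \sqrt{k} \cdot (\log d)^{O(1)}$ from \cref{thm:regression-main}.
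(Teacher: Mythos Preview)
Your forward direction is correct and is simply the contrapositive of the paper's direct argument. The paper applies the SDP bound to the classical test function $h=(f^*-g)/\|f^*-g\|$ to get $\iprod{Y-g,h}_i\le Cr$ on at least $0.9k$ buckets, combines this with $\iprod{Y-f^*,-h}_i\le r$ on at least $0.999k$ buckets from \cref{eq:noise}, adds the two on the intersection to obtain $\|f^*-g\|_i^2/\|f^*-g\|\le (C+1)r$, and finishes using one bucket from \cref{eq:norm-lower} where $\|f^*-g\|_i^2\ge 0.99\|f^*-g\|^2$. Same three ingredients, same logic.

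The reverse direction is where your plan diverges and runs into a genuine gap. Your route sums over buckets, applies AM--GM, and ends with $\pE\sum_i b_i\iprod{\e,f}_i \ge 0.7Cr\,\pE\sum_i b_i$, which you then want to convert into a violation of \cref{eq:noise} ``via SoS duality''. That step does not go through: the noise SDP's constraints are polynomial \emph{axioms} $b_i\iprod{\e,f}_i\ge b_ir$ that a pseudodistribution must satisfy bucket-by-bucket, not an aggregate lower bound on $\pE\sum_i b_i\iprod{\e,f}_i$. A large sum does not manufacture a pseudodistribution feasible for the noise system, and the SDP value bound in \cref{eq:noise} only constrains \emph{feasible} pseudodistributions. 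The paper sidesteps this by staying per-bucket: from the axiom $b_i\iprod{Y-g,f}_i\ge b_iCr$ it writes $b_i\iprod{Y-f^*,f}_i\ge b_iCr-b_i\iprod{f^*-g,f}_i$, bounds the correction term using only that $f^*-g$ is a \emph{fixed} vector of norm $\le Cr/2$ together with a purely classical application of \cref{eq:norm-upper} to it, zeroes out the few bad $b_i$'s, and concludes that (the modified) $\pE$ is itself feasible for the noise SDP and hence has value $\le 0.001k$. In particular, the pseudo-level invocation of \cref{eq:norm-upper} that you flag as ``the main obstacle'', the AM--GM splitting, and the truncation-based $O(d)$ worst-case bound on bad buckets are all unnecessary in the paper's route.
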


Now we can prove \cref{lem:regression-descent-progress}.

\begin{proof}[Proof of \cref{lem:regression-descent-progress}]
  The algorithm to obtain $g'$ with $\|g' - f^*\| \leq 0.999 \|g - f^*\|$ follows immediately from \cref{lem:regression-pdist-feasible} and \cref{lem:regression-pdist-prog}.
  By those lemmas, it is possible in polynomial time to obtain $\pE$ with $\pE \|f - f^*\|^2 \leq 0.999 \pE \|g - f^*\|^2$, and it suffices to output the linear function $\pE f$.
  This is because by Cauchy-Schwarz, $\| \pE f - f^*\|^2 \leq \pE \|f - f^*\|^2$.
  To decide whether to output \textsc{certify} it suffices to solve the SDP from \cref{lem:regression-certify-done} for appropriate choice of $C$ and output \textsc{certify} if the optimum value is at most $0.1k$.
\end{proof}

\subsubsection{Remaining proofs}

It remains to prove \cref{lem:regression-pdist-feasible,lem:regression-pdist-prog,lem:regression-certify-done}.
We prove them in order.

\begin{proof}[Proof of \cref{lem:regression-pdist-feasible}]
  The polynomial-time algorithm is to binary search on the parameter $s$; for each candidate value of $s$ solving the degree-$4$ SoS SDP in the lemma statement.

  First we show that with the value $s^2 = \|f^* - g\|^2$, the SoS program is feasible.
  The feasible point we exhibit is given by $f = f^*$.
  We expand $L_i(g) - L_i(f^*)$.
  \begin{align*}
      L_i(g) - L_i(f^*) & = \|g - y\|_i^2 - \|f^* - y\|_i^2 \\
      & = \|g - f^*\|_i^2 + 2 \iprod{g - f^*, f^* - y}_i\mper
  \end{align*}
  By \cref{eq:noise}, for at least $0.999k$ buckets we have $\iprod{g - f^*, f^* - y}_i \geq -sr$.
  And by \cref{eq:norm-lower}, for at least $0.999k$ buckets $\|g-f^*\|_i^2 \geq 0.99\|g - f^*\|^2$.
  So for at least $0.998k$ indices $i$ we have $L_i(g) - L_i(f^*) \geq 0.99s^2 - sr$.
  Thus, the degree-$4$ SoS program in the lemma statement is feasible (using $f = f^*$) with the correct choice of $s$ and a big-enough constant $C$.

  By a similar argument, the program in the lemma statement remains feasible for any choice of $s$ in the range $[0.99\|g-f^*\|, 1.01 \|g-f^*\|]$, by choosing $f = \e g + (1-\e)f^*$ for $\e = \e(s)$ such that $\|f - g\|^2 = s^2$.
  Such $s$ can be found by binary search.
\end{proof}

We turn to \cref{lem:regression-pdist-prog}.

\begin{proof}[Proof of \cref{lem:regression-pdist-prog}]
  We start by rearranging $\pE b_i L_i(f) \leq \pE b_i(L_i(g) - 0.97s^2)$ as
  \[
    \pE b_i (\|f - f^*\|_i^2 + 2 \iprod{f - f^*, f^* - y}_i + \|f^* - y\|_i^2 ) \leq \pE b_i (\|g - f^*\|_i^2 + 2 \iprod{g-f^*, f^* - y}_i + \|f^* - y\|_i^2 - 0.97s^2)
  \]
  which after grouping terms becomes
  \begin{align}
  \pE b_i \|f - f^*\|_i^2 \leq \pE b_i(\|g - f^*\|_i^2 + 2\iprod{g-f,f^*-y}_i - 0.97s^2)\mper \label{eq:loss-rearranged}
  \end{align}
  Now we define a set $G \subseteq [k]$ of \emph{good} buckets.
  Let $i \in G$ if the following all hold:
  \begin{align}
      \pE b_i \|f - f^*\|_i^2 & \geq 0.99 \pE b_i \|f - f^*\|^2 \label{eq:good-1} \\
      \pE b_i \|g - f^*\|_i^2 & \leq 1.01 \pE b_i \|g - f^*\|^2  \label{eq:good-2} \\
      \pE b_i \iprod{g-f,f^*-y}_i & \leq r s \label{eq:good-3}\mper
  \end{align}
  We claim that the contribution of buckets $B_i$ for $i \notin G$ to the sum $\sum_{i \leq k} b_i \|f - f^*\|^2$ is small in the following sense.
  Let $B \subseteq [k]$ be those buckets violating \eqref{eq:good-1}, $B' \subseteq [k]$ those violating \eqref{eq:good-2}, and $B'' \subseteq [k]$ those violating \eqref{eq:good-3}.
  Below, we will show that
  \begin{align}
    \sum_{i \in B \cup B' } \pE b_i \|f - f^*\|^2 \leq 0.016 k \pE \|f - f^*\|^2 \text{ and } \pE \sum_{i \in B''} b_i \leq 0.006k \mper \label{eq:bad-small}
  \end{align}
  Let us first see that this is enough to complete the argument.

  By definition of good buckets, if $i \in G$ then it follows from \cref{eq:loss-rearranged} that
  \[
  \pE b_i \|f - f^*\|^2 \leq 1.02 \pE b_i(1.01 \|g-f^*\|^2 - 0.96s^2)\mper
  \]
  (Here we also used that for a big-enough constant $C$ it holds that $s \geq Cr$.)
  Since $\pE$ satisfies $\sum_{i \leq k} b_i = 0.998k$, we have
  \begin{align*}
    & 0.998k \pE \|f - f^*\|^2 \\
    & = \pE \sum_{i \leq k} b_i \|f-f^*\|^2 \\
    & = \pE \sum_{i \in G} b_i \|f - f^*\|^2 + \pE \sum_{i \notin G}  b_i \|f-f^*\|^2 \\
    & \leq 1.02 \pE \sum_{i \in G} (1.01 \|g-f^*\|^2 - 0.96s^2) + \pE \sum_{i \notin G} b_i \|f - f^*\|^2 \\
    & \leq 0.9 k \|g-f^*\|^2 - 0.4ks^2 + \pE \sum_{i \notin G} b_i \|f -f^*\|^2 \text{ since $s^2 \geq 0.99 \|g-f^*\|^2$} \mper
  \end{align*}
  If we now use that $\sum_{i \in B \cup B'} b_i \|f - f^*\|^2 \leq 0.016 k \pE \|f - f^*\|^2$ from \eqref{eq:bad-small}, we can rearrange to obtain
  \begin{align*}
  & 0.96 k \pE \|f-f^*\|^2 \\
    & \leq 0.9k \|g-f^*\|^2 - 0.4ks^2 + \pE \sum_{i \in B''} b_i \|f -f^*\|^2\\
    &  \leq 0.9k \|g-f^*\|^2 - 0.4ks^2 + \Paren{\pE \Paren{\sum_{i \in B''} b_i}^2}^{1/2} (\pE \|f-f^*\|^4)^{1/2} \text{ by pseudodist. Cauchy-Schwarz} \\
    & \leq 0.9k \|g-f^*\|^2 - 0.4ks^2 +  0.1k (\pE \|f-f^*\|^4)^{1/2} \text{ by $\pE \sum_{i \in G} b_i \geq 0.992k$ } \\
    & \leq 0.9k \|g-f^*\|^2 - 0.4ks^2 +  0.2k (\pE \|f-g\|^4 + \|g-f^*\|^4)^{1/2} \text{ by pseudodist. triangle inequality} \\
    & \leq 0.9k \|g-f^*\|^2 - 0.4ks^2 + 0.35k s^2 \text{ since $s^2 \geq 0.99\|f-g^*\|^2$ and $\pE$ satisfies $\|f-g\|^2 = s^2$}\\
    & \leq 0.9k \|g-f^*\|^2\mper
  \end{align*}
  The lemma follows.

  \subparagraph{Proof of \cref{eq:bad-small}}
  It remains to establish \cref{eq:bad-small}.
  First let us establish that we may assume $\pE \|f - f^*\|^2 \geq s^2/10$.
  Otherwise,
  \[
  \pE \|f-f^*\|^2 \leq \frac{s^2}{10} = \frac 1 {10} \pE \|f-g\|^2 \leq \frac 1 5 (\pE \|f-f^*\|^2 + \pE \|g-f^*\|^2)
  \]
  which rearranges to imply $\pE \|f-f^*\|^2 \leq 0.75 \pE \|f-g^*\|^2$, so the lemma would follow.

  Continuing with the proof of \cref{eq:bad-small}, let $B \subseteq [k]$ be the set of indices where \eqref{eq:good-1} fails.
  If $\pE \sum_{i \in B} b_i \|f-f^*\|^2 \geq 0.008k \pE \|f-f^*\|^2 $, we claim that by modifying $\pE$ to set $b_i$ to zero for $i \notin B$ and replacing $f-f^*$ with $h = \tfrac{f-f^*}{2s}$ we would obtain a pseudodistribution which violates \eqref{eq:norm-lower}.
  To see this we must check feasibility for the norm lower bound SDP.
  The main constraint to check is $\pE \|h\|^4 \leq 1$.
  For this we observe that
  \[
  \pE \|f-f^*\|^4 \leq 2 \pE \|f-g\|^4 + 2 \pE \|g - f^*\|^4 \leq 6s^4
  \]
  by pseudoexpectation triangle inequality.
  We conclude that $\pE \sum_{i \in B} b_i \|f-f^*\|^2 \leq 0.008k$.

  By an analogous argument, this time violating \eqref{eq:norm-upper}, if $B'$ is the set of indices where \eqref{eq:good-2} fails then $\pE \sum_{i \in B'} b_i \|f-f^*\|^2 \leq 0.008k \pE \|f-f^*\|^2$.

  Lastly we establish the second part of \eqref{eq:bad-small}.
  If $B''$ is the set of indices where \eqref{eq:good-3} fails, if $\pE \sum_{i \in B''} b_i \geq 0.006k$ then by modifying $\pE$ by setting $b_i$ to zero for $i \notin B''$ and replacing $\pE (f-g)$ by $\pE (f-g)/s$ we obtain a pseudodistribution which violates \cref{eq:noise}.
\end{proof}

Finally we turn to the proof of \cref{lem:regression-certify-done}.

\begin{proof}[Proof of \cref{lem:regression-certify-done}]
  We start with the second implication.
  Suppose $\|f^*-g\| \leq Cr/2$, and suppose $\pE$ satisfies the constraints of \cref{lem:regression-certify-done}.
  Then we see that $\pE$ also satisfies
  \[
  b_i \iprod{Y-f^*,f}_i \geq b_i C r - b_i \iprod{f^* - g,f}_i \geq b_i C r - b_i \|f^*-g\|_i\mper
  \]
  Since \cref{eq:norm-upper} holds, this is at least $b_i C r - 1.01 b_i \|f^* - g\| \geq b_i C r /3$.
  For large-enough $C$, $\pE$ is feasible for the noise correlation SDP.
  Since \cref{eq:noise} holds, we must have $\pE \sum b_i \leq 0.001k$.

  Now we tackle the first implication.
  By \cref{eq:noise} and hypothesis on $g$ and pigeonhole principle, there exist at least $0.95k$ indices $i \leq k$ such that $\iprod{Y-g,\tfrac{f^*-g}{\|f^*-g\|}}_i \leq Cr$ and $\iprod{Y-f^*, \tfrac{g- f^*}{\|g-f^*\|}}_i \leq r \leq Cr$.
  The former rearranges to $\iprod{g-Y, \tfrac{g-f^*}{\|f^*-g\|}}_i \leq Cr$.
  Adding, we find $\|g-f^*\|_i^2 / \|g-f^*\| \leq (C+1)r$.
  Since \cref{eq:norm-lower} holds, for one $i$ for which $\|g-f^*\|_i^2 / \|g-f^*\| \leq (C+1)r$ we also have $\|g-f^*\|_i^2 \geq 0.99 \|g-f^*\|^2$.
  Putting these together, we find $\|g-f^*\| \leq 2Cr$ (for large-enough $C$), which proves the lemma.
\end{proof}

\subsection{Certification -- Proof of \cref{lem:regression-deterministic}}

In this section we show that the conditions on a set of regression data $(X_1,Y_1),\ldots,(X_n,Y_n)$ which our algorithm from \cref{sec:regression-descent} requires hold with high probability.
Our proof comes in three parts, one for each of \cref{eq:noise,eq:norm-upper,eq:norm-lower}.
We start by establishing some notation.
Throughout the section,
\begin{itemize}
  \item $X_1,\ldots,X_n$ are i.i.d. copies of an $O(1)$-nice, zero mean random variable $X$ on $\R^d$ with covariance $\Id$.
  \item $\tilde{X_i} = X_i \cdot 1_{\|X_i\| \leq \sqrt d \log d}$.
  \item $\tilde \Sigma = \E \tilde{X} \tilde{X}^\top$.
  \item $B_1 \cup \ldots \cup B_k = [n]$ is a partition of $n$ into $k$ equal parts.
  \item For $i \leq k$, $Z_i = \E_{j \sim B_i} \tilde{X}_j \tilde{X}_j^\top - \tilde{\Sigma}$.
  \item $\tilde{Z_i} = Z_i \cdot 1_{\|Z_i\|_2 \leq C \E \|Z_i\|_2 }$ for a big-enough constant $C$.
  \item We assume throughout that $n \geq C k$ and $n \geq k^{1/2} d (\log d)^{C}$ for $C$ large enough.
\end{itemize}

Now we prove some useful facts.
The following \cref{fact:trunc-cov} can be proved by standard applications of Holder's and Markov's inequalities.
\begin{fact}\label[fact]{fact:trunc-cov}
  $\| \tilde{\Sigma} - \Id\|_2 \leq o(1)$.
\end{fact}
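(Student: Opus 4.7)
The plan is to identify $\tilde\Sigma - \mathrm{Id}$ as the bias induced by the truncation and bound its spectral norm by a Hölder split into a moment factor and a tail probability factor. Since $\E XX^\top = \mathrm{Id}$, we have
\[
\tilde\Sigma - \mathrm{Id} \;=\; -\E[XX^\top \mathbf{1}\{\|X\| > \alpha\}]
\]
with $\alpha = \sqrt d \log d$, and the variational characterization of the spectral norm gives
\[
\|\tilde\Sigma - \mathrm{Id}\|_2 \;=\; \sup_{\|u\|=1} \E\bigl[\langle X, u\rangle^2 \mathbf{1}\{\|X\| > \alpha\}\bigr].
\]

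The first step is to apply Hölder's inequality with exponents $4$ and $4/3$ to separate the integrand from the indicator:
\[
\E\bigl[\langle X, u\rangle^2 \mathbf{1}\{\|X\| > \alpha\}\bigr] \;\leq\; \bigl(\E \langle X, u\rangle^8\bigr)^{1/4}\, \Pr(\|X\| > \alpha)^{3/4}.
\]
The eighth-moment factor is controlled directly by the niceness hypothesis: $L$-certifiable $(2,8)$-hypercontractivity together with $\E XX^\top = \mathrm{Id}$ yields $\E\langle X,u\rangle^8 \leq L^2 (\E\langle X,u\rangle^2)^4 \leq L^2$ uniformly over unit $u$, with $L = O(1)$.

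For the tail factor I would apply Markov's inequality to $\|X\|^8$, using the $L_8$-$L_2$ vector norm bound already invoked elsewhere in the paper (\cref{lem:l8l2vec}), which gives $\E \|X\|^8 \leq L^2 (\Tr \Sigma)^4 = L^2 d^4$. Hence
\[
\Pr(\|X\| > \alpha) \;\leq\; \frac{\E \|X\|^8}{\alpha^8} \;\leq\; \frac{L^2 d^4}{d^4 (\log d)^8} \;=\; \frac{L^2}{(\log d)^8}.
\]
Plugging back in,
\[
\|\tilde\Sigma - \mathrm{Id}\|_2 \;\leq\; L^{1/2} \cdot \Bigl(\frac{L^2}{(\log d)^8}\Bigr)^{3/4} \;=\; O\!\left(\frac{1}{(\log d)^6}\right) \;=\; o(1).
\]

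There is no real obstacle here; the only judgment call is balancing the Hölder exponents with the Markov moment so that the truncation level $\alpha = \sqrt d \log d$ yields a $(\log d)^{-\Omega(1)}$ bound. Weaker combinations (e.g.\ Cauchy--Schwarz with fourth moments and Markov on $\|X\|^4$) would still give $o(1)$ but with worse polylog rate; since $(2,8)$-hypercontractivity is assumed anyway, using the eighth moment is the natural choice and mirrors the truncation analysis already performed for the covariance estimator in \cref{lem:bias}.
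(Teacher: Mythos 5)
Your proof is correct and is exactly the argument the paper has in mind: the paper only remarks that \cref{fact:trunc-cov} follows from ``standard applications of H\"older's and Markov's inequalities,'' and your H\"older split of $\E[\langle X,u\rangle^2\,\mathbf{1}\{\|X\|>\alpha\}]$, with certifiable $(2,8)$-hypercontractivity controlling the moment factor and Markov on $\|X\|^8$ (via \cref{lem:l8l2vec}) controlling the tail, is a valid instantiation of that sketch, mirroring \cref{lem:bias}. The exact exponent choices are immaterial; any such combination yields the claimed $o(1)$ bound at truncation level $\alpha=\sqrt{d}\log d$.
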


The following \cref{fact:trunc-spec-norm} may be proved by straightforward application of the matrix Bernstein inequality.

\begin{fact}\label[fact]{fact:trunc-spec-norm}
  For all $i \leq k$, $\E \|Z_i\|_2 \leq O(d (\log d) k /n + 1) \log d + O(\sqrt{d \log d k/ n})$.
\end{fact}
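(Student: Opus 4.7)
The plan is to apply the matrix Bernstein inequality (\cref{lem:matrix-bernstein}) to the decomposition $Z_i = (1/m) \sum_{j \in B_i} M_j$, where $m = n/k$ and $M_j = \tilde X_j \tilde X_j^\top - \tilde\Sigma$. The summands are i.i.d., mean-zero, symmetric $d\times d$ matrices, so Bernstein applies directly once the almost-sure norm and the matrix variance of each $M_j/m$ are controlled.

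For the almost-sure range, the triangle inequality gives $\|M_j\|_2 \leq \|\tilde X_j\|^2 + \|\tilde\Sigma\|_2$. The truncation $\|\tilde X_j\| \leq \sqrt d\,\log d$ combined with \cref{fact:trunc-cov} (which yields $\|\tilde\Sigma\|_2 \leq 1 + o(1)$) gives $R \defeq \|M_j/m\|_2 \leq O(d(\log d)^2/m) = O(d(\log d)^2 k/n)$.

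For the matrix variance, expanding the square shows that
\[
\Norm{\E Z_i^2}_2 \;=\; \frac{1}{m}\,\Norm{\E\bigl[\|\tilde X\|^2 \tilde X \tilde X^\top\bigr] - \tilde\Sigma^{\,2}}_2 \;\leq\; \frac{1}{m}\,\Norm{\E\bigl[\|\tilde X\|^2 \tilde X \tilde X^\top\bigr]}_2,
\]
since subtracting the PSD matrix $\tilde\Sigma^{\,2}$ only decreases the spectral norm. For any unit $u$, Cauchy--Schwarz yields $\E[\|\tilde X\|^2 \iprod{u,\tilde X}^2] \leq (\E \|\tilde X\|^4)^{1/2}(\E \iprod{u,\tilde X}^4)^{1/2}$. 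The $O(1)$-niceness of $X$ supplies both ingredients: directionally, $(2,4)$-hypercontractivity gives $\E \iprod{u,\tilde X}^4 \leq O(\|\tilde\Sigma\|_2^2) = O(1)$; in vector norm, the same assumption gives $\E \|\tilde X\|^4 \leq O((\Tr \tilde\Sigma)^2) = O(d^2)$ (using, e.g., the vector $\ell_4$-$\ell_2$ bound cited in \cref{lem:EZub}). Both bounds pass from $X$ to $\tilde X$ because truncation can only decrease even moments. Combining, $\sigma^2 \defeq \|\E Z_i^2\|_2 \leq O(d/m) = O(dk/n)$.

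Plugging $R$ and $\sigma^2$ into \cref{lem:matrix-bernstein} with ambient dimension $d$ produces $\E \|Z_i\|_2 \leq O(\sqrt{\sigma^2 \log d}) + O(R \log d) = O(\sqrt{dk\log d/n}) + O(d(\log d)^{O(1)} k/n)$, which matches the claimed bound. The only real obstacle is the matrix variance computation, since we cannot rely on Gaussianity; but this is essentially the calculation already carried out in \cref{lem:EZub}, and niceness is exactly what allows us to pass through the Cauchy--Schwarz step without extra loss.
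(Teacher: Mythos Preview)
Your proposal is correct and follows exactly the approach the paper indicates (a ``straightforward application of the matrix Bernstein inequality''), mirroring the detailed argument already given for \cref{lem:EZub} in the covariance section. The range bound from the truncation $\|\tilde X\|\leq \sqrt d\,\log d$ and the variance bound via Cauchy--Schwarz plus $(2,4)$-hypercontractivity (with truncation only decreasing even moments, and $\E M^2\succeq 0$ justifying the drop of $\tilde\Sigma^2$) are precisely the two ingredients needed; any discrepancy in the exact power of $\log d$ is immaterial to the paper's downstream use of the fact.
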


The next fact will allow us to control a key variance term in the proof of \cref{lem:regression-deterministic}.

\begin{fact}
\label{fact:reg-det-variance}
  For any $\e > 0, c > 1$ there is $C > 0$ such that
  \[
  \E_{X_i} \max_{\pE} \pE \sum_{i \leq k} 1_{\|Z_i\|_2 \leq c \E \|Z_i\|_2} (\|f\|_i^2 - \|f\|^2)^2 \leq \e k
  \]
  so long as $n \geq k^{1/2} d (\log d)^C$ and $n \geq Ck$.
  Here the maximization is over $\pE$ in variables $f_1,\ldots,f_d$ with $\pE \|f\|^4 \leq 1$.
\end{fact}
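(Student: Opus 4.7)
The plan is to decompose the polynomial and reduce to the random-polynomial control developed in \cref{sec:random_poly}. Writing $\|f\|_i^2 - \|f\|^2 = \iprod{Z_i, ff^\top} + \iprod{\tilde\Sigma - \Id, ff^\top}$ and applying the SoS inequality $(a+b)^2 \preceq 2a^2 + 2b^2$, together with the identity $1_{\|Z_i\|_2 \leq c\E\|Z_i\|_2} Z_i = \tilde Z_i$ (taking the constant in the definition of $\tilde Z_i$ to equal $c$), I obtain
\[
\sum_{i \leq k} 1_{\|Z_i\|_2 \leq c\E\|Z_i\|_2}(\|f\|_i^2 - \|f\|^2)^2 \preceq 2 \sum_{i \leq k} \iprod{\tilde Z_i, ff^\top}^2 + 2k \iprod{\tilde\Sigma - \Id, ff^\top}^2.
\]
The bias term is dispatched immediately: SoS Cauchy-Schwarz gives $\iprod{\tilde\Sigma - \Id, ff^\top}^2 \preceq \|\tilde\Sigma - \Id\|_2^2 \|f\|^4$, and \cref{fact:trunc-cov} yields $\|\tilde\Sigma - \Id\|_2 = o(1)$, so $\pE$ of $2k$ times this term is $o(k) \cdot \pE \|f\|^4 \leq o(k)$.

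For the main term $\sum_i \iprod{\tilde Z_i, ff^\top}^2$ I would repeat the two-step argument of \cref{lem:p-main}, being careful to use only inequalities that are SoS in $f$ (so no constraint of the form $\|f\|^2 = 1$ is required). The expectation step mirrors \cref{lem:p-expectation}: $(2,4)$-certifiable hypercontractivity gives $\E \iprod{\tilde Z_i, ff^\top}^2 \preceq (L/m)\iprod{\tilde\Sigma, ff^\top}^2 \preceq (L \|\tilde\Sigma\|_2^2/m) \|f\|^4$ as a polynomial in $f$, so summing over $i$ and applying $\pE$ yields $\pE \E \sum_i \iprod{\tilde Z_i, ff^\top}^2 \leq O(k^2/n) \cdot \pE \|f\|^4 \leq O(k^2/n)$, which is at most $\e k / 4$ whenever $n \geq Ck$.

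For the deviation $\pE [\sum_i \iprod{\tilde Z_i, ff^\top}^2 - \E \sum_i]$ I would invoke \cref{thm:sosmatbern} with $r = 2$, applied to the centered $d^2 \times d^2$ matrices $M_i = \tilde Z_i^{\otimes 2} - \E[\tilde Z^{\otimes 2}]$. We have $\|M_i\|_2 \leq 2\tau^2$ almost surely, where $\tau = c\E\|Z_i\|_2$. For the SoS variance parameter, since $(\E\tilde Z^{\otimes 2})^2 \succeq 0$ we have $\E M_i^2 \preceq \E[(\tilde Z^2)^{\otimes 2}]$, so
\[
\iprod{u^{\otimes 2}, \E[M_i^2] u^{\otimes 2}} \leq \E (u^\top \tilde Z^2 u)^2 \preceq \E \|\tilde Z\|_2^4 \cdot \|u\|^4
\]
as a polynomial in $u$, using the SoS consequence $u^\top \tilde Z^2 u \preceq \|\tilde Z\|_2^2 \|u\|^2$ squared. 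Substituting $\E \|\tilde Z\|_2^4 \leq \tau^4$ together with $\tau^2 \leq O(dk\log d/n)$ (from \cref{fact:trunc-spec-norm}) into \cref{thm:sosmatbern} produces a deviation bound of $O(dk\log^2 d/n + d k^{3/2} (\log d)^{3/2}/n)$, which is at most $\e k/4$ under $n \geq k^{1/2} d (\log d)^C$ for $C$ sufficiently large.

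The main subtlety is that \cref{thm:sosmatbern} is stated for pseudoexpectations satisfying the sphere constraint $\{\|u\|^2 = 1\}$, whereas our hypothesis is the weaker $\pE \|f\|^4 \leq 1$. Inspection of the proof shows that the sole use of the sphere constraint is in the final step, which bounds $\max_{\pE} \pE \|u\|^{2r}$; for $r = 2$ this is exactly $\pE \|f\|^4 \leq 1$, so the theorem extends verbatim. I expect verifying this adaptation---together with reconciling the constants used for the $\tilde Z_i$ truncation and for the indicator in the statement---to be the main technical points requiring attention, while the rest is a direct transcription of the proof of \cref{lem:p-main}.
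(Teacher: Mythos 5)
Your skeleton matches the paper's (the paper simply invokes \cref{lem:p-main} rather than re-deriving it): the decomposition into $\iprod{\tilde Z_i, ff^\top}$ plus the bias term, the use of \cref{fact:trunc-cov}, the degree-$4$ expectation step via $(2,4)$-hypercontractivity, and your observation that the only role of the sphere constraint in \cref{thm:sosmatbern} is the final bound $\pE\|u\|^{2r}\le 1$ (which $\pE\|f\|^4\le 1$ supplies) are all correct and are exactly the points the paper's ``Degree 4 versus Degree 8'' remark addresses. The gap is in your deviation step, specifically the matrix-variance bound. You bound the SoS variance by $\iprod{u^{\otimes 2},\E[M_i^2]u^{\otimes 2}}\preceq \E\|\tilde Z\|_2^4\,\|u\|^4\le\tau^4\|u\|^4$, i.e.\ $\sigma\le\tau^2$ with $\tau=c\,\E\|Z_i\|_2$. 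But in the regime this fact is designed for, each bucket has $m=n/k$ samples with $m\ll d$ (e.g.\ the headline setting $k=\Theta(d)$, $n=\Theta(d^{3/2}\polylog d)$, so $m\approx\sqrt d\,\polylog d$), and then \cref{fact:trunc-spec-norm} gives $\E\|Z_i\|_2\approx \frac{dk\log d}{n}\gg 1$, not $\sqrt{dk\log d/n}$; your quoted ``$\tau^2\le O(dk\log d/n)$'' misreads that fact, and the true $\tau^2$ is of order $(dk/n)^2\polylog d + (\log d)^2\approx d/\polylog d$. Plugging this into \cref{thm:sosmatbern}, the term $\sqrt{k\log d}\cdot\sigma\approx\sqrt{k\log d}\,(dk/n)^2\polylog d\approx d^{3/2}/\polylog d$, which is far larger than $\e k=\e d$, so your argument does not give the claimed bound under $n\ge k^{1/2}d(\log d)^C$; it would only work if $n\gtrsim kd$, which defeats the purpose of the fact.

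The missing ingredient is precisely the refined variance computation of \cref{lem:sosmvbound}: there one does \emph{not} bound $\E(u^\top\tilde Z^2u)^2$ by $\E\|\tilde Z\|_2^4\|u\|^4$, but instead expands $Z^2\otimes Z^2$ over the $m$ independent samples in the bucket (picking up $1/m^2$ scaling from the cross terms) and controls the surviving terms such as $\E\|\tilde v\|^4\iprod{u,\tilde v}^4$ via certifiable hypercontractivity --- or, at degree $4$, via the aggressive truncation $\|\tilde X\|\le O(\sqrt d)$ as in the paper's remark --- yielding $\sigma^2\lesssim L^2(\Tr\Sigma)^2\normt{\Sigma}^2/m^2=(dk/n)^2$ up to logs. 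That is smaller than your $\tau^4$ by roughly a factor $(d/m)^2\approx d/\polylog d$, and it is exactly this gain that makes the $\sqrt{k\log d}\,\sigma$ term fit under $\e k$ when buckets contain fewer than $d$ samples. So your proposal needs to replace the crude $\|\tilde Z\|_2^4$ bound with (a degree-$4$ version of) \cref{lem:sosmvbound}; as written, the deviation bound fails in the stated parameter regime.
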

\begin{proof}
  Let $\tilde Z_i = Z_i \cdot 1_{\|Z_i\|_2 \leq c \E \|Z_i\|_2}$.
  Let $\tilde{\Sigma} = \E \tilde{X} \tilde{X}^\top$.
  We have for any $\pE$ by the definitions and triangle inequality,
  \begin{align*}
  \pE \sum_{i \leq k} 1_{\|Z_i\|_2 \leq c \E \|Z_i\|_2} (\|f\|_i^2 - \|f\|^2)^2 & = \pE \sum_{i \leq k} (\iprod{f, \tilde{Z_i} f} + \iprod{f, (\tSigma - \Id) f})^2 \\
  & \leq 2 \pE \sum_{i \leq k} \iprod{f, \tilde{Z_i} f}^2 +  2\pE \sum_{i \leq k} \iprod{f, (\tSigma - \Id) f}^2 \\
  & \leq 2 \pE \sum_{i \leq k} \iprod{f, \tilde{Z_i} f}^2 + o(k)  \, \, \text{by \cref{fact:trunc-cov}}\mper
  \end{align*}
To bound the remaining term above, $\pE \sum_{i \leq k} \iprod{f, \tilde{Z_i} f}^2$, we employ \cref{lem:p-main}.
This says that
\[
\E_{X_i,Y_i} \max_{\pE} \pE \sum_{i \leq k} \iprod{f, \tilde{Z_i} f}^2 \leq O \Paren{\frac 1 n \cdot k^{3/2} \cdot d \cdot \sqrt{\log d}} + O\Paren{ \frac{k^2} n } + O\Paren{[\E \|Z\|_2]^2 \cdot \log d}\mper
\]
(See note below on applying \cref{lem:p-main} even though we are maximizing over degree-$4$ pseudodistributions rather than degree-$8$.)
Using \cref{fact:trunc-spec-norm} to bound $\E \|Z \|_2$ and putting it all together, we find that
\[
\E_{X_i,Y_i} \max_{\pE} \sum_{i \leq k} 1_{\|Z_i\|_2 \leq c \E \|Z_i\|_2} (\|f\|_i^2 - \|f\|^2)^2 \leq \e k
\]
using $\alpha = C \sqrt d$ and $n \gg k^{1/2} d$ and $\delta \geq 2^{-\Theta(n)}$.
This concludes the proof

\textbf{Degree 4 versus Degree 8:} In the above we used \cref{lem:p-main}, which as stated applies only to degree-$8$ pseudodistributions. We briefly explain why it can be applied here.
Inspecting the proof of \cref{lem:p-main}, we see that the only place where degree-$8$-ness is used is to bound $\E_X \pE \|X\|^4 \iprod{X,u}^4 \leq (\E_X \|X\|^8 )^{1/2} \cdot (\E_X \iprod{X,u}^8)^{1/2}$, followed by the application of certifiable $(2,8)$ hypercontractivity to bound $\E_X \iprod{X,u}^8$.
In our current setting, we are working with more aggressively truncated variables $\tilde{X_i} = X_i \cdot \Ind_{\|X_i\| \leq O(\sqrt{d})}$ than in the covariance estimation setting.
This truncation allows for an alternative analysis at this point of the proof of \cref{lem:p-main}: $\E \|\tilde{X}\|^4 \pE \iprod{\tilde{X},v}^4 \leq O(d^2) \pE \iprod{X,v}^4$, which requires only degree-$4$ $\pE$.
\end{proof}

\subsection{Proof of \cref{eq:norm-upper}}

By a bounded-differences argument identical to \cref{lem:sos-conc}, it will be enough to prove the following claim.

\begin{claim}
\label[claim]{clm:norm-upper-exp}
  With notation as in \cref{lem:regression-deterministic}, $\E \max_{\pE} \pE \sum_{i \leq k} b_i \|f\|^2 \leq 0.0001k$ where the maximum is taken over all degree-$4$ pseudoexpectations such that $\pE \|f\|^4 \leq 1$, $\pE$ satisfies $\{b_i^2 = b_i\}$, and $\pE b_i \|f\|_i^2 \geq 1.01 \pE b_i \|f\|^2$.
\end{claim}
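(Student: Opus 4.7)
The plan is to extract from the SDP constraint $\pE b_i \|f\|_i^2 \geq 1.01\,\pE b_i \|f\|^2$ the inequality $\pE b_i \|f\|^2 \leq 100\,\pE b_i (\|f\|_i^2 - \|f\|^2)$, sum over $i$, and then use pseudoexpectation Cauchy--Schwarz to reduce to a variance-type quantity controlled by \cref{fact:reg-det-variance}. The constraint $b_i^2 = b_i$ together with the SoS-provable bound $\pE b_i \leq 1$ (since $1 - b_i = (1-b_i)^2$ modulo the Boolean constraint) give, via Cauchy--Schwarz applied per index and then summed,
\[
\pE \sum_i b_i(\|f\|_i^2 - \|f\|^2) \leq \sqrt{\textstyle\sum_i \pE b_i^2} \cdot \sqrt{\textstyle\sum_i \pE (\|f\|_i^2 - \|f\|^2)^2} \leq \sqrt{k} \cdot \sqrt{\pE \textstyle\sum_i (\|f\|_i^2 - \|f\|^2)^2}.
\]
Combined with Jensen's inequality applied to $\sqrt{\cdot}$, this gives
\[
\E \max_\pE \pE \textstyle\sum_i b_i \|f\|^2 \leq 100 \sqrt{k} \cdot \sqrt{\E \max_\pE \pE \textstyle\sum_i (\|f\|_i^2 - \|f\|^2)^2},
\]
where the right-hand maximization is now only over $\pE$ satisfying $\pE\|f\|^4 \leq 1$. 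It therefore suffices to show $\E \max_\pE \pE \sum_i (\|f\|_i^2 - \|f\|^2)^2 \leq \e k$ for any sufficiently small absolute constant $\e$.

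To bound this variance, I would split each summand according to whether bucket $i$ is \emph{good}, meaning $\|Z_i\|_2 \leq c \cdot \E\|Z_i\|_2$ for an appropriately chosen $c$, or \emph{bad}. The expected contribution from good buckets is bounded by $\e k/2$ by a direct application of \cref{fact:reg-det-variance}, whose hypothesis $n \geq k^{1/2} d (\log d)^C$ matches ours for $C$ chosen large enough in terms of $\e$ and $c$. For the bad-bucket contribution, I use the SoS-provable matrix inequality $\iprod{f, W_i f}^2 \preceq \|W_i\|_2^2 \cdot \|f\|^4$ with $W_i = Z_i + \tSigma - \Id$; together with $\pE\|f\|^4 \leq 1$ and \cref{fact:trunc-cov}, this yields $\pE(\|f\|_i^2 - \|f\|^2)^2 \leq 2\|Z_i\|_2^2 + o(1)$. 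Summing and taking expectations, the bad-bucket contribution is at most $2k \cdot \E[\Ind[\|Z\|_2 > c\,\E\|Z\|_2] \cdot \|Z\|_2^2] + o(k)$, which by Cauchy--Schwarz is at most $2k \cdot c^{-1/2} \cdot (\E\|Z\|_2^4)^{1/2}$. A fourth-moment strengthening of \cref{fact:trunc-spec-norm} via matrix Bernstein then suffices to make this $\leq \e k/2$ for an appropriate choice of $c$.

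The main obstacle is the trade-off in the choice of $c$: it must be large enough that the Markov-type tail bound makes the bad-bucket contribution small, but cannot grow so quickly with $k$ or $d$ that the $c$-dependent constant in \cref{fact:reg-det-variance} forces a stronger sample-complexity hypothesis than $n \geq k^{1/2} d (\log d)^C$. The key quantitative input is that matrix Bernstein gives sub-exponential tails for $\|Z\|_2 - \E\|Z\|_2$, so $\E\|Z\|_2^p \lesssim (\E\|Z\|_2)^p + \mathrm{polylog}(d)^p$ for fixed $p$; this allows $c$ to absorb only polylogarithmic factors in $d$, safely within the slack afforded by the hypothesis on $n$.
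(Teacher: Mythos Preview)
Your approach is essentially correct and shares the same skeleton as the paper's proof: extract $\pE b_i\|f\|^2 \leq 100\,\pE b_i(\|f\|_i^2-\|f\|^2)$ from the SDP constraint, apply pseudoexpectation Cauchy--Schwarz, and invoke \cref{fact:reg-det-variance}. The difference is in the \emph{order} of the good/bad split. The paper partitions $[k]$ into $G=\{i:\|Z_i\|_2\le c\,\E\|Z_i\|_2\}$ and $B=[k]\setminus G$ \emph{before} doing anything else, and on bad buckets uses only the trivial bound $\pE b_i\|f\|^2 \le (\pE b_i^2)^{1/2}(\pE\|f\|^4)^{1/2}\le 1$, so that $\pE\sum_{i\le k}b_i\|f\|^2 \le |B|+\sum_{i\in G}\pE b_i\|f\|^2$. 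Then Cauchy--Schwarz is applied only over $G$, and the resulting sum $\pE\sum_{i\in G}(\|f\|_i^2-\|f\|^2)^2$ is \emph{exactly} the indicator-weighted quantity in \cref{fact:reg-det-variance}. The bad part is handled by $\E|B|\le k/c$ via Markov---no moment bounds on $\|Z\|_2$ beyond the first are needed.

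By contrast, you apply Cauchy--Schwarz over all of $[k]$ first and split afterward, which forces you to control $\E\bigl[\Ind[\|Z\|_2>c\,\E\|Z\|_2]\cdot\|Z\|_2^2\bigr]$ and hence to invoke a fourth-moment strengthening of \cref{fact:trunc-spec-norm}. Your sketch for obtaining that moment bound via matrix Bernstein tails is plausible, but it is extra work the paper's ordering simply sidesteps. In short: your argument goes through, but the paper's early split is cleaner and avoids the need for any higher-moment control of $\|Z\|_2$.
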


\begin{proof}[Proof of \cref{clm:norm-upper-exp}]
  We will partition $[k]$ into \emph{good} and \emph{bad} sets $[k] = G \cup B$.
  Let $G = \{i \, : \, Z_i = \tilde{Z}_i \}$ and let $B = [k] \setminus G$.
  For any degree-$4$ pseudoexpectation satisfying the constraints of the norm upper bound SDP, we have
  \[
    \pE \sum_{i \leq k} b_i \|f\|^2 \leq |B| + \sum_{i \in G} \pE b_i \|f\|^2\mper
  \]
  Here we used $\pE \sum_{i \in B} b_i \|f\|^2 \leq \sum_{i \in B} (\pE b_i^2)^{1/2} (\pE \|f\|^4)^{1/2} \leq |B|$.
  Bounding the second term, we have
  \[
  \pE \sum_{i \in G} b_i \|f\|^2 \leq  0.99 \sum_{i \in G} \pE b_i \|f\|_i^2 = 0.99 \sum_{i \in G} \pE b_i \|f\|^2 + 0.99 \sum_{i \in G} \pE b_i (\|f\|_i^2 - \|f\|^2)\mper
  \]
  This rearranges to
  \begin{align}\label{eq:norm-upper-1}
    \pE \sum_{i \in G} b_i \|f\|^2 \leq 100 \sum_{i \in G} \pE b_i (\|f\|_i^2 - \|f\|^2) \leq 100 \sqrt{k}  \Paren{\pE \sum_{i \in G} (\|f\|_i^2 - \|f\|^2)^2}^{1/2} \mper
  \end{align}
  By definition of $G$, we can use \cref{fact:reg-det-variance} to obtain
  \[
  100 \sqrt{k} \E \max_{\pE} \Paren{\pE \sum_{i \in G} (\|f\|_i^2 - \|f\|^2)^2}^{1/2} \leq 0.000001 k\mper
  \]
  By Markov's inequality $\E |B| \leq 0.00001 k$, which completes the proof.
\end{proof}

\subsection{Proof of \cref{eq:norm-lower}}

As in the previous section, it will suffice to prove the following claim.

\begin{claim}
\label[claim]{clm:norm-lower-exp}
  With notation as in \cref{lem:regression-deterministic}, $\E \max_{\pE} \pE \sum_{i \leq k} b_i \|f\|^2 \leq 0.0001k$ where the maximum is taken over all degree-$4$ pseudoexpectations such that $\pE \|f\|^4 \leq 1$, $\pE$ satisfies $\{b_i^2 = b_i\}$, and $\pE b_i \|f\|_i^2 \leq 0.99 \pE b_i \|f\|^2$.
\end{claim}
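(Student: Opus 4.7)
The plan is to mirror the proof of \cref{clm:norm-upper-exp} almost verbatim, exploiting the symmetry between the two one-sided constraints. First, I would reduce proving \cref{eq:norm-lower} itself to establishing the expectation bound in \cref{clm:norm-lower-exp} via the same bounded-differences argument used for \cref{eq:norm-upper} (the norm lower bound SDP also has per-bucket variables $b_i \in [0,1]$ so replacing one bucket of samples changes the optimum by at most $\pE \|f\|^2 \leq (\pE \|f\|^4)^{1/2} \leq 1$).

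Next I would partition $[k] = G \cup B$ with $G = \{ i : Z_i = \tilde{Z}_i\}$, and control the contribution from $B$ via pseudoexpectation Cauchy--Schwarz exactly as before: $\pE \sum_{i \in B} b_i \|f\|^2 \leq \sum_{i \in B} (\pE b_i^2)^{1/2}(\pE \|f\|^4)^{1/2} \leq |B|$, with $\E |B| \leq 0.00001k$ by Markov's inequality applied to $\Pr[\|Z_i\|_2 > c\, \E\|Z_i\|_2]$.

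The only change relative to \cref{clm:norm-upper-exp} is in how we use the SDP constraint on good indices. From $\pE b_i \|f\|_i^2 \leq 0.99 \pE b_i \|f\|^2$ we get $\pE b_i \|f\|^2 \leq 100 \pE b_i (\|f\|^2 - \|f\|_i^2)$, and summing over $i \in G$ followed by pseudoexpectation Cauchy--Schwarz yields
\begin{align*}
\pE \sum_{i \in G} b_i \|f\|^2 \leq 100 \sqrt{k} \Paren{\pE \sum_{i \in G} (\|f\|^2 - \|f\|_i^2)^2}^{1/2}.
\end{align*}
Since $(\|f\|^2 - \|f\|_i^2)^2 = (\|f\|_i^2 - \|f\|^2)^2$ and $i \in G$ means exactly that the truncation indicator $\Ind[\|Z_i\|_2 \leq c\,\E\|Z_i\|_2]$ is $1$, \cref{fact:reg-det-variance} gives $\E \max_{\pE} \pE \sum_{i \in G}(\|f\|_i^2 - \|f\|^2)^2 \leq \e k$ for any $\e > 0$ provided $n \geq k^{1/2} d (\log d)^C$. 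Choosing $\e$ small enough (e.g. $\e = 10^{-10}$) makes the right-hand side at most $10^{-6} k$ in expectation, and combining with the bound on $|B|$ yields $\E \max_{\pE} \pE \sum_{i \leq k} b_i \|f\|^2 \leq 0.0001k$.

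The only conceptual subtlety — and the place where one must be slightly careful — is to verify that \cref{fact:reg-det-variance} is genuinely symmetric in the sign of $\|f\|_i^2 - \|f\|^2$, which it is since it is stated for the squared quantity. I expect no further obstacle; this claim is essentially a one-line corollary of the machinery already used for \cref{clm:norm-upper-exp}.
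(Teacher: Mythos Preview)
Your proposal is correct and follows essentially the same approach as the paper: partition into good/bad buckets, bound the bad contribution by $|B|$ via Cauchy--Schwarz, and on good buckets use the constraint $\pE b_i \|f\|_i^2 \leq 0.99\,\pE b_i \|f\|^2$ to rearrange to $\pE \sum_{i\in G} b_i\|f\|^2 \leq 100\sum_{i\in G}\pE b_i(\|f\|^2-\|f\|_i^2)$ and then apply Cauchy--Schwarz together with \cref{fact:reg-det-variance}. The only differences are cosmetic (you state the per-index inequality before summing, the paper sums first and then rearranges); your observation about the sign-symmetry of the squared deviation in \cref{fact:reg-det-variance} is exactly the point.
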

\begin{proof}[Proof of \cref{clm:norm-lower-exp}]
  The proof is similar to that of \cref{clm:norm-lower-exp}.
  Let the good and bad sets $B,G$ be as they were there.
  Once again, we note
  \[
  \pE \sum_{i \leq k} b_i \|f\|^2 \leq |B| + \sum_{i \in G} \pE b_i \|f\|^2
  \]
  As before, we can use Markov's inequality to conclude $\E |B| \leq 0.000001k$.
  So we just need to bound the second term.
  We have
  \[
  \sum_{i \in G} \pE b_i\|f\|^2 = \sum_{i \in G} \pE b_i \|f\|_i^2 + \sum_{i \in G} \pE b_i (\|f\|^2 - \|f\|_i^2)\mper
  \]
  By the constraints on $\pE$, this is at most
  \[
  0.99 \sum_{i \in G} \pE b_i \|f\|^2 + \sum_{i \in G} \pE b_i (\|f\|^2 - \|f\|_i^2)\mper
  \]
  So rearranging, we get
  \[
  \sum_{i \in G} \pE b_i \|f\|^2 \leq 100 \sum_{i \in G} \pE b_i (\|f\|^2 - \|f\|_i^2)\mper
  \]
  The remainder of the proof proceeds as in \cref{clm:norm-upper-exp}.
\end{proof}

\subsection{Proof of \cref{eq:noise}}

The argument is identical to one which appears in \cite{hopkins2018sub,cherapanamjeri2019fast}.
It suffices to note that $\iprod{Y-g,f}_i = f^\top (\frac k n \sum_{j \in B_i} \tilde{X_j} \e_j)$ where on the RHS of this equation $f$ is considered as a vector of coefficients.
Since the random vector $\frac k n \sum_{j \in B_i} \tilde{X_j}\e_j$ satisfies the hypotheses of Lemma 2.8 of \cite{hopkins2018sub}, this completes the proof by applying Lemma 2.8.

By a bounded-differences argument as in the proof of \cref{lem:sos-conc}, it is enough to prove the following lemmas concerning the \emph{expected} values of the noise SDP, norm upper bound SDP, and norm lower bound SDP.

\begin{lemma}\label[lemma]{lem:regression-cert-1}
  Let $f^*$ be a linear function on $\R^d$.
  Let $X_1,\ldots,X_n$ be i.i.d. copies of an $O(1)$-nice random variable $X$ on $\R^d$ with $\E X = 0$ and $\E XX^\top = \Id$.
  Let $\e_1,\ldots,\e_n$ be i.i.d. copies of a random variable $\e$ on $\R$ with $\E \e = 0$ and $\E \e^2 = 1$.
  Let $Y_i = f^*(X_i) + \e_i$.
  Let $k \in N$ and let $B_1,\ldots,B_k \subseteq [n]$ partition $[n]$ into $k$ parts of size $n/k$.
  \begin{align*}
  \E \Brac{\max_{\substack{ \deg \pE = 4 \\ \pE \text{ satisfies } \cAnoise}} \pE \sum_{i=1}^k b_i  } \leq 0.0005k
  \end{align*}
  where $\cAnoise$ is instantiated with the function $f^*$ and $r^2 \geq C (d/n + k/n)$ and with truncated samples $(\ttX_i, \tY_i)$.
\end{lemma}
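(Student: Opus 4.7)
The plan is to reduce the statement to a known mean-estimation SoS certification lemma from \cite{hopkins2018sub} by recognizing that the noise SDP is exactly the sub-Gaussian mean estimation SDP instantiated with the random vector $X\e$.

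First, I would rewrite the constraint $b_i \iprod{Y - f^*, f}_i \geq rb_i$ in a form that exposes the underlying mean-estimation structure. Since the samples are truncated, $\tY_j - f^*(\tX_j) = \e_j \cdot \Ind(\|X_j\| \leq \alpha) = \tilde{\e}_j$, so that $\iprod{Y-f^*,f}_i = \E_{j \sim B_i} \tilde{\e}_j \iprod{\tX_j, f} = \iprod{f, W_i}$ where $W_i = \E_{j \sim B_i} \tilde{\e}_j \tX_j$ (viewing $f$ as a coefficient vector). Thus the noise SDP is the SoS program
\begin{align*}
\max \pE \sum_i b_i \text{ s.t. } \pE \text{ satisfies } \{b_i^2 = b_i,\ \|f\|^2 = 1,\ b_i \iprod{f, W_i} \geq r b_i\}.
\end{align*}
This is precisely the standard median-of-means mean estimation SDP for the $d$-dimensional mean-zero random vector $V = \tX \tilde{\e}$, where $W_i$ is the empirical average of $V$ over bucket $B_i$.

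Next, I would verify that $V$ satisfies the hypotheses of Lemma 2.8 of \cite{hopkins2018sub}, which requires only that $V$ have bounded covariance (up to a constant), i.e.\ $\E VV^\top \preceq O(1) \cdot \Id$. Since $\E XX^\top = \Id$, $\E \e^2 = 1$, and $X$ is independent of $\e$, we have $\E VV^\top = \E[\tilde{\e}^2 \tX \tX^\top] \preceq \E[\e^2 XX^\top] = \Id$ (here using independence to factor the expectation; truncation can only shrink second moments). A small additional check is that the mean of $V$ is small enough to absorb into $r$: $\E V = -\E[(1-\Ind_{\|X\| \leq \alpha}) X\e]$, which by Cauchy--Schwarz combined with the niceness of $X$ and Markov's inequality has norm $\ll r$ for $\alpha = \Theta(\sqrt d \log d)$.

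Finally I would invoke Lemma 2.8 of \cite{hopkins2018sub} with the vector $V$: that lemma shows the expected SDP value is at most $0.0005k$ provided $r^2 \geq C(d/n + k/n)$, which is exactly our hypothesis. The main (minor) obstacle is handling the discrepancy between $V$ and $X\e$ introduced by the truncation, but this is only a low-order perturbation and is absorbed by the slack in $r$; the substantive work has already been done in \cite{hopkins2018sub}.
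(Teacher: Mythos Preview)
The proposal is correct and follows essentially the same approach as the paper: both rewrite $\iprod{Y-f^*,f}_i = f^\top W_i$ with $W_i = \tfrac{k}{n}\sum_{j \in B_i}\tilde X_j\e_j$, observe this is exactly the mean-estimation SoS program for the bounded-covariance vector $\tilde X\e$, and invoke the corresponding lemma of \cite{hopkins2018sub}, with truncation handled as a minor perturbation. Your writeup in fact supplies more detail (the covariance check and the truncation-bias estimate) than the paper's own terse proof.
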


\begin{proof}[Proof of \cref{lem:regression-cert-1}]
  The proof is very similar to \cite[Lemma 3.2]{hopkins2018sub}.
  The only twist is that we need to handle truncation of the samples $\tilde{X}_i,\tilde{Y}_i$.
\end{proof}

\section{Mean estimation in (almost) any norm in $\poly(d,1/\delta)$ time}

In this section we prove the following theorem, concerning estimation of the mean of a heavy-tailed random vector in general norms.

\begin{theorem}\label[theorem]{thm:general-norms}
  Let $\cB \subseteq \R^d$ be (the unit ball of) a norm and let $\cO$ be a separation oracle for the unit ball $\cB_*$ of its dual norm.
  There is a universal constant $C$ such that for every $n \in \N$ and $\delta \geq 2^{-n}$ there is an algorithm with the following guarantees.
  Let $X$ be a random vector on $\R^d$ with $\mu = \E X$ and $\Sigma = \E(X - \mu)(X - \mu)^\top$.
  Given $n$ independent samples $X_1,\ldots,X_n$ from $X$ the algorithm produces an estimator $\hat{\mu}$ such that with probability at least $1-\delta$,
  \[
  \|\hat{\mu} - \mu\|_{\cB} \leq \frac{C}{\sqrt{n}} \cdot \max \Paren{ \E_{\sigma,X} \frac{1}{\sqrt{n}}\Norm{\sum_{i \leq n} \sigma_i (X_i - \mu)}, R \sqrt{\log(1/\delta)} }
  \]
  where
  \begin{itemize}
    \item $\sigma_1,\ldots,\sigma_n$ are i.i.d. Rademacher random variables, and
    \item $R = \|\Sigma^{1/2}\|_{2 \rightarrow \cB} = (\sup_{x \in \cB^*} x^\top \Sigma x)^{1 / 2}$ is the $2 \rightarrow \cB$ norm of $\Sigma^{1/2}$.
  \end{itemize}
  The algorithm runs in time $O(dn) + \poly(d,1/\delta)$ and makes at most $\poly(d,1/\delta)$ calls to $\cO$.
\end{theorem}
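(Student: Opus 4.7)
The plan is to compute a mild modification of the Lugosi--Mendelson estimator that is defined as the minimizer of a convex function, and to solve that convex optimization problem using subgradient / cutting-plane methods driven by the dual separation oracle for $\cB_*$. First I would split the $n$ samples into $k=\Theta(\log(1/\delta))$ buckets of size $m=n/k$, form the bucket means $Z_1,\ldots,Z_k$, and define
$$ \Phi(\theta)\;:=\;\max_{S\subseteq[k],\,|S|=\lceil k/2\rceil} \Norm{\bar Z_S-\theta}_{\cB}\mcom\qquad \bar Z_S:=\tfrac{2}{k}\sum_{i\in S}Z_i. $$
Since $\Phi$ is the pointwise maximum of $\binom{k}{\lceil k/2\rceil}\le 2^{k}=\poly(1/\delta)$ convex functions of $\theta$, it is convex; the estimator is $\hat\mu\in\arg\min_\theta\Phi(\theta)$.

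For the statistical analysis, set $r=\tfrac{1}{\sqrt n}\max\Paren{\E\Norm{\tfrac{1}{\sqrt n}\sum_i\sigma_i(X_i-\mu)},\;R\sqrt{\log(1/\delta)}}$. Adapting the Lugosi--Mendelson argument, I would show that with probability at least $1-\delta$, $\|\bar Z_S-\mu\|_{\cB}\le O(r)$ uniformly in $S$. The key ingredients are a bounded-differences inequality controlling $\sup_{u\in\cB_*}\max_S\langle u,\bar Z_S-\mu\rangle$ in terms of its expectation, Gin\'e--Zinn symmetrization converting the expectation into the Rademacher complexity $\E\|\sum_i\sigma_i(X_i-\mu)\|_{\cB}/\sqrt n$ (gaining a factor $\sqrt n$ rather than $\sqrt m$ because each $\bar Z_S$ already averages at least $n/2$ of the original samples), and the $R\sqrt{\log(1/\delta)}$ term coming from bounding the bounded-differences fluctuations in the $\cB$-direction via the covariance. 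Granting this uniform bound, $\Phi(\mu)\le O(r)$ and therefore $\Phi(\hat\mu)\le O(r)$; by the triangle inequality applied to any fixed $S_0$, $\|\hat\mu-\mu\|_{\cB}\le\|\hat\mu-\bar Z_{S_0}\|_{\cB}+\|\bar Z_{S_0}-\mu\|_{\cB}\le\Phi(\hat\mu)+O(r)=O(r)$, which is the desired bound.

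Computationally, I would run subgradient descent (or the ellipsoid method) on $\Phi$, initialized at the empirical mean and restricted to a large but bounded region. At each iterate $\theta$, for each of the $\poly(1/\delta)$ subsets $S$ a single call to the dual separation oracle computes $u_S^*=\arg\max_{u\in\cB_*}\langle u,\bar Z_S-\theta\rangle$, which simultaneously evaluates $\|\bar Z_S-\theta\|_{\cB}$ and yields a subgradient $-u_S^*$ of that summand; picking the maximizing $S^\star$ and using $-u_{S^\star}^*$ gives a subgradient of $\Phi$. Standard convergence bounds then yield $\poly(d,1/\delta)$ total iterations and oracle calls.

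The hardest step will be the uniform concentration in the statistical analysis: the supremum is taken over both $u\in\cB_*$ and the combinatorial family of subsets $S$, and one must avoid paying a factor of $\binom{k}{k/2}$ in the tail rather than the desired $\log(1/\delta)$. I expect this to go through because the ``top-$k/2$ average'' functional $a\mapsto\max_{|S|=\lceil k/2\rceil}\tfrac{2}{k}\sum_{i\in S}a_i$ is a \emph{convex} function on $\R^k$, which allows the standard symmetrization plus bounded-differences toolkit to be applied directly, instead of the more delicate arguments Lugosi--Mendelson needed to handle the (non-convex) median of projections.
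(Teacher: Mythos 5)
There is a genuine gap, and it is in the statistical core of your plan, not the optimization. The functional $\Phi(\theta)=\max_{|S|=\lceil k/2\rceil}\|\bar Z_S-\theta\|_{\cB}$ is not a median-type statistic: the maximum over half-size subsets \emph{retains} the outlier buckets instead of discarding them, so the claim ``$\Phi(\mu)\le O(r)$ with probability $1-\delta$'' is false under a mere second-moment assumption. Concretely, take $X=0$ with probability $1-p$ and $X=v/\sqrt p$ with probability $p$, with $p=1/(nk^2)$ and $k=\Theta(\log(1/\delta))$. With probability about $np=1/k^2\gg\delta$ one sample is huge; its bucket mean is shifted by roughly $\tfrac{1}{m}\|v\|/\sqrt p = k^2\|v\|/\sqrt n$, and any $S$ containing that bucket has $\|\bar Z_S-\mu\|_{\cB}\gtrsim \tfrac{2}{k}\cdot k^2\|v\|/\sqrt n = 2k\|v\|/\sqrt n$, which exceeds $r\asymp \sqrt{k}\,\|v\|/\sqrt n$ by a factor $\sqrt{\log(1/\delta)}$ (taking $p$ smaller makes the gap arbitrarily large). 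On this event the minimizer $\hat\mu$ of $\Phi$ is itself dragged toward the corrupted subset averages, so it is not only the analysis that breaks. This is also why your proof route cannot be repaired as stated: the bounded-differences step fails because swapping one bucket changes $\Phi$ by $\tfrac{2}{k}\|Z_i-Z_i'\|_{\cB}$, which is unbounded, whereas the Lugosi--Mendelson-type functionals (and the paper's \ref{eq:gtst} value) only count, per direction, \emph{how many} buckets deviate, and hence change by at most one when a bucket is arbitrarily replaced; that $0/1$ aggregation is exactly what buys the $\exp(-\Omega(k))$ tail from two moments.

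The paper keeps the median structure: it uses $(r,1/10)$-centrality (for every $u\in\cB_*$, at most $k/10$ bucket means have $\iprod{Z_i-x,u}>r$), observes that the set of central points is convex, and builds a separation oracle for it by enumerating the $\binom{k}{k/10+1}\le 2^k=\poly(1/\delta)$ ``bad'' subsets and running the ellipsoid method inside $\cB_*$ for each, followed by binary search over $r$; the statistical guarantee is the Lugosi--Mendelson centrality of $\mu$, reproved via a bounded-differences argument on the counting program. Your computational outline (enumerating $2^k$ subsets, cutting-plane methods driven by the dual oracle) is in the same spirit and would be fine, but it must be attached to a quantile/counting-based notion of center rather than to the worst-half-average objective $\Phi$.
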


The algorithm will directly compute a simplified version of the median-of-means estimator of Lugosi and Mendelson \cite{lugosi2018near}.
The nontrivial aspect of our algorithm is that it avoids brute-force search over an $\epsilon$-net of size $2^d$; we show that instead it is possible to limit brute-force search to a space of size $O(1/\delta)$.

To set up our algorithm and its analysis we make one important definition.

\begin{definition}
  Let $Z_1,\ldots,Z_k \in \R^d$ and let $\cB \subseteq \R^d$ be a norm.
  For $r > 0$ and $p \in [0,1]$, a point $x \in \R^d$ is $(r,p)$-central (with respect to $\cB$) if for every $u \in \cB_*$ there are at most $pk$ vectors $Z_1,\ldots,Z_k$ such that $\iprod{Z_i - x, u} > r$.
\end{definition}

We define a median-of-means estimator for the mean with respect to a norm $\cB$.
Our definition matches that of Lugosi and Mendelson \cite{lugosi2018near}, except for a small simplification in the definition of the set of points the estimator chooses from to ensure convexity; it is straightforward to show that this does not change the analysis of the estimator.

\begin{definition}[The $\cB$ mean estimator]
  Let $\cB \subseteq \R^d$ be a norm.
  Let $n \in \N$ and $\delta > 2^{-n}$.
  Let $k = C \log 1/\delta$ for some constant $C$ and let $B_1,\ldots,B_k \subseteq [n]$ partition $[n]$ into equal-size sets.
  We define the following estimator for the mean of a random vector.
  Given $X_1,\ldots,X_n$ i.i.d. copies of a random vector $X$ on $\R^d$, let $Z_i = \E_{j \sim B_i} X_j$.
  For the minimal $r$ such that an $(r,1/10)$-central point with respect to $Z_1,\ldots,Z_k$ exists, find such a point $x$ and output it.
\end{definition}

The main statistical analysis is captured by the following main theorem of \cite{lugosi2018near}.

\begin{theorem}[Theorem 2 of \cite{lugosi2018near}]\label[theorem]{thm:general-norms-info}
  Let $\cB \subseteq \R^d$ be a norm.
  Let $X$ be a random variable on $\R^d$ with $\mu = \E X$ and $\Sigma = \E(X - \mu)(X - \mu)^\top$.
  Let $n \in \N$ and $\delta \geq 2^{-n}$.
  There is a universal constant $C$ such that if independent copies $X_1,\ldots,X_n$ of $X$ are partitioned into $k = C \log(1/\delta)$ buckets $B_1,\ldots,B_k \subseteq [n]$ with $|B_i| = n/k$ and we let $Z_i = \tfrac k n \sum_{j \in B_i} X_j$, then with probability at least $1-\delta$ the vector $\mu$ is $(r,1/10)$-central with respect to $Z_1,\ldots,Z_k$, for
  \[
  r \leq O \Paren{\frac{1}{\sqrt{n}} \cdot \max \Paren{ \E_{\sigma,X} \frac{1}{\sqrt{n}}\Norm{\sum_{i \leq n} \sigma_i X_i}, R \sqrt{\log(1/\delta)}}}
  \]
  where $\sigma,R$ are as in \cref{thm:general-norms}.
\end{theorem}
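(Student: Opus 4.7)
My plan is to prove this by combining a per-direction Chebyshev argument (which will give the $R\sqrt{\log(1/\delta)}$ term) with a symmetrization/Rademacher-process argument (which will give the expected-norm term), and then to pass from ``fixed $u$'' to ``uniform over $u \in \cB_*$'' via a bounded-differences concentration inequality. First I would reduce to the centered case by replacing $X_i$ with $X_i - \mu$, so that the goal becomes showing that with probability at least $1-\delta$ the zero vector is $(r,1/10)$-central with respect to the bucket means $Z_1,\ldots,Z_k$, that is,
\[
\sup_{u \in \cB_*} \bigl|\{ i \in [k] : \iprod{Z_i,u} > r \}\bigr| \leq k/10\mper
\]
I denote the quantity on the left by $N(r)$ and aim to show $N(r) \leq k/10$ with high probability for the stated $r$.

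For any fixed $u \in \cB_*$, each bucket mean $Z_i$ is an average of $n/k$ independent mean-zero random variables whose inner product with $u$ has variance at most $R^2$, so $\Var(\iprod{Z_i,u}) \leq R^2 k/n$. By Chebyshev, $\Pr[\iprod{Z_i,u} > 10 R \sqrt{k/n}] \leq 1/100$. Hence for any single $u$ the expected number of ``bad'' buckets is at most $k/100$, and a binomial tail bound gives the pointwise statement $N(r_0) \leq k/50$ with probability $1 - 2^{-\Omega(k)}$ at $r_0 = 10R\sqrt{k/n}$. Since $k = \Theta(\log(1/\delta))$, this probability is already $\geq 1-\delta$, and $r_0$ matches the second term $R\sqrt{\log(1/\delta)/n}$ in the stated bound. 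The work is to extend this to hold \emph{uniformly} over $u$.

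To get the uniform bound I would pass through an expected-supremum inequality. Let $\phi$ be a $1$-Lipschitz, nondecreasing surrogate for $\mathbf{1}[\,\cdot > r\,]$ sandwiched between $\mathbf{1}[\,\cdot > r/2\,]$ and $\mathbf{1}[\,\cdot > r\,]$ (e.g., the trapezoidal function). Then $N(r) \leq \sum_i \phi(\iprod{Z_i,u})$ and, by McDiarmid's bounded-differences inequality applied to the functional $(X_1,\ldots,X_n) \mapsto \sup_u \sum_i \phi(\iprod{Z_i,u})$ (each coordinate can change the value by at most $1$ since $\phi\in[0,1]$), the supremum concentrates around its expectation up to $O(\sqrt{k\log(1/\delta)})$. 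For the expectation, I would use standard symmetrization to replace $\sum_i \phi(\iprod{Z_i,u})$ by its Rademacher-symmetrized version, then contraction (Ledoux--Talagrand) to peel off $\phi$ at the cost of a Lipschitz constant $\leq 2/r$, giving
\[
\E \sup_{u\in\cB_*}\sum_i \phi(\iprod{Z_i,u}) \;\leq\; \tfrac{2}{r}\,\E_{\sigma,X}\sup_{u\in\cB_*}\Iprod{\tfrac{k}{n}\sum_{j\leq n}\sigma_j X_j,\,u} \;=\; \tfrac{2k}{rn}\,\E_{\sigma,X}\Norm{\sum_{j\leq n}\sigma_j X_j}\mper
\]
Choosing $r$ to be a sufficiently large constant multiple of $\max\bigl(R\sqrt{k/n},\,\tfrac{1}{\sqrt{n}}\cdot\tfrac{1}{\sqrt{n}}\E\|\sum_j\sigma_j X_j\|\bigr)$ makes this expected supremum at most $k/100$, and then the McDiarmid deviation is absorbed because $\sqrt{k\log(1/\delta)}=O(k)$.

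The main obstacle I anticipate is the interplay between the two terms in the final maximum: the Chebyshev/binomial step only controls the number of buckets whose \emph{deviation} exceeds $R\sqrt{k/n}$ in a fixed direction, while the Rademacher step only controls the \emph{total mass} $\sum_i \phi(\iprod{Z_i,u})$ uniformly. To combine them I would truncate the bucket-means at scale $O(R\sqrt{k/n})$ before symmetrization; the truncation error is controlled by the Chebyshev estimate (so at most $k/50$ buckets are affected in any direction, with high probability), while on the un-truncated part contraction and symmetrization yield the Rademacher bound without extra factors. Careful choice of the constants in the trapezoidal $\phi$ and in the Chebyshev threshold then yields the claimed $(r,1/10)$-centrality. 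This whole argument essentially tracks the proof of Theorem 2 of \cite{lugosi2018near}, with the minor simplification that we only need centrality at the single radius $r$ rather than over a range of radii.
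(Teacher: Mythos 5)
Your outline is sound and would yield the theorem, but it differs from the paper's route, and two steps need tightening. The paper avoids the Lipschitz surrogate $\phi$ and contraction entirely: it phrases centrality as the feasibility program \ref{eq:gtst}, uses the constraint $b_i\iprod{v,Z_i-\mu}\geq rb_i$ to deduce $\sum_i b_i \leq \tfrac{1}{r}\sum_i|\iprod{Z_i-\mu,v}|$ (a Markov-type step), and then bounds $\E\sup_{v\in\cB^*}\sum_i|\iprod{Z_i-\mu,v}|$ in a single symmetrization calculation (\cref{lem:genexpbnd}) that produces \emph{both} terms --- the $\sqrt{k/n}\,R$ term comes out of the centering $\E|\iprod{Z_i',v}|\leq\sqrt{k/n}\,R$, and the Rademacher-norm term from symmetrization. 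Because of this, the truncation of the bucket means you propose is unnecessary: in your route the $R$ term already falls out of $\sup_u\sum_i\E\phi(\iprod{Z_i,u})\leq k\cdot 4R^2k/(nr^2)$ via Chebyshev, exactly as in the centering step of the paper's \cref{lem:genexpbnd}, so no interaction with the Rademacher term needs to be controlled.

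Two details in your writeup should be fixed. First, the bounded-differences step must be applied with the \emph{buckets} (equivalently the $Z_i$) as the independent coordinates, not the individual $X_j$. Changing a single $X_j$ does indeed perturb the functional by at most $1$, but with $n$ such coordinates McDiarmid would only give deviation $O(\sqrt{n\log(1/\delta)})$, which is useless; at the bucket level you get deviation $O(\sqrt{k\log(1/\delta)})=O(k)$, which is what the argument needs (this is exactly \cref{lem:genbd} of the paper). Second, after symmetrizing over buckets you land on $\E\sup_u\sum_{i\leq k}\sigma_i\iprod{Z_i,u}=\tfrac{k}{n}\,\E\sup_u\sum_{j\leq n}\sigma_{h(j)}\iprod{X_j,u}$ where $h(j)$ is the bucket of $j$, with \emph{one Rademacher sign per bucket}, not one per sample; your displayed identity jumps straight to $\tfrac{k}{n}\E\|\sum_j\sigma_jX_j\|$. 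An additional symmetrization pass (introducing fresh per-sample signs $\varepsilon_j$, as in the proof of \cref{lem:genexpbnd}) is required to justify this.
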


We note that Theorem 2 of \cite{lugosi2018near} has an additional term $\E \|G\|$ on the right-hand side, where $G$ is a Gaussian with covariance $\Sigma$.
We provide for reference a simple proof of \cref{thm:general-norms-info} in the appendix which shows that this term is unnecessary.

The analysis of this median-of-means estimator is completed by the following simple lemma:

\begin{lemma}\label[lemma]{lem:central-close}
  Suppose $Z_1,\ldots,Z_k \in \R^d$ and $\cB$ is a norm, and $x,y$ are both $(r,1/10)$-central.
  Then $\|x-y\|_{\cB} \leq 2r$.
\end{lemma}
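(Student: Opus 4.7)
The plan is to prove Lemma \ref{lem:central-close} via duality together with a simple pigeonhole argument. Recall that for any norm $\cB$ with unit ball $\cB$ and dual unit ball $\cB_*$, we have the identity
\[
\|x - y\|_{\cB} = \sup_{u \in \cB_*} \iprod{x - y, u}\mper
\]
So it suffices to show that for every $u \in \cB_*$, $\iprod{x - y, u} \leq 2r$.

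The key observation is that the $(r, 1/10)$-centrality condition, combined with the symmetry of the dual ball (i.e.\ $u \in \cB_* \iff -u \in \cB_*$), controls $\iprod{Z_i - x, u}$ on both sides. Specifically, applying the centrality condition at $u$ gives at most $k/10$ indices $i$ with $\iprod{Z_i - x, u} > r$, while applying it at $-u$ gives at most $k/10$ indices $i$ with $\iprod{Z_i - x, u} < -r$. A union bound yields at least $(8/10)k$ indices $i$ with $|\iprod{Z_i - x, u}| \leq r$. The identical reasoning applied to $y$ yields at least $(8/10)k$ indices with $|\iprod{Z_i - y, u}| \leq r$.

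By pigeonhole, the two index sets of size at least $(8/10)k$ inside a universe of size $k$ must intersect in at least $(6/10)k > 0$ indices. Fix any index $i$ in their intersection. Then the triangle inequality gives
\[
|\iprod{x - y, u}| \leq |\iprod{Z_i - x, u}| + |\iprod{Z_i - y, u}| \leq 2r\mper
\]
Taking the supremum over $u \in \cB_*$ completes the proof. I expect no real obstacle here: the only subtle point is the use of symmetry of $\cB_*$ to handle both tails of $\iprod{Z_i - x, u}$, which is automatic for any norm.
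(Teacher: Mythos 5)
Your proof is correct and follows essentially the same route as the paper's: express $\|x-y\|_{\cB}$ via the dual ball, use centrality (applied at $u$ and $-u$) plus pigeonhole to find a common index $i$ with both $\iprod{x-Z_i,u}\le r$ and $\iprod{Z_i-y,u}\le r$, and add. Your two-sided (absolute-value) bookkeeping is slightly more than needed but changes nothing of substance.
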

\begin{proof}
    By definition, $\|x - y\|_{cB} = \sup_{u \in \cB_*} \iprod{x-y,u}$.
    Since $x,y$ are $(r,1/10)$ central, for every $u$ there is $Z_i$ such that $\iprod{Z_i-y,u} \leq r$ and $\iprod{x - Z_i,u} \leq r$.
    Adding these two, we obtain $\iprod{x-y,u} = \iprod{x - Z_i,u} + \iprod{Z_i -y,u} \leq 2r$.
\end{proof}

In light of \cref{thm:general-norms-info} and \cref{lem:central-close}, to prove \cref{thm:general-norms} it will suffice to give an algorithm which finds an $(r,1/10)$-central point given $Z_1,\ldots,Z_k$, if such exists.
For this we prove the following two lemmas.

\begin{lemma}\label[lemma]{lem:central-convex}
  Let $\cB \subseteq \R^d$ be a norm and $Z_1,\ldots,Z_k \in \R^d$.
  For all $r > 0$ and $p \in [0,1]$, the set of $(r,p)$-central points is convex.
\end{lemma}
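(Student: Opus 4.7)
My plan is to show that the set of $(r,p)$-central points is an intersection of (closed) half-spaces in $\R^d$, by reformulating the defining condition, one $u$ at a time, as a single linear inequality in $x$. This makes convexity immediate.

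Concretely, fix $Z_1,\ldots,Z_k$ and the norm $\cB$. For each $u \in \cB_*$, let $q(u)$ denote the $(k - \lfloor pk \rfloor)$-th smallest value of the finite collection $\{\iprod{Z_i, u}\}_{i=1}^k$ (an order statistic depending only on $u$). I would then observe the equivalence
\[
\bigl|\{i : \iprod{Z_i - x, u} > r\}\bigr| \leq pk
\quad\Longleftrightarrow\quad
\bigl|\{i : \iprod{Z_i, u} \leq \iprod{x,u} + r\}\bigr| \geq k - \lfloor pk \rfloor
\quad\Longleftrightarrow\quad
\iprod{x, u} \geq q(u) - r,
\]
where the last equivalence is just the definition of the relevant order statistic. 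Note that the key point here is that, although the left-hand inequality is nonlinear in $x$ (because which indices $i$ belong to the "bad" set depends on $x$), its quantitative meaning is captured by a single linear threshold $q(u) - r$ on $\iprod{x,u}$.

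Having established this equivalence, the definition of $(r,p)$-centrality becomes
\[
\{x : x \text{ is } (r,p)\text{-central}\} \;=\; \bigcap_{u \in \cB_*} \bigl\{ x \in \R^d : \iprod{x,u} \geq q(u) - r \bigr\}.
\]
Each set in the intersection is a closed half-space in $\R^d$, so the intersection is convex. This completes the argument: if $x, y$ are both $(r,p)$-central and $\lambda \in [0,1]$, then for every $u \in \cB_*$ we have $\iprod{\lambda x + (1-\lambda) y, u} = \lambda \iprod{x,u} + (1-\lambda)\iprod{y,u} \geq q(u) - r$.

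The only mild subtlety I anticipate is bookkeeping around the strict inequality $>$ appearing in the definition versus the non-strict inequality on the order statistic (and what happens if $pk$ is not an integer), which is why I use $\lfloor pk \rfloor$ above. A naive union-bound attempt at the level of "bad" index sets $S_x(u), S_y(u)$ only gives $(r, 2p)$-centrality of the convex combination, so it is important to pass to the dual formulation with $q(u)$ rather than argue set-theoretically on a per-$u$ basis.
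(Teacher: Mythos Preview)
Your proposal is correct and follows essentially the same approach as the paper: both show that the set of $(r,p)$-central points is $\bigcap_{u \in \cB_*} S_u$ where each $S_u$ is a half-space. The paper simply asserts that $S_u$ is a half-space without further detail, whereas you spell out the order-statistic argument explicitly; your treatment of the $\lfloor pk \rfloor$ bookkeeping is fine.
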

\begin{proof}
  For $u \in \cB_*$, let
  \[
  S_u = \{ x \in \R^d \, : \, \iprod{Z_i - x,u} \leq r \text{ for at least } (1-p)k \text{ $Z_i$'s} \}\mper
  \]
  The set of $(r,p)$-central points is exactly $\bigcap_{u \in \cB_*} S_u$, so it suffices to show that $S_u$ is convex; it is easy to see that $S_u$ is in fact a half-space, so we are done.
\end{proof}

\begin{lemma}\label[lemma]{lem:central-separate}
  Let $\cB \subseteq \R^d$ be a norm and let $Z_1,\ldots,Z_k \in \R^d$.
  For all $r > 0$ and $p \in [0,1]$ there is a separation oracle for the set of $(r,p)$-central points which runs in time $2^k \cdot \poly(d)$ and makes at most $2^k \cdot \poly(d)$ calls to a separation oracle $\cO$ for the norm $\cB_*$.
\end{lemma}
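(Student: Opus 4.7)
The plan is to exploit the explicit halfspace description of the central set that is implicit in the proof of \cref{lem:central-convex}. Recall from that proof that the set of $(r,p)$-central points is $\bigcap_{u\in\cB_*} S_u$, and that each $S_u$ is actually a halfspace. Concretely, writing $a_{(1)}(u)\geq a_{(2)}(u)\geq\cdots\geq a_{(k)}(u)$ for the order statistics of $\iprod{Z_1,u},\ldots,\iprod{Z_k,u}$, the condition ``at most $pk$ indices $i$ satisfy $\iprod{Z_i-x,u}>r$'' is equivalent to $a_{(\lfloor pk\rfloor+1)}(u)\leq r+\iprod{x,u}$, i.e.\ $S_u=\{x: \iprod{x,u}\geq c_u\}$ with $c_u:=a_{(\lfloor pk\rfloor+1)}(u)-r$. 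So the central set is an intersection of (infinitely many) halfspaces, and designing a separation oracle reduces to deciding, given a query point $x$, whether there exists $u\in\cB_*$ with $\iprod{x,u}<c_u$, and if so, returning such a $u$ (the halfspace $S_u$ is then the desired separator).

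The key observation is that $\iprod{x,u}<c_u$ holds if and only if there is a subset $S\subseteq[k]$ of size $\lfloor pk\rfloor+1$ such that $\iprod{Z_i-x,u}>r$ for all $i\in S$. I would therefore iterate over all $\binom{k}{\lfloor pk\rfloor+1}\leq 2^k$ such subsets $S$. For each fixed $S$, the question ``does there exist $u\in\cB_*$ satisfying the $\lfloor pk\rfloor+1$ linear inequalities $\iprod{Z_i-x,u}>r$ for $i\in S$?'' is a convex feasibility problem over the convex set $\cB_*$ intersected with $\lfloor pk\rfloor+1$ open halfspaces. This can be solved in $\poly(d)$ time by the Ellipsoid method, using $\cO$ to handle the $\cB_*$ constraint and the explicit linear inequalities for the rest; standard tricks (replacing each strict inequality $>r$ by $\geq r+\eta$ for a suitably small $\eta$, with an outer binary search on $\eta$) handle the strictness. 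If any $S$ yields a feasible $u$, we output the halfspace $S_u$ as the separator; otherwise, by the equivalence above, $x$ is certifiably in the central set.

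Correctness of the separator: if $u$ is feasible for some $S$, then $a_{(\lfloor pk\rfloor+1)}(u)>\iprod{x,u}+r$, hence $c_u>\iprod{x,u}$, so $x\notin S_u$; and $S_u$ contains every central point by definition of $(r,p)$-centrality. Totalling up, we make at most $2^k$ calls to the Ellipsoid routine, each costing $\poly(d)$ time and $\poly(d)$ calls to $\cO$, which gives the claimed $2^k\cdot\poly(d)$ bound on both running time and oracle calls.

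The main obstacle I anticipate is purely notational/technical rather than conceptual: carefully handling the strict inequalities $\iprod{Z_i-x,u}>r$ so that the Ellipsoid method produces an \emph{honest} separating hyperplane (rather than a near-feasible point), and arguing that the feasibility test for each $S$ can be made decisive in polynomial time. Both issues are resolved by the standard ``$\eta$-relaxation'' of the strict inequalities and by invoking the polynomial-time equivalence of optimization and separation for convex bodies presented via $\cO$.
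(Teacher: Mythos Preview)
Your proposal is correct and follows essentially the same approach as the paper: enumerate all size-$(\lfloor pk\rfloor+1)$ subsets of $[k]$, and for each one use the Ellipsoid method (with $\cO$ supplying the $\cB_*$ constraint) to test feasibility of the corresponding linear system, outputting the resulting $u$ as a separating direction if one is found. The paper's version is slightly terser in that it works with the non-strict inequalities $\iprod{Z_i-x,u}\geq r$ directly rather than introducing an $\eta$-relaxation, but this is a cosmetic difference.
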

\begin{proof}
  By rearranging the definition of centrality, $x$ is $(r,p)$-central if and only if for all $T \subseteq [k]$ with $|T| = pk+1$ there is no $u \in \cB_*$ such that $\iprod{Z_i-x,u} \geq r$ for all $i \in T$.

  Suppose given $x \in \R^d$.
  The separation oracle proceeds as follows.
  For all $T \subseteq [k]$ with $|T| = pk+1$, check via the ellipsoid method using $\poly(d)$ calls to $\cO$ and $\poly(d)$ additional running time that whether the following set is nonempty:
  \[
  S_T = \{ u \in \cB_* \, : \, \iprod{Z_i-x,u} \geq r \text{ for all } i \in T\}\mper
  \]
  If $S_T = \emptyset$ for all $T$, then $x$ is $(r,p)$-central.
  Otherwise, suppose there exists $u \in S_T$ for some $T \subseteq [k]$.
  Then $u$ separates $x$ from the set $S_u$ (as defined in the proof of \cref{lem:central-convex}), so the separation oracle may output the linear function $f(x) = \iprod{x,u} + r - \iprod{Z_i,u}$, where $Z_i$ is such that $\iprod{Z_i,u}$ is the $pk$-th least number among $\{ \iprod{Z_i,u} \}_{i \in [k]}$.

  There are at most $2^k$ choices for $T$, and hence the separation oracle requires $2^k \cdot \poly(d)$ calls to $\cO$ and $2^k \cdot \poly(d)$ additional running time.
\end{proof}

Now we can prove \cref{thm:general-norms}.

\begin{proof}[Proof of \cref{thm:general-norms}]
  Given $X_1,\ldots,X_n$, our algorithm first computes bucketed means $Z_1,\ldots,Z_k$ for $k = C \log(1/
  \delta)$, with $C$ a big-enough constant to be chosen later..
  Note that for each $r$, together \cref{lem:central-convex,lem:central-separate} imply that there is an Ellipsoid-based algorithm to find an $(r,p)$-central point or determine that none exists.\footnote{Formally to obtain this guarantee from the Ellipsoid method one must ensure that the convex set of interest is sandwiched between exponentially-small and exponentially-large balls in $\R^d$ \cite{MR1261419-Grotschel93}. This is straightforward in our case by observing that if there is any $(r,p)$-central point then there is a small ball of $(r+\e,p)$-central points for small $\e > 0$, and that any central point is contained in the smallest ball containing all of $Z_1,\ldots,Z_k$.}

  It just remains to show that by binary search our algorithm can find (up to a factor of $2$) a value $r'$ such that $r' \leq O(r)$ where $r = \frac{1}{\sqrt{n}} \cdot \max \Paren{ \E_{\sigma,X} \Norm{\sum_{i \leq n} \sigma_i X_i}, R \sqrt{\log(1/\delta)} }$ is as in the theorem statement.

  First we show that the algorithm may easily compute an upper bound on this value of $r$ -- the upper bound is $d \cdot \max_{i,j \leq k} \|Z_i - Z_j\|_2$.
  With probability at least $1-\delta/2$, we have
  \[
  d \cdot \max_{i,j \leq k} \|Z_i - Z_j\|_2 \leq r \cdot \poly(d,1/\delta)
  \]
  by factor-$d$ equivalence of $\cB$ and $\ell_2$ together with Chebyshev's inequality.
  So by running at most $\poly(d,1/\delta)$ steps of binary search on $r'$, the algorithm finds $r' \leq O(r)$ such that there exists an $(r',1/10)$-central point, and outputs that point.
\end{proof}

\subsection{Proof of \cref{thm:general-norms-info}}

We will first formulate a polynomial optimization problem to test whether a given point, $x \in \R^d$ is $(r,p)$-central with respect to the vectors, $Z_1, \dots, Z_k$:

\begin{gather*}
    \max \sum_{i = 1}^k b_i \\
    b_i\iprod{v, Z_i - x} \geq b_i r \\
    b_i^2 = b_i \\
    v \in \cB^* \tag{GEN-TST} \label{eq:gtst}
\end{gather*}

We will denote the optimal value of the above optimization problem instantiated with vectors $\bZ = \{Z_i, \dots, Z_k\}$, a point $x$ and a radius $r$ as $m = \text{\ref{eq:gtst}}(\bZ, x, r)$. Note now, that a point $x$ is $(r,p)$-central with respect to the points $Z_1, \dots, Z_k$ if \ref{eq:gtst}$(\bZ, x, r)$ is less than $pk$. Before we proceed with the proof of \cref{thm:general-norms-info}, we will first show that \text{\ref{eq:gtst}} satisfies the bounded differences condition with respect to the inputs, $Z_i$.

\begin{lemma}
  \label[lemma]{lem:genbd}
  Let $\bZ = \{Z_1, \dots, Z_k\}$ and $\bZ^\prime = \{Z_1, \dots, Z_i^\prime, \dots, Z_k\}$; that is, the $i^{th}$ vector, $Z_i$, is replaced by $Z_i^\prime$. Then, for any $x \in \R^d$ and $r > 0$, we have $m = \text{\ref{eq:gtst}}(\bZ, x, r)$ and $m^\prime = \text{\ref{eq:gtst}}(\bZ^\prime, x, r)$ satisfying:
  \begin{equation*}
      \abs{m - m^\prime} \leq 1
  \end{equation*}
\end{lemma}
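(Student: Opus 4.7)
The plan is to mirror the bounded-differences argument used earlier in \cref{lem:sos-conc}, but in the simpler, non-SoS setting of \ref{eq:gtst}. The key observation is that a single index $i$ contributes at most $1$ to the objective $\sum_j b_j$, since $b_i \in \{0,1\}$, and the constraints for different indices are decoupled (each $b_i$'s constraint involves only $Z_i$, not $Z_j$ for $j \neq i$).

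First I would let $(b^*, v^*)$ be any optimal feasible solution for \ref{eq:gtst}$(\bZ, x, r)$ achieving value $m$. I would then construct a candidate solution $(\hat b, \hat v)$ for \ref{eq:gtst}$(\bZ', x, r)$ by setting $\hat b_j = b^*_j$ for all $j \neq i$, $\hat b_i = 0$, and $\hat v = v^*$. Feasibility is immediate: $\hat v \in \cB^*$ holds since $v^* \in \cB^*$; the equation $\hat b_j^2 = \hat b_j$ still holds; the inequality $\hat b_j \iprod{\hat v, Z_j - x} \geq \hat b_j r$ is identical to the original for $j \neq i$ (and $Z_j$ is unchanged in $\bZ'$), and for $j = i$ it becomes $0 \geq 0$, which is trivially true (so the value of $Z_i'$ is irrelevant). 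Hence $m' \geq \sum_j \hat b_j = m - b^*_i \geq m - 1$.

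By the symmetric argument, starting with an optimal solution of \ref{eq:gtst}$(\bZ', x, r)$ and zeroing out its $i$-th indicator, we obtain $m \geq m' - 1$. Combining the two inequalities gives $|m - m'| \leq 1$.

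I do not anticipate any obstacle here; this is a one-paragraph argument that relies only on the separable structure of the constraints and the $\{0,1\}$-boundedness of the $b_i$. The only place one must be careful is noting that the constraint $b_i \iprod{v, Z_i - x} \geq b_i r$ is automatically satisfied when $b_i = 0$, so changing $Z_i$ has no effect on the modified solution's feasibility.
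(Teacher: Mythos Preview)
Your proposal is correct and essentially identical to the paper's own proof: the paper also takes an optimal $(v,b)$ for \ref{eq:gtst}$(\bZ,x,r)$, sets $b_i=0$ while keeping $v$ and the other $b_j$'s unchanged to obtain a feasible point for \ref{eq:gtst}$(\bZ',x,r)$ with value at least $m-1$, and then invokes the symmetric argument.
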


\begin{proof}
  Let $x \in \R^d$ and $r > 0$. And let the $v \in \cB^*$ and $b_1, \dots, b_k$be the maximizers of \ref{eq:gtst}$(\bZ, x, r)$. Now, we may construct a candidate solution for \ref{eq:gtst}$(\bZ^\prime, x, r)$, by picking $v^\prime = v$ and $b_j^\prime = b_j$ for all $j \neq i$ and $b_i = 0$. Note that the candidate pair, $(v^\prime, \bb^\prime)$ is feasible for \ref{eq:gtst}$(\bZ^\prime, x, r)$ and therefore, we may conclude that: 
  \begin{equation*}
      m^\prime \geq \sum_{i = 1}^k b^\prime_i \geq \sum_{i = 1}^k b_i - 1 = m - 1
  \end{equation*}
  Similarly, we may conclude that $m \geq m^\prime - 1$. The two statements conclude the proof of the lemma.
\end{proof}

We will now prove a lemma useful in bounding the expected value of the \ref{eq:gtst}$(\bZ, \mu, r)$. Before we do this, we will restate a lemma bounding the Rademacher complexity of compositions of Lipschitz functions with an underlying function class from \cite{ledoux1991probability}:

\begin{theorem}
  \label{thm:ledtal}
  Let $X_1, \dots, X_n \in \R^d$ be i.i.d.~random vectors, $\mathcal{F}$ be a class of real-valued functions on $\R^d$ and $\sigma_i, \dots, \sigma_n$ be independent Rademacher random variables. If $\phi: \R \rightarrow \R$ is an $L$-Lipschitz function with $\phi(0) = 0$, then:

  \begin{equation*}
    \E \sup_{f \in \mathcal{F}} \sum_{i = 1}^n \sigma_i \phi(f(X_i)) \leq L\cdot \E \sup_{f \in \mathcal{F}} \sum_{i = 1}^{n} \sigma_i f(X_i).
  \end{equation*}
\end{theorem}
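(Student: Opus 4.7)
The plan is to prove \cref{thm:ledtal} via the classical Ledoux--Talagrand peeling argument, reducing the $n$-variable statement to a one-Rademacher contraction and then handling the latter by a symmetrization trick exploiting the Lipschitz hypothesis.

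First, I would set up an inductive scheme that replaces $\phi$ by the identity one Rademacher at a time. For $0 \le k \le n$, define
\[
S_k \;=\; \E \sup_{f \in \mathcal F} \left[ L \sum_{i \le k} \sigma_i f(X_i) \;+\; \sum_{i > k} \sigma_i \phi(f(X_i)) \right],
\]
so that $S_0$ is the left-hand side of the theorem and $S_n$ equals the right-hand side. It is enough to prove $S_k \le S_{k+1}$ for every $k$. Conditioning on $X_1, \dots, X_n$ and on all Rademachers except $\sigma_{k+1}$, and writing $h_f$ for the (now deterministic) quantity $L \sum_{i \le k} \sigma_i f(X_i) + \sum_{i > k+1} \sigma_i \phi(f(X_i))$, $a_f = f(X_{k+1})$, $b_f = \phi(f(X_{k+1}))$, this reduces to the following one-Rademacher inequality: for every family $\{h_f, a_f, b_f\}_{f \in \mathcal F}$ with $|b_f - b_g| \le L |a_f - a_g|$ for all $f, g$ (which holds by the $L$-Lipschitz property of $\phi$), and $\sigma$ a single Rademacher,
\[
\E_\sigma \sup_{f \in \mathcal F} [h_f + \sigma b_f] \;\le\; \E_\sigma \sup_{f \in \mathcal F} [h_f + L \sigma a_f].
\]

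Second, I would prove this one-Rademacher inequality by directly expanding the expectation over $\sigma \in \{\pm 1\}$. Averaging the two values gives
\[
2\, \E_\sigma \sup_f [h_f + \sigma b_f] \;=\; \sup_{f,g \in \mathcal F} \bigl( h_f + h_g + (b_f - b_g) \bigr),
\]
and analogously the right-hand side equals $\sup_{f,g}\bigl( h_f + h_g + L(a_f - a_g)\bigr)$. Now for each fixed pair $(f,g)$, the Lipschitz hypothesis forces either $b_f - b_g \le L(a_f - a_g)$, in which case the same pair $(f,g)$ on the right already dominates, or $b_f - b_g \le L(a_g - a_f)$, in which case the swapped pair $(g, f)$ on the right dominates (crucially using that $h_f + h_g$ is symmetric in $f,g$). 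Taking the supremum over $(f,g)$ yields the required inequality.

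Third, iterating the one-Rademacher bound from $k = 0$ up to $k = n-1$ gives $S_0 \le S_1 \le \dots \le S_n$, which is exactly the claimed inequality. The hypothesis $\phi(0) = 0$ does not actually enter the supremum-form argument above (it is typically needed for the $\sup_f |\cdot|$ variant or to normalize odd extensions), so I would simply remark on this. The main (minor) obstacle is noticing the $f \leftrightarrow g$ symmetry of $h_f + h_g$ that lets the casework on the sign of $a_f - a_g$ collapse into a single clean bound; once this observation is made, the remainder is bookkeeping and an $n$-step induction.
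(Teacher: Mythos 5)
Your proof is correct. Note, however, that the paper does not prove this statement; it quotes it from Ledoux and Talagrand (\cite{ledoux1991probability}, the contraction principle, Theorem 4.12), so there is no paper proof to compare against. What you have written is essentially the standard Ledoux--Talagrand peeling argument: reduce to the one-Rademacher case by fixing all other variables, rewrite $2\,\E_\sigma \sup_f[h_f + \sigma b_f]$ as $\sup_{f,g}[h_f + h_g + (b_f - b_g)]$, and for each pair $(f,g)$ use $|b_f - b_g| \le L|a_f - a_g|$ together with the $f \leftrightarrow g$ symmetry of $h_f + h_g$ to argue that either the $(f,g)$ or the $(g,f)$ term on the right dominates. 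Your observation that $\phi(0)=0$ is not actually used in this non-absolute-value, one-sided form is also correct; the normalization matters only for the variant with $\sup_f|\cdot|$ (where it contributes the standard factor of $2$). The argument is complete and sound.
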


\begin{lemma}
  \label[lemma]{lem:genexpbnd}
  Let $X_1, \dots, X_n$ be iid random vectors with mean $0$ and covariance, $\Sigma$. In addition, for $i = 1, \dots, k$, let $Z_i = \frac{k}{n} \sum_{j = \frac{i - 1}{k}n}^{\frac{i}{k}n} X_j$. Then, we have that:
  \begin{equation*}
      \E \Brac{\max_{v \in \cB^*} \abs{\iprod{Z_i, v}}} \leq \frac{k}{n} \cdot \Paren{4 \E \Brac{\Norm{\sum_{i = 1}^n \varepsilon_i X_i}} + \sqrt{kn} R}
  \end{equation*}
  where 
  \begin{enumerate}
      \item $\varepsilon_i$ are independent iid Rademacher random variables
      \item $R = \|\Sigma^{1/2}\|_{2 \rightarrow \cB}$ as in the statement of \cref{thm:general-norms-info}
  \end{enumerate}
\end{lemma}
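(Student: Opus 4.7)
The plan is to control $\E[\max_{v \in \cB^*}|\iprod{Z_i,v}|]$ by recognizing it as $\E\|Z_i\|_{\cB}$ (the duality $\max_{v \in \cB^*}|\iprod{y,v}| = \|y\|_{\cB}$) and then applying classical symmetrization. Since $Z_i = \tfrac{k}{n}\sum_{j \in B_i} X_j$ where $B_i$ is the $i$th bucket of size $n/k$, the task reduces to bounding $\tfrac{k}{n}\E\|\sum_{j \in B_i}X_j\|_{\cB}$.

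The first step will be standard symmetrization. Since the $X_j$ are i.i.d.~with mean zero, I would introduce an independent copy $\{X_j'\}_{j \in B_i}$ and independent Rademacher signs $\{\varepsilon_j\}_{j \in B_i}$, and use Jensen's inequality together with the fact that $X_j - X_j'$ has the same law as $\varepsilon_j(X_j - X_j')$:
\begin{equation*}
\E\|\sum_{j \in B_i}X_j\|_{\cB}
\;\leq\; \E\|\sum_{j \in B_i}(X_j - X_j')\|_{\cB}
\;=\; \E\|\sum_{j \in B_i}\varepsilon_j(X_j - X_j')\|_{\cB}
\;\leq\; 2\,\E\|\sum_{j \in B_i}\varepsilon_j X_j\|_{\cB}.
\end{equation*}

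Second, I would lift the subset Rademacher sum to a Rademacher sum over all $n$ samples. Conditioning on the signs $\{\varepsilon_j\}_{j \in B_i}$ and treating $\{\varepsilon_j\}_{j \notin B_i}$ as independent Rademachers, the identity $\sum_{j \in B_i}\varepsilon_j X_j = \E_{\varepsilon_{\notin B_i}}[\sum_{j=1}^n \varepsilon_j X_j]$ together with conditional Jensen (which passes through any norm by convexity) yields $\E\|\sum_{j \in B_i}\varepsilon_j X_j\|_{\cB} \leq \E\|\sum_{j=1}^n \varepsilon_j X_j\|_{\cB}$. Combining the two steps gives the bound $\E\|Z_i\|_{\cB} \leq \tfrac{2k}{n}\E\|\sum_{j=1}^n \varepsilon_j X_j\|_{\cB}$, which already dominates the first term of the claimed upper bound (with constant $2$ rather than $4$), and since the additional $\tfrac{k}{n}\sqrt{kn}\,R$ term in the lemma is non-negative, the stated inequality follows immediately.

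There is no real obstacle here: the argument is a routine application of symmetrization and conditional Jensen, both of which go through for an arbitrary norm $\cB$ without change. The only thing to be careful about is to apply symmetrization at the level of the $n/k$ samples inside bucket $i$ (where independence and mean-zero are available) rather than at the level of $Z_i$ itself, and to invoke conditional Jensen \emph{before} taking the outer expectation over $\{X_j\}$ so that the partial Rademacher sum can be absorbed into the full one.
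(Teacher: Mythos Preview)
Your argument is correct for the statement as literally written (a single bucket $i$), and in fact yields the sharper bound $\E\|Z_i\|_{\cB}\le \tfrac{2k}{n}\E\|\sum_{j=1}^n\varepsilon_j X_j\|_{\cB}$, so the claimed inequality follows with room to spare.

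However, the paper's own proof establishes the stronger inequality
\[
\E\Bigl[\max_{v\in\cB^*}\sum_{i=1}^{k}\bigl|\iprod{Z_i,v}\bigr|\Bigr]\;\le\;\frac{k}{n}\Bigl(4\,\E\Bigl\|\sum_{j=1}^n\varepsilon_j X_j\Bigr\|+\sqrt{kn}\,R\Bigr),
\]
i.e.\ with the sum over buckets \emph{inside} the maximum over $v$; the statement appears to have dropped this $\sum_{i=1}^k$, but it is what the downstream application (bounding $\E[\text{GEN-TST}(\bZ,\mu,r)]$) actually requires. For that version the paper centers by $\E|\iprod{Z_i',v}|$ (producing the $\sqrt{kn}\,R$ term), symmetrizes at the level of the $Z_i$'s, invokes the Ledoux--Talagrand contraction principle to strip the absolute values, and then symmetrizes a second time at the level of the $X_j$'s.

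Your route does not extend to the summed version without loss: bounding $\max_v\sum_i|\iprod{Z_i,v}|\le\sum_i\|Z_i\|_{\cB}$ and applying your single-bucket estimate to each term gives $\tfrac{2k^2}{n}\E\|\sum_j\varepsilon_j X_j\|$, an extra factor of $k$. The contraction step is precisely what lets the paper avoid this, by keeping the common $v$ coupled across buckets. So while your proof is clean and sufficient for the displayed claim, be aware that the paper is really proving (and using) the summed form, for which the contraction-based argument is essential.
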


\begin{proof}
  First note that the random vectors, $Z_i$ have covariance matrices $\frac{k}{n} \Sigma$. We now bound the quantity as follows:
  
  \begin{align*}
      \E \Brac{\max_{v \in \cB^*} \sum_{i = 1}^k\abs{\iprod{Z_i, v}}} &\leq \E \Brac{\max_{v \in \cB^*} \sum_{i = 1}^k \abs{\iprod{Z_i, v}} - \E \brac{\abs{\iprod{Z_i^\prime, v}}}} + k \max_{v \in \cB^*} \E \brac{\abs{\iprod{Z_i^\prime, v}}}
  \end{align*}
  
  We bound the second term as follows:
  
  \begin{equation*}
      \max_{v \in \cB^*} \E \Brac{\abs{\iprod{Z^\prime_i, v}}} \leq \max_{v \in \cB^*} \E \Brac{\iprod{Z^\prime_i, v}^2}^{1 / 2} = \max_{v \in \cB^*} \frac{k}{n} v^\top \Sigma v = \frac{k}{n} R^2
  \end{equation*}
  
  Let $h: [n] \rightarrow [k]$ denote the function assigning data points, $X_i$ to buckets. For the first term, we proceed as follows:
  
  \begin{align*}
      \E \Brac{\max_{v \in \cB^*} \sum_{i = 1}^k \abs{\iprod{Z_i, v}} - \E\brac{\iprod{Z^\prime_i, v}}} &\leq \E_{Z_i, Z_i^\prime} \Brac{\max_{v \in \cB^*} \sum_{i = 1}^k \abs{\iprod{Z_i, v}} - \abs{\iprod{Z^\prime_i, v}}} \\
      &\leq \E_{Z_i, Z_i^\prime, \sigma_i} \Brac{\max_{v \in \cB^*} \sum_{i = 1}^k \sigma_i (\abs{\iprod{Z_i, v}} - \abs{\iprod{Z_i^\prime, v}})} \\
      &\leq \E_{Z_i, \sigma_i} \Brac{\max_{v \in \cB^*} \sum_{i = 1}^k \sigma_i \abs{\iprod{Z_i, v}}} + \E_{Z_i^\prime, \sigma_i} \Brac{\max_{v \in \cB^*} -\sigma_i \abs{\iprod{Z_i^\prime, v}}} \\
      &= 2\E_{Z_i, \sigma_i} \Brac{\max_{v \in \cB^*} \sum_{i = 1}^k \sigma_i \abs{\iprod{Z_i, v}}} \leq 2 \E_{Z_i, \sigma_i} \Brac{\max_{v \in \cB^*} \sum_{i = 1}^k \sigma_i \iprod{Z_i, v}} \\
      &=2\cdot \frac{k}{n}\cdot \E_{X_j, \sigma_i} \Brac{\max_{v \in \cB^*} \sum_{j = 1}^n \sigma_{h(j)} \iprod{X_j, v}} \\
      &= 2\cdot \frac{k}{n}\cdot \E_{X_j, \sigma_i} \Brac{\max_{v \in \cB^*} \sum_{j = 1}^n \sigma_{h(j)} \iprod{X_j, v} - \E \sigma_{h(j)}\iprod{X_j^\prime, v}} \\
      &\leq 2\cdot \frac{k}{n}\cdot \E_{X_j, X^\prime_j, \sigma_i, \varepsilon_j} \Brac{\max_{v \in \cB^*} \sum_{j = 1}^n \epsilon_j \sigma_{h(j)} (\iprod{X_j, v} - \iprod{X_j^\prime, v})} \\
      &\leq 2\cdot \frac{k}{n}\cdot \E_{X_j, X^\prime_j, \sigma_i, \varepsilon_j} \Brac{\max_{v \in \cB^*} \sum_{j = 1}^n \epsilon_j (\iprod{X_j, v} - \iprod{X_j^\prime, v})} \\
      &\leq \frac{4k}{n} \E_{X_j, \sigma_j} \Brac{\max_{v \in \cB^*}  \Iprod{v, \sum_{j = 1}^n \varepsilon_j X_j}} = \frac{4k}{n} \E_{X_j, \sigma_j} \Brac{\Norm{\sum_{j = 1}^n \varepsilon_j X_j}}
  \end{align*}
\end{proof}

The proof of \cref{thm:general-norms-info} follows immediately by applying the bounded differences concentration inequality to \ref{eq:gtst} $(\bZ, \mu, r)$ for $k = C \log 1 / \delta$ (by \cref{lem:genbd}) and upper bounding its expected value as follows by an application of \cref{lem:genexpbnd}:

\begin{equation*}
    \E \Brac{\text{\ref{eq:gtst}}(\bZ, \mu, r)} \leq \frac{1}{r} \cdot \E \Brac{\max_{v \in \cB} \sum_{i = 1}^k \abs{\iprod{Z_i - \mu, v}}} \leq \frac{k}{20}
\end{equation*}

\hfill\qedsymbol

\newcommand{\sogmm}{\textsc{sogmm}}

\section{Roadblock to Information-Theoretic Optimality: Single-Spike Block Mixtures}
\label[section]{sec:roadblock}

We describe a simple high-dimensional testing problem which must be solved by any algorithm using our techniques (in a sense we make precise below) to substantially improve on our quantitative error rates in covariance estimation and linear regression.
We also present some mild evidence that this \emph{single-spike block mixtures} problem may be hard for polynomial-time algorithms; we view obtaining stronger evidence for hardness (or, of course, an efficient algorithm) as a fascinating open problem.

\begin{definition}[Single-Spike Block Mixtures]
  Let $d,m \in \N$ and $1 > \lambda > 0$.
  In the \emph{single-spike block mixtures testing problem} the goal is to distinguish, given vectors $y_1,\ldots,y_{md} \in \R^d$, between the following two cases:
  \begin{itemize}
  \item[\textsc{null}:] $y_1,\ldots,y_{md} \sim \cN(0,\Id)$ i.i.d.
  \item[\textsc{planted}:]
  First $x \sim \{ \pm 1/\sqrt{d} \}^d$ and $s_1,\ldots,s_d \sim \{\pm 1\}$.
  Then, $y_1,\ldots,y_m \sim \cN(0,\Id + s_1 \lambda xx^\top)$ and $y_{m+1},\ldots,y_{2m} \sim \cN(0, + s_2 \lambda xx^\top)$, and so forth.
  That is, each \emph{block} of vectors $y_{im},\ldots,y_{(i+1)m - 1}$ has either slightly larger variance in the $x$ direction (if $s_i = 1$) or slightly lesser variance (if $s_i = -1$) than they would in the null case.
  \end{itemize}
\end{definition}

\begin{remark}[Relation to Covariance Estimation and Linear Regression]
  The median-of-means approach to covariance estimation requires us to be able to find an appropriate median of empirical covariance matrices $\overline{\Sigma}_1,\ldots,\overline{\Sigma}_k$.
  In this context, a $r$-median is any matrix $M$ such that for all unit $x$ we have $\iprod{xx^\top, \Sigma_i - M} \leq r$ for at least $0.9k$ of $\Sigma_1,\ldots,\Sigma_k$.
  The $r$ for which it is possible to find an $r$-median translates directly to the error rate of the eventual median-of-means estimator.

  To find such an $r$-median, it seems crucial to be able to recognize one.
  The single-spike block mixtures problem can be reformulated as the problem of deciding whether or not the identity matrix $\Id$ is an $r$-median for the empirical covariances in the blocks.
  
   Any median-of-means algorithm for covariance estimation giving rate $d^{3/4 - \Omega(1)}/\sqrt{n}$ would solve the single-spike block mixtures problem with $\lambda \leq d^{1/4 - \Omega(1)} / \sqrt{m}$.
   This is what our moment-matching lower bound suggests is hard.

  The relationship to linear regression is a little more subtle.
  Our algorithm for linear regression even in the case that $X$ has identity covariance goes via a subroutine which, if improved to improve the overall sample complexity of our algorithm to $d^{3/2 - \Omega(1)}$, would similarly solve the single-spike block mixtures problem.
  Although whether this represents a fundamental roadblock in the case of regression with identity covariance is unclear, we do expect that for regression where $X$ has arbitrary covariance $\Sigma$ this will represent a similar roadblock, since often linear regression algorithms in the latter setting implicitly solve covariance estimation problems.
\end{remark}

\begin{remark}[Generalizations of Single-Spike Block Mixtures]
  The version of the single-spike block mixtures problem presented above has samples $y_1,\ldots,y_{md} \in \R^d$ split into $d$ buckets.
  This corresponds our median-of-means algorithms with parameters set to achieve success probability $1-\exp(-\Omega(d))$.
  To investigate the computational complexity of the more general setting of success probability $1-\delta$, we would instead consider a variant with samples $y_1,\ldots,y_{m \log(1/\delta)}$ in $\log(1/\delta)$ blocks.
  We focus for simplicity on the case $\delta = 2^{-d}$, but similar computations could be carried out for the generalized setting.
\end{remark}

To get a feel for the problem, let us first sketch an argument that it can be solved in exponential time if $\lambda \gg 1/\sqrt{m}$ and $m \gg 1$.

\begin{lemma}
  There is a constant $C$ such that if $\lambda \geq C(1/\sqrt{m} + 1/\sqrt{d})$ and $m \gg 1$ then there is a $2^d \poly(d,m)$ time algorithm which solves single-spike block mixtures with high probability.
\end{lemma}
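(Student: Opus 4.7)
The natural algorithm is an exhaustive search over the $2^d$ candidate spike directions. Enumerate all $x \in \{\pm 1/\sqrt{d}\}^d$. For each such candidate, and each block $i \in [d]$, compute the empirical quadratic form $Z_i(x) = \tfrac{1}{m}\sum_{j \in B_i} \iprod{y_j,x}^2$. Then count the number of ``active'' blocks $N(x) = |\{i \in [d] : |Z_i(x) - 1| \geq \lambda/2\}|$, and output \textsc{planted} if some $x$ has $N(x) \geq 0.7 d$ and \textsc{null} otherwise. The total running time is clearly $2^d \cdot \poly(d,m)$.

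The correctness analysis has two essentially symmetric pieces. Under \textsc{planted}, instantiate the algorithm at the true spike $x^*$. Because $\iprod{y_j,x^*} \sim \cN(0,1+s_i\lambda)$ for $y_j$ in block $i$, the variable $Z_i(x^*)$ equals $(1+s_i\lambda)$ times an average of $m$ independent $\chi^2_1$ variables. Bernstein's inequality for sub-exponential sums yields $\Pr[|Z_i(x^*) - (1+s_i\lambda)| \geq \lambda/4] \leq 2\exp(-c m\lambda^2)$; since $m\lambda^2 \geq C^2$ when $\lambda \geq C/\sqrt{m}$, this per-block failure probability can be made an arbitrarily small constant by choosing $C$ large. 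A standard Chernoff bound over the $d$ independent blocks then yields $N(x^*) \geq 0.7d$ with probability $1-\exp(-\Omega(d))$.

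Under \textsc{null} the same calculation must be combined with a union bound over the $2^d$ candidates. Fix any $x \in \{\pm 1/\sqrt{d}\}^d$: then $Z_i(x)$ is an average of $m$ i.i.d.~$\chi^2_1$ random variables, so by Bernstein, $\Pr[|Z_i(x) - 1| \geq \lambda/2] \leq 2\exp(-c m\lambda^2) \leq p$, where $p$ can be driven arbitrarily small by enlarging the constant $C$ in the hypothesis $\lambda \geq C/\sqrt{m}$. A second Chernoff bound then gives $\Pr[N(x) \geq 0.7d] \leq \exp(-d \cdot D(0.7 \,\|\, p))$, which for sufficiently small $p$ is at most $\exp(-(\log 2 + 1) d)$; the union bound over the $2^d$ candidates leaves an overall null-case failure probability of $\exp(-\Omega(d))$.

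\textbf{Main obstacle.} The only real subtlety is making the exponents balance: the per-candidate failure probability in the null analysis must beat the $2^d$ union bound comfortably. This is achieved by choosing the constant $C$ in $\lambda \geq C/\sqrt{m}$ large enough that $p$ is driven well below any fixed threshold, at which point the Chernoff relative-entropy bound supplies a factor of $\exp(-\Omega(d\log(1/p)))$ that dominates the enumeration cost. The additional $1/\sqrt d$ slack in the hypothesis plays no essential role in the argument; it only ensures $\lambda$ sits in Bernstein's quadratic (sub-Gaussian) regime, and the interesting regime $\lambda = o(1)$ already satisfies this.
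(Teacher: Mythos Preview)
Your proposal is correct, but it takes a genuinely different route from the paper's own proof.

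The paper's algorithm enumerates over the $2^d$ \emph{subsets of blocks} rather than over candidate spikes: for each $S \subseteq [d]$ with $|S| \geq d/4$ it computes the top eigenvalue of $\sum_{i \in S} (\Sigma_i - \Id)$, where $\Sigma_i$ is the empirical covariance of block $i$, and declares \textsc{planted} if this exceeds $\lambda|S|/2$. In the planted case the subset $S = \{i : s_i = 1\}$ works, with $x$ witnessing a large eigenvalue; in the null case one uses Gaussian covariance concentration (each $\sum_{i \in S} \Sigma_i$ is built from at least $md/4$ standard Gaussian samples, so its spectral deviation from $|S|\Id$ is $O(|S|/\sqrt m)$ with probability $1-2^{-100d}$), then union-bounds over subsets.

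Your approach instead enumerates over the $2^d$ hypercube spike candidates and, for each, counts how many blocks have a large one-dimensional deviation $|Z_i(x)-1|$. The analysis is entirely scalar: Bernstein for averages of $\chi^2_1$'s in each block, then a binomial Chernoff bound over blocks, with the constant $C$ in $\lambda \geq C/\sqrt m$ chosen so that the per-block probability $p$ is small enough to make the Chernoff exponent beat the $2^d$ enumeration. This is arguably cleaner because it avoids matrix concentration altogether. On the other hand, it leans on the specific promise that the planted spike lies in $\{\pm 1/\sqrt d\}^d$; the paper's eigenvalue test would still work if the spike were an arbitrary unit vector, since it never searches over $x$ at all. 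Your remark that the $1/\sqrt d$ term in the hypothesis is essentially slack is consistent with both proofs.
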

\begin{proof}
  The algorithm is as follows.
  Let $\Sigma_i = \tfrac 1 m \sum_{j=im}^{(i+1)m} y_j y_j^\top$ be the empirical second moment of the samples in the $i$-th block.
  For all $S \subseteq [d]$, compute the maximum eigenvalue of $\sum_{i \in S} (\Sigma_i - \Id)$.
  If there is $S$ with $|S| \geq d/4$ such that this eigenvalue is greater than $\lambda |S| / 2$, then return \textsc{planted}.
  Otherwise, return \textsc{null}.

  To analyze the algorithm, we first consider what occurs in the \textsc{planted} case.
  Let $S = \{i \, : \, s_i = 1 \}$.
  With high probability, $|S| \geq d/4$.
  Consider
  \[
  \E_{x,y} \sum_{i \in S } x^\top (\Sigma_i - \Id) x = \E |S| \lambda \|x\|^4 \geq |S| \lambda\mper
  \]
  By standard concentration results, $x^\top (\sum_{i \in S} \Sigma_i - \Id)x \geq |S| \lambda /2$ with high probability in the \textsc{planted} case, so the algorithm will output \textsc{planted}

  Now let us see what happens in the \textsc{null} case -- we wish to show that the algorithm outputs \textsc{null} with high probability.
  First consider fixed $S \subseteq [d]$ with $|S| \geq d/4$.
  (Later we will take a union bound.)
  Then
  \[
  \sum_{i \in S} (\Sigma_i -\Id) = \frac 1 m \sum_{i \in S} \sum_{j=im}^{(i+1)m} y_j y_j^\top - |S|\Id
  \]
  Now, since $m \gg 1$ and $|S| \geq d/4$, there are at least $Cd$ vectors $y_j$ in the above sum, for a large-enough constant $C$.
  By standard results on concentration of eigenvalues of Gaussian matrices,
  \[
  \Norm{\frac 1 {m|S|} \sum_{i \in S} \sum_{j=im}^{(i+1)m} y_j y_j^\top - \Id} \leq O(1/\sqrt{m}) \text{ with probability at least } 1-2^{-100d}
  \]
  and hence by a union bound this holds for all $|S| \geq d/4$ with high probability.
  So as long as $\lambda \gg 1/\sqrt{m}$, the algorithm will output \textsc{null}.
\end{proof}

Next, we observe that under a stronger assumption on $\lambda$, the key subroutine from both our algorithms solves the single-spike block mixtures problem.

\begin{lemma}
  If $\lambda \gg d^{1/4} / \sqrt{m}$, then there is a polynomial-time algorithm to solve single-spike block mixtures.
\end{lemma}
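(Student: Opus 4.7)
The plan is to run the certification SDPs from the covariance estimation subroutine directly, using the identity matrix as the candidate covariance. Given the samples $y_1,\ldots,y_{md}$, partition $[md]$ into the $d$ natural blocks $B_1,\ldots,B_d$, set $Z_i = \tfrac{1}{m}\sum_{j \in B_i} y_j y_j^\top$, and solve both \ref{eq:tstcov} and \ref{eq:tstcovn} with $x = \Id$ and radius $r = C_0 (\log d)^{3/2} d^{1/4}/\sqrt{m}$, where $C_0$ is the constant $C_3$ from \cref{lem:cert-main}. Output \textsc{planted} if either SDP has optimum at least $d/4$; otherwise output \textsc{null}. Since the SDPs are of size $\poly(d,m)$ this runs in polynomial time.

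For the \textsc{null} case, the samples are i.i.d.\ $\cN(0,\Id)$. Gaussian random vectors are $O(1)$-nice in the sense of \cref{def:nice-dist}: the identity $\E\iprod{X,u}^8 = 105(\E\iprod{X,u}^2)^4$ holds as a polynomial identity in $u$, and similarly for the fourth moment. Because no truncation is needed for Gaussians (the truncation bias analysis in \cref{lem:cert-main} only helps), \cref{lem:cert-main} applies with $k = d$, $n = md$, $\Tr\Sigma = d$, $\|\Sigma\|_2 = 1$, and effective rank $\sr = d$. The rate it demands is
\[
r \;\geq\; C_3(\log d)^{3/2}\cdot\frac{1}{\sqrt{md}}\bigl(d^{1/4}\cdot d^{1/2} + \sqrt{d}\bigr) \;=\; O\!\left((\log d)^{3/2}\cdot \frac{d^{1/4}}{\sqrt{m}}\right),
\]
which matches our chosen $r$. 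Thus both SDPs have value at most $0.001 d < d/4$ with probability $1 - 2^{-\Omega(d)}$ and the algorithm answers correctly.

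For the \textsc{planted} case, let $x \in \{\pm 1/\sqrt d\}^d$ and $s_1,\ldots,s_d \in \{\pm 1\}$ be the hidden parameters. Let $S^+ = \{i : s_i = +1\}$ and $S^- = \{i : s_i = -1\}$; by Chernoff, $\min(|S^+|,|S^-|) \geq d/3$ with overwhelming probability. For $i \in S^+$, the block empirical second moment satisfies $\iprod{xx^\top, Z_i} = 1 + \lambda + \eta_i$ where $\eta_i$ is a mean-zero fluctuation of order $1/\sqrt{m}$ (standard concentration for a sum of $m$ independent chi-squared-type random variables with $\|x\|^2 = 1$). Since $\lambda \gg d^{1/4}/\sqrt m$ absorbs both the polylogarithmic factors and the $1/\sqrt m$ fluctuations, we have $\iprod{xx^\top, Z_i - \Id} \geq \lambda/2 > r$ for every $i \in S^+$. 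Taking the integer point $b_i = \mathbf{1}[i \in S^+]$ and $u = x$ (a unit vector) gives a feasible solution to \ref{eq:tstcov} of objective $|S^+| \geq d/3 > d/4$; every integer feasible solution is a valid degree-$4$ pseudoexpectation, so the SDP optimum exceeds our threshold and the algorithm outputs \textsc{planted} correctly. (If by chance $|S^-| > |S^+|$, the symmetric argument makes \ref{eq:tstcovn} exceed the threshold.)

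There is no substantive obstacle here beyond matching parameters: the lemma is essentially a corollary of \cref{lem:cert-main}, asserting that the same SoS certificate which upper-bounds the median-of-means test on i.i.d.\ Gaussian data can be used in contrapositive form as a distinguisher whenever a planted structure violates the certificate's guarantee by more than a constant factor. The only bookkeeping step that merits care is confirming that the Gaussian concentration bound $|\eta_i| = O(1/\sqrt m)$ can be tightened to hold uniformly across all $d$ blocks via a union bound, which costs only an additional $\sqrt{\log d}$ factor and is cleanly absorbed into the $\gg$ hypothesis on $\lambda$.
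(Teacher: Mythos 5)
Your proposal is correct and takes essentially the same route as the paper: the paper's proof also forms the per-block empirical second moments, uses the degree-$8$ SoS certification machinery (it invokes \cref{lem:p-main} directly on the SoS-certified value of $\max_{b,u}\sum_i b_i\iprod{\Sigma_i-\Id,uu^\top}$, thresholded at $\lambda d/4$, whereas you invoke the packaged \cref{lem:cert-main} for \ref{eq:tstcov}/\ref{eq:tstcovn} at radius $\tilde O(d^{1/4}/\sqrt m)$) to handle the \textsc{null} case, and exhibits the integral solution $u=x$, $b_i=\Ind[s_i=1]$ in the \textsc{planted} case. The only nits are cosmetic: the relaxation is degree-$8$ (not degree-$4$), and the "no truncation needed" remark implicitly uses that $\lambda<1$ forces $m\gg\sqrt d$ so the prescribed truncation never fires for Gaussian samples, but neither affects the argument.
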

\begin{proof}
  Let $\Sigma_i = \tfrac 1 m \sum_{j=im}^{(i+1)m} y_j y_j^\top$ be the empirical second moment of the samples in the $i$-th block.
  The algorithm is to find the smallest $c$ such that there is a degree-8 SoS proof that
  \[
  \max_{b,u} \sum_{i \leq d} b_i \iprod{\Sigma_i - \Id, uu^\top} \leq c
  \]
  subject to $b_i^2 = b_i, \|u\|^2 = 1$.
  If $c \geq \lambda d/4$, then return \textsc{planted}, otherwise return \textsc{null}.

  The analysis of the algorithm follows from \cref{lem:p-main} via arguments as in \cref{lem:regression-deterministic}.
\end{proof}

Now we turn to our main theorem for this section, capturing the moment matching lower bound.

\begin{theorem}
\label[theorem]{thm:roadblock-main}
  If $\lambda \ll \tfrac{d^{1/4}}{\sqrt{m} \poly \log(d,m)}$ then every degree-$(md)^{o(1)}$ function $f \, : \, y_1,\ldots,y_{md} \rightarrow \R$ such that $\E_{\textsc{null}} f(y_1,\ldots,y_{md})^2 = 1$ has $|\E_{\textsc{planted}} f - \E_{\textsc{null}} f| \leq o(1)$.
\end{theorem}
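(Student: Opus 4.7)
The plan is to execute the standard low-degree likelihood ratio computation, exploiting the Gaussian structure of both hypotheses to reduce the second-moment computation to a clean combinatorial sum. By the dual formulation of the low-degree test, it suffices to show $\|L^{\leq D}\|_{\textsc{null}}^2 = 1 + o(1)$, where $L = d\mathbb{P}_{\textsc{planted}}/d\mathbb{P}_{\textsc{null}}$ and the superscript denotes $L^2$-projection onto polynomials of degree at most $D$ in $Y = (y_1, \ldots, y_{md})$. Expanding in the (multivariate) Hermite basis for $\mathcal{N}(0, I)^{\otimes md}$, this reduces to
\[
\|L^{\leq D}\|^2 = \mathbb{E}_{\theta, \theta'}\Bigl[\sum_{|\alpha| \leq D} \prod_j \mathbb{E}[\tilde{H}_{\alpha_j}(y_j) \mid \theta]\mathbb{E}[\tilde{H}_{\alpha_j}(y_j') \mid \theta']\Bigr],
\]
where $\theta = (x, s_1, \ldots, s_d)$ and $\tilde{H}_\alpha$ are the orthonormal Hermite polynomials.

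The key computation is the per-sample moment. The generating-function identity $\sum_\alpha (v^\alpha/\alpha!)\,\mathbb{E}_{y \sim \mathcal{N}(0, I + s\lambda xx^\top)}[H_\alpha(y)] = \exp(s\lambda(v^\top x)^2/2)$ yields $\mathbb{E}[H_\alpha \mid s, x] = (s\lambda)^k (2k-1)!! \, x^\alpha$ when $|\alpha| = 2k$ and $0$ otherwise. Using $\sum_{|\alpha|=2k} x^\alpha (x')^\alpha/\alpha! = \tau^{2k}/(2k)!$ where $\tau = \langle x, x' \rangle$, the per-sample contribution at Hermite level $k$ collapses to $\binom{2k}{k}(ss'\lambda^2\tau^2/4)^k$. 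Multiplying across the $m$ samples within a block and collecting terms with $L_i := \sum_{j \in B_i}l_j$, a routine generating-function identity gives the block factor $(s_i s_i' \lambda^2 \tau^2)^{L_i}\binom{m/2 + L_i - 1}{L_i}$. Averaging over the independent uniform signs kills all odd $L_i$, and setting $L_i = 2M_i$ yields the clean expression
\[
\|L^{\leq D}\|^2 = \mathbb{E}_{x, x'}\Bigl[\sum_{\substack{M_1, \ldots, M_d \geq 0 \\ \sum_i M_i \leq D/4}} \prod_i \binom{m/2 + 2M_i - 1}{2M_i}(\lambda^2 \tau^2)^{2M_i}\Bigr].
\]

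It remains to bound this sum. For $M_i$ much smaller than $m$, I will use $\binom{m/2+2M_i-1}{2M_i} \lesssim (m/2)^{2M_i}/(2M_i)!$, so each inner product is at most $\prod_i (m\lambda^2\tau^2/2)^{2M_i}/(2M_i)!$. Grouping by $N = \sum M_i$, the coefficient of $\tau^{4N}$ in the resulting sum is bounded by $h_d(N) := [z^{2N}]\cosh(z)^d$. Since $\sqrt{d}\tau$ is the normalized sum of $d$ i.i.d.\ Rademachers and hence subgaussian, $\mathbb{E}[\tau^{4N}] \lesssim (4N-1)!!/d^{2N}$, while $h_d(N) \lesssim d^N/(2^N N!)$. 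A Stirling calculation then yields the clean term-by-term bound $T_N \lesssim (\kappa N)^N$ with $\kappa = O(m^2 \lambda^4/d)$. Under the hypothesis $\lambda \ll d^{1/4}/(\sqrt{m}\cdot \mathrm{polylog}(md))$, we have $\kappa \leq 1/\mathrm{polylog}(md)$, so for $D = (md)^{o(1)}$ (choosing the polylog exponent large enough that $\kappa D \leq 1/\log(md)$), the terms $(\kappa N)^N$ form a decaying geometric-like series summing to $o(1)$.

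The main obstacle I anticipate is the moment bound for $\tau$: since $\tau$ is a Rademacher average rather than a Gaussian, the subgaussian comparison to $(4N-1)!!/d^{2N}$ needs to be justified carefully, especially in how loose it can be relative to the truncation degree $D$. Relatedly, the approximation $\binom{m/2+2M_i-1}{2M_i} \approx (m/2)^{2M_i}/(2M_i)!$ must be justified uniformly over the $M_i$'s appearing in the sum; for $M_i$ that grow with $D$ this needs a quantitative error term. Both of these are where the quantitative $\mathrm{polylog}$ slack in the hypothesis is absorbed, and balancing them against $D = (md)^{o(1)}$ to obtain $\|L^{\leq D}\|^2 - 1 = o(1)$ is the technical heart of the argument.
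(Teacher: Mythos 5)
Your proposal is correct in substance and sits in the same overall framework as the paper (bounding the degree-$D$ projection of the likelihood ratio in the Hermite basis), but the central computation is carried out by a genuinely different route. The paper computes the planted Hermite coefficients explicitly via a Wick/Stein recursion (\cref{lem:hermite-single-vec}, \cref{lem:hermite-any}), identifies the ``super even'' multi-indices that contribute, and then bounds $\sum_{|\alpha|\le t}(\E H_\alpha)^2$ level-by-level by directly counting those multi-indices (\cref{lem:lb-main}), giving a per-level bound of the form $(C\lambda t\sqrt m/d^{1/4})^t$. You instead run the replica/chi-squared computation: the same per-sample moment (your $(s\lambda)^k(2k-1)!!\,x^\alpha$ matches \cref{lem:hermite-single-vec}), collapsed by generating functions into an expectation over the overlap $\tau=\iprod{x,x'}$ and the signs, with per-block negative-binomial factors $(1-s_is_i'\lambda^2\tau^2)^{-m/2}$ truncated at total degree $D$; sign-averaging recovering exactly the ``degree divisible by 4 per block'' structure the paper isolates combinatorially. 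Your route keeps the $1/\alpha!$ normalization and, via the coefficientwise bound $\cosh(z)^d\preceq e^{dz^2/2}$ and Gaussian domination of Rademacher moments (which holds exactly, so your first flagged worry is benign), yields the per-level bound $(\kappa N)^N$ with $\kappa=\Theta(m^2\lambda^4/d)=\Theta((\lambda\sqrt m/d^{1/4})^4)$ --- slightly sharper in its degree dependence than the paper's bound, while landing at the same threshold $\lambda\sim d^{1/4}/\sqrt m$; the paper's counting argument is in exchange more elementary and makes the support of the planted moments explicit. Your two flagged technical points (uniformity of the binomial-coefficient estimate in $M_i$, and the $\tau$-moment bound) are standard and fillable, and the residual tension between polylog slack in $\lambda$ and degree $(md)^{o(1)}$ is present in the paper's own proof as well (its lemma requires $t\lambda\sqrt m/d^{1/4}\le c$), so it is not a gap specific to your argument.
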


It follows from by-now standard linear algebra that the theorem follows from the following lemmas (in particular \cref{lem:lb-main}). See \cite{hopkins2018statistical} for technical background.

We start with a lemma analyzing the moments of the matrix $(y_1,\ldots,y_{md})$ under the planted distribution.

\begin{lemma}\label[lemma]{lem:hermite-single-vec}
  For a multi-index $\alpha$ over $[d]$, let $H_\alpha$ be the $\alpha$-th (probabilists') Hermite polynomial \cite{wiki:hermite-polynomials,DBLP:books/daglib/0033652}.
  Let $x \in \R^d$ and let $\lambda \in \R$ such that $|\lambda| \|x\|^2 \leq 1$.
  Let $y \sim \cN(0,\Id + \lambda xx^\top)$.
  Then if $\alpha$ is odd, $\E H_\alpha(y) = 0$, and if $\alpha$ is even,
  \[
  \E H_\alpha(y) = \Paren{|\alpha|-1}!! \cdot \lambda^{|\alpha|/2} \cdot x^\alpha
  \]
  where $x^\alpha = \prod_{i \leq d} x_i^{\alpha_i}$ is the monomial specified by $\alpha$.
\end{lemma}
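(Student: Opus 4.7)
The plan is to reduce everything to an identity of generating functions. Recall that the probabilists' Hermite polynomials are defined through the identity
\[
  \sum_{\alpha \in \N^d} \frac{t^\alpha}{\alpha!} H_\alpha(y) \;=\; \exp\Paren{\iprod{t,y} - \tfrac{1}{2} \|t\|^2}\mper
\]
First I would take expectations on both sides with respect to $y \sim \cN(0, \Id + \lambda xx^\top)$. Since the moment generating function of $y$ is $\E \exp(\iprod{t,y}) = \exp(\tfrac{1}{2} t^\top (\Id + \lambda xx^\top) t)$, the $\|t\|^2/2$ terms cancel and the right-hand side collapses to
\[
  \sum_\alpha \frac{t^\alpha}{\alpha!}\, \E H_\alpha(y) \;=\; \exp\Paren{\tfrac{\lambda}{2} \iprod{t,x}^2}\mper
\]

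Next I would expand the right-hand side as a power series in $t$ and match coefficients. Writing $\exp(z) = \sum_{k \geq 0} z^k/k!$ and applying the multinomial theorem to $\iprod{t,x}^{2k} = (\sum_i t_i x_i)^{2k} = \sum_{|\alpha|=2k} \frac{(2k)!}{\alpha!} t^\alpha x^\alpha$ yields
\[
  \exp\Paren{\tfrac{\lambda}{2} \iprod{t,x}^2} \;=\; \sum_{k \geq 0} \sum_{|\alpha|=2k} \frac{\lambda^k (2k)!}{2^k k!} \cdot \frac{t^\alpha}{\alpha!} \cdot x^\alpha\mper
\]
Only multi-indices $\alpha$ of even total degree appear in this expansion, so $\E H_\alpha(y) = 0$ whenever $|\alpha|$ is odd. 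For even $\alpha$ with $|\alpha| = 2k$, identifying coefficients of $t^\alpha/\alpha!$ gives $\E H_\alpha(y) = \tfrac{(2k)!}{2^k k!} \lambda^k x^\alpha$, and the standard identity $(2k)!/(2^k k!) = (2k-1)!!$ produces the claimed formula $\E H_\alpha(y) = (|\alpha|-1)!! \cdot \lambda^{|\alpha|/2} \cdot x^\alpha$.

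There is no substantive obstacle in this argument: both power series converge absolutely for every $t \in \R^d$ (the left from orthogonality of Hermite polynomials under the Gaussian measure, the right from analyticity of $\exp$), which justifies exchanging sum and expectation and reading off coefficients. The hypothesis $|\lambda| \|x\|^2 \leq 1$ is used only to guarantee that $\Id + \lambda xx^\top$ is a bona fide covariance matrix so that $y$ is well-defined; it plays no role in the manipulations themselves.
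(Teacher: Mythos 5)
Your proof is correct, but it takes a genuinely different route from the paper. The paper argues by induction on $|\alpha|$: odd $\alpha$ by symmetry, the multilinear case via Wick's theorem, and the general case via the Hermite recurrence $H_\alpha(y) = y_i H_{\alpha - e_i}(y) - \partial_{y_i} H_{\alpha - e_i}(y)$ combined with the multivariate Stein's lemma for $\cN(0,\Id+\lambda xx^\top)$. You instead integrate the Hermite generating function against the law of $y$, use the Gaussian MGF to collapse the right-hand side to $\exp(\tfrac{\lambda}{2}\iprod{t,x}^2)$, and read off all coefficients at once; the identity $(2k)!/(2^k k!)=(2k-1)!!$ then gives exactly the paper's formula, including the mixed-parity cases with $|\alpha|$ even but some $\alpha_i$ odd (which the paper's multilinear/Wick case covers). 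Your argument is shorter and handles all $\alpha$ uniformly, while the paper's induction avoids any infinite-sum manipulation. The one place you are a bit glib is the interchange of $\E$ with the infinite sum: absolute convergence of each series separately does not by itself license the swap; you need a domination argument (e.g.\ Cram\'er-type bounds $|H_n(z)| \leq C\sqrt{n!}\,e^{z^2/4}$ together with $\Id + \lambda xx^\top \preceq 2\Id$, which is delicate exactly at the boundary $|\lambda|\|x\|^2 = 1$), or, more simply, prove the identity for $|\lambda|\|x\|^2 < 1$ and extend to the boundary by noting that for fixed $\alpha$ both sides are polynomials in $\lambda$ and $x$ (the left side being a finite combination of Gaussian moments, hence polynomial in the covariance entries). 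With that one sentence added, the proof is complete.
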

\begin{proof}
  First note that $\cN(0,\Id + \lambda xx^\top)$ is symmetric about the origin, from which the claim for odd $\alpha$ is immediate.
  We turn to even $\alpha$.
  The proof will be by induction on $|\alpha| = \sum_{i \in [d]} \alpha_i$.
  In the base case $\alpha = (0,\ldots,0)$ we have $H_\alpha(y) = 1$ so the claim is clearly true.

  We also consider separately the case of multilinear $\alpha$; that is, $\alpha$ with $\alpha_i \in \{0,1\}$.
  In this case, $H_\alpha(y) = y^\alpha = \prod_{i \in [d]} y_i^{\alpha_i}$.
  By Wick's theorem, $\E y^\alpha = \sum_{m \in M(\alpha)} \prod_{ij \in m} \E y_i y_j$, where $M(\alpha)$ is the set of all matchings on $\{i \, : \, \alpha_i = 1\}$.
    Each term in the sum is clearly equal to $\lambda^{|\alpha|} x^\alpha$, so we obtain $(|\alpha|-1)!! \cdot \lambda^{|\alpha|} \cdot x^\alpha$ (using that the number of matchings on the complete $|\alpha|$-vertex graph is $(|\alpha|-1)!!$).

  Consider the case of $\alpha$ not multilinear.
  There must be some $i \leq d$ with $\alpha_i > 1$; fix such an $i$.
  We can write $\alpha = \beta + e_i$.
  Then $H_\beta$ satisfies the following recurrence:
  \[
  H_\alpha(y) = y_i H_{\beta}(y) - \frac{\partial}{\partial y_i} H_\beta(y)\mper
  \]
  By the multivariate Stein's lemma (see e.g. \cite{liu1994siegel}),
  \[
  \E y_i H_{\beta}(y) - \frac{\partial}{\partial y_i} H_\beta(y) = \sum_{j=1}^d \lambda x_i x_j \E \frac{\partial}{\partial y_j} H_\beta(y) = \sum_{j=1}^d \lambda x_i x_j \beta_j \E H_{\beta- e_j}(y)\mcom
  \]
  where in the last equality we have used that $H_t(z)' = t H_t(z)$ for the $t$-th univariate Hermite polynomial.
  By induction, the above is
  \[
  \Paren{\sum \beta_j} \cdot \lambda^{|\alpha|/2} \cdot x^\alpha \cdot \Paren{\sum \beta_j -2}!!
  \]
  which finishes the proof, because $\Paren{\sum \beta_j}!! = \Paren{\sum \beta_j} \cdot \Paren{\sum \beta_j -2}!!$.
\end{proof}

Now we can characterize the moments of the matrix $(y_1,\ldots,y_{md})$.

\begin{lemma}\label[lemma]{lem:hermite-any}
  Let $\alpha$ be a multi-index over $[d] \times [md]$.
  Let $H_\alpha$ be the $\alpha$-th Hermite polynomial.
  Let $y = (y_1,\ldots,y_{md})$ be sampled according to the single-spike block mixture planted distribution.
  Let $\alpha^{(i)}$ be the portion of $\alpha$ corresponding to $y_{im},\ldots,y_{(i+1)m-1}$.
  Then if every $\alpha^{(i)}$ is divisible by $4$ and $\alpha_j$, by which we mean $\alpha$ restricted to $y_j$, is even for all $j$, and finally $\alpha$ restricted to each of the $d$ \emph{rows} of the matrix $(y_1,\ldots,y_{md})$ is even, then
  \[
  \E_{x,s,y} H_\alpha(y) = (\lambda/d)^{|\alpha|/2} \cdot \prod_{j \in [md]} (|\alpha_j|-1)!!
  \]
  Otherwise, $\E_{x,s,y} H_\alpha(y) = 0$.
\end{lemma}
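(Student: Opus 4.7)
The plan is to condition first on $(x, s)$, then use independence of the $y_j$'s, apply the single-vector Hermite moment formula (\cref{lem:hermite-single-vec}) to each factor, and finally take expectations over $s$ and $x$ --- the three parity/divisibility conditions in the statement will fall out as the non-vanishing requirements at each stage.

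More concretely, I would begin by recalling that the multivariate Hermite polynomial for a collection of independent vector blocks factors: writing $\alpha_j$ for the restriction of $\alpha$ to the coordinates of $y_j$, we have $H_\alpha(y) = \prod_{j \in [md]} H_{\alpha_j}(y_j)$. Conditional on $x$ and the block signs $s_1,\ldots,s_d$, the vectors $y_j$ are independent, with $y_j \sim \cN(0, \Id + s_{i(j)} \lambda xx^\top)$ where $i(j)$ denotes the block containing index $j$. Since $\|x\|^2 = 1$ and $|\lambda| < 1$, the hypothesis $|s_{i(j)}\lambda|\|x\|^2 \le 1$ of \cref{lem:hermite-single-vec} is satisfied, so applying that lemma termwise yields, when every $|\alpha_j|$ is even,
\[
\E_{y\mid x,s} H_\alpha(y) \;=\; \prod_{j} (|\alpha_j|-1)!! \cdot (s_{i(j)}\lambda)^{|\alpha_j|/2} \cdot x^{\alpha_j},
\]
and $0$ if any $|\alpha_j|$ is odd (this gives condition 2 of the lemma). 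Factoring out $\lambda^{|\alpha|/2}$ and collecting signs per block gives
\[
\E_{y\mid x,s} H_\alpha(y) \;=\; \lambda^{|\alpha|/2}\Bigl(\prod_j (|\alpha_j|-1)!!\Bigr)\cdot x^\alpha \cdot \prod_{i \in [d]} s_i^{|\alpha^{(i)}|/2}.
\]

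Next I would integrate over $s$ and $x$ independently. Since the $s_i$ are i.i.d.\ uniform on $\{\pm 1\}$, $\E_s \prod_i s_i^{|\alpha^{(i)}|/2} = \prod_i \E s_i^{|\alpha^{(i)}|/2}$, which vanishes unless each $|\alpha^{(i)}|/2$ is even, i.e.\ each $|\alpha^{(i)}|$ is divisible by $4$ (condition 1); when it does not vanish it equals $1$. Similarly, letting $\bar\alpha_\ell$ denote the total exponent on row $\ell$ of the $d \times md$ matrix, $x^\alpha = \prod_\ell x_\ell^{\bar\alpha_\ell}$, and since $x_\ell = \pm 1/\sqrt d$ uniformly, $\E_x x^\alpha$ vanishes unless every $\bar\alpha_\ell$ is even (condition 3), in which case each factor is $d^{-\bar\alpha_\ell/2}$, so the product equals $d^{-|\alpha|/2}$. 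Multiplying everything together produces exactly $(\lambda/d)^{|\alpha|/2}\prod_j (|\alpha_j|-1)!!$.

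There is no serious obstacle here --- the only care required is bookkeeping the three orthogonal parity conditions (block divisibility by $4$, per-sample evenness, per-row evenness) and matching them to the three independent sources of randomness (block signs $s$, Gaussian symmetry within each $y_j$, and the Rademacher spike $x$). I would also briefly remark that in the excluded cases (some $|\alpha^{(i)}|$ not divisible by $4$, some $|\alpha_j|$ odd, or some row sum $\bar\alpha_\ell$ odd), the corresponding expectation vanishes for the reasons identified above, so $\E_{x,s,y} H_\alpha(y) = 0$ as claimed.
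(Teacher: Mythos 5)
Your proposal is correct and follows essentially the same route as the paper's proof: condition on $(x,s)$, factor $H_\alpha$ over the now-independent samples, apply \cref{lem:hermite-single-vec} to each $y_j$, and then let the expectation over the Rademacher signs $s_i$ enforce divisibility of $|\alpha^{(i)}|$ by $4$ and the expectation over $x \in \{\pm 1/\sqrt d\}^d$ enforce evenness of the row degrees, yielding the factor $d^{-|\alpha|/2}$. Your bookkeeping of the three parity conditions and the vanishing cases matches the paper's argument.
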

\begin{proof}
  First of all, after conditioning on $x$ and $s$, all of $y_1,\ldots,y_{md}$ become independent.
  If $\alpha^{(i)}$ is the portion of $\alpha$ corresponding to $y_i$, then we have
  \[
  \E_{x,s} \E_y H_\alpha(y) = \E_{x,s} \prod_{i \in [m]} \E_{y_i} H_{\alpha^{(i)}}(y_i)\mper
  \]
  Now we can use \cref{lem:hermite-single-vec} to see that this is in turn
  \[
  \lambda^{|\alpha|/2} \cdot \prod_{j \in [md]} (|\alpha_j|-1)!! \cdot \E_{x,s}  \prod_{i \in [m]} s_i^{|\alpha^{(i)}|/2} x^{\alpha^{(i)}}\mper
  \]
  Here by $x^{\alpha^{(i)}}$ we mean the monomial in $x$ whose degree in $x_j$ is equal to the total degree of coordinates of the form $(j,\ell)$ in $\alpha$ for any $\ell \in [im,(i+1)m-1]$.
  If any $|\alpha_i|/2$ is odd, then the above is zero using symmetry of $s_i^{|\alpha_i|/2}$.
  Otherwise, it is equal to $\E x^\alpha$.
  This in turn is $0$ unless $\alpha$ has even degree in every row of the matrix $(y_1,\ldots,y_{md})$ and hence every $x_j$ appears in $x^\alpha$ evenly often; in this case it is equal to $d^{-|\alpha|/2}$.
\end{proof}

\begin{lemma}
\label[lemma]{lem:lb-main}
  There is a universal $c > 0$ such that for any $t \in \N$,
  if $t \lambda m^{1/2} / d^{1/4} \leq c$, then
  \[
  \sum_{|\alpha| \leq t} (\E H_\alpha(y))^2 \leq 1 + O(t \lambda m^{1/2}/d^{1/4})\mper
  \]
\end{lemma}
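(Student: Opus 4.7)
My plan is to use \cref{lem:hermite-any} to reduce the sum to an explicit combinatorial expression, evaluate it by Fourier-inverting the three parity/divisibility constraints simultaneously, and then bound the result using standard Rademacher moment estimates.

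Normalizing so that $\|H_\alpha\|^2 = \prod_{(r,j)}\alpha_{(r,j)}!$ under $\cN(0,I)$, \cref{lem:hermite-any} transforms the quantity to bound into
\[
1 + \sum_{\substack{1 \leq |\alpha|\leq t \\ \alpha \text{ valid}}} \Paren{\frac{\lambda}{d}}^{|\alpha|}\prod_{j=1}^{md}\frac{((|\alpha_j|-1)!!)^2}{\alpha_j!},
\]
where ``valid'' means every $|\alpha_j|$ is even, every $|\alpha^{(i)}|$ is divisible by $4$, and every row sum $\sum_j \alpha_{(r,j)}$ is even, with $\alpha_j! := \prod_r \alpha_{(r,j)}!$. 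The three constraints decouple under Fourier inversion: introducing $\eta_j \in \{\pm 1\}$ for column parity, $\zeta_i \in \{1,i,-1,-i\}$ for block mod-$4$, and $\epsilon_r \in \{\pm 1\}$ for row parity, each $\alpha_{(r,j)}$ receives a weight $(\eta_j\zeta_{b(j)}\epsilon_r)^{\alpha_{(r,j)}}$ in an unconstrained sum. For fixed column degree $c_j$ the multinomial theorem collapses the row distribution to $(\eta_j \zeta_{b(j)} S_\epsilon)^{c_j}/c_j!$ where $S_\epsilon := \sum_r \epsilon_r$; summing over even $c_j$ via $\sum_{c\text{ even}}\binom{c}{c/2}z^c/2^c = (1-z^2)^{-1/2}$, then collecting the $m$ columns within each block and using $\eta_j^2 = 1$ and $\zeta_i^2 \in \{\pm 1\}$, the entire expression collapses to
\[
\E_\epsilon \bigl[F(\lambda S_\epsilon/d)^d\bigr]_{\deg \leq t},\quad F(u) := \tfrac12\bigl[(1-u^2)^{-m/2}+(1+u^2)^{-m/2}\bigr] = \sum_{\ell \geq 0}\binom{m/2+2\ell-1}{2\ell}u^{4\ell},
\]
where $[\,\cdot\,]_{\deg \leq t}$ truncates the formal power series in $u$ to degree $\leq t$ before substitution. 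A useful sanity check: removing the truncation recovers the full $\chi^2$ divergence between planted and null, which one can also derive directly by integrating out $y$ using Sherman--Morrison.

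Writing $F(u)^d = \sum_L b_L(d)\,u^{4L}$ and $M_{4L}(d) := \E S_\epsilon^{4L}$, the sum equals $1 + \sum_{L=1}^{\lfloor t/4\rfloor}(\lambda/d)^{4L}\,b_L(d)\,M_{4L}(d)$. Elementary counting shows $b_L(d) \leq (Cm^2 d)^L/L!$ (the dominant configurations have $L$ of the inner indices $\ell_1,\ldots,\ell_d$ equal to $1$ and the rest zero), and Rademacher--Gaussian comparison gives $M_{4L}(d) \leq (4L-1)!!\,d^{2L}$ for $L \leq d$. Setting $\xi := \lambda m^{1/2}/d^{1/4}$, each term is then bounded by $(C' L\xi^4)^L$. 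Under the hypothesis $t\xi \leq c$ for a small enough universal constant $c$, one has $L\xi^4 \leq (t/4)\xi^4 \leq c\xi^3/4 \ll 1$ uniformly for $L \leq t/4$, so the sum is dominated by its first term of order $\xi^4$ and the total is comfortably below $1 + O(t\xi)$. The main technical obstacle is the second step --- carrying out the three simultaneous character sums cleanly so that the identity above emerges without spurious lower-order terms; the remaining bounds are routine, provided one replaces the Gaussian comparison by the trivial bound $M_{4L}(d) \leq d^{4L}$ if $L$ ever approaches $d$, a regime the hypothesis $t\xi \leq c$ already rules out.
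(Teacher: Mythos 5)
Your route is correct in substance but genuinely different from the paper's. Both arguments start from \cref{lem:hermite-any}, but the paper then bounds each degree slice by a direct count of the ``super even'' multi-indices, absorbing all combinatorial multiplicities into a $(Ct)^{t}$ slack and summing a geometric series under $t\lambda m^{1/2}/d^{1/4}\le c$; it never evaluates the constrained sum. You instead Fourier-invert the three parity/divisibility constraints and collapse the sum exactly to $\E_\epsilon\bigl[F(\lambda S_\epsilon/d)^d\bigr]$ truncated as a formal power series, with $F(u)=\tfrac12[(1-u^2)^{-m/2}+(1+u^2)^{-m/2}]$; I checked this identity and it is right (after the multinomial collapse over rows, the per-column factor $((2k-1)!!)^2/(2k)!=\binom{2k}{k}4^{-k}$ is exactly what produces the closed form, and averaging $\zeta_i^2=\pm1$ gives $F$). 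This buys a sharper per-slice bound of the form $(CL\xi^4)^L$ (writing $\xi=\lambda m^{1/2}/d^{1/4}$) in place of the paper's $(Ct\xi)^{4L}$, gives the full $\chi^2$ as a by-product, and it makes explicit the ``straightforward counting'' step the paper only asserts. One presentational caveat: your closed form exists precisely because you divide by $\alpha!$, i.e.\ you bound the normalized sum $\sum(\E H_\alpha)^2/\alpha!$, which is the quantity the low-degree argument behind \cref{thm:roadblock-main} actually needs, but it is not literally the unnormalized sum in the statement of \cref{lem:lb-main}; the paper is loose on this point itself (its displayed identity even drops the square on the double factorials), so this is a mismatch of conventions rather than an error in your reasoning, but you should say explicitly which quantity you bound.

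Two quantitative slips, both repairable and neither harmful in the intended regime $t=(md)^{o(1)}$. First, the claim $b_L(d)\le (Cm^2d)^L/L!$ is not true for every $L$ permitted by the hypothesis: the hypothesis only couples $t$ to $\lambda$, so $t$ (hence $L\le t/4$) may exceed any multiple of $m^2d$, and then a single block carrying all the degree already gives $b_L\ge\binom{m/2+2L-1}{2L}\ge 1>(Cm^2d)^L/L!$. The convolution argument behind your bound needs the per-block estimate $\binom{m/2+2\ell-1}{2\ell}\le (Cm^2)^\ell/\ell!$, which holds only for $\ell\lesssim m^2$. The fix is to replace $Cm^2$ by $C(m^2+L)$, since the per-block coefficient times $\ell!$ is at most $(C(m^2+\ell))^\ell$ for all $\ell$; this yields $b_L\le (C(m^2+L)d)^L/L!$, the term bound becomes $\bigl(C'(L\xi^4+\xi^4L^2/m^2)\bigr)^L$, and the hypothesis $t\xi\le c$ still forces each term below $2^{-L}$, so the conclusion and the $1+O(t\xi)$ bound are unchanged. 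Second, your remark that the hypothesis rules out $L$ approaching $d$ is not correct (tiny $\lambda$ permits huge $t$), but it is also unnecessary: $\E S_\epsilon^{4L}\le(4L-1)!!\,d^{2L}$ holds for all $L$ and $d$ by the standard Rademacher--Gaussian moment comparison, with no restriction, so no fallback to the trivial bound is needed.
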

\begin{proof}
  Let us focus on a fixed $t$.
  Since $\E H_\alpha(y) =0$ if $\alpha$ is odd, we may assume $t$ is even.
  Let us call $\alpha$ satisfying the conditions of \cref{lem:hermite-any} \emph{super even}.
  Then we have
  \[
  \sum_{|\alpha|=t} (\E H_\alpha(y))^2 = (\lambda/d)^t \cdot \sum_{|\alpha|=t} \prod_{j \in [md]}(|\alpha_j|-1)!! \cdot 1[\alpha \text{ is super even}]\mper
  \]
  Straightforward counting of the super even $\alpha$'s shows that this quantity is at most
  \[
  \Paren{\frac{C \lambda t}{d}}^t \cdot d^{3t/4} m^{t/2}
  \]
  for some big-enough constant $C$.
\end{proof}

\section*{Acknowledgements}
We thank Tselil Schramm for helpful remarks as this manuscript was being prepared.


  \phantomsection
  \addcontentsline{toc}{section}{References}
  \bibliographystyle{amsalpha}
  \bibliography{bib/mathreview,bib/dblp,bib/custom,bib/scholar}

\newcommand{\etalchar}[1]{$^{#1}$}
\providecommand{\bysame}{\leavevmode\hbox to3em{\hrulefill}\thinspace}
\providecommand{\MR}{\relax\ifhmode\unskip\space\fi MR }
\providecommand{\MRhref}[2]{%
  \href{http://www.ams.org/mathscinet-getitem?mr=#1}{#2}
}
\providecommand{\href}[2]{#2}
\begin{thebibliography}{BBH{\etalchar{+}}12b}

\bibitem[AMS99a]{alon1999space}
Noga Alon, Yossi Matias, and Mario Szegedy, \emph{The space complexity of
  approximating the frequency moments}, Journal of Computer and system sciences
  \textbf{58} (1999), no.~1, 137--147.

\bibitem[AMS99b]{DBLP:journals/jcss/AlonMS99}
Noga Alon, Yossi Matias, and Mario Szegedy, \emph{The space complexity of
  approximating the frequency moments}, J. Comput. Syst. Sci. \textbf{58}
  (1999), no.~1, 137--147.

\bibitem[BBH{\etalchar{+}}12a]{BarakBHKSZ12}
Boaz Barak, Fernando G. S.~L. Brand{\~a}o, Aram~Wettroth Harrow, Jonathan~A.
  Kelner, David Steurer, and Yuan Zhou, \emph{Hypercontractivity,
  sum-of-squares proofs, and their applications}, STOC, 2012, pp.~307--326.

\bibitem[BBH{\etalchar{+}}12b]{DBLP:conf/stoc/BarakBHKSZ12}
Boaz Barak, Fernando G. S.~L. Brand{\~{a}}o, Aram~Wettroth Harrow, Jonathan~A.
  Kelner, David Steurer, and Yuan Zhou, \emph{Hypercontractivity,
  sum-of-squares proofs, and their applications}, {STOC}, {ACM}, 2012,
  pp.~307--326.

\bibitem[BKS15]{DBLP:conf/stoc/BarakKS15}
Boaz Barak, Jonathan~A. Kelner, and David Steurer, \emph{Dictionary learning
  and tensor decomposition via the sum-of-squares method}, {STOC}, {ACM}, 2015,
  pp.~143--151.

\bibitem[BM16]{DBLP:conf/colt/BarakM16}
Boaz Barak and Ankur Moitra, \emph{Noisy tensor completion via the
  sum-of-squares hierarchy}, {COLT}, {JMLR} Workshop and Conference
  Proceedings, vol.~49, JMLR.org, 2016, pp.~417--445.

\bibitem[BS17]{sos-notes-general}
Boaz Barak and David Steurer, \emph{The sos algorithm over general domains},
  2017.

\bibitem[C{\etalchar{+}}12]{catoni2012challenging}
Olivier Catoni et~al., \emph{Challenging the empirical mean and empirical
  variance: a deviation study}, Annales de l'Institut Henri Poincar{\'e},
  Probabilit{\'e}s et Statistiques, vol.~48, Institut Henri Poincar{\'e}, 2012,
  pp.~1148--1185.

\bibitem[CDG19]{cheng2019high}
Yu~Cheng, Ilias Diakonikolas, and Rong Ge, \emph{High-dimensional robust mean
  estimation in nearly-linear time}, Proceedings of the Thirtieth Annual
  ACM-SIAM Symposium on Discrete Algorithms, SIAM, 2019, pp.~2755--2771.

\bibitem[CFB19]{cherapanamjeri2019fast}
Yeshwanth Cherapanamjeri, Nicolas Flammarion, and Peter~L Bartlett, \emph{Fast
  mean estimation with sub-gaussian rates}, arXiv preprint arXiv:1902.01998
  (2019).

\bibitem[DHL19]{dong2019quantum}
Yihe Dong, Samuel~B Hopkins, and Jerry Li, \emph{Quantum entropy scoring for
  fast robust mean estimation and improved outlier detection}, arXiv preprint
  arXiv:1906.11366 (2019).

\bibitem[DKK{\etalchar{+}}16]{DBLP:conf/focs/DiakonikolasKK016}
Ilias Diakonikolas, Gautam Kamath, Daniel~M. Kane, Jerry Li, Ankur Moitra, and
  Alistair Stewart, \emph{Robust estimators in high dimensions without the
  computational intractability}, {FOCS}, {IEEE} Computer Society, 2016,
  pp.~655--664.

\bibitem[FLL16]{fan2016overview}
Jianqing Fan, Yuan Liao, and Han Liu, \emph{An overview of the estimation of
  large covariance and precision matrices}, 2016.

\bibitem[FO07]{DBLP:journals/toc/FeigeO07}
Uriel Feige and Eran Ofek, \emph{Easily refutable subformulas of large random
  3cnf formulas}, Theory of Computing \textbf{3} (2007), no.~1, 25--43.

\bibitem[GLS93]{MR1261419-Grotschel93}
Martin Gr\"otschel, L\'aszl\'o Lov\'asz, and Alexander Schrijver,
  \emph{Geometric algorithms and combinatorial optimization}, second ed.,
  Algorithms and Combinatorics, vol.~2, Springer-Verlag, Berlin, 1993.
  \MR{1261419}

\bibitem[GM15]{ge2015decomposing}
Rong Ge and Tengyu Ma, \emph{Decomposing overcomplete 3rd order tensors using
  sum-of-squares algorithms}, arXiv preprint arXiv:1504.05287 (2015).

\bibitem[HL18]{hopkins2018mixture}
Samuel~B Hopkins and Jerry Li, \emph{Mixture models, robustness, and sum of
  squares proofs}, Proceedings of the 50th Annual ACM SIGACT Symposium on
  Theory of Computing, ACM, 2018, pp.~1021--1034.

\bibitem[HL19]{hopkins2019hard}
\bysame, \emph{How hard is robust mean estimation?}, arXiv preprint
  arXiv:1903.07870 (2019).

\bibitem[Hop18a]{hopkins2018sub}
Samuel~B Hopkins, \emph{Sub-gaussian mean estimation in polynomial time}, arXiv
  preprint arXiv:1809.07425 (2018).

\bibitem[Hop18b]{hopkins2018statistical}
Samuel Brink~Klevit Hopkins, \emph{Statistical inference and the sum of squares
  method}.

\bibitem[HS16a]{HsuSab16}
D.~Hsu and S.~Sabato, \emph{Loss minimization and parameter estimation with
  heavy tails}, J. Mach. Learn. Res. \textbf{17} (2016).

\bibitem[HS16b]{hsu2016loss}
Daniel Hsu and Sivan Sabato, \emph{Loss minimization and parameter estimation
  with heavy tails}, The Journal of Machine Learning Research \textbf{17}
  (2016), no.~1, 543--582.

\bibitem[HSS15]{DBLP:conf/colt/HopkinsSS15}
Samuel~B. Hopkins, Jonathan Shi, and David Steurer, \emph{Tensor principal
  component analysis via sum-of-square proofs}, {COLT}, {JMLR} Workshop and
  Conference Proceedings, vol.~40, JMLR.org, 2015, pp.~956--1006.

\bibitem[HSS19]{hopkins2019robust}
Samuel~B Hopkins, Tselil Schramm, and Jonathan Shi, \emph{A robust spectral
  algorithm for overcomplete tensor decomposition}, Conference on Learning
  Theory, 2019, pp.~1683--1722.

\bibitem[HSSS16a]{DBLP:conf/stoc/HopkinsSSS16}
Samuel~B. Hopkins, Tselil Schramm, Jonathan Shi, and David Steurer, \emph{Fast
  spectral algorithms from sum-of-squares proofs: tensor decomposition and
  planted sparse vectors}, {STOC}, {ACM}, 2016, pp.~178--191.

\bibitem[HSSS16b]{hopkins2016fast}
Samuel~B Hopkins, Tselil Schramm, Jonathan Shi, and David Steurer, \emph{Fast
  spectral algorithms from sum-of-squares proofs: tensor decomposition and
  planted sparse vectors}, Proceedings of the forty-eighth annual ACM symposium
  on Theory of Computing, ACM, 2016, pp.~178--191.

\bibitem[JVV86]{jerrum1986random}
Mark~R Jerrum, Leslie~G Valiant, and Vijay~V Vazirani, \emph{Random generation
  of combinatorial structures from a uniform distribution}, Theoretical
  Computer Science \textbf{43} (1986), 169--188.

\bibitem[KKM18]{klivans2018efficient}
Adam Klivans, Pravesh~K Kothari, and Raghu Meka, \emph{Efficient algorithms for
  outlier-robust regression}, arXiv preprint arXiv:1803.03241 (2018).

\bibitem[KSS18]{kothari2018robust}
Pravesh~K Kothari, Jacob Steinhardt, and David Steurer, \emph{Robust moment
  estimation and improved clustering via sum of squares}, Proceedings of the
  50th Annual ACM SIGACT Symposium on Theory of Computing, ACM, 2018,
  pp.~1035--1046.

\bibitem[KWB19]{kunisky2019notes}
Dmitriy Kunisky, Alexander~S Wein, and Afonso~S Bandeira, \emph{Notes on
  computational hardness of hypothesis testing: Predictions using the
  low-degree likelihood ratio}, arXiv preprint arXiv:1907.11636 (2019).

\bibitem[LD19]{lecue2019robust}
Guillaume Lecu{\'e} and Jules Depersin, \emph{Robust subgaussian estimation of
  a mean vector in nearly linear time}, arXiv preprint arXiv:1906.03058 (2019).

\bibitem[Li18]{li2018principled}
Jerry~Zheng Li, \emph{Principled approaches to robust machine learning and
  beyond}, Ph.D. thesis, Massachusetts Institute of Technology, 2018.

\bibitem[Liu94]{liu1994siegel}
Jun~S Liu, \emph{Siegel's formula via stein's identities}, Statistics \&
  Probability Letters \textbf{21} (1994), no.~3, 247--251.

\bibitem[LLVZ19]{lei2019fast}
Zhixian Lei, Kyle Luh, Prayaag Venkat, and Fred Zhang, \emph{A fast spectral
  algorithm for mean estimation with sub-gaussian rates}, arXiv preprint
  arXiv:1908.04468 (2019).

\bibitem[LM16]{lugosi2016risk}
Gabor Lugosi and Shahar Mendelson, \emph{Risk minimization by median-of-means
  tournaments}, arXiv preprint arXiv:1608.00757 (2016).

\bibitem[LM17]{lugosi2017regularization}
G{\'a}bor Lugosi and Shahar Mendelson, \emph{Regularization, sparse recovery,
  and median-of-means tournaments}, arXiv preprint arXiv:1701.04112 (2017).

\bibitem[LM18a]{lugosi2018near}
\bysame, \emph{Near-optimal mean estimators with respect to general norms},
  arXiv preprint arXiv:1806.06233 (2018).

\bibitem[LM18b]{LM18}
\bysame, \emph{Sub-gaussian estimators of the mean of a random vector}, Annals
  of Statistics (2018).

\bibitem[LM19a]{lugosi2017sub}
G.~Lugosi and S.~Mendelson, \emph{Sub-{G}aussian estimators of the mean of a
  random vector}, Ann. Statist. \textbf{47} (2019), no.~2, 783--794.

\bibitem[LM19b]{lugosi2019mean}
Gabor Lugosi and Shahar Mendelson, \emph{Mean estimation and regression under
  heavy-tailed distributions--a survey}, arXiv preprint arXiv:1906.04280
  (2019).

\bibitem[LO11]{lerasle2011robust}
Matthieu Lerasle and Roberto~I Oliveira, \emph{Robust empirical mean
  estimators}, arXiv preprint arXiv:1112.3914 (2011).

\bibitem[LRV16]{DBLP:conf/focs/LaiRV16}
Kevin~A. Lai, Anup~B. Rao, and Santosh Vempala, \emph{Agnostic estimation of
  mean and covariance}, {FOCS}, {IEEE} Computer Society, 2016, pp.~665--674.

\bibitem[LT91]{ledoux1991probability}
M.~Ledoux and M.~Talagrand, \emph{Probability in banach spaces: Isoperimetry
  and processes}, vol.~23, Springer Science \& Business Media, 1991.

\bibitem[Min15]{Min18}
S.~Minsker, \emph{Geometric median and robust estimation in {B}anach spaces},
  Bernoulli \textbf{21} (2015), no.~4, 2308--2335.

\bibitem[MSS16]{DBLP:conf/focs/MaSS16}
Tengyu Ma, Jonathan Shi, and David Steurer, \emph{Polynomial-time tensor
  decompositions with sum-of-squares}, {FOCS}, {IEEE} Computer Society, 2016,
  pp.~438--446.

\bibitem[MW18]{minsker2018robust}
Stanislav Minsker and Xiaohan Wei, \emph{Robust modifications of u-statistics
  and applications to covariance estimation problems}, arXiv preprint
  arXiv:1801.05565 (2018).

\bibitem[MZ18]{mendelson2018robust}
Shahar Mendelson and Nikita Zhivotovskiy, \emph{Robust covariance estimation
  under $ l\_4-l\_2 $ norm equivalence}, arXiv preprint arXiv:1809.10462
  (2018).

\bibitem[NY83a]{NemYud83}
A.~S. Nemirovsky and D.~B. Yudin, \emph{{Problem Complexity and Method
  Efficiency in Optimization}}, Wiley-Interscience Series in Discrete
  Mathematics, John Wiley \&\ Sons, 1983.

\bibitem[NY83b]{nemirovsky1983problem}
Arkadii~Semenovich Nemirovsky and David~Borisovich Yudin, \emph{Problem
  complexity and method efficiency in optimization.}

\bibitem[O'D14]{DBLP:books/daglib/0033652}
Ryan O'Donnell, \emph{Analysis of boolean functions}, Cambridge University
  Press, 2014.

\bibitem[Pea01]{pearson1901liii}
Karl Pearson, \emph{Liii. on lines and planes of closest fit to systems of
  points in space}, The London, Edinburgh, and Dublin Philosophical Magazine
  and Journal of Science \textbf{2} (1901), no.~11, 559--572.

\bibitem[RSS18a]{raghavendra2018high}
Prasad Raghavendra, Tselil Schramm, and David Steurer, \emph{High-dimensional
  estimation via sum-of-squares proofs}, arXiv preprint arXiv:1807.11419
  (2018).

\bibitem[RSS18b]{RSS18}
\bysame, \emph{High-dimensional estimation via sum-of-squares proofs}, arXiv
  preprint arXiv:1807.11419 (2018).

\bibitem[SBRJ19]{suggala2019adaptive}
Arun~Sai Suggala, Kush Bhatia, Pradeep Ravikumar, and Prateek Jain,
  \emph{Adaptive hard thresholding for near-optimal consistent robust
  regression}, arXiv preprint arXiv:1903.08192 (2019).

\bibitem[SS17]{schramm2017fast}
Tselil Schramm and David Steurer, \emph{Fast and robust tensor decomposition
  with applications to dictionary learning}, Proceedings of Machine Learning
  Research vol \textbf{65} (2017), 1--34.

\bibitem[Ste18]{steinhardt2018robust}
Jacob Steinhardt, \emph{Robust learning: Information theory and algorithms},
  Ph.D. thesis, Stanford University, 2018.

\bibitem[T{\etalchar{+}}87]{tyler1987distribution}
David~E Tyler et~al., \emph{A distribution-free $ m $-estimator of multivariate
  scatter}, The annals of Statistics \textbf{15} (1987), no.~1, 234--251.

\bibitem[Tro12]{DBLP:journals/focm/Tropp12}
Joel~A. Tropp, \emph{User-friendly tail bounds for sums of random matrices},
  Foundations of Computational Mathematics \textbf{12} (2012), no.~4, 389--434.

\bibitem[wik19a]{wiki:hermite-polynomials}
\emph{Hermite polynomials}, Aug 2019.

\bibitem[{Wik}19b]{wiki:least_squares}
{Wikipedia contributors}, \emph{Least squares--- {W}ikipedia{,} the free
  encyclopedia}, 2019, [Online; accessed 24-July-2019].

\bibitem[{Wik}19c]{wiki:lognormal}
\bysame, \emph{Log-normal distribution}, 2019, [Online; accessed 22-July-2019].

\end{thebibliography}

\appendix


\section{Linear Algebra and Probability Results}
Here we collect the statements (and proofs) of useful results from linear algebra and probability.

\begin{lemma}\label[lemma]{lem:linalg1}
Let $M$ be a (random) symmetric matrix, then $\normt{\E[M]^2} \leq \normt{\E[M^2]}$ and $\normt{\E(M-\E[M])^2} \leq 2 \normt{\E[M^2]}$.
\end{lemma}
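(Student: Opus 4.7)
The plan is to observe that both inequalities follow from a single matrix Jensen-style fact together with the triangle inequality for the spectral norm.

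First I would expand the matrix variance. Since $M$ is symmetric, so is $\E[M]$, and a direct computation gives
\[
\E\bigl[(M - \E[M])^2\bigr] \;=\; \E[M^2] - \E[M]^2.
\]
The left-hand side is PSD because it is the expectation of squares of symmetric matrices, so rearranging yields the Löwner inequality $\E[M]^2 \preceq \E[M^2]$. Both sides are PSD ($\E[M]^2$ is the square of a symmetric matrix, and $\E[M^2]$ is PSD as the expectation of squares), so comparing their spectral norms gives the first claim: $\|\E[M]^2\|_2 \leq \|\E[M^2]\|_2$.

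For the second claim, use the identity $\E[(M-\E[M])^2] = \E[M^2] - \E[M]^2$ once more, and apply the triangle inequality to get
\[
\bigl\|\E(M-\E[M])^2\bigr\|_2 \;\leq\; \|\E[M^2]\|_2 + \|\E[M]^2\|_2 \;\leq\; 2\|\E[M^2]\|_2,
\]
where the last step invokes the first claim.

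There is no real obstacle here — the only subtlety is remembering that symmetry of $M$ is what makes $\E[M]^2$ PSD (so that a spectral-norm comparison follows from the Löwner-order comparison), and that writing the matrix variance in the form $\E[M^2] - \E[M]^2$ immediately reduces both parts to manipulations already carried out.
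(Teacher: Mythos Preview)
Your proof is correct and essentially identical to the paper's: both use that $\E[(M-\E[M])^2]\succeq 0$ to get $\E[M]^2 \preceq \E[M^2]$ and deduce the first inequality from positive semidefiniteness, then obtain the second via the triangle inequality combined with the first.
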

\begin{proof}
    Note that $\E[(M-\E[M])^2] \succeq 0 \implies \E[M^2] \succeq \E[M]^2$. Since both matrices are p.s.d. it follows that $\normt{\E[M]^2} \leq \normt{\E[M^2]}$. The second claim follows since $\normt{\E(M-\E[M])^2} \leq \normt{\E[M^2]} + \normt{\E[M]^2} \leq 2 \normt{ \E[M^2]}$.
\end{proof}

\begin{lemma} \label[lemma]{lem:l4l2norm}
    Let $x \sim \cD$ be a random vector from a distribution that is L$8$-L$2$ hypercontractive -- $\E[\langle v, x \rangle^8] \leq L^2 (\E[\langle v, x \rangle^2])^4$ -- then
    \[\normt {\E[\normt{x}^2 xx^\top]} \leq L \Tr(\Sigma) \normt{\Sigma} \]
\end{lemma}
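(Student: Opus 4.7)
The plan is to bound $\Normt{\E[\snormt{x} xx^\top]}$ using the variational characterization of the spectral norm. First I would write
\[
\Normt{\E[\snormt{x} xx^\top]} = \sup_{\norm{v}=1} v^\top \E[\snormt{x} xx^\top] v = \sup_{\norm{v}=1} \E[\snormt{x} \iprod{v,x}^2],
\]
and apply Cauchy--Schwarz to the expectation on the right, splitting it as $(\E \snorm{x}^2)^{1/2} \cdot (\E \iprod{v,x}^4)^{1/2}$. It remains to bound each factor separately in terms of $L$, $\Tr(\Sigma)$, and $\Normt{\Sigma}$.

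For the second factor, I would observe that L8-L2 hypercontractivity implies L4-L2 hypercontractivity (up to the constant $\sqrt L$) by Jensen's inequality applied to $t \mapsto t^2$: namely $\E \iprod{v,x}^4 \leq (\E \iprod{v,x}^8)^{1/2} \leq L \cdot (\E \iprod{v,x}^2)^2 = L \cdot (v^\top \Sigma v)^2$. Hence for unit $v$ this gives $(\E \iprod{v,x}^4)^{1/2} \leq \sqrt L \cdot \Normt{\Sigma}$.

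For the first factor, I would expand $\snormt{x}^2 = \bigl(\sum_i \iprod{e_i,x}^2\bigr)^2 = \sum_{i,j} \iprod{e_i,x}^2 \iprod{e_j,x}^2$, apply Cauchy--Schwarz to each summand, and use the L4-L2 bound derived in the previous step on each coordinate direction: $\E \iprod{e_i,x}^2 \iprod{e_j,x}^2 \leq (\E \iprod{e_i,x}^4)^{1/2}(\E \iprod{e_j,x}^4)^{1/2} \leq \sqrt L \, \Sigma_{ii} \cdot \sqrt L \, \Sigma_{jj}$. Summing over $i,j$ yields $\E \snorm{x}^2 \leq L (\Tr \Sigma)^2$, so $(\E \snorm{x}^2)^{1/2} \leq \sqrt L \cdot \Tr(\Sigma)$.

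Multiplying the two bounds gives $\Normt{\E[\snormt{x} xx^\top]} \leq \sqrt L \cdot \Tr(\Sigma) \cdot \sqrt L \cdot \Normt{\Sigma} = L \Tr(\Sigma) \Normt{\Sigma}$, which is the desired inequality. There is no significant obstacle here: the argument is a routine combination of the variational characterization, two applications of Cauchy--Schwarz, and the hypothesized hypercontractivity (together with the trivial observation that L8-L2 implies L4-L2).
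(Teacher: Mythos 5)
Your proof is correct and takes essentially the same route as the paper: the variational characterization of the spectral norm, a Cauchy--Schwarz split of $\E[\snormt{x}\iprod{v,x}^2]$ into an $\E\norm{x}^4$-type factor and an $\E\iprod{v,x}^4$-type factor, and hypercontractivity applied coordinatewise. The only cosmetic difference is that the paper lifts both factors to eighth moments via Jensen and invokes \cref{lem:l8l2vec} for $\E\norm{x}^8 \leq L^2(\Tr\Sigma)^4$, whereas you stay at fourth moments using the L4-L2 bound implied by L8-L2 hypercontractivity --- the same calculation one degree lower.
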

\begin{proof}
     We introduce a vector $v$ with $\normt{v} \leq 1$. Then,
\begin{align*}
    & \E[\iprod{v, \normt{x}^2 xx^\top v}] = \E[\normt{x}^2 \iprod{v, x}^2] \leq (\E[\normt{x}^8])^{1/4} (\E[\iprod{v, x}^8])^{1/4}.
\end{align*}
by Cauchy-Schwarz and the Jensen inequality.
For the first term we have $(\E[\normt{x}^8])^{1/4} \leq \sqrt{L} \Tr \Sigma$ by \cref{lem:l8l2vec}. For the second term once again using L$8$-L$2$ hypercontractivity we have,
$(\E[\iprod{v, x}^8])^{1/4} \leq \sqrt{L} \E[\iprod{v, x}]^2 \leq \sqrt{L} \normt{\Sigma}$.
\end{proof}

\begin{lemma} \label[lemma]{lem:l8l2vec}
    Let $x \sim \cD$ be a random vector with a distribution that is L$8$-L$2$ hypercontractive. Then,
    \begin{align*}
      \E[\normt{x}^8] \leq L^2 (\Tr \Sigma)^4.
    \end{align*}
\end{lemma}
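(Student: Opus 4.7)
The plan is to reduce the bound on $\E \|x\|^8$ to coordinate-wise moment bounds that we can obtain directly from the L$8$-L$2$ hypercontractivity hypothesis. Since the hypothesis gives $\E \iprod{v, x}^8 \leq L^2 (\E \iprod{v,x}^2)^4$ for every fixed $v$, the natural first move is to specialize $v$ to standard basis vectors $e_i$, which yields $\E x_i^8 \leq L^2 \Sigma_{ii}^4$.

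Next, I will expand $\|x\|^8 = (\sum_i x_i^2)^4 = \sum_{i,j,k,l} x_i^2 x_j^2 x_k^2 x_l^2$ and bound each cross term by H\"older's inequality with four equal exponents:
\[
  \E[x_i^2 x_j^2 x_k^2 x_l^2] \leq (\E x_i^8)^{1/4} (\E x_j^8)^{1/4} (\E x_k^8)^{1/4} (\E x_l^8)^{1/4}.
\]
Plugging in the coordinate-wise bound $(\E x_i^8)^{1/4} \leq \sqrt{L} \cdot \Sigma_{ii}$ gives $\E[x_i^2 x_j^2 x_k^2 x_l^2] \leq L^2 \Sigma_{ii} \Sigma_{jj} \Sigma_{kk} \Sigma_{ll}$.

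Finally, summing this bound over all $(i,j,k,l) \in [d]^4$ factors as $L^2 (\sum_i \Sigma_{ii})^4 = L^2 (\Tr \Sigma)^4$, which is the claimed inequality. None of the steps are delicate: hypercontractivity along coordinate directions is immediate, H\"older is a standard tool, and the sum factorizes. The only place one might slip is choosing the right exponents in H\"older — picking $(4,4,4,4)$ is what makes the cross term reduce exactly to a product of fourth roots of eighth moments, which is precisely the object the hypercontractivity hypothesis controls.
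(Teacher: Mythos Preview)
Your proof is correct and is essentially identical to the paper's: both expand $\|x\|^8=(\sum_i x_i^2)^4$, apply H\"older (the paper phrases it as iterated Cauchy--Schwarz) to reduce each cross term to $(\E x_i^8)^{1/4}(\E x_j^8)^{1/4}(\E x_k^8)^{1/4}(\E x_l^8)^{1/4}$, invoke L$8$--L$2$ hypercontractivity along the coordinate directions, and then factor the sum as $L^2(\Tr\Sigma)^4$.
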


\begin{proof}
A short computation using the Cauchy-Schwarz inequality and L8-L2 equivalence shows that, 
\begin{align*}
    & \E[\normt{x}^8] = \E[(\sum_{i=1}^d \iprod{x, e_i}^2)^4] = \E[\sum_{a,b,c,d} \iprod{x, e_a}^2 \iprod{x, e_b}^2 \iprod{x, e_c}^2 \iprod{x, e_d}^2] \leq \\
    & \sum_{a,b,c,d} (\E[\iprod{x, e_a}^8] \E[\iprod{x, e_b}^8] \E[\iprod{x, e_c}^8] \E[\iprod{x, e_d}^8])^{1/4} \leq \\
    & L^2 \sum_{a,b,c,d} \E[\iprod{x, e_a}^2] \E[\iprod{x, e_b}^2] \E[\iprod{x, e_c}^2] \E[\iprod{x, e_d}^2] \leq L ^2(\Tr \Sigma)^4.
\end{align*}
\end{proof}

\end{document}